\newtheorem{thm}{Theorem}[section]
\newtheorem{lem}[thm]{Lemma}
\newtheorem{cor}[thm]{Corollary}
\newtheorem{remk}[thm]{Remark}
\newtheorem{defn}[thm]{Definition}
\newtheorem*{warning}{Warning}
\newtheorem{convention}[thm]{Convention}
\newtheorem{fact}[thm]{Fact}
\newtheorem{exmp}[thm]{Example}
\newcommand{\wb}{\bot}
\newcommand{\wa}{\top}
\newcommand{\wap}{\wa'}
\newcommand{\1}{\mathsf{\mathit{1}}}
\newcommand{\0}{\mathsf{\mathit{0}}}
\newcommand{\al}{\alpha}
\newcommand{\be}{\beta}
\newcommand{\ep}{\varepsilon}
\newcommand{\rig}{\rightarrow}
\newcommand{\mrig}{\mathrel{-\!\!\!\!\!\rightarrow}}
\newcommand{\Rig}{\Rightarrow}
\newcommand{\bcdw}{\mathbin{\boldsymbol\cdot}}
\newcommand{\bcdn}{\mbox{\boldmath{$\cdot$}}}
\newcommand{\s}{\mathbin{\star}}
\newcommand{\seteq}{\mathrel{\mbox{\,\textup{:}\!}=\nolinebreak }\,}
\newcommand{\sbA}{{\boldsymbol{A}}}
\newcommand{\sbB}{{\boldsymbol{B}}}
\newcommand{\sbC}{{\boldsymbol{C}}}
\newcommand{\sbD}{{\boldsymbol{D}}}
\newcommand{\sbE}{{\boldsymbol{E}}}
\newcommand{\sbF}{{\boldsymbol{F}}}
\newcommand{\sbG}{{\boldsymbol{G}}}
\newcommand{\sbH}{{\boldsymbol{H}}}
\newcommand{\sbL}{{\boldsymbol{L}}}
\newcommand{\sbS}{{\boldsymbol{S}}}
\newcommand{\sbT}{{\boldsymbol{T}}}
\newcommand{\sbU}{{\boldsymbol{U}}}
\newcommand{\sbX}{{\boldsymbol{X}}}
\newcommand{\sbZ}{{\boldsymbol{Z}}}
\newcommand{\bs}{{\backslash}}
\newcommand{\ov}{\overline}
\newcommand{\leibniz}{\boldsymbol{\varOmega}}
\begin{document}
\title[Varieties of De Morgan Monoids: Covers of Atoms]
% II]
{Varieties of De Morgan Monoids:\\Covers of Atoms}
% II\,:\\Covers of Atoms}
\author{T.\ Moraschini}
\address{Institute of Computer Science, Academy of Sciences of the Czech Republic, Pod Vod\'{a}renskou v\v{e}\v{z}\'{i} 2, 182 07 Prague 8, Czech Republic.}
%\address{Departament de L\`{o}gica, Hist\`{o}ria i Filosofia de la Ci\`{e}ncia, Universitat de Barcelona
%(UB), Montalegre 6, E-08001 Barcelona, Spain.}
\email{moraschini@cs.cas.cz}
%\email{tommaso.moraschini@ub.edu}
\author{J.G.\ Raftery}
\address{Department of Mathematics and Applied Mathematics,
 University of Pretoria,
 Private Bag X20, Hatfield,
 Pretoria 0028, South Africa}
\email{{james.raftery@up.ac.za}}
% \quad {jamie.wannenburg@up.ac.za}}
\author{J.J.\ Wannenburg}
\address{Department of Mathematics and Applied Mathematics,
 University of Pretoria,
 Private Bag X20, Hatfield,
 Pretoria 0028, and DST-NRF Centre of Excellence in Mathematical and Statistical Sciences (CoE-MaSS), South Africa}
\email{{jamie.wannenburg@up.ac.za}}
%\thanks{${}^*$presenter}
%\keywords{}
\keywords{De Morgan monoid, Sugihara monoid, Dunn monoid, residuated lattice, relevance logic.\vspace{1mm}
\\
\indent {2010 {\em Mathematics Subject Classification.}}\\ \indent
Primary: 03B47, 06D99, 06F05.  Secondary: 03G25, 06D30\vspace{1mm}}
%, 08C15\vspace{1mm}}
\thanks{This work received funding from the European Union's Horizon 2020 research and innovation programme under the Marie Sklodowska-Curie grant agreement
No~689176 (project ``Syntax Meets Semantics: Methods, Interactions, and Connections in Substructural logics"). The first author was supported by project CZ.02.2.69/0.0/0.0/17\_050/0008361, OPVVV M\v{S}MT, MSCA-IF Lidsk\'{e} zdroje v teoretick\'{e} informatice.
The second author was supported in part
by the National Research Foundation of South Africa (UID 85407).
The third author was supported by the DST-NRF Centre of Excellence in Mathematical and Statistical Sciences (CoE-MaSS), South Africa.
Opinions expressed and conclusions arrived at are those of the authors and are not necessarily to be attributed to the CoE-MaSS}

%\thanks{This work received funding from the European Union's Horizon 2020 research and innovation programme under the Marie Sklodowska-Curie grant
%agreement No~689176 (project ``Syntax Meets Semantics: Methods, Interactions, and Connections in Substructural logics"). The first author was also supported by RVO~67985807 and Czech Science Foundation project GBP202/12/G061, and by the CAS-ICS postdoctoral fellowship.
%%The first author was supported by the project GJ15-07724Y of the Czech Science Foundation.
%%The first author was supported by the University of Barcelona (grant AIPF), the government of Spain and
%%FEDER funds from the European Union (project MTM2011-25747), the Czech Science Foundation (project GA13-14654S) and the
%%government of Catalonia (grant 2014SGR-788).
%%\\
%%\indent
%The second author was supported in part by the National Research Foundation of South Africa (UID 85407).
%The third author was supported by the DST-NRF Centre of Excellence in Mathematical and Statistical Sciences (CoE-MaSS), South Africa.
%Opinions expressed and conclusions arrived at are those of the authors and are not necessarily to be attributed to the CoE-MaSS}

\begin{abstract}
%After establishing a structural characterization of the finitely subdirectly irreducible De Morgan monoids, we
%prove
The variety $\mathsf{DMM}$ of De Morgan monoids has just four minimal subvarieties.  The join-irreducible
covers of these atoms in the subvariety lattice of $\mathsf{DMM}$ are investigated.  One of the two
atoms consisting of idempotent algebras has no such cover; the other has just one.  The remaining two
atoms lack nontrivial idempotent members.  They are generated, respectively, by $4$--element De Morgan monoids $\sbC_4$ and
$\sbD_4$, where $\sbC_4$
%is totally ordered and
is the only nontrivial $0$--generated algebra onto which
finitely subdirectly irreducible De Morgan monoids may be mapped by non-injective homomorphisms.  The
homomorphic \emph{pre-images} of $\sbC_4$ within $\mathsf{DMM}$ (together with the trivial De Morgan monoids)
constitute a proper quasi\-variety, which is shown to have a largest subvariety $\mathsf{U}$.  The covers of the variety
$\mathbb{V}(\sbC_4)$ within $\mathsf{U}$ are revealed here.  There are just ten of them (all
finitely generated).  In exactly six of these ten varieties, all nontrivial members have $\sbC_4$ as a
\emph{retract}.  In the varietal join of those six classes, every subquasivariety is a variety---in fact, every finite
subdirectly irreducible algebra is projective.  Beyond $\mathsf{U}$, all
%join-irreducible
covers of $\mathbb{V}(\sbC_4)$ [or of $\mathbb{V}(\sbD_4)$] within $\mathsf{DMM}$ are discriminator varieties.
Of these, we identify infinitely many that are finitely generated, and some that are not.
We also prove that there are just $68$ minimal quasivarieties of De Morgan monoids.
%
%With a view to identifying the structurally complete varieties in this lattice, we isolate
%those satisfying a weak (`passive') form of structural completeness, and we prove structural
%completeness in a number of cases.  It is also shown that there are just $68$ minimal
%quasivarieties of De Morgan monoids.
\end{abstract}

%\noindent
%Preliminary working draft (1 October 2017)\\

\maketitle

\makeatletter
\renewcommand{\labelenumi}{\text{(\theenumi)}}
\renewcommand{\theenumi}{\roman{enumi}}
\renewcommand{\theenumii}{\roman{enumii}}
\renewcommand{\labelenumii}{\text{(\theenumii)}}
\renewcommand{\p@enumii}{\theenumi(\theenumii)}
\makeatother

{\allowdisplaybreaks

\section{Introduction}

\enlargethispage{4pt}

De Morgan monoids, introduced by Dunn \cite{Dun66,MDL74}, are involutive distributive residuated
lattices satisfying $x\leqslant x^2$.  The theory of residuated lattices descends from the study
of ideal multiplication in rings, and from the calculus of binary relations, but the algebras also
model substructural logics; see \cite{GJKO07}.
%Residuated structures arose from the study of ideal
%multiplication in rings and from the algebra of binary relations, but they also model substructural
%logics; see \cite{GJKO07}.
In particular, the relevance logic $\mathbf{R}^\mathbf{t}$ of Anderson and Belnap
\cite{AB75} is algebraized
%(in the precise sense of \cite{BP89})
by the variety $\mathsf{DMM}$ of De
Morgan monoids, provided that the monoid identity $e$ is distinguished in the algebraic signature.
From the general theory of algebraization \cite{BP89}, it follows that the axiomatic extensions of
$\mathbf{R}^\mathbf{t}$ and the subvarieties of $\mathsf{DMM}$ form anti-isomorphic lattices, and the
latter are susceptible to the methods of universal algebra.

Accordingly, in \cite{MRW}, we initiated a study of the lattice of varieties of De Morgan monoids.
Among other results, we proved that this lattice has just four atoms.  The idempotent De Morgan
monoids (a.k.a.\ Sugihara monoids) are very well understood and encompass two of the minimal
varieties, viz.\ the class $\mathsf{BA}$ of Boolean algebras (whose nontrivial members satisfy
$\textup{$\neg e<e$}$) and the variety $\mathbb{V}(\sbS_3)$ generated by the $3$--element Sugihara
monoid (in which $\neg e=e$).  The remaining two are generated, respectively, by two $4$--element algebras
$\sbC_4$ and $\sbD_4$, where $\sbC_4$ is totally ordered (with $e<\neg e$), while $e$ and $\neg e$
are incomparable in $\sbD_4$.  We established in \cite[Thm.~5.21]{MRW} that a subvariety of $\mathsf{DMM}$ omits
$\sbC_4$ and $\sbD_4$ iff it consists of Sugihara monoids.

The present paper is primarily an investigation of the covers of these four atoms within $\mathsf{DMM}$.
It suffices to consider
%Only
the join-irreducible covers,
% are of interest,
as the subvariety lattice of $\mathsf{DMM}$ is
distributive.  We show that $\mathsf{BA}$ has no join-irreducible cover within $\mathsf{DMM}$, and that
$\mathbb{V}(\sbS_3)$ has just one; the situation for $\mathbb{V}(\sbC_4)$ and $\mathbb{V}(\sbD_4)$
is much more complex (see Theorem~\ref{covers of all atoms}).

The covers of $\mathbb{V}(\sbC_4)$ are distinctive, in view of a result of Slaney \cite{Sla89}:
%Slaney \cite{Sla89} proved that
$\sbC_4$ is the only $0$--generated nontrivial algebra onto which
finitely subdirectly irreducible De Morgan monoids may be mapped by non-injective homomorphisms.
We demonstrate that there is a largest variety $\mathsf{U}$ of De Morgan monoids consisting of homomorphic
\emph{pre-images} of $\sbC_4$ (along with trivial algebras), as well as a largest subvariety $\mathsf{M}$ of
$\mathsf{DMM}$ such that $\sbC_4$ is a \emph{retract} of every nontrivial member of $\mathsf{M}$.  Thus,
$\textup{$\mathbb{V}(\sbC_4)\subseteq\mathsf{M}\subseteq\mathsf{U}$}$.  We furnish $\mathsf{U}$ and $\mathsf{M}$
with finite equational axiomatizations; each has an undecidable equational theory and uncountably many
subvarieties (see Sections~\ref{crystalline varieties} and \ref{reflections section}).  We also
provide representation theorems for the members of $\mathsf{U}$ and $\mathsf{M}$
(Corollaries~\ref{m representation 2} and \ref{m representation 3}),
involving a `skew reflection' construction of Slaney \cite{Sla93}.

With the help of these representations, we identify all of the covers of $\mathbb{V}(\sbC_4)$ within $\mathsf{U}$.
There are just ten, of which exactly six fall within $\mathsf{M}$ (Theorem~\ref{main covers}, Corollary~\ref{10 covers}).
All ten of these varieties are finitely generated.

Within $\mathsf{DMM}$, every cover of $\mathbb{V}(\sbD_4)$ is semisimple---in fact, a discriminator variety.
%---provided that it is join-irreducible.
The same applies to the covers of $\mathbb{V}(\sbC_4)$ that are not contained in $\mathsf{U}$.
In both cases, we identify infinitely many such
%(join-irreducible)
covers that are finitely generated, and some that are not
even generated by their finite members
%locally finite
%We show, however, that within $\mathsf{DMM}$ but beyond $\mathsf{U}$,
%the variety $\mathbb{V}(\sbC_4)$ (and likewise $\mathbb{V}(\sbD_4)$) has infinitely many join-irreducible covers
%that are finitely generated, and some that are not
(see Sections~\ref{other covers c4} and \ref{other covers d4}).
%All of the lastmentioned covers are semisimple---in fact, they are discriminator varieties.

In the literature of substructural logics, subvariety lattices are more prominent
than subquasivariety lattices, because they mirror the extensions of a logic
by new axioms, as opposed to new inference rules.
Nevertheless, some natural logical problems call for a consideration of quasivarieties if they are
to be approached algebraically, e.g., the identification of the structurally complete axiomatic extensions
of $\mathbf{R}^\mathbf{t}$.  Although this particular question is deferred to a subsequent paper, we
throw some fresh light here on the subquasivariety lattice of $\mathsf{DMM}$.

Specifically, each of the four minimal
varieties of De Morgan monoids is also minimal as a quasivariety, but they are not alone in
this.  Indeed, we prove that $\mathsf{DMM}$ has just $68$ minimal subquasivarieties (Corollary~\ref{68 cor},
Remark~\ref{68 remark}).  The proof exploits Slaney's description of the free $0$--generated De Morgan monoid
in \cite{Sla85}.  We show, moreover, that in the varietal join $\mathsf{J}$ of the six covers of $\mathbb{V}(\sbC_4)$
within $\mathsf{M}$, every finite subdirectly 
irreducible 
algebra is projective.  It follows that every subquasivariety of
$\mathsf{J}$ is a variety.  (See Theorems~\ref{proj} and \ref{primitivity}.)

\section{Residuated Structures and De Morgan Monoids}

Some key definitions and results are recalled briefly below.  Unproved assertions
in this section were either referenced or proved in \cite{MRW}, where
%the reader can find appropriate
additional citations and/or attributions can be found.  Familiarity with \cite{MRW} is not presupposed, however.

%\begin{defn}
%\textup{
An
{\em involutive (commutative) residuated lattice}, or briefly, an {\em IRL},
is an algebra
$\sbA=\langle A;\bcdw,\wedge,\vee,\neg,e\rangle$
comprising a commutative monoid $\langle A;\bcdw,e\rangle$,
a lattice $\langle A;\wedge,\vee\rangle$
and a function $\neg\colon A\mrig A$,
called an {\em involution},
such that $\sbA$ satisfies $\neg\neg x=x$ and
\begin{equation}\label{involution-fusion law}
x\bcdw y\leqslant z\;\Longleftrightarrow\;\neg z\bcdw y\leqslant\neg x.
\end{equation}
%cf.\ \cite{GJKO07}.
Here, $\leqslant$ denotes the lattice order and $\neg$ binds
more strongly than any other operation; we refer to $\bcdw$
as {\em fusion}.
%}
%\end{defn}
%
%
%Setting $y=e$ in (\ref{involution-fusion law}), we see that $\neg$ is antitone.
%In fact, De Morgan's laws for $\neg,\wedge,\vee$ hold, so
%
It follows
%from this definition
that $\neg$ is an anti-automorphism of $\langle A;\wedge,\vee\rangle$,
and if we define
%Defining
%\[
%\textup{
$x\rig y\seteq\neg(x\bcdw\neg y)$ and
%\textup{
% \ and \ }
$f\seteq\neg e$,
%}
%\]
%\end{defn}
then $\sbA$ satisfies
%we note that IRLs satisfy
\begin{align}
& x\bcdw y\leqslant z\;\Longleftrightarrow\;y\leqslant x\rig z \quad\textup{(the law of residuation),}\label{residuation}\\
& \neg x= x\rig f, \textup{ \ hence \ } x\bcdw\neg x\leqslant f,\label{neg properties 0}\\
& x\rig y=\neg y\rig\neg x \textup{ \ and \ } x\bcdw y=\neg(x\rig\neg y),\label{neg properties}\\
& e\leqslant x=x\bcdw x\;\Longleftrightarrow\;x\bcdw\neg x=\neg x\;\Longleftrightarrow\;x=x\rig x.
\label{3 conditions}
\end{align}

%\begin{defn}
%\textup{
%A {\em (commutative) residuated lattice\/}---or
%an {\em RL\/}---is a
An algebra
$\sbA=\langle A;\bcdw,\rig,\wedge,\vee,e\rangle$
is called a {\em (commutative) residuated lattice\/}---or
an {\em RL\/}---if
%comprising a commutative monoid
$\langle A;\bcdw,e\rangle$ is a commutative monoid,
%a lattice
$\langle A;\wedge,\vee\rangle$ is a lattice
and
$\rig$ is a binary operation---called \emph{residuation}---such
%, where
%and
that $\sbA$ satisfies (\ref{residuation}).
%}
%  It is said to be \emph{square-increasing} if $a\leqslant a^2$ ($\seteq a\bcdw a$)
%for all $a\in A$, and \emph{distributive} if the lattice $\langle A;\wedge,\vee\rangle$ is distributive.  A square-increasing
%distributive RL is called a \emph{Dunn monoid}.}
%\end{defn}
%We say that $\sbA$ is \emph{square-increasing} if $a\leqslant a^2$ ($\seteq a\bcdw a$) for all $a\in A$,
%and \emph{distributive} if the lattice $\langle A;\wedge,\vee\rangle$ is distributive.  A distributive square-increasing IRL
%is called a \emph{De Morgan monoid}.  The variety of De Morgan monoids is denoted by $\mathsf{DMM}$.
%
%Thus, up to term equivalence, every IRL has a reduct that is an RL.  Conversely,
%every RL can be embedded into (the RL-reduct of) an IRL; see \cite{GR04} and the
%antecedents cited there.

\enlargethispage{4pt}

Every RL satisfies the following well known laws.  Here and subsequently, $x\leftrightarrow y$ abbreviates $\textup{$(x\rig y)\wedge(y\rig x)$}$.
\begin{align}
& x\bcdw (x\rig y)\leqslant y \textup{ \ and \ }  x\leqslant (x\rig y)\rig y \label{x y law}\\
%& (x\bcdw y)\rig z=y\rig (x\rig z)=x\rig (y\rig z) \label{permutation}\\
& x\leqslant y\rig z\;\Longleftrightarrow\;y\leqslant x\rig z \label{subpermutation}\\
%& (x\rig y)\bcdw(y\rig z)\leqslant x\rig z \label{transitivity}\\
& x\bcdw(y\vee z)=(x\bcdw y)\vee(x\bcdw z)\label{fusion distributivity}\\
& x\rig(y\wedge z)=(x\rig y)\wedge(x\rig z)
%\;\;\textup{and}\;\;
\label{and distributivity}\\
&
(x\vee y)\rig z=(x\rig z)\wedge(y\rig z)\label{or distributivity}\\
& x\leqslant y\;\Longrightarrow\;
\left\{ \begin{array}{l}
                           x\bcdw z\leqslant y\bcdw z \;\;\textup{and}\;\;
                           \\
                           z\rig x\leqslant z\rig y\;\;\textup{and}\;\;
                           %\\
                           y\rig z\leqslant x\rig z
                           \end{array}
                   \right. \label{isotone}
\\
& x\leqslant y\;\Longleftrightarrow\;e\leqslant x\rig y \label{t order}\\
& x=y\;\Longleftrightarrow\;e\leqslant x\leftrightarrow y \label{t reg}\\
& e\leqslant x\rig x \textup{ \ and \ } e\rig x=x. \label{t laws}
%\\
%& e\leqslant x\;\Longleftrightarrow \; x\rig x\leqslant x. \label{t eq}
\end{align}
By (\ref{t reg}), an RL $\sbA$ is nontrivial (i.e., $\left| A\right|>1$)
iff $e$ is not its least element.
%, iff $e$ has a strict lower bound.
A class of algebras is said to be \emph{nontrivial} if it has a nontrivial member.

An RL $\sbA$ is said to be \emph{bounded} if there are \emph{extrema}
$\bot,\top\in A$, by which we mean that $\bot\leqslant a\leqslant \top$ for all $a\in A$.
In this case, for each $a\in A$,
\begin{equation}\label{bounds}
a\bcdw\bot=\bot=\top\rig\bot
\text{ \ and\/ \ }
\bot\rig a=\top=
a\rig\top=\top\bcdw\top.
\end{equation}
If, moreover, $\top\bcdw a=\top$ for all $a\in A\bs\{\bot\}$, then $\sbA$ is
said to be \emph{rigorously compact} \cite{Mey86}, in which case
$a\rig\bot=\bot=
%$ and $
\top\rig b$
%=\bot$
for all $a\in A\bs\{\bot\}$
and $b\in A\bs\{\top\}$\textup{.}  The extrema of a bounded [I]RL are
not distinguished in the algebra's signature, so they are not always retained in subalgebras.

\begin{lem}\label{tight}
Let $\sbA$ be a rigorously compact RL, with extrema\/ $\bot,\top$\textup{,} and let $h\colon\sbA\mrig\sbB$ be
a homomorphism that is not a constant function.  Then
\begin{enumerate}
\item\label{tight 1}
$h^{-1}[\{h(\bot)\}]=\{\bot\}$ and\/ $h^{-1}[\{h(\top)\}]=\{\top\}$\textup{.}

\smallskip

\item\label{tight 2}
If\/ $h(\bot)$ is meet-irreducible in $\sbB$\textup{,} then\/ $\bot$ is meet-irreducible in $\sbA$\textup{.}
Likewise, $\top$ is join-irreducible if\/ $h(\top)$ is.

\smallskip

\item\label{tight 3}
If $\sbB$ is totally ordered (as a lattice), then\/ $\bot$ is meet-irreducible and\/ $\top$ join-irreducible
in $\sbA$\textup{.}
\end{enumerate}
\end{lem}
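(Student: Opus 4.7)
The plan is to prove (i) first—where the substantive work lies—and then obtain (ii) and (iii) as simple corollaries.  Since $\bot\leqslant a\leqslant\top$ for every $a\in A$, monotonicity of $h$ makes $h(\bot)$ and $h(\top)$ the extrema of the image $h[\sbA]$; in particular, $h$ is non-constant iff $h(\bot)\neq h(\top)$, which will serve as my only handle on the hypothesis.

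For the $\bot$ half of (i), I would suppose $h(a)=h(\bot)$ with $a\neq\bot$.  Rigorous compactness gives $\top\bcdw a=\top$, while (\ref{bounds}) gives $\top\bcdw\bot=\bot$; applying $h$ and substituting $h(a)=h(\bot)$ collapses $h(\top)$ to $h(\bot)$, contradicting non-constantness.  For the $\top$ half, I would suppose $h(a)=h(\top)$ with $a\neq\top$.  The derived form of rigorous compactness gives $\top\rig a=\bot$, whence $h(\bot)=h(\top)\rig h(\top)\geqslant h(e)$ by (\ref{t laws}).  Combined with $h(\bot)\leqslant h(e)$ (from $\bot\leqslant e$), this forces $h(\bot)=h(e)$; the $\bot$ half already proved then yields $e=\bot$, so $\sbA$ is trivial and $h$ is constant, again contradicting the hypothesis.

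Parts (ii) and (iii) are then immediate.  For (ii), a decomposition $\bot=a\wedge b$ maps to $h(\bot)=h(a)\wedge h(b)$, so meet-irreducibility of $h(\bot)$ in $\sbB$ gives $h(a)=h(\bot)$ or $h(b)=h(\bot)$, and (i) lifts this to $a=\bot$ or $b=\bot$; the $\top$ clause is dual.  For (iii), every element of a totally ordered lattice is both meet- and join-irreducible, so (ii) applies to $h(\bot)$ and $h(\top)$.  The only real obstacle is the asymmetry within (i): the $\top$ case admits no direct analogue of the ``$\top\bcdw a=\top$'' trick and must instead be routed through the monoid identity via (\ref{t laws}), bootstrapping on the $\bot$ case that has just been handled.
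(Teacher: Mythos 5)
Your proof is correct, and for the $\bot$ half of (i) and for parts (ii) and (iii) it is exactly the paper's argument. The one place you diverge is the $\top$ half of (i): you claim there is ``no direct analogue of the $\top\bcdw a=\top$ trick'' and so route the contradiction through $h(\bot)=h(e)$, the already-proved $\bot$ case (to get $e=\bot$), and the triviality criterion from (\ref{t reg}). That chain is sound, but the asymmetry you perceive is illusory: rigorous compactness gives $\top\rig b=\bot$ for $b\neq\top$, and (\ref{bounds}) gives $\top\rig\top=\top$, so one computes directly $h(\bot)=h(\top\rig b)=h(\top)\rig h(b)=h(\top)\rig h(\top)=h(\top\rig\top)=h(\top)$, which is precisely the mirror image of the fusion computation in the $\bot$ case and is how the paper's ``similarly'' is meant. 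Your detour costs nothing beyond a few extra lines and a dependence of the $\top$ case on the $\bot$ case; the symmetric version is self-contained and slightly cleaner, but both establish the lemma.
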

\begin{proof}
(\ref{tight 1})
If $\bot< a\in A$, with $h(a)=h(\bot)$, then $\top\bcdw a=\top$, by rigorous compactness, so
%$h(\bot)=h(\bot\bcdw\top)=h(\bot)\bcdw h(\top)=h(a)\bcdw h(\top)=h(a\bcdw\top)=h(\top)$.
%h(a)\rig h(\bot)=h(\bot)\rig h(\bot)=h(\bot\rig\bot)=h(\top)$.
%
$h(\top)=h(\top)\bcdw h(a)=h(\top)\bcdw h(\bot)=h(\top\bcdw\bot)=h(\bot)$.
%
%\,If $\bot< a\in A$, with $h(a)=h(\bot)$, then $a\rig\bot=\bot$, by rigorous compactness, so
%%\[
%$h(\bot)=
%h(a)\rig h(\bot)=h(\bot)\rig h(\bot)=h(\bot\rig\bot)=h(\top)$.
%%\]
Similarly, if
$\top> b\in A$, with $h(b)=h(\top)$, then $h(\top)=h(\bot)$, because $\top\rig b=\bot$.
As $h$ is isotone, we conclude in both cases that $\left|h[A]\right|=1$, contradicting the fact that $h$ is not constant.

(\ref{tight 2}) follows easily from (\ref{tight 1}), and (\ref{tight 3}) from (\ref{tight 2}).
\end{proof}

%If we say that $\bot,\top$ are {\em extrema\/} of an RL $\sbA$, we mean that $\bot\leqslant a\leqslant\top$
%for all $a\in A$.  An RL with extrema is said to be {\em bounded}.
%%In that case, its extrema need not be {\em distinguished\/} elements, so they are not always retained in subalgebras.

In an RL $\sbA$, we define $x^0\seteq e$ and $x^{n+1}\seteq x^n\bcdw x$ \,for $n\in\omega$.  We say that
$\sbA$ is {\em square-increasing\/} if it satisfies $x\leqslant x^2$.
Every [square-increasing]
RL can be embedded into a [square-increasing] IRL, and
every finitely generated square-increasing IRL is bounded.
%\cite{Mey73}.
% and Section~\ref{reflections section}.
%\cite[Sec.~9]{GR04}.
%Moreover, Slaney \cite{Sla16} has shown that if two square-increasing IRLs have the same RL-reduct, then they are equal.
% \cite{Sla16}.
The following laws obtain in all square-increasing IRLs:
% (and not in all IRLs):
\begin{eqnarray}
&& x\wedge y\leqslant x\bcdw y
\label{square increasing cor} \\
&& x,y\leqslant e\;\Longrightarrow \; x\bcdw y=x\wedge y\label{square increasing cor2}\\
&& x\rig(x\rig y)\leqslant x\rig y \label{contraction}\\
%&& \neg x \rig x\leqslant x\\
&& e\leqslant x\vee\neg x \label{excluded middle}\\
&& f\leqslant x\;\Longrightarrow\;x^3=x^2 \quad \textup{(in particular, $f^3=f^2$). \label{cube}}
\end{eqnarray}

An RL $\sbA$ is said to be \emph{distributive} [resp.\ \emph{modular}] if its reduct $\langle A;\wedge,\vee\rangle$ is a
distributive [resp.\ modular] lattice.
%Every finitely generated square-increasing IRL is bounded.
%; see \cite[Thm.~3.2]{MRW} and the antecedents cited in \cite[Sec.~7]{MRW}.

%Thus, $[\neg(f^2))$ is a deductive filter in any square-increasing IRL.

Recall that a [quasi]variety is the model class of a set of [quasi-]equations
in an algebraic signature.  (Quasi-equations have the form
\[
(\al_1=\be_1\;\,\&\,\;\dots\;\,\&\,\;\al_n=\be_n)\;\Longrightarrow\;\al=\be,
\]
where $n\in\omega$.)
The class of all RLs and that of all IRLs
are finitely axiomatized varieties.  They are
%arithmetical (i.e.,
congruence distributive, congruence permutable and have
the congruence extension property (CEP); see \cite{GJKO07} for instance.

%An algebra
%is said to be \emph{trivial} if its universe has just one element;
%% is said to be \emph{trivial};
%a class of algebras is \emph{nontrivial} if it has a nontrivial member.
In the lemma below, the acronym {[F]SI} abbreviates `[finitely] subdirectly irreducible'.
(In any given algebraic signature, the direct product of an empty family is a trivial algebra, hence
SI algebras are nontrivial, as are simple algebras.)
Every variety is generated by its SI finitely generated members, as
Birkhoff's Subdirect Decomposition Theorem \cite[Thm.~3.24]{Ber12}
says that every algebra is isomorphic to a
subdirect product of SI homomorphic images of itself
(and since an equation involves only finitely many variables).

%\pagebreak[2]

\begin{lem}\label{fsi si simple}
Let\/ $\sbA$ be a (possibly involutive) RL.
\begin{enumerate}
\item\label{fsi}
$\sbA$ is FSI iff\/ $e$ is join-irreducible in\/ $\langle A;\wedge,\vee\rangle$\textup{.}
In this case, therefore, the subalgebras of $\sbA$ are also FSI.

\item\label{prime}
When\/ $\sbA$
is distributive, it
is FSI iff\/ $e$ is join-prime\/
\textup{(}i.e.,
whenever $a,b\in A$ with $e\leqslant a\vee b$\textup{,} then $e\leqslant a$ or $e\leqslant b$\textup{).}

\item\label{si}
If there is a largest element strictly below $e$\textup{,} then $\sbA$ is SI.  The converse holds if $\sbA$ is square-increasing.

\item\label{simple}
If\/ $e$ has just one strict lower bound, then\/ $\sbA$ is simple.  The converse holds when\/ $\sbA$ is square-increasing.
\end{enumerate}
\end{lem}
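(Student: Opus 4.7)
The plan is to reduce all four parts to the well-known correspondence (see \cite{GJKO07}) between the congruences of the commutative RL $\sbA$ and its \emph{deductive filters}: subsets $F \subseteq A$ with $e \in F$ that are upward-closed and closed under $\wedge$ and $\bcdw$.  Under this correspondence, the trivial congruence matches $\uparrow e := \{x \in A : e \leqslant x\}$ and the total congruence matches $A$; moreover it is a lattice isomorphism, so meets of congruences match intersections of filters, and a congruence is non-trivial iff its filter strictly contains $\uparrow e$.  A central observation to be invoked repeatedly is that \emph{every non-trivial filter $F$ contains some $a \in A$ with $a < e$}: if $c \in F \setminus \uparrow e$, then $a := c \wedge e \in F$ (by closure of $F$ under $\wedge$, using $c, e \in F$), and $a < e$, because $c \not\geqslant e$ forces $a \neq e$, while $a \leqslant e$ is automatic.

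For Part (\ref{fsi}), the ``if'' direction is immediate: given non-trivial filters $F_1, F_2$ and elements $a_i \in F_i$ with $a_i < e$, the element $a_1 \vee a_2$ lies in $F_1 \cap F_2$ by upward closure, and is strictly below $e$ because $e$ is join-irreducible; so $F_1 \cap F_2 \supsetneq \uparrow e$, witnessing that the intersection of the two non-trivial congruences is non-trivial.  The subalgebra claim is then automatic, since join-irreducibility of $e$ in $\langle A; \wedge, \vee\rangle$ passes to any subalgebra (which contains $e$ and inherits joins).  The ``only if'' direction is the main obstacle: given $e = a \vee b$ with $a, b < e$, one must exhibit two non-trivial filters meeting in $\uparrow e$.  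The naive choice (the filters generated by $a$ and $b$) works only when the iterated fusion-powers $a^n \vee b^m$ remain above $e$, which is not automatic in a general commutative RL; for this step I would invoke the filter-theoretic machinery from \cite{GJKO07}.  Part (\ref{prime}) is then immediate: in any distributive lattice, an element is join-irreducible iff join-prime.

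For the ``if'' direction of Part (\ref{si}), if $c$ is the largest element strictly below $e$, then by the central observation every non-trivial filter contains some $a < e$, hence (by $a \leqslant c$ and upward closure) contains $c$; so the filter generated by $c$ is the minimum non-trivial filter, making $\sbA$ SI.  For the square-increasing converse, the hypothesis $a \leqslant e$ together with (\ref{isotone}) yields $a \bcdw a \leqslant e \bcdw a = a$, while (\ref{square increasing cor}) gives $a = a \wedge a \leqslant a \bcdw a$; hence every $a \in \downarrow e$ is idempotent, so the filter generated by such an $a$ is simply $\uparrow a$, and the filters of $\sbA$ are exactly $\{\uparrow a : a \leqslant e\}$---whence SI forces a largest element strictly below $e$.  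For the ``if'' direction of Part (\ref{simple}), if $a_0$ is the unique element strictly below $e$, then $a_0$ is in fact the minimum of $A$, since for any $x \in A$ either $x \geqslant e > a_0$, or $x \wedge e < e$ forces $x \wedge e = a_0$ and hence $x \geqslant a_0$.  Every non-trivial filter thus contains $a_0$ and, by upward closure, equals $A$; so the only filters are $\uparrow e$ and $A$, making $\sbA$ simple.  The square-increasing converse follows from the filter classification above: simplicity forces $|\downarrow e| = 2$, which is the unique-lower-bound condition.
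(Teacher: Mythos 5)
The paper itself does not prove this lemma---it is recalled from \cite{MRW}---so your argument can only be judged on its own terms; your reduction to the correspondence between congruences and deductive filters is the standard route and is exactly the framework the paper recalls in Section 3. Two points need attention. First, the step you defer in part (\ref{fsi}) (FSI implies $e$ join-irreducible) needs no external machinery, and your worry that the ``naive choice'' might fail is unfounded. For $a\leqslant e$ one has $a^{n+1}=a^n\bcdw a\leqslant a^n\bcdw e=a^n$ by (\ref{isotone}), so the deductive filter $G_a$ generated by $a$ is $\bigcup_n[a^n)$, and any element of $G_a\cap G_b$ lies above $a^k\vee b^k$ for some $k$. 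Expanding $(a\vee b)^{2k-1}$ by (\ref{fusion distributivity}) gives the join of all terms $a^i\bcdw b^j$ with $i+j=2k-1$; in each such term $i\geqslant k$ or $j\geqslant k$, so (using $a,b\leqslant e$ and (\ref{isotone}) again) the term lies below $a^k$ or below $b^k$, whence $(a\vee b)^{2k-1}\leqslant a^k\vee b^k$. Thus if $e=a\vee b$ with $a,b<e$, then $e=e^{2k-1}=(a\vee b)^{2k-1}\leqslant a^k\vee b^k$ for every $k$, so $G_a\cap G_b=[e)$ while both filters properly contain $[e)$, refuting FSI. You should supply this computation rather than cite it away: it is the only non-formal content of the lemma.

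Second, in the square-increasing converses of (\ref{si}) and (\ref{simple}), your intermediate claim that the deductive filters of $\sbA$ are exactly $\{[a):a\leqslant e\}$ is false for infinite algebras (the union of an infinite strictly descending chain of principal filters is a deductive filter that need not be principal); the paper asserts this classification only for \emph{finite} algebras. Fortunately your conclusions survive without it: all that is needed is that $[a)$ \emph{is} a deductive filter for every $a\leqslant e$, which follows from the idempotence $a^2=a$ you correctly derived. For (\ref{si}): if $\sbA$ is SI, the minimum nontrivial deductive filter $F_0$ contains some $c<e$ by your central observation, and $F_0\subseteq[a)$ for every $a<e$, so $a\leqslant c$ for all such $a$, making $c$ the largest element strictly below $e$. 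For (\ref{simple}): simplicity forces $[a)=A$ for every $a<e$, so any such $a$ is the least element of $A$ and hence unique, while nontriviality guarantees at least one exists. With these two repairs the proof is complete.
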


The class of all [F]SI
members of a class $\mathsf{L}$
of algebras shall be denoted by $\mathsf{L}_\textup{[F]SI}$.
The class operator symbols $\mathbb{I}$, $\mathbb{H}$, $\mathbb{S}$, $\mathbb{P}$,
$\mathbb{P}_\mathbb{S}$ and $\mathbb{P}_\mathbb{U}$
stand, respectively, for closure under isomorphic and homomorphic images, subalgebras, direct and subdirect products,
and ultra\-products,
while $\mathbb{V}$
and $\mathbb{Q}$
denote varietal
and quasivarietal
generation, i.e., $\mathbb{V}=\mathbb{HSP}$ and
$\mathbb{Q}=\mathbb{ISPP}_\mathbb{U}=\mathbb{IP}_\mathbb{S}\mathbb{SP}_\mathbb{U}$.
%(In a given signature, the direct product of an empty family is a trivial algebra.)
For each class operator $\mathbb{O}$, we abbreviate $\mathbb{O}(\{\sbA_1,\dots,\sbA_n\})$ as
%We suppress excessive parentheses, e.g.,
$\mathbb{O}(\sbA_1,\dots,\sbA_n)$.
Recall that $\mathbb{P}_\mathbb{U}(\mathsf{L})\subseteq\mathbb{I}(\mathsf{L})$ for
any finite set $\mathsf{L}$ of finite similar algebras \cite[Lem.~IV.6.5]{BS81}.
%abbreviates $\mathbb{O}(\{\sbA_1,\dots,\sbA_n\})$ for each class operator $\mathbb{O}$.

{\em J\'{o}nsson's Theorem\/} \cite{Jon67,Jon95} asserts that, for any subclass
$\mathsf{L}$ of a
congruence distributive variety,
%we have
%the class
$\mathbb{V}(\mathsf{L})_\textup{FSI}
%$ of FSI members of $\mathbb{V}(\mathsf{L})$ is contained in
\subseteq
\mathbb{HSP}_\mathbb{U}(\mathsf{L})$.
In particular, if $\mathsf{L}$ consists of finitely many finite similar algebras,
%(so that $\mathbb{P}_\mathbb{U}(\mathsf{L})\subseteq\mathbb{I}(\mathsf{L})$),
then
$\mathbb{V}(\mathsf{L})_\textup{FSI}\subseteq\mathbb{HS}(\mathsf{L})$, provided that
$\mathbb{V}(\mathsf{L})$ is congruence distributive.  Note also that
$\mathbb{HS}(\mathsf{L})=\mathbb{SH}(\mathsf{L})$ for any class $\mathsf{L}$ of [I]RLs, owing to the CEP.

\begin{cor}\label{semisimple}
Let\/ $\mathsf{K}$ be any class of simple square-increasing [I]RLs.
Then the variety\/
$\mathbb{V}(\mathsf{K})$ is\/ \textup{semisimple,} i.e., its SI members are simple
algebras.  In fact, its SI members are just the nontrivial algebras in\/
$\mathbb{ISP}_\mathbb{U}(\mathsf{K})$\textup{.}\,\footnote{\,Actually, $\mathbb{V}(\mathsf{K})$ is a discriminator variety,
so it consists of Boolean products of simple algebras, in the sense of \cite[Sec.\,IV.8--9]{BS81}.
% in the sense of \cite[Sec.~IV.9]{BS81}.
This stronger conclusion will not be needed here, but it follows
because the discriminator varieties are just the congruence permutable semisimple varieties
with equationally definable principal congruences (EDPC) \cite{BKP84,FK83}, and because
square-increasing [I]RLs have EDPC \cite[Thm.~3.55]{GJKO07}.\label{discriminator}}
%%
%%  The theorem number 3.55 needs to be re-checked.  It came from a preprint of the book.
%%
\end{cor}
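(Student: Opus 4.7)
The plan is to apply J\'onsson's Theorem in tandem with the first-order characterisation of simplicity supplied by Lemma~\ref{fsi si simple}(\ref{simple}), and then to verify that each of the operators $\mathbb{H}$, $\mathbb{S}$, $\mathbb{P}_\mathbb{U}$ preserves the class of ``simple or trivial'' square-increasing [I]RLs.  Since the [I]RLs form a congruence distributive variety, J\'onsson yields $\mathbb{V}(\mathsf{K})_\textup{FSI}\subseteq\mathbb{HSP}_\mathbb{U}(\mathsf{K})$, so it suffices to show that every member of $\mathbb{HSP}_\mathbb{U}(\mathsf{K})$ is simple or trivial; because FSI algebras are nontrivial, this forces them to be simple and yields the stated semisimplicity.

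For the ultraproduct step I would invoke Lemma~\ref{fsi si simple}(\ref{simple}): a square-increasing [I]RL is simple iff $e$ has exactly one strict lower bound, a condition expressible by a single first-order sentence.  Every member of $\mathsf{K}$ satisfies this sentence together with the [I]RL axioms and the square-increasing law, so by \L{}o\'s's theorem $\mathbb{P}_\mathbb{U}(\mathsf{K})$ still consists of simple algebras.  For $\mathbb{S}$, let $\sbS$ be a nontrivial subalgebra of a simple $\sbB\in\mathbb{P}_\mathbb{U}(\mathsf{K})$ and let $\theta$ be a non-identity congruence on $\sbS$; using the CEP for [I]RLs, $\theta$ extends to a non-identity congruence on $\sbB$, which must be universal by simplicity, so restricting back shows $\theta$ universal on $\sbS$.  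Finally $\mathbb{H}$ preserves ``simple or trivial'' because the kernel of any homomorphism out of a simple algebra is either the identity (yielding an isomorphic, hence simple, image) or the universal congruence (yielding a trivial one).

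The ``in fact'' clause will then follow by unwinding these inclusions.  An SI member $\sbA$ of $\mathbb{V}(\mathsf{K})$ is FSI, hence simple by the preceding, and arises as $h(\sbS)$ for some subalgebra $\sbS$ of some $\sbB\in\mathbb{P}_\mathbb{U}(\mathsf{K})$; since $\sbA$ is nontrivial and simple, $\ker h$ must be the identity, so $\sbA\cong\sbS\in\mathbb{SP}_\mathbb{U}(\mathsf{K})$.  Conversely, any nontrivial member of $\mathbb{ISP}_\mathbb{U}(\mathsf{K})$ is (up to isomorphism) a subalgebra of an ultraproduct in $\mathbb{P}_\mathbb{U}(\mathsf{K})$, hence simple, and therefore SI.  The only mildly delicate point is the first-order encoding of simplicity; once Lemma~\ref{fsi si simple}(\ref{simple}) is in hand, it carries simplicity through $\mathbb{P}_\mathbb{U}$, and the CEP then handles $\mathbb{S}$ cleanly while $\mathbb{H}$ is immediate.
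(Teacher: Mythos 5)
Your proposal is correct and follows the paper's own route exactly: J\'onsson's Theorem to land in $\mathbb{HSP}_\mathbb{U}(\mathsf{K})$, \L{}o\'s's theorem applied to the first-order simplicity criterion of Lemma~\ref{fsi si simple}(\ref{simple}) for the $\mathbb{P}_\mathbb{U}$ step, and the CEP for the $\mathbb{S}$ step. The paper's proof is just a more compressed version of the same argument, leaving the $\mathbb{H}$ step and the ``in fact'' clause implicit where you spell them out.
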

\begin{proof}
By J\'{o}nsson's Theorem, the SI members of $\mathbb{V}(\mathsf{K})$ belong to
$\mathbb{HSP}_\mathbb{U}(\mathsf{K})$, but the criterion for simplicity in Lemma~\ref{fsi si simple}(\ref{simple})
is first order-definable and therefore persists in ultraproducts (by \L os' Theorem \cite[Thm.~V.2.9]{BS81}),
while the CEP ensures that nontrivial subalgebras of simple algebras are simple.
\end{proof}

An element $a$ of an [I]RL $\sbA$ is said to be {\em idempotent\/} if $a^2=a$.
We say that $\sbA$ is {\em idempotent\/} if all of its elements are.

An IRL
%, and that $\sbA$
is said to be \emph{anti-idempotent} if it is square-increasing
and satisfies $x\leqslant f^2$ (or equivalently, $\neg(f^2)\leqslant x$).  This terminology is justified by
%e rationale for this name is
Theorem~\ref{combined}(\ref{sug cor}), which implies that a square-increasing IRL $\sbA$
is anti-idempotent iff $\mathbb{V}(\sbA)$ has no nontrivial idempotent member.
% below.
%(Here, an algebra
%is said to be \emph{trivial} if its universe has just one element;
%% is said to be \emph{trivial};
%a class of algebras is \emph{nontrivial} if it has a nontrivial member.)

%\enlargethispage{12pt}

\begin{thm}\label{combined}\
\begin{enumerate}
\item
\label{idempotence f and e}
%In a
A square-increasing IRL is idempotent iff it satisfies\/
%$\sbA$\textup{,} the following are equivalent.
%\begin{enumerate}
%\item\label{f^2=f}
%$f^2=f$\textup{.}
%\item\label{f<=e}
$f\leqslant e$\textup{,} iff it satisfies\/ $f^2=f$\textup{.}
%\item\label{a is idempotent}
%$\sbA$ is idempotent.
%\end{enumerate}
Consequently:

\item\label{idempotence f and e 2}
A square-increasing non-idempotent IRL has no idempotent subalgebra (and in particular, no trivial subalgebra).

\item\label{sug cor}
A variety of square-increasing IRLs has no nontrivial idempotent member iff
it satisfies\/ $x\leqslant f^2$ (i.e., it consists of anti-idempotent algebras).

\item\label{anti-idempotent 2}
In a simple anti-idempotent IRL\/ $\sbA$\textup{,} if\/ $e<a\in A$\textup{,} then\/ $a\bcdw f=f^2$\textup{.}
\end{enumerate}
\end{thm}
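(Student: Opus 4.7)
My plan is to prove the four parts in order, since each builds on the previous, with the main difficulty concentrated in the direction $f \leq e \Rightarrow$ idempotent in (i). For (i), I would run the cycle: idempotent $\Rightarrow f^2 = f$ is trivial; for $f^2 = f \Rightarrow f \leq e$, I use $\neg(f \bcdw f) = f \rig \neg f = f \rig e$ from (\ref{neg properties}) to conclude $f \rig e = \neg f = e$, whence $f = f \bcdw e \leq e$ by (\ref{residuation}). The crucial step is $f \leq e \Rightarrow$ idempotent, for which I first observe $f^2 = f$ (from $f \leq f^2 \leq f \bcdw e = f$), then for any $x$ apply square-increasing to $\neg x$ and isotonicity of $\bcdw$:
\[
x^2 \bcdw \neg x \;\leq\; x^2 \bcdw (\neg x)^2 \;=\; (x \bcdw \neg x)^2 \;\leq\; f^2 = f,
\]
using (\ref{neg properties 0}) in the last step; residuation then gives $x^2 \leq \neg x \rig f = \neg \neg x = x$, which combined with $x \leq x^2$ yields $x^2 = x$.

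Part (ii) follows immediately from (i): an idempotent subalgebra $\sbB \leq \sbA$ satisfies $f \leq e$, and since $e$ (hence $f = \neg e$) is a constant preserved by subalgebra embeddings, $\sbA$ itself satisfies $f \leq e$ and is therefore idempotent by (i). Trivial subalgebras, being automatically idempotent, are thereby also excluded. For the easier direction $\Leftarrow$ of (iii), if $\sbA \in \mathsf{V}$ is idempotent with $\mathsf{V} \models x \leq f^2$, then (i) yields $f^2 = f$, so $x \leq f$ and dually $e = \neg f \leq x$ for every $x$; combining with $f \leq e$, every element equals $e$, forcing $\sbA$ trivial.

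For the harder direction of (iii), I will reduce to finitely generated FSI $\sbA \in \mathsf{V}$ (bounded by the excerpt's cited fact). The congruence $\theta$ generated by $(f, e)$ produces an idempotent quotient, which by hypothesis must be trivial; so $\theta$ is total. Invoking the standard congruence-filter correspondence for commutative RLs and using $e \rig f = f$ from (\ref{t laws}), this translates to $(f \rig e) \wedge f = \bot$. Applying $\neg$ (De Morgan, by distributivity inherited from the De Morgan monoid context) yields $f^2 \vee e = \top$. In FSI $\sbA$, Lemma~\ref{fsi si simple}(\ref{prime}) makes $e$ join-prime; the involutive dual makes $f$ meet-prime, and applying this to $(f \rig e) \wedge f = \bot \leq f$ gives $f \rig e \leq f$, i.e., $e \leq f^2$ after $\neg$. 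Consequently $\top = f^2 \vee e \leq f^2$, so $f^2 = \top$ and $x \leq f^2$ for all $x$.

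Finally, for (iv), Lemma~\ref{fsi si simple}(\ref{simple}) gives $e$ a unique strict lower bound $\bot$ in the simple algebra $\sbA$; by involution, $f$ has a unique strict upper bound $\top$. Given $e < a$, isotonicity yields $af \geq ef = f$, while (\ref{neg properties}) gives $\neg(af) = a \rig e$; since $e \leq a \rig e$ iff $a \leq e$ (by (\ref{t order})), which fails for $a > e$, we have $a \rig e \neq e$ and hence $af \neq f$. Thus $af > f$, forcing $af = \top$ by the uniqueness of $\top$ as the only strict upper bound of $f$, and anti-idempotence gives $\top = f^2$, so $af = f^2$. The main obstacle is the $f \leq e \Rightarrow$ idempotent step in (i): the decisive move of applying square-increasing to $\neg x$ (rather than $x$) to force $(x \bcdw \neg x)^2 \leq f^2 = f$ is where the proof really hinges; part (iii)'s forward direction is also delicate, relying on distributivity and the filter-congruence correspondence.
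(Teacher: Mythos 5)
The paper proves only part (\ref{anti-idempotent 2}) here, citing \cite{MRW} for parts (\ref{idempotence f and e})--(\ref{sug cor}). Your arguments for (\ref{idempotence f and e}), (\ref{idempotence f and e 2}) and (\ref{anti-idempotent 2}) are correct, and your proof of (\ref{anti-idempotent 2}) is essentially the paper's own (both exhibit $a\bcdw f$ as a strict upper bound of $f$ and invoke the uniqueness of such a bound in a simple square-increasing IRL, identifying it with $f^2$ via anti-idempotence). The genuine gap is in the forward direction of (\ref{sug cor}). You correctly derive $(f\rig e)\wedge f=\bot$, i.e.\ $\neg(f^2)\wedge f=\bot$, equivalently $f^2\vee e=\top$, from the triviality of the quotient by the congruence generated by $(e,f)$. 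But the passage from there to $f^2=\top$ fails twice over. First, Lemma~\ref{fsi si simple}(\ref{prime}) yields join-primality of $e$ (hence meet-primality of $f$) only for \emph{distributive} algebras, whereas the theorem concerns arbitrary square-increasing IRLs, so meet-primality is simply not available (De Morgan's laws for $\neg$ are fine without distributivity, but primality is not). Second, even granting meet-primality of $f$, applying it to $(f\rig e)\wedge f\leqslant f$ produces the disjunction ``$f\rig e\leqslant f$ or $f\leqslant f$'', whose second disjunct is trivially true, so nothing follows. No order-theoretic argument can bridge this step: in the two-element Boolean algebra (which is simple, hence FSI) one has $f^2\vee e=e=\top$ yet $f^2=f=\bot$, so ``$f^2\vee e=\top$ implies $f^2=\top$'' is false for FSI square-increasing IRLs in general. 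The hypothesis that the variety has no nontrivial idempotent member must be invoked a second time at exactly this point, and your argument no longer uses it.

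The repair is to quotient by a different (larger) deductive filter. What is needed is $\neg(f^2)\wedge e=\bot$ rather than $\neg(f^2)\wedge f=\bot$: since $f\leqslant f^2$ gives $\neg(f^2)\leqslant\neg f=e$, the former says exactly that $\neg(f^2)=\bot$, i.e.\ $f^2=\top$. So for $\sbA$ in the variety, consider the lattice filter $[\neg(f^2))$; it contains $e$, hence is a deductive filter. In $\sbA/[\neg(f^2))$ one has $e\leqslant\neg(f^2)$ (because $e\rig\neg(f^2)=\neg(f^2)$ lies in the filter, by (\ref{t laws})), whence $f^2\leqslant f\leqslant f^2$, so the quotient is idempotent by part (\ref{idempotence f and e}) and therefore trivial by hypothesis. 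Thus $[\neg(f^2))=A$, which says precisely that $\neg(f^2)\leqslant x$, i.e.\ $x\leqslant f^2$, holds throughout $\sbA$. This version needs neither boundedness, nor FSI-ness, nor distributivity.
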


%\begin{thm}\label{anti-idempotent}\
%\begin{enumerate}
%\textup{.}
%%In particular, $\mathsf{K}$ consists of bounded algebras.

%\smallskip

%\end{thm}
\begin{proof}
(\ref{idempotence f and e})--(\ref{sug cor}) were proved in \cite[Thm.~3.3 and Cor.~3.6]{MRW}.

(\ref{anti-idempotent 2}) \,Let $e<a\in A$.  By (\ref{isotone}), $f=e\bcdw f\leqslant a\bcdw f$, but by (\ref{involution-fusion law}), $a\bcdw f\nleqslant f$
(since $a\bcdw e\nleqslant e$), so $f<a\bcdw f$.  As $\sbA$ is simple and square-increasing,
Lemma~\ref{fsi si simple}(\ref{simple}) and involution properties show that
$f$ has just one strict upper bound in $\sbA$, which must be $f^2$, by anti-idempotence.  Thus, $a\bcdw f=f^2$.
\end{proof}

%An RL $\sbA$ is said to be \emph{distributive} [resp.\ \emph{modular}] if its reduct $\langle A;\wedge,\vee\rangle$ is a
%distributive [resp.\ modular] lattice.

%
\begin{defn}\label{de morgan monoid definition}
\textup{A {\em De Morgan monoid\/} is a distributive square-increasing IRL.
The variety of De Morgan monoids shall be denoted by $\mathsf{DMM}$.}
\end{defn}

%A \emph{De Morgan monoid}
%%[resp.\ a \emph{Dunn monoid}]
%is a distributive square-increasing IRL.
%% [resp.\ RL].
A De Morgan monoid satisfies $x\leqslant e$ iff it is a Boolean algebra (in which the operation $\wedge$ is duplicated
by fusion).

In a partially ordered set $\langle P;\leqslant\rangle$,
%with elements $a\leqslant b$,
we denote by $[a)$ the set of all upper bounds of an element $a$
(including $a$ itself), and by $(a]$ the set of all lower bounds.  If $a\leqslant b\in P$,
we use $[a,b]$ to denote the interval $\{c\in P:a\leqslant c\leqslant b\}$.  If $a<b$ and
$[a,b]=\{a,b\}$, we say that $b$ \emph{covers} (or is a \emph{cover} of) $a$.

%In the next result, item~(\ref{splitting}) is partly a consequence of (\ref{excluded middle}); see \cite[Thm.~5.1]{MRW} for a proof.

\begin{thm}\label{dmm fsi}
Let $\sbA$ be a De Morgan monoid that is FSI.  Then
\begin{enumerate}
\item\label{splitting}
%, with\/ $a\in A$\textup{.}  Then\/ $e\leqslant a$ or\/ $a\leqslant f$\textup{.}  Thus,
$A=[e)\cup(f\,]$\textup{;}

\item\label{dm fsi rigorously compact}
if $\sbA$ is bounded, then it is rigorously compact.  Consequently,

\item\label{rigorously compact generation}
every finitely generated subalgebra of $\sbA$ is rigorously compact.
\end{enumerate}
\end{thm}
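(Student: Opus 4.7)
For (i), I appeal to Lemma~\ref{fsi si simple}(\ref{prime}), which says that distributivity together with the FSI hypothesis makes $e$ join-prime in $\sbA$. The square-increasing law (\ref{excluded middle}), namely $e\leqslant x\vee\neg x$, then yields, for every $a\in A$, that $e\leqslant a$ or $e\leqslant\neg a$; the second alternative is just $a\leqslant\neg e=f$. Hence $A\subseteq[e)\cup(f\,]$, and the reverse inclusion is trivial.

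For (ii), assume $\sbA$ is bounded with extrema $\bot,\top$, and fix $a\in A\setminus\{\bot\}$. Set $u\seteq\top\bcdw a$. Since $\top\geqslant e$, the isotonicity laws (\ref{isotone}) give $u\geqslant e\bcdw a=a>\bot$. Using (\ref{neg properties}), I rewrite $\neg u=\top\rig\neg a=a\rig\bot$. If $\neg u\geqslant e$ held, residuation would yield $e\bcdw a\leqslant\bot$, whence $a=\bot$, a contradiction; so by (i) applied to $\neg u$, we must have $\neg u\leqslant f$, i.e.\ $u\geqslant e$. It follows that $\top\bcdw u\geqslant\top\bcdw e=\top$, so $\top\bcdw u=\top$. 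On the other hand, $\top^{2}=\top$ by the square-increasing law and the maximality of $\top$, so $\top\bcdw u=(\top\bcdw\top)\bcdw a=\top\bcdw a=u$. The two expressions for $\top\bcdw u$ then give $u=\top$, i.e.\ $\top\bcdw a=\top$, which is rigorous compactness.

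For (iii), let $\sbB$ be a finitely generated subalgebra of $\sbA$. Then $\sbB$ is a De Morgan monoid, bounded because every finitely generated square-increasing IRL is bounded, as recalled earlier. By Lemma~\ref{fsi si simple}(\ref{fsi}), the join-irreducibility of $e$ in $\sbA$ descends to $\sbB$, so $\sbB$ is FSI; applying (ii) to $\sbB$ completes the proof.

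The main subtlety lies in (ii): one has to spot that $u=\top\bcdw a$ is a fixed point of multiplication by $\top$ (since $\top^{2}=\top$), and that (i) simultaneously forces $u\geqslant e$ whenever $a>\bot$. Once both observations are in hand, the equality $u=\top$ is automatic, and the rest of the argument is routine book-keeping with (\ref{neg properties}) and residuation.
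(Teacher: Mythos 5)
Your proof is correct. The paper itself gives no argument for this theorem (it is one of the results deferred to \cite{MRW}), so there is no in-paper proof to compare against; but your derivation is sound and entirely self-contained from the lemmas quoted in Section~2. Part (i) is exactly the expected route: distributivity plus FSI gives join-primeness of $e$ via Lemma~\ref{fsi si simple}(\ref{prime}), and (\ref{excluded middle}) then splits $A$ as $[e)\cup(f\,]$. In (ii), every step checks out: $\neg(\top\bcdw a)=a\rig\bot$ by (\ref{neg properties}), the case $\neg u\geqslant e$ is excluded by residuation, so (i) forces $u=\top\bcdw a\geqslant e$, and combining $\top\bcdw u=\top$ with $\top\bcdw u=(\top\bcdw\top)\bcdw a=u$ yields $u=\top$. (Note that $\top\bcdw\top=\top$ already holds in any bounded RL by (\ref{bounds}), so the appeal to the square-increasing law there is harmless but not needed.) Part (iii) follows as you say from boundedness of finitely generated square-increasing IRLs together with the heredity of FSI recorded in Lemma~\ref{fsi si simple}(\ref{fsi}).
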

\begin{proof}
See \cite[Sec.~5]{MRW} and its references to antecedents.
\end{proof}

%\begin{cor}\label{subcover}
%Let\/ $\sbA$ be a De Morgan monoid that is SI.  Let\/ $c$ be the largest element of\/ $\sbA$
%strictly below\/ $e$\/ \,\textup{(}which exists, by Lemma~\textup{\ref{fsi si simple}(\ref{si})).}  Then $c\leqslant f$.
%\end{cor}

%\begin{thm}\label{dm fsi rigorously compact}
%Every bounded FSI \,De Morgan monoid is rigorously
%compact.
%%\/ \textup{(}see Definition~\textup{\ref{rigorously compact definition}).}
%Thus, every finitely generated subalgebra of an FSI \,De Morgan monoid is rigorously
%compact.
%\end{thm}

%Recall that a {\em quasivariety\/} is
%%a class of similar algebras closed under isomorphic images, subalgebras, direct products and ultraproducts.  Equivalently, it is
%the model class of some set of pure quasi-equations
%%\[
%\,$(\al_1=\be_1\;\,\&\,\;\dots\;\,\&\,\;\al_n=\be_n)\;\Longrightarrow\;\al=\be$ \,($n\in\omega$)\,
%%\left({\textup{\large \&}}_{\,i<n\,}\,\ga_i=\de_i\right)\Longrightarrow\ga=\de
%%\]
%in an algebraic signature.
%
%; see \cite[Thm.\,V.2.25]{BS81}.
%Here $n\in\omega$, i.e., quasi-equations have finite length
%%(i.e., $n\in\omega$)
%and encompass equations.  Although a quasivariety need not be homomorphically closed (i.e., it need not be a variety), it must
%contain a trivial algebra, viz.\ the direct product of its empty subfamily.

A \emph{Sugihara monoid} is an idempotent De Morgan monoid; see \cite{AB75,Dun70,GR12,GR15,OR07}.
The variety $\mathsf{SM}$ of all Sugihara monoids coincides with $\mathbb{V}(\sbS^*)$ for the algebra
\[
\sbS^*=\langle\{a:0\neq a\in\mathbb{Z}\};\bcdw,\wedge,\vee,\neg,1\rangle
\]
%wherein $\wedge,\vee,\neg$ denote the operations of minimum, maximum and additive inversion (respectively), while
on the set of all nonzero integers, where the lattice order is the usual total
order, the involution is the usual additive inversion and
%the term function of $\left|x\right|\seteq x\rig x$ is the natural absolute value function.  In this algebra,
\[
a\bcdw b \; = \; \left\{ \begin{array}{ll}
                           \textup{the element of $\{a,b\}$ with the greater absolute value, \,if $\left|a\right| \neq \left|b\right|$;}\\
                           a\wedge b  \textup{ \,if $\left|a\right| = \left|b\right|$.}
                                               \end{array}
                   \right.
\]
%the residual operation $\rig$ is given by
%\begin{equation*}
%a \rig b \; = \; \left\{ \begin{array}{ll}
%                           (-a) \vee b   & \mbox{if\, $a \leqslant b$;} \\
%                           (-a) \wedge b & \mbox{if\, $a \not\leqslant b$.}
%                           \end{array}
%                   \right.
%\end{equation*}

An IRL is said to be \emph{odd}
if it satisfies $f=e$.  By Theorem~\ref{combined}(\ref{idempotence f and e}), every odd De Morgan monoid is
a Sugihara monoid.  In the odd Sugihara monoid $\sbS=\langle\mathbb{Z};\bcdw,\wedge,\vee,-,0\rangle$
on the set of {\em all\/} integers, the operations are defined like those of $\sbS^*$, except that
$0$ takes over from $1$ as the neutral element for $\bcdw$.
%Both $e$ and $f$ are $0$ in $\sbS$, so $\sbS$ is odd.
%It follows from Theorem~\ref{odd cor} and Dunn's results in \cite{AB75} that t
The variety of all odd Sugihara monoids is $\mathbb{Q}(\sbS)$, whereas $\mathsf{SM}=\mathbb{Q}(\sbS,\sbS^*)$.
%the smallest quasivariety containing $\sbS$, whereas
%%and that
%{\sf SM} is the smallest quasivariety containing both $\sbS^*$ and $\sbS$.

For each positive integer $n$, let $\sbS_{2n}$ denote the subalgebra of $\sbS^*$ with universe $\{-n,\dots,-1,1,\dots,n\}$ and,
for $n\in\omega$, let $\sbS_{2n+1}$ be the subalgebra of $\sbS$ with universe $\{-n,\dots,-1,0,1,\dots,n\}$.
Note that $\sbS_2$ is a Boolean algebra.
%The results cited above yield:
%\begin{thm}\label{sug thm}
Up to isomorphism,
the algebras $\sbS_n$ \textup{($1<n\in\omega$)} are precisely the finitely generated SI\, Sugihara monoids, whence the algebras $\sbS_{2n+1}$ $\textup{($0<n\in\omega$)}$
are just the finitely generated SI odd Sugihara monoids.
%\end{thm}
%We cannot embed $\sbS$ (nor even $\sbS_{2n+1}$) into $\sbS^*$, owing to the involution.  Nevertheless, $\sbS$
%is a homomorphic image of $\sbS^*$, and $\sbS_{2n+1}$ is a homomorphic image of $\sbS_{2n+2}$, for all $n\in\omega$.  In each
%case, the kernel of the homomorphism identifies $-1$ with $1$; it identifies no other pair of distinct elements.  Also,
%$\sbS_{2n-1}$ is a homomorphic image of $\sbS_{2n+1}$ if $n>0$; in this case the kernel collapses $-1,0,1$ to a point,
%while isolating all other elements.  Thus,
The algebra $\sbS_3$ is a homomorphic image of $\sbS_n$ for all integers $n\geq 3$.
%In particular, e
Thus, every nontrivial variety of Sugihara monoids includes $\sbS_2$ or $\sbS_3$.
\begin{thm}\label{odd sm}
%\
\textup{(\cite{OR07,GR12})}
%\,
\begin{enumerate}
\item\label{osm}
%\textup{(\cite{OR07,GR12})} \,
Every quasivariety of odd Sugihara monoids is a variety.
\item
%\begin{cor}
\label{osm varieties}
The lattice of varieties of odd Sugihara monoids is the
%following
chain
%of order type\/ $\omega+1$\textup{\,:}
\[
\mathbb{V}(\sbS_1)\subsetneq\mathbb{V}(\sbS_3)\subsetneq\mathbb{V}(\sbS_5)\subsetneq\,\dots\,\subsetneq\mathbb{V}(\sbS_{2n+1})\subsetneq\,\dots\,\subsetneq\mathbb{V}(\sbS).
\]
\end{enumerate}
\end{thm}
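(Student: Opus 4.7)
The plan is to dispatch part~(\ref{osm}) by citing the references \cite{OR07,GR12} directly, and to focus on part~(\ref{osm varieties}). For the latter I would proceed in three stages: first establish the displayed chain of inclusions, then prove each inclusion is strict, and finally argue that the chain exhausts all varieties of odd Sugihara monoids.

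The first two stages are comparatively routine. Each $\sbS_{2n+1}$ is clearly a subalgebra of $\sbS_{2n+3}$ and of $\sbS$, whence the inclusions. Strictness of $\mathbb{V}(\sbS_{2n+1})\subsetneq\mathbb{V}(\sbS_{2n+3})$ follows from J\'{o}nsson's Theorem in the congruence-distributive setting: every FSI member of $\mathbb{V}(\sbS_{2n+1})$ lies in $\mathbb{HS}(\sbS_{2n+1})$ and so has at most $2n+1$ elements, which excludes the FSI algebra $\sbS_{2n+3}$. Strictness of the last inclusion then follows from the fact that $\sbS$ contains every $\sbS_{2n+3}$ as a subalgebra.

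For the third stage I would take a nontrivial variety $\mathsf{V}$ of odd Sugihara monoids and invoke Birkhoff's Subdirect Decomposition Theorem: $\mathsf{V}$ is generated by its finitely generated SI members, and these---by the description of the finitely generated SI odd Sugihara monoids recalled just above the theorem---form a non-empty subfamily $\{\sbS_{2n+1}:n\in N\}$ for some non-empty $N\subseteq\{1,2,\ldots\}$. If $N$ has a largest element $n_0$, then every smaller $\sbS_{2m+1}$ embeds into $\sbS_{2n_0+1}$, and hence $\mathsf{V}=\mathbb{V}(\sbS_{2n_0+1})$, which appears in the chain.

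The main obstacle is the case when $N$ is unbounded, where I need to show $\sbS\in\mathsf{V}$. My plan is to fix a non-principal ultrafilter $\mathcal{U}$ on $N$, form the ultraproduct $\sbT=\prod_{n\in N}\sbS_{2n+1}/\mathcal{U}$, which belongs to $\mathsf{V}$ because varieties are closed under $\mathbb{P}_\mathbb{U}$, and exhibit an embedding $\phi\colon\sbS\hookrightarrow\sbT$ given by $\phi(k)=[(a_n)_{n\in N}]_\mathcal{U}$, where $a_n=k$ whenever $n\geq|k|$ (and is chosen arbitrarily otherwise). Distinct integers differ on a cofinite, hence $\mathcal{U}$-large, subset of $N$, so $\phi$ is injective; and for any fixed $k,\ell\in\mathbb{Z}$ the operations of $\sbS_{2n+1}$ applied to $k,\ell$ coincide with those of $\sbS$ once $n\geq\max(|k|,|\ell|)$, another cofinite condition. \L os's Theorem then implies that $\phi$ preserves all operations, so $\sbS\in\mathsf{V}$, forcing $\mathsf{V}=\mathbb{V}(\sbS)$ and completing the classification.
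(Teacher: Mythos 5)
The paper offers no proof of this theorem: part~(i) is explicitly credited to \cite{OR07,GR12}, and part~(ii) is one of the recalled facts that, per the preamble to Section~2, are referenced or proved in \cite{MRW}. So there is no in-paper argument to compare against; what can be said is that your reconstruction of part~(ii) is correct and self-contained modulo the background facts the paper itself recalls (congruence distributivity of IRLs, J\'{o}nsson's Theorem, the classification of the finitely generated SI odd Sugihara monoids as the $\sbS_{2n+1}$, and the fact that $\mathbb{V}(\sbS)$ is the whole variety of odd Sugihara monoids, which you need to close the unbounded case). The three stages all check out: the inclusions come from $\sbS_{2m+1}\in\mathbb{S}(\sbS_{2n+1})$ for $m\leq n$; strictness follows since $\mathbb{V}(\sbS_{2n+1})_{\textup{FSI}}\subseteq\mathbb{HS}(\sbS_{2n+1})$ bounds the size of FSI members by $2n+1$, excluding $\sbS_{2n+3}$; and in the unbounded case your coordinatewise embedding of $\sbS$ into a nonprincipal ultraproduct of the $\sbS_{2n+1}$ is sound, since for fixed integers the operations of $\sbS_{2n+1}$ and $\sbS$ agree once $n$ exceeds the relevant absolute values, so the required agreements hold on cofinite, hence $\mathcal{U}$-large, sets. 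One could streamline the last step by noting directly that the two tuples representing $\phi(k\s\ell)$ and $\phi(k)\s\phi(\ell)$ agree $\mathcal{U}$-almost everywhere, so the full strength of \L os's Theorem is not needed, but invoking it is harmless.
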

%\begin{proof}
%See \cite{OR07}, \cite[Sec.~7]{GR12} and their references to antecedents of (\ref{osm varieties}).
%%\cite[Sec.~29.4]{AB75} or \cite[Fact~7.6]{GR12}.
%\end{proof}
%\begin{thm}\label{osm}
%Every quasivariety of odd Sugihara monoids is a variety.
%\end{thm}
%\begin{proof}
%See \cite[Thm.~7.3]{GR12}.  (For a stronger result, see \cite[Thm.~9.4]{OR07}.)
%\end{proof}

%\begin{cor}\label{rigorously compact generation}
%If a De Morgan monoid is FSI, then its finitely generated subalgebras are rigorously compact.
%%Thus, every variety of De Morgan monoids is generated by its rigorously compact finitely generated members.
%\end{cor}

An algebra is said to be $n$--\emph{generated} (where $n$ is a cardinal) if it has a generating subset with at most $n$ elements.
Thus, an IRL is $0$--generated iff it has no proper subalgebra.
We depict below the two-element Boolean algebra $\mathbf{2}$ ($=\sbS_2$), the three-element Sugihara monoid $\sbS_3$, and two %$0$--generated
four-element
De Morgan monoids, $\sbC_4$ and $\sbD_4$.
In each case, the labeled Hasse diagram determines the structure.
%, in view of Lemma~\ref{bounds}, Theorem~\ref{dm fsi rigorously compact} and the definitions.
%That $\sbC_4$ and $\sbD_4$ are indeed De Morgan monoids was noted long ago in the relevance logic literature,
%e.g., \cite{Mey83,Mey86}.
%As all four algebras are
%simple
%, by Lemma~\ref{fsi si simple}(\ref{simple}),
%and
%rigorously compact,
%by Theorem~\ref{dmm fsi}(\ref{dm fsi rigorously compact}),
%the labeled Hasse diagrams determine their structure.

{\tiny

\thicklines
\begin{center}
\begin{picture}(80,60)(-28,51)

\put(-105,63){\line(0,1){30}}
\put(-105,63){\circle*{4}}
\put(-105,93){\circle*{4}}

\put(-101,91){\small $e$}
\put(-101,60){\small $f$}

\put(-122,80){\small $\mathbf{{2}\colon}$}

%
%%%%%
%

\put(-50,78){\circle*{4}}
\put(-50,63){\line(0,1){30}}
\put(-50,63){\circle*{4}}
\put(-50,93){\circle*{4}}

\put(-46,91){\small ${\top}$}
\put(-45,76){\small ${e}=f$}
\put(-45,61){\small ${\bot}$}

\put(-75,80){\small ${\sbS_3}\colon$}

%
%%%%%%
%

\put(30,59){\circle*{4}}
\put(30,59){\line(0,1){39}}
\put(30,72){\circle*{4}}
\put(30,85){\circle*{4}}
\put(30,98){\circle*{4}}

\put(35,96){\small ${f^2}$}
\put(35,82){\small $f$}
\put(35,69){\small ${e}$}
\put(35,56){\small $\neg(f^2)$}

\put(2,80){\small ${\sbC_4}\colon$}

%
%%%%%%
%

\put(120,65){\circle*{4}}
\put(135,80){\line(-1,-1){15}}
\put(135,80){\circle*{4}}
\put(105,80){\line(1,-1){15}}
\put(105,80){\circle*{4}}
\put(105,80){\line(1,1){15}}
\put(120,95){\circle*{4}}
\put(135,80){\line(-1,1){15}}

\put(122,99){\small ${f^2}$}
\put(95,78){\small ${e}$}
\put(140,78){\small $f$}
\put(118,55){\small $\neg(f^2)$}

\put(69,80){\small ${\sbD_4}\colon$}

%%%%%%

\end{picture}\nopagebreak
\end{center}

}

\begin{thm}\label{0 gen simples}
A De Morgan monoid is simple and\/ $0$--generated iff it is isomorphic to\/ $\mathbf{2}$ or to\/
$\sbC_4$ or to\/ $\sbD_4$\textup{.}
%Let\/ $\sbA$ be a simple\/ $0$--generated De Morgan monoid.  Then\/ $\sbA\cong\mathbf{2}$ or\/ $\sbA\cong\sbC_4$ or\/ $\sbA\cong\sbD_4$\textup{.}
\end{thm}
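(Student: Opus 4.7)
The plan is to prove the equivalence in two directions. For sufficiency, in each of $\mathbf{2}$, $\sbC_4$, $\sbD_4$, the element $e$ has exactly one strict lower bound---namely $f$, $\neg(f^2)$ and $\neg(f^2)$, respectively---so Lemma~\ref{fsi si simple}(\ref{simple}) delivers simplicity. Starting from the constant $e$, the operation $\neg$ produces $f$, the operation $\bcdw$ produces $f^2$, and $\neg$ then produces $\neg(f^2)$; the Hasse diagrams confirm that $\wedge$ and $\vee$ introduce nothing new, so each of these algebras is $0$-generated.

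For necessity, I would let $\sbA$ be simple and $0$-generated in $\mathsf{DMM}$. Since $\sbA$ is FSI and finitely generated, Theorem~\ref{dmm fsi}(\ref{rigorously compact generation}) makes it rigorously compact with extrema $\bot,\top$, and $A = [e)\cup(f]$ by Theorem~\ref{dmm fsi}(\ref{splitting}). Simplicity combined with Lemma~\ref{fsi si simple}(\ref{simple}) forces $e$ to have a unique strict lower bound, necessarily $\bot$, so applying the involution, $\top$ is the unique strict upper bound of $f$. I then split on the relative position of $f$ and $e$. If $f < e$, then $f$ itself is a strict lower bound of $e$, so $f = \bot$ and dually $e = \top$; the splitting collapses to $A = \{e,f\} \cong \mathbf{2}$. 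If $f = e$, the splitting collapses to $A = \{\bot, e, \top\} \cong \sbS_3$, but in $\sbS_3$ the constant $e$ generates only $\{e\}$, contradicting $0$-generation, so this case is impossible. Otherwise $f \not\le e$, making $\sbA$ non-idempotent, so $f \ne f^2$ by Theorem~\ref{combined}(\ref{idempotence f and e}); square-increase then yields $f^2 > f$, and the uniqueness of $\top$ as the sole strict upper bound of $f$ forces $f^2 = \top$ and dually $\neg(f^2) = \bot$. Consequently the subalgebra generated by the constant $e$ contains $\{\bot, e, f, \top\}$, and rigorous compactness (for products involving $\bot$ or $\top$) together with the unit laws shows that this set is closed under every operation, so $\sbA = \{\bot, e, f, \top\}$. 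If $e < f$ the result is the chain $\bot < e < f < \top$, i.e.\ $\sbC_4$; if $e$ and $f$ are incomparable then $e \wedge f$ is a strict lower bound of $e$, giving $e \wedge f = \bot$ and dually $e \vee f = \top$, yielding the diamond $\sbD_4$.

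The principal obstacle is ruling out stray elements between successive levels of the resulting four-element lattice. This is forced uniformly by $0$-generation: once simplicity and the involution have identified $\bot = \neg(f^2)$ and $\top = f^2$ as the unique immediate neighbours of $e$ and $f$, the subalgebra generated by the constant $e$ is visibly contained in $\{\bot, e, f, \top\}$, and hence so is $\sbA$.
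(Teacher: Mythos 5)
Your argument is correct. Note that the paper itself gives no proof of Theorem~\ref{0 gen simples}: it is one of the results recalled from the companion paper \cite{MRW}, so there is no in-text proof to compare against. Your derivation is a sound, self-contained reconstruction from the lemmas recalled in Section~2: sufficiency via Lemma~\ref{fsi si simple}(\ref{simple}) and inspection of the Hasse diagrams, and necessity by combining boundedness and rigorous compactness of finitely generated FSI De Morgan monoids (Theorem~\ref{dmm fsi}), the uniqueness of the strict lower bound of $e$ in a simple square-increasing IRL, the involution, and Theorem~\ref{combined}(\ref{idempotence f and e}) to locate $f^2$ at the top in the non-idempotent cases. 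The case analysis on the position of $f$ relative to $e$ is exhaustive, and the closure computation showing that $\{\neg(f^2),e,f,f^2\}$ is a subalgebra (hence all of $\sbA$, by $0$--generation) is the right way to exclude stray elements. One small remark: in the case $f=e$ you assert $A=\{\bot,e,\top\}\cong\sbS_3$ before deriving the contradiction; that intermediate identification is justifiable (every element is then comparable with $e=f$, which has a unique strict lower bound and a unique strict upper bound), but it is also unnecessary, since $f=e$ already makes $\{e\}$ a subalgebra, so $0$--generation forces $\sbA$ to be trivial, contradicting simplicity directly.
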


%Both $\mathbb{V}(\sbC_4)$ and $\mathbb{V}(\sbD_4)$ are categorically equivalent to the variety $\mathbb{V}(\mathbf{2})$
%of all Boolean algebras.
%Equivalently, $\sbC_4$ and $\sbD_4$ are primal algebras, as they generate arithmetical varieties and are finite, simple
%and lack proper
%subalgebras and nontrivial automorphisms; see \cite{FP64a} and \cite{Hu69}.
%In what follows, a number of features of $\sbC_4$ will be important.

%It was shown in \cite[Thm.~5.17, Remark~5.19]{MRW} that every FSI De Morgan monoid $\sbA$ is either
%(i)~a totally ordered Sugihara monoid in which $e$ covers $f$ (whence $\sbA$ embeds into an ultrapower of $\sbS^*$),
%or (ii)~a pre-image of a totally ordered odd Sugihara monoid $\sbA/\theta$, where the $e$--class of the congruence $\theta$
%is the anti-idempotent interval subalgebra $[\neg(f^2),f^2]$ of $\sbA$, and all other $\theta$--classes are singletons.
%Because the structure of Sugihara monoids is very transparent, this largely reduces the study of
%irreducible De Morgan monoids to the anti-idempotent case and, with Theorem~\ref{0 gen simples}, it yields:
%\begin{thm}
%%\textup{(\cite{MRW})}
%%\,
%A variety of De Morgan monoids consists of Sugihara monoids iff it excludes\/ $\sbC_4$ and\/ $\sbD_4$.
%\end{thm}

\begin{lem}\label{pre m}
Let\/ $\sbA$ be a nontrivial square-increasing IRL,
and\/ $\mathsf{K}$ a variety of square-increasing IRLs.
\begin{enumerate}
\item\label{pre m 1}
If\/ $\sbA$ is anti-idempotent, with
$e\leqslant f$\textup{,}
%and\/
%$\sbA$ is anti-idempotent,
%$a\leqslant f^2$ for all\/ $a\in A$\textup{,}
%and\/
%$\sbA$ satisfies\/
%$e\leqslant f$\textup{,}
then\/
%$x\leqslant f^2$\textup{,} then\/
$e<f$\textup{.}
%in\/ $\sbA$\textup{.}
%$\sbC_4$ can be embedded into\/ $\sbA$\textup{.}

%\smallskip

\item\label{pre m 0}
If\/ $e<f$ in\/ $\sbA$\textup{,} then\/ $\sbC_4$ can be embedded into\/
$\sbA$\textup{.}

%\smallskip

\item\label{pre m 3}
If\/ $\sbA$ is simple and\/ $\sbC_4$ or\/ $\sbD_4$ can be embedded into\/ $\sbA$\textup{,} then\/
$\sbA$ is anti-idem\-potent.

\item\label{pre m 2}
If\/ $\sbC_4$ can be embedded into every SI member of\/ $\mathsf{K}$\textup{,} then\/
$\mathsf{K}$ consists of anti-idempotent algebras and satisfies\/ $e\leqslant f$\textup{.}

\item\label{pre m 4}
If\/ $\sbD_4$ can be embedded into every SI member of\/ $\mathsf{K}$\textup{,} then\/
$\mathsf{K}$ consists of anti-idempotent algebras.
%and\/ $x\leqslant f^2$\textup{.}
\end{enumerate}
\end{lem}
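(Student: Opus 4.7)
For part (i), I would argue by contradiction: assume $e=f$, so $\sbA$ is odd. Theorem~\ref{combined}(\ref{idempotence f and e}) then forces $\sbA$ to be idempotent, so $f^{2}=f=e$, and anti-idempotence collapses to $x\leqslant e$ for every $x\in A$. Since $\neg$ reverses order and $\neg e = e$, applying $\neg$ to $x\leqslant e$ yields $e\leqslant \neg x$; combined with $\neg x\leqslant e$ (itself an instance of the same universal inequality), this gives $\neg x = e$, hence $x=e$, contradicting nontriviality.

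For part (ii), given $e<f$, the plan is to show that $S=\{\neg(f^{2}),e,f,f^{2}\}$ is a four-element chain closed under all the operations, hence a subalgebra isomorphic to $\sbC_{4}$. First, $f<f^{2}$ strictly: otherwise $f^{2}=f$ would imply $f\leqslant e$ by Theorem~\ref{combined}(\ref{idempotence f and e}), contradicting $e<f$; by involution $\neg(f^{2})<e$ as well, so the four elements are distinct. Closure under $\wedge,\vee,\neg$ is then immediate from the chain structure. The remaining work is the fusion table, which I would dispatch using (\ref{cube}) to get $f^{3}=f^{2}$, the equivalence (\ref{3 conditions}) applied to $f^{2}$ (which lies above $e$ and is idempotent) to get $f^{2}\bcdw\neg(f^{2})=\neg(f^{2})$, and isotonicity together with the identity law to get $\neg(f^{2})\bcdw f = \neg(f^{2})$ and $(\neg(f^{2}))^{2}=\neg(f^{2})$.

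For part (iii), suppose $\sbA$ is simple and square-increasing with $\sbC_{4}$ or $\sbD_{4}$ embedded. By Lemma~\ref{fsi si simple}(\ref{simple}), $e$ has \emph{exactly} one strict lower bound in $\sbA$; the embedding furnishes $\neg(f^{2})$ as one such bound (noting that $\neg(f^{2})<e$ in both $\sbC_{4}$ and $\sbD_{4}$), so uniqueness identifies the two. Then any $a\in A$ with $a\leqslant e$ lies in $\{e,\neg(f^{2})\}$, while any $a\nleqslant e$ has $a\wedge e$ as a strict lower bound of $e$, hence equal to $\neg(f^{2})$. Either way $\neg(f^{2})\leqslant a$, so $\neg(f^{2})$ is the least element of $\sbA$; by involution $f^{2}$ is the greatest, which is precisely the identity $x\leqslant f^{2}$.

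For parts (iv) and (v) the common driver is that $\sbC_{4}$ and $\sbD_{4}$ are both \emph{non}-idempotent (each satisfies $f<f^{2}$). The inequality $e\leqslant f$ in (iv) is a constant equation holding in $\sbC_{4}$, and hence in every SI member of $\mathsf{K}$ via the embedding; Birkhoff's subdirect decomposition transfers it to all of $\mathsf{K}$. For anti-idempotence in both (iv) and (v), I would argue: a nontrivial idempotent member of $\mathsf{K}$ would, by Birkhoff, have a nontrivial idempotent SI homomorphic image in $\mathsf{K}$ (since idempotence is preserved by $\mathbb{H}$); the hypothesis embeds $\sbC_{4}$ (resp.\ $\sbD_{4}$) into that SI, yet subalgebras of idempotent algebras are idempotent, contradicting $f<f^{2}$ in the embedded copy. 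Hence $\mathsf{K}$ has no nontrivial idempotent member, and Theorem~\ref{combined}(\ref{sug cor}) yields $x\leqslant f^{2}$. I expect the main obstacle to be the fusion bookkeeping in (ii), which supplies the generic embedding that feeds the later parts; everything after (iii) is then a short application of Birkhoff plus the previously catalogued theorem.
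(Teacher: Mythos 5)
Your proposal is correct throughout. Note, though, that the paper does not actually prove parts (\ref{pre m 1}) and (\ref{pre m 0}) here --- it cites \cite[Sec.~5]{MRW} for them --- so your self-contained arguments (the collapse to a trivial algebra when $e=f$, and the verification that $\{\neg(f^2),e,f,f^2\}$ is a four-element chain closed under fusion via $f^3=f^2$ and (\ref{3 conditions})) supply details the paper omits rather than diverging from it; they are the standard arguments and check out. For (\ref{pre m 3}) your route is exactly the paper's: simplicity plus the square-increasing law force $e$ to have a unique strict lower bound, which the embedded $\neg(f^2)$ must be, and which is then a global minimum. The only genuine difference is in the tail of (\ref{pre m 2}) and (\ref{pre m 4}): the paper shows that $\sbC_4$ embeds \emph{diagonally} into every nontrivial member of $\mathsf{K}$ (using the subdirect decomposition and the fact that $\sbC_4$ is $0$--generated) and concludes that no nontrivial member is idempotent, whereas you run the contradiction through a nontrivial idempotent \emph{SI image} of a putative idempotent member, using that idempotence persists under $\mathbb{H}$ and $\mathbb{S}$. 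Both are two-line applications of Birkhoff's theorem followed by Theorem~\ref{combined}(\ref{sug cor}); the paper's version has the side benefit of establishing the (reused elsewhere) fact that $\sbC_4$ embeds into every nontrivial member of $\mathsf{K}$, while yours is marginally more economical for the stated claim.
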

\begin{proof}
(\ref{pre m 1}) and (\ref{pre m 0}) were proved in \cite[Lem.~5.22]{MRW}.
%%
%%  Change Sec. 5 to Lem. 5.21 (submitted draft of MRW) or to Lem. 5.22 (new draft of MRW).
%%

(\ref{pre m 3}) follows from Lemma~\ref{fsi si simple}(\ref{simple}), because $\neg(f^2)<e$ in $\sbC_4$ and in $\sbD_4$.

(\ref{pre m 2})
\,Suppose $\sbC_4$ embeds into every SI member of
%algebra $\sbB\in
$\mathsf{K}$.  Then $\mathsf{K}$ satisfies $e\leqslant f$, as $\sbC_4$ does.
Now let $\sbB\in\mathsf{K}$ be nontrivial.  Then $\sbB\in\mathbb{IP}_\mathbb{S}\{\sbB_i:i\in I\}$
for suitable SI algebras $\sbB_i\in\mathsf{K}$, by the Subdirect Decomposition Theorem.  As
$\sbC_4$ embeds into each $\sbB_i$, it embeds diagonally into $\prod_{i\in I}\sbB_i$, and therefore into $\sbB$,
because it is $0$--generated.  Thus, no nontrivial $\sbB\in\mathsf{K}$ is
%, then no such $\sbB$ is
%%satisfies $e<f$, and so cannot be
idempotent,
%by Theorem~\ref{idempotence f and e}.
and so
%in which case
$\mathsf{K}$ satisfies $x\leqslant f^2$, by Theorem~\ref{combined}(\ref{sug cor}).
%, as well as $e\leqslant f$.

The proof of (\ref{pre m 4}) is similar.
\end{proof}

The next lemma generalizes \cite[Thms.~2, 3]{Sla89} (where it was confined to FSI De Morgan monoids).

\begin{lem}\label{at most one hom}
Let\/ $\sbA$ be a rigorously compact IRL.
\begin{enumerate}
\item\label{at most one hom 1}
There is at most one homomorphism from\/ $\sbA$ into\/ $\sbC_4$\textup{.}

%\smallskip

\item\label{at most one hom 2}
If there is a homomorphism from\/ $\sbA$ to\/ $\sbC_4$\textup{,} then\/
%$\sbA$ is anti-idempotent.
$\neg(f^2)\leqslant a\leqslant f^2$ for all $a\in A$\textup{.}
\end{enumerate}
\end{lem}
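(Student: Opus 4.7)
The plan is to exploit that $\sbC_4$ is $0$--generated (Theorem~\ref{0 gen simples}): it has no proper subalgebra, and it admits only the identity automorphism. Consequently, any homomorphism $h\colon\sbA\to\sbC_4$ has image a subalgebra of $\sbC_4$, which must be $\sbC_4$ itself, so $h$ is surjective and in particular non-constant. Lemma~\ref{tight} then applies. Being surjective and order-preserving, $h$ must send $\bot^{\sbA}$ and $\top^{\sbA}$ to the extrema $\neg(f^2)$ and $f^2$ of $\sbC_4$, and by Lemma~\ref{tight}(\ref{tight 1}) these are the only preimages, i.e.\ $h^{-1}[\{\neg(f^2)\}]=\{\bot^{\sbA}\}$ and $h^{-1}[\{f^2\}]=\{\top^{\sbA}\}$.

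For part~(\ref{at most one hom 2}), since $h$ commutes with $\neg$ and $\bcdw$ and fixes $e$, one has $h(\neg(f^2)^{\sbA})=\neg(f^2)^{\sbC_4}=\bot^{\sbC_4}$, so the singleton-preimage property just noted forces $\neg(f^2)^{\sbA}=\bot^{\sbA}$. Symmetrically $(f^2)^{\sbA}=\top^{\sbA}$, and since $\bot^{\sbA}\leqslant a\leqslant\top^{\sbA}$ for every $a\in A$, the required inequalities follow immediately.

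For part~(\ref{at most one hom 1}), my strategy is to characterize every fibre $h^{-1}[\{c\}]$ ($c\in\sbC_4$) intrinsically in $\sbA$, which will force any two homomorphisms into $\sbC_4$ to coincide pointwise. The fibres over $\neg(f^2)$ and $f^2$ are already pinned down as $\{\bot^{\sbA}\}$ and $\{\top^{\sbA}\}$. To separate the remaining fibres, I would use that $\neg(f^2)$ is the unique element of $\sbC_4$ strictly below $e$, whence $h(a)\geqslant e$ iff $a\neq\bot^{\sbA}$; and that by residuation, $h(a)\leqslant e$ iff $e\leqslant h(a)\rig e=h(a\rig e)$, iff $a\rig e\neq\bot^{\sbA}$. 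Combining, $h(a)=e$ iff $a\neq\bot^{\sbA}$ and $a\rig e\neq\bot^{\sbA}$, while $h(a)=f$ iff $a\neq\top^{\sbA}$ and $a\rig e=\bot^{\sbA}$. These conditions refer only to the structure of $\sbA$, so $h$ is uniquely determined.

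The main technical move is the residuation manoeuvre that converts the external condition ``$h(a)\leqslant e$'' into the intrinsic one ``$a\rig e\neq\bot^{\sbA}$''; everything else reduces to an application of Lemma~\ref{tight} together with the observation that $0$--generatedness of $\sbC_4$ forces surjectivity of every homomorphism into it.
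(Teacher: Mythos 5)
Your proof is correct. Part~(\ref{at most one hom 2}) and the preliminary pinning-down of the fibres over $\neg(f^2)$ and $f^2$ run exactly as in the paper: surjectivity from $0$--generatedness, then Lemma~\ref{tight}(\ref{tight 1}) to force $\neg(f^2)=\bot$ and $f^2=\top$ with singleton preimages. Where you genuinely diverge is the closing step of part~(\ref{at most one hom 1}). The paper argues by contradiction: if $h_1(a)=e$ and $h_2(a)=f$, then $h_2(a^2)=f^2$ forces $a^2=f^2$ by the singleton-fibre property, whence $h_1(a^2)=f^2$, contradicting $h_1(a^2)=e^2=e$. You instead give an intrinsic first-order description of every fibre, using $x\leqslant y\Leftrightarrow e\leqslant x\rig y$ to translate ``$h(a)\leqslant e$'' into ``$a\rig e\neq\bot$''; since the describing conditions mention only $\sbA$, any two homomorphisms must agree. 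Your route buys slightly more -- an explicit formula for the (unique) homomorphism, of the same flavour as the map $h$ constructed in the proof of Theorem~\ref{t largest} -- at the cost of checking four fibre descriptions for consistency; the paper's squaring trick is shorter but purely negative, yielding uniqueness without exhibiting $h$. Both arguments are sound and rest on the same key input, Lemma~\ref{tight}(\ref{tight 1}).
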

\begin{proof}
Let $\bot,\top$ be the extrema of $\sbA$.
Suppose $h_1,h_2\colon\sbA\mrig\sbC_4$ are homomorphisms, and note that they are surjective,
because $\sbC_4$ is $0$--generated.  For each
$i\in\{1,2\}$,
as $h_i$ is isotone and preserves $\bcdn,\neg,e$,
we have
\[
\textup{$h_i(f^2)=f^2=h_i(\top)$
\,and\,
$h_i(\neg(f^2))=\neg(f^2)=h_i(\bot)$,}
\]
so by Lemma~\ref{tight}(\ref{tight 1}), $f^2=\top$ and $\neg(f^2)=\bot$ (proving (\ref{at most one hom 2})) and
\begin{equation}\label{tight equation}
\textup{$h_i^{-1}[\{f^2\}]=\{f^2\}$
\,and\,
$h_i^{-1}[\{\neg(f^2)\}]=\{\neg(f^2)\}$.}
\end{equation}
Therefore, if $h_1\neq h_2$, then $h_1(a)=e$ and $h_2(a)=f$ for
some $a\in A$.  In that case, $h_2(a^2)=f^2$, so $a^2=f^2$ (by (\ref{tight equation})),
whence $h_1(a^2)=f^2$, contradicting the fact that $h_1(a^2)=(h_1(a))^2=e^2=e$.
%%\[
%$h_2(a\rig\neg a)=h_2(a)\rig\neg h_2(a)=f\rig e=\neg(f^2)$,
%%\]
%so $a\rig\neg a=\neg(f^2)$ (by (\ref{tight equation})), whence
%$h_1(a\rig\neg a)=\neg(f^2)$, contradicting the fact that
%\[
%h_1(a\rig\neg a)=h_1(a)\rig \neg h_1(a)=e\rig f=f.
%%\qedhere
%\]
Thus, $h_1=h_2$, proving (\ref{at most one hom 1}).
\end{proof}

\begin{thm}\label{slaney onto c4}
\textup{(Slaney \cite[Thm.~1]{Sla89})}
%has shown, moreover, that e
\,Let\/ $h\colon\sbA\mrig\sbB$ be a homomorphism, where\/ $\sbA$ is an FSI \,De Morgan monoid, and\/ $\sbB$ is nontrivial
and\/ $0$--generated.  Then\/ $h$ is an isomorphism or\/ $\sbB\cong\sbC_4$\textup{.}
%\,Every homomorphism from an FSI De Morgan monoid into a nontrivial\/ $0$--generated De Morgan monoid
%is an isomorphism or has\/ $\sbC_4$ as its image.
%% \cite[Thm.\,1]{Sla89}.
\end{thm}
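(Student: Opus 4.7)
The plan is to prove that if $h$ is not injective, then $\sbB\cong\sbC_4$; surjectivity of $h$ is automatic, because $\sbB$ is $0$-generated and the image of $h$ is a subalgebra of $\sbB$ containing the $0$-generated subalgebra of $\sbB$, which is $\sbB$ itself.

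Assume $h$ is not injective and fix $a,b\in A$ with $h(a)=h(b)$ but $a\neq b$. I would introduce the element $c:=(a\leftrightarrow b)\wedge e$ as a witness. A short computation yields $h(c)=e_B$, because $h(a\rig b)=h(a)\rig h(a)\geqslant e_B$ and $h(b\rig a)=h(a)\rig h(a)\geqslant e_B$ by (\ref{t laws}), so meeting their images with $e_B$ yields $e_B$ in each case. Also $c<e$: clearly $c\leqslant e$, while $c=e$ would give $e\leqslant a\leftrightarrow b$, forcing $a=b$ by (\ref{t reg}) and contradicting the choice of $a,b$.

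Now the FSI splitting $A=[e)\cup(f\,]$ of Theorem~\ref{dmm fsi}(\ref{splitting}) applies to $c$, and because $c\notin[e)$ we obtain $c\leqslant f$. Applying $h$ gives $e_B=h(c)\leqslant h(f)=f_B$ in $\sbB$. The case $e_B=f_B$ is excluded, for then $\sbB$ would be odd and hence a Sugihara monoid by Theorem~\ref{combined}(\ref{idempotence f and e}), but every odd De Morgan monoid has a trivial $0$-generated subalgebra, contradicting the nontriviality of $\sbB$. So $e_B<f_B$.

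By Lemma~\ref{pre m}(\ref{pre m 0}), $\sbC_4$ embeds into $\sbB$; the image of the embedding is a subalgebra of the $0$-generated algebra $\sbB$, and since $\sbC_4$ is itself $0$-generated, this image must be all of $\sbB$, yielding $\sbB\cong\sbC_4$. The main obstacle is that a randomly chosen pair $(a,b)$ of distinct elements with $h(a)=h(b)$ need not sit below $f$, so order preservation of $h$ alone cannot deliver the crucial inequality $e_B\leqslant f_B$; the construction of $c$ as a meet with $e$ is precisely the device that pulls such a witness into the lower half $(f\,]$ of the FSI splitting.
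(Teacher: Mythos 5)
Your argument is correct. Note, however, that the paper does not prove this theorem at all --- it is quoted from Slaney \cite[Thm.~1]{Sla89} --- so there is no internal proof to compare against; what you have produced is a self-contained derivation from the paper's own toolkit, which is worth recording. Surjectivity is immediate because a $0$--generated algebra has no proper subalgebra. For the main dichotomy, your witness $c=(a\leftrightarrow b)\wedge e$ is exactly the right device: it satisfies $h(c)=e$ by (\ref{t laws}) and $c<e$ by (\ref{t reg}), and the FSI splitting $A=[e)\cup(f\,]$ of Theorem~\ref{dmm fsi}(\ref{splitting}) then forces $c\leqslant f$, whence $e\leqslant f$ in $\sbB$. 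Ruling out $e=f$ in $\sbB$ via the triviality of the $0$--generated subalgebra of an odd algebra is sound (the detour through Theorem~\ref{combined}(\ref{idempotence f and e}) and Sugihara monoids is harmless but unnecessary: in an odd IRL the singleton $\{e\}$ is already closed under all operations, so a nontrivial $0$--generated algebra cannot be odd). Finally, Lemma~\ref{pre m}(\ref{pre m 0}) embeds $\sbC_4$ into $\sbB$, and $0$--generation of $\sbB$ upgrades the embedding to an isomorphism. Every citation you invoke is applicable as stated, and the case analysis is exhaustive, so the proof stands.
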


%\begin{thm}\label{only c4}
%\textup{(Slaney \cite[Thm.\,1]{Sla89})} \,Every homomorphism from an FSI De Morgan monoid into a nontrivial\/ $0$--generated De Morgan monoid
%is an isomorphism or has\/ $\sbC_4$ as its image.
%\end{thm}

\section{Minimality}\label{minimality section}

%Before focussing on the lattice of subvarieties of $\mathsf{DMM}$, we recall a general result from universal algebra.
%let us recall two general results from universal algebra.

The following general result
%from universal algebra
will be needed in our study of the subvariety lattice of $\mathsf{DMM}$.

\begin{thm}
%\
%\label{kollar and jonsson}
%\begin{enumerate}
%\item
\label{jonsson's other theorem}
%\textup{(J\'{o}nsson
%\cite{Jon72})}
\textup{(\cite[Cor.~4.1.13]{Jon72})}
\,If a nontrivial algebra
% $\sbA$
of finite type is finitely generated, then
%its total congruence $A^2$ is compact in ${\boldsymbol{\mathit{Con}}}\,\sbA$\textup{,} and if $\sbA$ has a compact total congruence, then
it has a simple homomorphic image.
%
%\smallskip
%
%\item\label{kollar}
%\textup{(Koll\'{a}r \cite{Kol79})} \,In any variety, all members have compact total congruences iff the nontrivial members lack trivial subalgebras.
%\end{enumerate}
\end{thm}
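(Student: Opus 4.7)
The plan is to apply Zorn's lemma to the collection $\mathcal{P}$ of proper congruences of $\sbA$ (those $\theta \in \mathrm{Con}(\sbA)$ with $\theta \neq A \times A$), ordered by inclusion, and then to quotient by a maximal element. Since $\sbA$ is nontrivial, the identity congruence $\Delta_A$ belongs to $\mathcal{P}$, so $\mathcal{P}$ is nonempty. For any chain $C \subseteq \mathcal{P}$, the set-theoretic union $\bigcup C$ is a congruence, because the operations of $\sbA$ are finitary. Once I verify that $\bigcup C$ remains proper, Zorn's lemma yields a maximal $\mu \in \mathcal{P}$; by the correspondence theorem, the congruences of $\sbA/\mu$ are in bijection with congruences of $\sbA$ above $\mu$, and the maximality of $\mu$ forces these to be only $\mu$ and $A \times A$. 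Hence $\sbA/\mu$ is simple---and nontrivial because $\mu \neq A \times A$.

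The one substantial step, and the place where both hypotheses (finite generation and finite type) actually get used, is showing that a chain of proper congruences cannot have $A \times A$ as its union. The cleanest route is to prove the stronger claim that $\nabla := A \times A$ is a \emph{compact} element of $\mathrm{Con}(\sbA)$. Granted this, if $\nabla = \bigcup_\alpha \theta_\alpha$ for a chain of proper congruences, compactness produces a finite $F$ with $\nabla \leqslant \bigvee_{\alpha \in F}\theta_\alpha$; but in a chain, this finite join is simply the maximum over $F$, which would then itself equal $\nabla$, contradicting the fact that every $\theta_\alpha$ lies in $\mathcal{P}$.

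To establish compactness of $\nabla$, I would let $G = \{g_1, \ldots, g_n\}$ be a finite generating set for $\sbA$, let $\Sigma$ denote the (finite) signature, and fix $g_1$. Then take
\[
S \;=\; \{(g_i, g_1) : 1 \leqslant i \leqslant n\} \;\cup\; \{(f(g_1, \ldots, g_1), g_1) : f \in \Sigma\},
\]
a finite set of pairs in $A \times A$. In the quotient $\sbA/\mathrm{Cg}^{\sbA}(S)$, every $[g_i]$ collapses to a single class $c := [g_1]$, and $c$ is a fixed point of each basic operation. Hence $\{c\}$ is a subalgebra containing the image of every generator, forcing $\sbA/\mathrm{Cg}^{\sbA}(S) = \{c\}$; equivalently, $\mathrm{Cg}^{\sbA}(S) = \nabla$. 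This exhibits $\nabla$ as a finitely generated---hence compact---element of the algebraic lattice $\mathrm{Con}(\sbA)$, and the Zorn argument above goes through. (The boundary case $n = 0$, which requires $\sbA$ to possess constants, is handled analogously by choosing $g_1$ to be the interpretation in $\sbA$ of some nullary symbol.)
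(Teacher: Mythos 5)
The paper does not prove this statement at all---it is quoted verbatim from J\'{o}nsson's lecture notes (\cite[Cor.~4.1.13]{Jon72}) and used as a black box---so there is no internal proof to compare yours against. Your argument is correct and is, in essence, the standard proof underlying the cited corollary: Zorn's lemma applied to the proper congruences, with the substantive point being that the union of a chain of proper congruences stays proper. You handle the two places where care is needed. First, collapsing the generators $g_1,\dots,g_n$ to a single class $c$ does \emph{not} by itself trivialize the quotient (the subalgebra generated by $c$ may be larger than $\{c\}$); your extra pairs $(f(g_1,\dots,g_1),g_1)$ for the finitely many basic operations $f$ force $\{c\}$ to be a subuniverse, which is exactly where the finite-type hypothesis enters, and correctly so---the theorem fails for finitely generated algebras of infinite type. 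Second, exhibiting $\nabla=A\times A$ as a finitely generated, hence compact, element of the algebraic lattice $\mathrm{Con}(\sbA)$ is precisely what rules out a chain of proper congruences with union $\nabla$, since a finite join within a chain is just its maximum. The nullary case and the passage from a maximal proper congruence $\mu$ to the simplicity and nontriviality of $\sbA/\mu$ are also handled correctly. I see no gap.
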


%Theorem~\ref{jonsson's other theorem}
%%both claims in (\ref{jonsson's other theorem})
%can fail for algebras of infinite type;

%\noindent
%For varieties of finite type, this
%strengthens {\em Magari's Theorem}, which asserts that every nontrivial variety
%has a simple member, cf.\ \cite[Thm.~II.10.13]{BS81}.
%
%, but Theorem~\ref{jonsson's other theorem}
%%both claims in (\ref{jonsson's other theorem})
%can fail for algebras of infinite type; see \cite[pp.\,153--154]{Jon72}.
%%A subvariety of $\mathsf{DMM}$ satisfies the
%%demands in (\ref{kollar}) iff it contains no nontrivial odd Sugihara monoid (see Theorem~\ref{odd cor}).
%%%Many (but not all) varieties of De Morgan monoids satisfy
%%%the demands in (\ref{kollar}).

A quasivariety is said to be {\em minimal\/} if it is nontrivial and has no nontrivial proper subquasivariety.  If we say that a variety is {\em minimal\/} (without
further qualification), we mean that it is nontrivial and has no nontrivial proper subvariety.  When we mean instead that it is {\em minimal as a quasivariety}, we shall say
so explicitly, thereby avoiding ambiguity.

Recall that $\mathbb{V}(\mathbf{2})$ is the class of all Boolean algebras.

%\begin{thm}\label{bergman mckenzie}
%\textup{(Bergman \& McKenzie \cite{BM90})}
%\,Every locally finite congruence modular minimal variety is also minimal as a quasivariety.
%\end{thm}

\begin{thm}\label{atoms}
\textup{(\cite[Thm.~6.1]{MRW})} \,The distinct classes\/ $\mathbb{V}(\mathbf{2})$\textup{,} $\mathbb{V}(\sbS_3)$\textup{,} $\mathbb{V}(\sbC_4)$ and\/ $\mathbb{V}(\sbD_4)$
are precisely the minimal varieties of De Morgan monoids.
\end{thm}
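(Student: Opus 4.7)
The plan is to prove the two inclusions: first, that each of the four listed classes is a minimal variety; and second, that every minimal subvariety of $\mathsf{DMM}$ coincides with one of them.

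For minimality, let $\sbA \in \{\mathbf{2}, \sbS_3, \sbC_4, \sbD_4\}$. Each such $\sbA$ is square-increasing, and in every case $e$ has a unique strict lower bound (read off the displayed Hasse diagrams), so $\sbA$ is simple by Lemma~\ref{fsi si simple}(\ref{simple}). Moreover, $\sbA$ has no proper nontrivial subalgebra: three of the four are $0$-generated by Theorem~\ref{0 gen simples}, while the only proper subalgebra of $\sbS_3$ is the trivial $\{e\}$ (any subalgebra containing $\bot$ or $\top$ contains both, via $\neg$). Hence $\mathbb{HS}(\sbA)$ contains only trivial algebras and isomorphic copies of $\sbA$, so J\'{o}nsson's Theorem confines the FSI members of $\mathbb{V}(\sbA)$ to copies of $\sbA$. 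The Subdirect Decomposition Theorem then forces any nontrivial subvariety of $\mathbb{V}(\sbA)$ to contain $\sbA$, hence to equal $\mathbb{V}(\sbA)$. The four varieties are pairwise distinct by obvious separating equations: $\mathbf{2}$ alone satisfies $x \leqslant e$; $\sbS_3$ is the idempotent and odd one; and $\sbC_4$, $\sbD_4$ are both anti-idempotent, but only $\sbC_4$ satisfies $e \leqslant f$.

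For the converse, let $\mathsf{V}$ be a minimal subvariety of $\mathsf{DMM}$ and invoke Theorem~\ref{combined}(\ref{sug cor}): either $\mathsf{V}$ has a nontrivial idempotent member $\sbM$, or $\mathsf{V}$ is anti-idempotent. In the first subcase, $\sbM$ is a Sugihara monoid, so $\mathbb{V}(\sbM) \subseteq \mathsf{V}$ is a nontrivial variety of Sugihara monoids, and the remark preceding Theorem~\ref{odd sm} forces $\mathbf{2} = \sbS_2$ or $\sbS_3$ into $\mathsf{V}$; minimality gives $\mathsf{V} \in \{\mathbb{V}(\mathbf{2}), \mathbb{V}(\sbS_3)\}$. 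In the anti-idempotent subcase, fix any nontrivial $\sbA \in \mathsf{V}$. We must have $e \neq f$ in $\sbA$, for otherwise $\sbA$ would be odd and hence a nontrivial idempotent member of $\mathsf{V}$, contradicting the hypothesis. Consequently, the $0$-generated subalgebra $\sbA_0 \subseteq \sbA$ already contains $\{e, \neg e\}$ and is nontrivial. Being finitely generated and of finite type, $\sbA_0$ admits a simple homomorphic image $\sbT \in \mathsf{V}$ by Theorem~\ref{jonsson's other theorem}; as a homomorphic image of a $0$-generated algebra, $\sbT$ is itself $0$-generated. Theorem~\ref{0 gen simples} then restricts $\sbT$ to $\mathbf{2}$, $\sbC_4$, or $\sbD_4$, of which the first is ruled out because $\mathbf{2}$ is idempotent while $\mathsf{V}$ is not. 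Minimality yields $\mathsf{V} = \mathbb{V}(\sbC_4)$ or $\mathsf{V} = \mathbb{V}(\sbD_4)$.

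The main obstacle is the anti-idempotent case: one must distill from an arbitrary nontrivial member of $\mathsf{V}$ a $0$-generated simple algebra that is already classified. Theorem~\ref{jonsson's other theorem} supplies the simple quotient and Theorem~\ref{0 gen simples} identifies its isomorphism type; the only delicate input is ensuring that the $0$-generated subalgebra fed into Theorem~\ref{jonsson's other theorem} is itself nontrivial, which reduces to the observation that anti-idempotence forbids a nontrivial odd member.
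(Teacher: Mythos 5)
Your argument is correct. Note that the paper itself offers no proof of this statement---it is imported verbatim from the companion paper \cite[Thm.~6.1]{MRW}---so there is nothing internal to compare against; but your derivation is a sound and self-contained reconstruction from results the paper does state: simplicity of the four generators via Lemma~\ref{fsi si simple}(\ref{simple}), absence of proper nontrivial subalgebras plus J\'{o}nsson's Theorem for minimality and distinctness, and, for exhaustiveness, the dichotomy of Theorem~\ref{combined}(\ref{sug cor}) combined with Theorems~\ref{jonsson's other theorem} and \ref{0 gen simples} to extract a nontrivial $0$--generated simple algebra ($\sbC_4$ or $\sbD_4$) in the anti-idempotent case. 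The one point worth stating a little more carefully is the parenthetical claim that trivial algebras are excluded from the FSI members of $\mathbb{V}(\sbA)$; what you actually need (and what your subdirect-decomposition step correctly uses) is only that every nontrivial SI member of $\mathbb{V}(\sbA)$ lies in $\mathbb{I}(\sbA)$.
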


%This means that, for each axiomatic consistent extension $\mathbf{L}$ of $\mathbf{R}^\mathbf{t}$, there exists
%$\sbB\in\{\mathbf{2},\sbS_3,\sbC_4,\sbD_4\}$ such that the theorems of $\mathbf{L}$ all take values $\geqslant e$
%on any evaluation of their variables in $\sbB$.

%It is shown in \cite{HRT02} that an [I]RL $\sbA$ is semilinear (i.e., a subdirect product of chains) iff it is distributive and satisfies
%\begin{equation}\label{semilinear}
%e\leqslant (x\rig y)\vee(y\rig x).
%\end{equation}

A variety $\mathsf{K}$ is said to be
\emph{finitely generated} if $\mathsf{K}=\mathbb{V}(\sbA)$ for some finite algebra $\sbA$ (or equivalently,
$\mathsf{K}=\mathbb{V}(\mathsf{L})$ for some finite set $\mathsf{L}$ of finite algebras).  Every finitely generated variety is \emph{locally finite},
i.e., its finitely generated members are finite algebras
\cite[Thm.~II.10.16]{BS81}.
Bergman and McKenzie \cite{BM90} showed that every locally finite congruence modular minimal variety is also
minimal as a quasivariety, so
%\end{thm}
%Thus,
by Theorem~\ref{atoms},
%It follows from Theorems~\ref{bergman mckenzie} and \ref{atoms} that
$\mathbb{V}(\mathbf{2})$, $\mathbb{V}(\sbS_3)$, $\mathbb{V}(\sbC_4)$ and $\mathbb{V}(\sbD_4)$ are minimal
as quasivarieties.
%It follows from Theorems~\ref{bergman mckenzie} and \ref{atoms} that
%$\mathbb{V}(\mathbf{2})$, $\mathbb{V}(\sbS_3)$, $\mathbb{V}(\sbC_4)$ and $\mathbb{V}(\sbD_4)$ are minimal
%as quasivarieties.
We proceed to show that the total number
%It turns out that the number
of minimal subquasivarieties of $\mathsf{DMM}$ is still finite, but much greater
than four.
%, as we proceed to show.

%We denote by $\mathbb{Q}(\mathsf{L})$ the smallest quasivariety containing a class $\mathsf{L}$ of similar algebras.  Recall that
%$\mathbb{Q}(\mathsf{L})=\mathbb{ISPP}_\mathbb{U}(\mathsf{L})=\mathbb{IP}_\mathbb{S}\mathbb{SP}_\mathbb{U}(\mathsf{L})$
%(cf.\ \cite[Thm.~V.2.25]{BS81}).

\begin{lem}\label{minimal quasivarieties}
Let\/ $\sbA$ and\/ $\sbB$ be nontrivial algebras, where\/ $\sbA$ is\/ $0$--generated.
\begin{enumerate}
\item\label{minimal quasivarieties 1}
If\/ $\sbB\in\mathbb{Q}(\sbA)$\textup{,} then\/ $\sbA$ can be embedded into\/ $\sbB$\textup{,} whence\/ $\mathbb{Q}(\sbA)=\mathbb{Q}(\sbB)$\textup{.}

\item\label{minimal quasivarieties 1.25}
$\mathbb{Q}(\sbA)$ is a minimal quasivariety.

\item\label{minimal quasivarieties 1.5}
If\/ $\sbB\in\mathbb{Q}(\sbA)$ and\/ $\sbB$ is\/ $0$--generated, then\/ $\sbA\cong\sbB$\textup{.}

\item\label{minimal quasivarieties 2}
If\/ $\sbA$ has finite type and\/ $\mathbb{Q}(\sbA)$ is a variety, then\/ $\sbA$ is simple.
\end{enumerate}
\end{lem}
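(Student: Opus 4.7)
The plan is to handle the four parts in order, exploiting repeatedly the fact that a $0$-generated algebra coincides with its (unique) smallest subalgebra, i.e., it has no proper subalgebra.

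For (\ref{minimal quasivarieties 1}), I would unpack $\mathbb{Q}(\sbA)=\mathbb{IP}_\mathbb{S}\mathbb{SP}_\mathbb{U}(\sbA)$, so $\sbB$ is isomorphic to a subdirect product of subalgebras of ultrapowers $\sbA_i$ ($i\in I$) of $\sbA$. The crux is to identify the smallest subalgebra $\sbC$ of $\prod_i\sbA_i$, i.e., the one generated by the $0$-ary terms. By \L o\'{s}' Theorem, each $\sbA_i$ is elementarily equivalent to $\sbA$, so identities and non-identities between $0$-ary terms transfer verbatim from $\sbA$ to each $\sbA_i$ and hence to $\prod_i\sbA_i$. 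This makes the assignment $t^\sbA\mapsto(t^{\sbA_i})_{i\in I}$, for $0$-ary terms $t$, a well-defined injective homomorphism from $\sbA$ onto $\sbC$. Being the smallest subalgebra, $\sbC$ sits inside every subalgebra of $\prod_i\sbA_i$; in particular, it is contained in the isomorphic copy of $\sbB$, yielding an embedding $\sbA\hookrightarrow\sbB$. Consequently $\sbA\in\mathbb{IS}(\sbB)\subseteq\mathbb{Q}(\sbB)\subseteq\mathbb{Q}(\sbA)$, giving $\mathbb{Q}(\sbA)=\mathbb{Q}(\sbB)$.

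For (\ref{minimal quasivarieties 1.25}), take any nontrivial subquasivariety $\mathsf{K}$ of $\mathbb{Q}(\sbA)$ and pick any nontrivial $\sbB\in\mathsf{K}$; part (\ref{minimal quasivarieties 1}) embeds $\sbA$ into $\sbB$, so $\sbA\in\mathsf{K}$ and thus $\mathbb{Q}(\sbA)\subseteq\mathsf{K}$. For (\ref{minimal quasivarieties 1.5}), the embedding of $\sbA$ into $\sbB$ from (\ref{minimal quasivarieties 1}) has a $0$-generated image, which is therefore a subalgebra of $\sbB$; since $\sbB$ itself is $0$-generated and so has no proper subalgebra, the embedding is onto, hence an isomorphism.

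For (\ref{minimal quasivarieties 2}), I apply Theorem~\ref{jonsson's other theorem} to $\sbA$ (it is nontrivial, of finite type, and $0$-generated, so finitely generated) to obtain a simple homomorphic image $\sbA/\theta$. This quotient is still $0$-generated and lies in $\mathbb{V}(\sbA)$. Since $\mathbb{Q}(\sbA)$ is assumed to be a variety containing $\sbA$, we have $\mathbb{V}(\sbA)\subseteq\mathbb{Q}(\sbA)$, so $\sbA/\theta\in\mathbb{Q}(\sbA)$. Part (\ref{minimal quasivarieties 1.5}) then forces $\sbA/\theta\cong\sbA$, so $\theta$ must be the identity congruence, and $\sbA$ is simple.

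The main delicacy lies in (\ref{minimal quasivarieties 1}), specifically in showing that the smallest subalgebra of the ambient subdirect product is \emph{isomorphic} to $\sbA$ rather than a proper quotient of it. The work is done by the elementary equivalence between $\sbA$ and its ultrapowers, which prevents any new $0$-ary equation from being satisfied in $\sbA_i$, and it is precisely this feature that keeps the canonical map $\sbA\to\sbC$ injective. Once that point is secured, (\ref{minimal quasivarieties 1.25})--(\ref{minimal quasivarieties 2}) follow formally.
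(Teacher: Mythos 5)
Your proof is correct and follows essentially the same route as the paper's: part (i) is established by transferring variable-free identities and non-identities through ultrapowers so that the canonical map on nullary term values embeds $\sbA$ into $\sbB$, and (ii)--(iv) are then formal consequences (the paper merely phrases (iv) contrapositively). One small caution on (iv): the step ``$\sbA/\theta\cong\sbA$, so $\theta$ must be the identity congruence'' is not a valid inference in general (an algebra can be isomorphic to a proper quotient of itself), but it is also unnecessary, since the simplicity of $\sbA$ follows at once from the isomorphism $\sbA\cong\sbA/\theta$ with the simple algebra $\sbA/\theta$.
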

\begin{proof}
(\ref{minimal quasivarieties 1})
\,Let $\sbB\in\mathbb{Q}(\sbA)=\mathbb{ISPP}_\mathbb{U}(\sbA)$.
Then $\sbB$ embeds into a direct product $\sbD$ of ultrapowers of $\sbA$, where the
index set of the direct product is not empty (because $\sbB$ is nontrivial).  Clearly, if a variable-free equation $\ep$
is true in $\sbA$, then it is true in $\sbB$.  Conversely, if $\ep$ is true in $\sbB$, then it is true in $\sbD$, as
variable-free equations persist in
extensions (i.e., super-algebras).  In that case, since $\ep$ persists in homomorphic images, it is true in an ultrapower $\sbU$ of $\sbA$, whence it is true in $\sbA$,
because all first order sentences persist in ultraroots.  There is therefore a well defined injection $k\colon A\mrig B$, given by
\[
\al^\sbA(c_1^\sbA,c_2^\sbA,\dots)\,\,\mapsto\,\,\al^\sbB(c_1^\sbB,c_2^\sbB,\dots),
\]
where $c_1,c_2,\dots$ are the nullary operation symbols of
the signature and $\al$ is any term.  Clearly, $k$ is a homomorphism from $\sbA$ into $\sbB$, so $\sbA\in\mathbb{IS}(\sbB)$.

(\ref{minimal quasivarieties 1.25}) \,follows immediately from (\ref{minimal quasivarieties 1}).

(\ref{minimal quasivarieties 1.5})
\,In the proof of (\ref{minimal quasivarieties 1}), the image of the embedding $k$ is a subalgebra of $\sbB$.  So, if
$\sbB$ is $0$--generated, then $k$ is surjective, i.e., $k\colon\sbA\cong\sbB$.

(\ref{minimal quasivarieties 2})
\,Suppose $\sbA$ has finite type and
is not simple.  As $\sbA$ is $0$--generated and nontrivial, it has a simple homomorphic image $\sbC$, by
%Theorem~\ref{kollar and jonsson}(
Theorem~\ref{jonsson's other theorem}, and $\sbC$ is still $0$--generated.
If $\sbC\in\mathbb{Q}(\sbA)$, then $\sbA\cong\sbC$, by (\ref{minimal quasivarieties 1.5}),
contradicting the non-simplicity of $\sbA$.  So, $\sbC\notin\mathbb{Q}(\sbA)$, whence $\mathbb{Q}(\sbA)$ is not a variety.
\end{proof}

\begin{thm}\label{dmm minimal quasivarieties}
A quasivariety of De Morgan monoids is minimal iff it is\/ $\mathbb{V}(\sbS_3)$ or\/ $\mathbb{Q}(\sbA)$ for some nontrivial\/ $0$--generated
De Morgan monoid\/ $\sbA$\textup{.}
\end{thm}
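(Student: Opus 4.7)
The plan is to treat the two directions separately. The backward direction is a direct appeal to already-established results, while the forward direction splits into two cases according to whether the $0$-generated subalgebra of a nontrivial witness is itself nontrivial.

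For the backward direction, suppose $\mathsf{Q}$ equals $\mathbb{V}(\sbS_3)$ or $\mathbb{Q}(\sbA)$ for some nontrivial $0$-generated De Morgan monoid $\sbA$. In the first case, the paragraph following Theorem~\ref{atoms} already records that $\mathbb{V}(\sbS_3)$, being a locally finite congruence distributive (hence modular) minimal variety, is minimal as a quasivariety via the Bergman--McKenzie theorem of \cite{BM90}. In the second case, Lemma~\ref{minimal quasivarieties}(\ref{minimal quasivarieties 1.25}) directly supplies that $\mathbb{Q}(\sbA)$ is a minimal quasivariety.

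For the forward direction, let $\mathsf{Q}$ be a minimal quasivariety of De Morgan monoids and pick any nontrivial $\sbB\in\mathsf{Q}$. Let $\sbA$ be the $0$-generated subalgebra of $\sbB$, i.e., the subalgebra generated by the constant $e$. If $\sbA$ is nontrivial, then $\sbA\in\mathsf{Q}$ by closure under subalgebras; hence $\mathbb{Q}(\sbA)\subseteq\mathsf{Q}$ is a nontrivial subquasivariety, and minimality of $\mathsf{Q}$ forces $\mathsf{Q}=\mathbb{Q}(\sbA)$, as required. If instead $\sbA$ is trivial, then $\{e\}$ is closed under all operations of $\sbB$, which forces $\neg e=e$, i.e., $f=e$ in $\sbB$; by Theorem~\ref{combined}(\ref{idempotence f and e}), $\sbB$ is therefore idempotent, hence a nontrivial odd Sugihara monoid. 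Since $f=e$ is an equation, every member of $\mathbb{Q}(\sbB)\subseteq\mathsf{Q}$ is an odd Sugihara monoid, so by Theorem~\ref{odd sm}(\ref{osm}) the quasivariety $\mathbb{Q}(\sbB)$ is in fact a variety, and Theorem~\ref{odd sm}(\ref{osm varieties}) places $\mathbb{V}(\sbB)=\mathbb{Q}(\sbB)$ in the chain above $\mathbb{V}(\sbS_1)$; nontriviality of $\sbB$ then gives $\sbS_3\in\mathbb{V}(\sbB)\subseteq\mathsf{Q}$. Consequently $\mathbb{V}(\sbS_3)=\mathbb{Q}(\sbS_3)\subseteq\mathsf{Q}$, and minimality of $\mathsf{Q}$ yields $\mathsf{Q}=\mathbb{V}(\sbS_3)$.

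The main subtlety lies in this odd case: because a nontrivial odd Sugihara monoid need not be $0$-generated, the collapse to $\mathbb{V}(\sbS_3)$ is not a purely quasivarietal triviality but relies essentially on the rigid chain structure of the lattice of varieties of odd Sugihara monoids (Theorem~\ref{odd sm}), which ensures that every nontrivial such variety already contains $\sbS_3$.
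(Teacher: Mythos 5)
Your proof is correct and follows essentially the same route as the paper's: the backward direction cites Lemma~\ref{minimal quasivarieties}(\ref{minimal quasivarieties 1.25}) and the Bergman--McKenzie remark, and the forward direction splits on whether the $0$--generated subalgebra is trivial, invoking Theorem~\ref{combined}(\ref{idempotence f and e}) and Theorem~\ref{odd sm} in the odd case exactly as the paper does. The only (immaterial) difference is that you start from an arbitrary nontrivial member of the minimal quasivariety, whereas the paper first writes it as $\mathbb{Q}(\sbA)$ and passes to the smallest subalgebra of that generator.
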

\begin{proof}
Sufficiency follows from Lemma~\ref{minimal quasivarieties}(\ref{minimal quasivarieties 1.25}) and previous remarks about $\mathbb{V}(\sbS_3)$.
Conversely, let $\mathsf{K}$ be a minimal subquasivariety of $\mathsf{DMM}$.  Being minimal, $\mathsf{K}$ is $\mathbb{Q}(\sbA)$ for some
nontrivial De Morgan monoid $\sbA$.  Let $\sbB$ be the smallest subalgebra of $\sbA$.
If $\sbB$ is trivial, then $\sbA$ satisfies $e=f$, so
$\mathsf{K}$ is a variety, by Theorems~\ref{combined}(\ref{idempotence f and e}) and \ref{odd sm}(\ref{osm}).
%s~\ref{odd cor} and \ref{osm}.
In this case, as $\mathsf{K}$ is a minimal variety of odd Sugihara monoids, it is $\mathbb{V}(\sbS_3)$, by Theorem~\ref{odd sm}(\ref{osm varieties}).
On the other hand, if $\sbB$ is nontrivial, then $\mathsf{K}=\mathbb{Q}(\sbB)$ (again by the minimality of $\mathsf{K}$), and this completes
the proof, because $\sbB$ is $0$--generated.
\end{proof}

A {\em deductive filter\/} of a (possibly involutive) RL $\sbA$ is a lattice filter $G$ of $\langle A;\wedge,\vee\rangle$ that is also a submonoid
of $\langle A;\bcdw,e\rangle$.
%It is said to be {\em prime\/} if its complement $A\bs G$ is closed under the binary operation $\vee$.
%Note that
Thus, $[e)$ is the smallest deductive filter of $\sbA$.  The lattice of deductive filters of $\sbA$ and the congruence lattice
${\boldsymbol{\mathit{Con}}}\,\sbA$ of $\sbA$ are isomorphic.  The isomorphism and its inverse are given by
\begin{eqnarray*}
& G\,\mapsto\,\leibniz
G
\seteq
\{\langle a,b\rangle\in A^2\colon a\leftrightarrow b
%rig b,\,b\rig a
\in G\};\\
& \theta\,\mapsto\,\{a\in A\colon \langle a\wedge e, e\rangle\in\theta\}.
\end{eqnarray*}
For a deductive filter $G$ of $\sbA$ and $a,b\in A$, we often abbreviate $\sbA/\leibniz G$ as $\sbA/G$, and $a/\leibniz G$ as $a/G$, noting that
%\begin{equation*}
%\label{factor order}
$a\rig b\in G$ iff
%\textup{ \ iff \ }
$a/G\leqslant b/G$
%\textup{ \,in }
in $\sbA/G$.
%\end{equation*}
In the square-increasing case, the deductive filters of $\sbA$ are just the lattice filters of
$\langle A;\wedge,\vee\rangle$ that contain $e$, by
(\ref{square increasing cor}), so $[b)$ is a deductive filter whenever $e\geqslant b\in A$, and
if $\sbA$ is finite, then all of its deductive filters have this form.

For any quasivariety $\mathsf{K}$ and any cardinal $m$, the free $m$--generated algebra in $\mathsf{K}$
shall be denoted by $\sbF_\mathsf{K}(m)$ if it exists (i.e., if $m>0$ or the signature of $\mathsf{K}$
includes a constant symbol).

\begin{thm}\label{min qv = lower bounds}
The minimal subquasivarieties of\/ $\mathsf{DMM}$ form a finite set, whose cardinality is the number of lower bounds of\/ $e$ in\/ $\sbF_\mathsf{DMM}(0)$\textup{.}
\end{thm}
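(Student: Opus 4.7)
The plan is to combine Theorem~\ref{dmm minimal quasivarieties} with a counting argument about the free $0$-generated De Morgan monoid $\sbF\seteq\sbF_\mathsf{DMM}(0)$. First I would record that $\sbF$ is finite (by Slaney's description \cite{Sla85} already cited in the Introduction) and, being a finitely generated square-increasing IRL, bounded; let $\bot$ denote its minimum. By Theorem~\ref{dmm minimal quasivarieties}, the minimal subquasivarieties of $\mathsf{DMM}$ are $\mathbb{V}(\sbS_3)$ together with the classes $\mathbb{Q}(\sbA)$ as $\sbA$ ranges over nontrivial $0$-generated De Morgan monoids.

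Next I would verify that these classes are pairwise distinct. For two nontrivial $0$-generated DMMs $\sbA_1,\sbA_2$, the equality $\mathbb{Q}(\sbA_1)=\mathbb{Q}(\sbA_2)$ forces $\sbA_1\cong\sbA_2$ by Lemma~\ref{minimal quasivarieties}(\ref{minimal quasivarieties 1.5}). To exclude $\mathbb{V}(\sbS_3)=\mathbb{Q}(\sbA)$ for such an $\sbA$, I would invoke Lemma~\ref{minimal quasivarieties}(\ref{minimal quasivarieties 1}) to embed $\sbA$ into $\sbS_3$; but in $\sbS_3$ we have $e=f$, so the only subalgebras are the trivial one $\{e\}$ and $\sbS_3$ itself, and $\sbS_3$ is not $0$-generated, contradicting the hypotheses on $\sbA$.

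The heart of the argument is a bijection between isomorphism classes of nontrivial $0$-generated DMMs and $\{b\in F:\bot<b\leqslant e\}$. By freeness of $\sbF$, every DMM admits a unique homomorphism from $\sbF$; when the target is $0$-generated this map is surjective, and any abstract isomorphism between two $0$-generated quotients of $\sbF$, composed with one quotient map, must coincide with the unique homomorphism into the other, forcing equality of kernels. Hence isomorphism classes of $0$-generated DMMs correspond bijectively to congruences of $\sbF$ and, via the filter/congruence correspondence recalled just before the theorem, to deductive filters of $\sbF$. Finiteness together with square-increasingness then reduces the deductive filters to principal upsets $[b)$ with $b\leqslant e$, and the trivial quotient is the one at $b=\bot$. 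Adding the singleton $\{\mathbb{V}(\sbS_3)\}$ yields a total of $|\{b\in F:b\leqslant e\}|$ minimal subquasivarieties, i.e., the number of lower bounds of $e$ in $\sbF$.

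I expect the main obstacle to be the isomorphism-versus-congruence bookkeeping in the previous paragraph: one must carefully exploit the uniqueness of the free homomorphism out of $\sbF$ to ensure that non-isomorphic deductive filters yield genuinely non-isomorphic $0$-generated quotients. Once this is in place, the remaining steps reduce to invoking Theorem~\ref{dmm minimal quasivarieties}, inspecting the subalgebras of $\sbS_3$, and applying the already-recalled description of deductive filters in a finite square-increasing IRL.
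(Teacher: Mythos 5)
Your proof is correct and follows essentially the same route as the paper's: both reduce the count, via Theorem~\ref{dmm minimal quasivarieties} and the fact that the deductive filters of the finite algebra $\sbF_\mathsf{DMM}(0)$ are exactly the principal upsets $[b)$ with $b\leqslant e$, to a bijection between those filters and the minimal subquasivarieties other than $\mathbb{V}(\sbS_3)$, with Lemma~\ref{minimal quasivarieties}(\ref{minimal quasivarieties 1.5}) supplying the injectivity. Your appeal to the initiality of $\sbF_\mathsf{DMM}(0)$ to see that isomorphic $0$--generated quotients have equal kernels is just an abstract rephrasing of the paper's explicit computation with the nullary terms $\al,\be$.
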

\begin{proof}
Let $\sbF=\sbF_\mathsf{DMM}(0)$.
%As noted earlier,
Slaney \cite{Sla85} proved that $\sbF$ has just $3088$ elements; its bottom element is $e^\sbF\leftrightarrow f^\sbF$
(see \cite[Thm.~3.2]{MRW}).
By the Homomorphism Theorem, every $0$--generated De Morgan monoid is isomorphic to a factor algebra of $\sbF$, so $\mathsf{DMM}$
has only finitely many minimal subquasivarieties, by Theorem~\ref{dmm minimal quasivarieties}.

Now consider a factor algebra $\sbF/G$, where $G$ is a deductive filter
of $\sbF$.  As $\sbF$ is finite, $G=[\al^\sbF)$ for some nullary term $\al$ in the language of IRLs, where $\al^\sbF\leqslant e^\sbF$.
If $\sbF/G$ is nontrivial, i.e., $\al^\sbF\nleqslant e^\sbF\leftrightarrow f^\sbF$, then $\sbF/G$ is not odd (by (\ref{t reg})),
whence $\mathbb{Q}(\sbF/G)\neq\mathbb{V}(\sbS_3)$.
The function $\textup{$\al^\sbF\mapsto\mathbb{Q}(\sbF/[\al^\sbF))$}$ is therefore a well defined surjection
from the lower bounds of $e^\sbF$ in $\sbF$ to the set consisting of the trivial subvariety (corresponding to the bottom element of $\sbF)$
and the minimal subquasivarieties of $\mathsf{DMM}$, \emph{other} than $\mathbb{V}(\sbS_3)$.
It remains only to show that this map is injective.
To that end, suppose $\sbF/[\al^\sbF)$ and
$\sbF/[\be^\sbF)$ generate the same quasivariety, where $\al^\sbF,\beta^\sbF\leqslant e^\sbF$.
Then there is an isomorphism $\textup{$g\colon\sbF/[\al^\sbF)\cong\sbF/[\be^\sbF)$}$, by
Lemma~\ref{minimal quasivarieties}(\ref{minimal quasivarieties 1.5}).
As $\beta^\sbF\leqslant e^\sbF$, we have
%Note that
$\be^\sbF\leftrightarrow e^\sbF=\be^\sbF\in [\be^\sbF)$, by (\ref{t order})
and (\ref{t laws}), so $\be^{\sbF/[\be^\sbF)}=e^{\sbF/[\be^\sbF)}$.  Now
\[
g(\be^\sbF/[\al^\sbF))=g(\be^{\sbF/[\al^\sbF)})=\be^{\sbF/[\be^\sbF)}=e^{\sbF/[\be^\sbF)}=g(e^\sbF/[\al^\sbF)),
\]
but $g$ is injective, so $\be^\sbF/[\al^\sbF)=e^\sbF/[\al^\sbF)$, i.e.,
$\be^\sbF=\be^\sbF\leftrightarrow e^\sbF\in [\al^\sbF)$.
%Note that $\be^\sbF\leftrightarrow e^\sbF=\be^\sbF$, by (\ref{t order})
%and (\ref{t laws}),
%as $\beta^\sbF\leqslant e^\sbF$.
This means that $\al^\sbF\leqslant\be^\sbF$ and, by symmetry,
$\al^\sbF=\be^\sbF$, completing the proof.
%This completes the proof.
\end{proof}

\begin{cor}\label{68 cor}
There are exactly\/ $68$ minimal quasivarieties of De Morgan monoids.
\end{cor}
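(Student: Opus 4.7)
The plan is to derive the corollary directly from Theorem~\ref{min qv = lower bounds}, reducing the counting of minimal subquasivarieties of $\mathsf{DMM}$ to a concrete tally of lower bounds of $e$ in the free $0$--generated De Morgan monoid. Write $\sbF = \sbF_\mathsf{DMM}(0)$. By Theorem~\ref{min qv = lower bounds}, the number of minimal subquasivarieties of $\mathsf{DMM}$ equals $\lvert (e^\sbF]\rvert$, so it suffices to establish that exactly $68$ elements of $\sbF$ lie below $e^\sbF$.

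First, I would invoke Slaney's description of $\sbF$ from \cite{Sla85}, which is the input already used in the proof of Theorem~\ref{min qv = lower bounds} to fix $|F| = 3088$. Slaney's analysis amounts to a normal-form classification of the closed (nullary) terms in the signature of IRLs modulo the De Morgan monoid identities, together with the induced lattice order; in particular it pins down the elements of $(e^\sbF]$. Second, I would exploit the structural symmetry provided by the involution: since $\neg$ is an anti-automorphism of $\langle F;\wedge,\vee\rangle$ with $\neg e^\sbF = f^\sbF$, the assignment $a \mapsto \neg a$ gives a bijection $(e^\sbF] \to [f^\sbF)$, so one may equivalently enumerate the principal filter generated by $f^\sbF$. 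This is the more convenient side to count, since the law $f^3 = f^2$ from (\ref{cube}), together with (\ref{square increasing cor2}) and the fact that $\sbF$ is bounded with top $f^{2,\sbF}$ (by Theorem~\ref{dmm fsi}(\ref{rigorously compact generation}) applied to the $0$-generated FSI quotients), constrains which closed terms can land above $f^\sbF$.

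Third, I would perform the actual tally using Slaney's list, grouping elements by their position in the layered structure of $\sbF$ (elements below $e$ that are also below $f$ form one stratum, those incomparable with $f$ form another), and check that the total comes to $68$. Together with Theorem~\ref{min qv = lower bounds}, this yields the corollary.

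The main obstacle is that this count is a finite but genuinely combinatorial verification, inherited from \cite{Sla85} rather than derivable by the abstract methods developed in the earlier sections of this paper. Any proof therefore unavoidably invokes Slaney's enumeration as a black box; my proposal does not try to reprove it, but rather to package its output through the bijection provided by Theorem~\ref{min qv = lower bounds} and the involutive symmetry of $\sbF$.
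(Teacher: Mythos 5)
Your reduction to counting $\lvert (e^{\sbF}]\rvert$ in $\sbF=\sbF_{\mathsf{DMM}}(0)$ via Theorem~\ref{min qv = lower bounds} is exactly the paper's first step, but after that your proposal stops short of a proof: the decisive step is phrased as ``perform the actual tally using Slaney's list \dots and check that the total comes to $68$,'' with no argument for why the total is $68$. Passing to $[f^{\sbF})$ by the involution, and the constraints $f^3=f^2$, (\ref{square increasing cor2}), etc., do not by themselves yield a count; they only relabel the problem. So the number $68$ is never actually derived.

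The paper avoids any brute-force tally of the $3088$ elements by using the \emph{structural} form of Slaney's description (see Remark~\ref{68 remark}): $\sbF\cong\mathbf{2}\times\sbD_4\times\sbA$, where $\sbA$ is a skew reflection of a direct product of four Dunn monoids, and in each of $\mathbf{2}$, $\sbD_4$ and the four Dunn monoid factors, $e$ has exactly one strict lower bound. Two general observations then finish the count: lower bounds of $e$ in a direct product are exactly the tuples of lower bounds in the factors, so the counts multiply; and a skew reflection $\textup{S}^\leqslant(\sbB)$ adds precisely one new lower bound of $e$, namely $\0$, to those already present in $\sbB$ (elements of $B'$ never lie below elements of $B$). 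This gives $2\times 2\times\bigl((2\times 2\times 2\times 2)+1\bigr)=68$. Your proposal is missing this multiplicative/skew-reflection mechanism, which is the actual content of the corollary's proof beyond Theorem~\ref{min qv = lower bounds}; it is also why the paper defers the argument until skew reflections have been defined.
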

\begin{proof}
By Theorem~\ref{min qv = lower bounds}, we need to show that $e$ has just $68$ lower bounds in $\sbF_\mathsf{DMM}(0)$.
The argument will be given in Remark~\ref{68 remark}, after the notion of a `skew reflection' has been defined.
\end{proof}

%\begin{proof}
%Slaney \cite{Sla85} proved that $\sbF_\mathsf{DMM}(0)$ has just $3088$ elements.
%By the Homomorphism Theorem, every $0$--generated De Morgan monoid is isomorphic to a factor algebra of $\sbF_\mathsf{DMM}(0)$,
%so the result follows from Theorem~\ref{dmm minimal quasivarieties}.
%\end{proof}

%More can be said here.  Let $\sbF=\sbF_\mathsf{DMM}(0)$ and consider a factor algebra $\sbF/G$, where $G$ is a deductive filter
%of $\sbF$.  As $\sbF$ is finite, $G=[\al^\sbF)$ for some nullary term $\al$ in the language of IRLs, where $\al^\sbF\leqslant e^\sbF$.
%The function $\textup{$\al^\sbF\mapsto\mathbb{Q}(\sbF/[\al^\sbF))$}$ is therefore a well defined surjection
%from the lower bounds of $e^\sbF$ in $\sbF$ to the set consisting of the trivial subvariety (corresponding to the bottom element of $\sbF)$
%and the minimal subquasivarieties of $\mathsf{DMM}$, other than $\mathbb{V}(\sbS_3)$.  It is also an injection, for if $\sbF/[\al^\sbF)$ and
%$\sbF/[\be^\sbF)$ generate the same quasivariety, where $\al^\sbF,\beta^\sbF\leqslant e^\sbF$,
%then there is an isomorphism $\textup{$g\colon\sbF/[\al^\sbF)\cong\sbF/[\be^\sbF)$}$, by
%Lemma~\ref{minimal quasivarieties}(\ref{minimal quasivarieties 1.5}).  In this case,
%\[
%g(\be^\sbF/[\al^\sbF))=g(\be^{\sbF/[\al^\sbF)})=\be^{\sbF/[\be^\sbF)}=e^{\sbF/[\be^\sbF)}=g(e^\sbF/[\al^\sbF)),
%\]
%but $g$ is injective, so $\be^\sbF=\be^\sbF\leftrightarrow e^\sbF\in [\al^\sbF)$, i.e., $\al^\sbF\leqslant\be^\sbF$ and, by symmetry,
%$\al^\sbF=\be^\sbF$.  This yields the following conclusion.

\section{Crystalline Varieties}\label{crystalline varieties}

We begin this section with some general observations about retracts, that will be needed later.

Recall that an algebra $\sbA$ is said to be a {\em retract\/} of an algebra $\sbB$ if there are homomorphisms $g\colon\sbA\mrig\sbB$ and $h\colon\sbB\mrig\sbA$
such that $h\circ g$ is the identity function $\textup{id}_A$ on $A$.  This forces $g$ to be injective and $h$ surjective; we refer
to $h$ as a \emph{retraction} (of $\sbB$ onto $\sbA$).  The composite of two retractions, when defined, is clearly still a retraction.
\begin{remk}\label{retract remark2}
\textup{Given
%For every homomorphism of algebras $f\colon\sbA\mrig\sbB$,
similar algebras $\sbA$ and $\sbB$, the first canonical projection $\pi_1\colon\sbA\times\sbB\mrig\sbA$
is a retraction iff
%whenever
there exists a homomorphism $f\colon\sbA\mrig\sbB$.  (Sufficiency:
%For in this case,
as $\textup{id}_A$ and $f$ are homomorphisms, so is the function $g$ from
%\colon
$\sbA$
%\mrig
to $\sbA\times\sbB$ defined
by $a\mapsto\langle a,f(a)\rangle$, and clearly $\pi_1\circ g=\textup{id}_A$.)
%It follows that
Consequently, if an algebra
$\sbC$ is a retract of every member of a class $\mathsf{K}$, then $\sbD$ is a retract of $\sbD\times\sbE$
for all $\sbD,\sbE\in\mathsf{K}$, because there is always a composite homomorphism from $\sbD$ to $\sbE$ (whose
image is isomorphic to $\sbC$).}
\end{remk}
\begin{remk}\label{retract remark}
\textup{\,A \,$0$--generated algebra $\sbA$ is a
retract of an algebra $\sbB$ if there exist homomorphisms $g\colon\sbA\mrig\sbB$ and $h\colon\sbB\mrig\sbA$.  For in this case, every element of $A$ has the form $\al^{\sbA}(c_1,\dots,c_n)$
for some term $\al$ and some {\em distinguished\/} elements $c_i\in A$, whence $h\circ g=\textup{id}_A$, because homomorphisms preserve distinguished elements (and respect terms).}
\end{remk}
\begin{lem}\label{retract lemma}
Let\/ $\mathsf{K}$ be a variety of finite type, and let\/ $\sbA\in\mathsf{K}$ be finite, simple and\/ $0$--generated.  Then the following conditions are equivalent.
\begin{enumerate}
\item\label{retract 2}
$\sbA$ is a retract of every nontrivial member of\/ $\mathsf{K}$.

%\smallskip

\item\label{retract 1}
Every simple algebra in\/ $\mathsf{K}$ is isomorphic to $\sbA$ and embeds into every nontrivial member of\/ $\mathsf{K}$.
\end{enumerate}
\end{lem}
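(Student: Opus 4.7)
The plan is to establish the two implications separately, with (ii) $\Rightarrow$ (i) being the substantive direction.

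For (i) $\Rightarrow$ (ii), I would proceed as follows. Let $\sbC \in \mathsf{K}$ be simple; being simple, it is nontrivial (by convention), so (i) supplies a retraction $h\colon \sbC \mrig \sbA$. Since $h$ is surjective and $\sbC$ is simple, $\ker h$ is either the diagonal or $C \times C$; the latter would force $h$ to be constant and therefore $\sbA$ to be trivial, contradicting its simplicity. Hence $h$ is an isomorphism, so $\sbC \cong \sbA$. The embedding claim is immediate, since for any nontrivial $\sbB \in \mathsf{K}$, (i) supplies a section $\sbA \mrig \sbB$ accompanying the retraction, and every section is injective.

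For (ii) $\Rightarrow$ (i), fix a nontrivial $\sbB \in \mathsf{K}$. By Remark~\ref{retract remark}, since $\sbA$ is $0$-generated, it suffices to produce homomorphisms $\sbA \mrig \sbB$ and $\sbB \mrig \sbA$. The first is provided by (ii); as $\sbA$ is $0$-generated, its image is necessarily the smallest subalgebra $\sbA'$ of $\sbB$, so $\sbA' \cong \sbA$ is simple. To construct the second, I would exhibit a simple quotient of $\sbB$ directly. Let
\[
\Theta_0 = \{\theta \in {\boldsymbol{\mathit{Con}}}\,\sbB : \theta \cap (A' \times A') = \Delta_{A'}\}.
\]
Then $\Delta_B \in \Theta_0$, and $\Theta_0$ is closed under unions of chains, because intersection with the fixed set $(A')^2$ commutes with directed unions of binary relations. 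Zorn's lemma supplies a maximal element $\theta^* \in \Theta_0$. The claim is that $\sbB/\theta^*$ is simple. To see this, let $\theta$ be a congruence of $\sbB$ strictly extending $\theta^*$. By maximality of $\theta^*$ we have $\theta \notin \Theta_0$, so $\theta \cap (A')^2 \neq \Delta_{A'}$; as $\sbA'$ is simple, $\theta \cap (A')^2 = (A')^2$, meaning $\theta$ collapses $\sbA'$ to a point in $\sbB/\theta$. If $\sbB/\theta$ were nontrivial, then (ii) would embed $\sbA$ into $\sbB/\theta$, with image equal to the smallest subalgebra of $\sbB/\theta$; but that smallest subalgebra is the collapsed image of $\sbA'$, which is trivial, contradicting $\sbA$ nontrivial. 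Hence $\theta = B \times B$, so $\theta^*$ is a maximal proper congruence, $\sbB/\theta^*$ is simple, and by (ii), $\sbB/\theta^* \cong \sbA$. Composition with the quotient map yields the required homomorphism $\sbB \mrig \sbA$.

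The main obstacle is producing a homomorphism $\sbB \mrig \sbA$ when $\sbB$ is not finitely generated: Theorem~\ref{jonsson's other theorem} guarantees simple quotients only for finitely generated algebras, so it does not apply to $\sbB$ directly. The Zorn argument above bypasses this by leveraging the rigidity that (ii) imposes on quotients of $\sbB$---namely, the smallest subalgebra of any nontrivial quotient must again be (isomorphic to) $\sbA$---which, combined with the simplicity of $\sbA'$, upgrades maximality within $\Theta_0$ to genuine maximality among proper congruences.
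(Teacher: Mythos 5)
Your proof is correct, and the two directions split as follows: your argument for (i)\,$\Rightarrow$\,(ii) is the same as the paper's, but your argument for (ii)\,$\Rightarrow$\,(i) is genuinely different. The paper first applies Theorem~\ref{jonsson's other theorem} to conclude that $\sbA$ is a homomorphic image of every finitely generated nontrivial member of $\mathsf{K}$, then embeds an arbitrary nontrivial $\sbB$ into an ultraproduct of its finitely generated nontrivial subalgebras, pushes the quotient maps through via $\mathbb{P}_\mathbb{U}\mathbb{H}(\mathsf{L})\subseteq\mathbb{HP}_\mathbb{U}(\mathsf{L})$, and finally uses the finiteness of $\sbA$ to identify the resulting ultrapower of $\sbA$ with $\sbA$ itself. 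You instead manufacture the required homomorphism $\sbB\mrig\sbA$ directly: Zorn's lemma yields a congruence $\theta^*$ maximal among those restricting to the identity on the least subalgebra $\sbA'\cong\sbA$ of $\sbB$, and the simplicity of $\sbA'$, together with the observation that every nontrivial quotient of $\sbB$ (lying in $\mathsf{K}$, since $\mathsf{K}$ is a variety) must have a nontrivial $0$--generated subalgebra isomorphic to $\sbA$, forces every congruence strictly above $\theta^*$ to be $B\times B$; since $\theta^*$ itself is proper (it separates the points of $\sbA'$), the quotient $\sbB/\theta^*$ is simple and hence isomorphic to $\sbA$ by (ii). Your route buys something real: it dispenses with ultraproducts and with Theorem~\ref{jonsson's other theorem}, and in this direction it never uses the finiteness of $\sbA$ or the finite type of $\mathsf{K}$ -- only that $\sbA$ is nontrivial, simple and $0$--generated -- so it proves a slightly more general statement. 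What the paper's route buys is economy of means within its own ecosystem: Theorem~\ref{jonsson's other theorem} and the ultraproduct calculus are already deployed repeatedly elsewhere (e.g.\ in Theorem~\ref{u and n quasivarieties}), so the published proof is a short exercise in machinery the reader has already absorbed.
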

\begin{proof}
(\ref{retract 2})$\;\Rig\;$(\ref{retract 1}):
\,For each simple $\sbC\in\mathsf{K}$, there is a homomorphism $h$ from $\sbC$ onto $\sbA$, by (\ref{retract 2}), and $h$ must be an isomorphism (as $\sbA$ is nontrivial
and $\sbC$ is simple).  Thus, the embedding claim also follows from (\ref{retract 2}).

(\ref{retract 1})$\;\Rig\;$(\ref{retract 2}):
\,By (\ref{retract 1}) and
%Theorem~\ref{kollar and jonsson}(
Theorem~\ref{jonsson's other theorem}, $\sbA$ is a homomorphic image of every finitely generated nontrivial
member of $\mathsf{K}$.  Consider an arbitrary nontrivial algebra $\sbB\in\mathsf{K}$.  By (\ref{retract 1}), $\sbA\in\mathbb{IS}(\sbB)$.  Like any nontrivial algebra, $\sbB$ embeds
into an ultraproduct $\sbU$ of finitely generated nontrivial subalgebras $\sbB_i$ of $\sbB$ (cf.\ \cite[Thm.~V.2.14]{BS81}).  As $\sbA\in\mathbb{H}(\sbB_i)$ for all $i$,
and as $\mathbb{P}_\mathbb{U}\mathbb{H}(\mathsf{L})\subseteq\mathbb{HP}_\mathbb{U}(\mathsf{L})$ for any class $\mathsf{L}$ of similar algebras, there is a homomorphism
$h$ from $\sbU$ onto an ultrapower of $\sbA$.  But $\sbA$, being finite, is isomorphic to all of its ultrapowers, so $h$ restricts to a homomorphism from $\sbB$ into $\sbA$.
Therefore, $\sbA$ is a retract of $\sbB$, by Remark~\ref{retract remark}.
\end{proof}

Generalizing the usage of \cite{Sla89}, we say that an IRL $\sbA$ is \emph{crystalline} if there is a homomorphism $h\colon\sbA\mrig\sbC_4$
(in which case $h$ is surjective).\footnote{\,For the sake of Theorem~\ref{u and n quasivarieties}, we have dropped
the requirement in \cite{Sla89} that crystalline algebras be FSI.}
Theorem~\ref{slaney onto c4} motivates the following definitions.
%We define
\begin{defn}\ \label{u and n}
\begin{enumerate}
\item\label{u}
%\begin{eqnarray*}
%&&
$\mathsf{W}\seteq\{\sbA\in\mathsf{DMM} : \,\left|A\right|=1 \textup{ \,or\, $\sbA$ is crystalline}\}$\textup{;}\vspace{0.75mm}
%\\
%\smallskip

\item\label{n}
%&&
$\mathsf{N\:}\seteq\{\sbA\in\mathsf{DMM} : \,\left|A\right|=1 \textup{ \,or\, $\sbC_4$ is a retract of $\sbA$}\}\subseteq\mathsf{W}$.
%\end{eqnarray*}
\end{enumerate}
\end{defn}
\noindent
By Lemma~\ref{at most one hom}(\ref{at most one hom 2}), the rigorously compact algebras in $\mathsf{W}$ are anti-idempotent.
Also, $\sbA$ is a retract of $\sbA\times\sbB$ for all nontrivial $\sbA,\sbB\in\mathsf{N}$, by Remark~\ref{retract remark2}.

\begin{thm}\label{u and n quasivarieties}
$\mathsf{W}$ and\/ $\mathsf{N}$ are quasivarieties.
\end{thm}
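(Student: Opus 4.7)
My plan is to establish closure of $\mathsf{W}$ and $\mathsf{N}$ under $\mathbb{I}$, $\mathbb{S}$, $\mathbb{P}$ and $\mathbb{P}_\mathbb{U}$.  Since $\sbC_4$ is $0$--generated, Remark~\ref{retract remark} shows that $\sbC_4$ is a retract of a nontrivial De Morgan monoid $\sbA$ iff there exist homomorphisms in both directions $\sbC_4\leftrightarrows\sbA$.  Thus (after adjoining trivial algebras) $\mathsf{N}$ is the class of De Morgan monoids $\sbA$ admitting both $\sbA\to\sbC_4$ and $\sbC_4\to\sbA$, while $\mathsf{W}$ requires only the former.  Both classes contain a trivial algebra by definition, so it suffices to verify the four closure properties; since $\mathsf{DMM}$ is a variety, I may work entirely inside $\mathsf{DMM}$.

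For $\mathbb{S}$:  If $\sbA\in\mathsf{W}$ is nontrivial, then $\sbA$ is non-idempotent (its image $\sbC_4$ is), so by Theorem~\ref{combined}(\ref{idempotence f and e 2}) every subalgebra $\sbB\leqslant\sbA$ is nontrivial.  The restricted map $h|_\sbB\colon\sbB\to\sbC_4$ has image a nontrivial subalgebra of the $0$--generated algebra $\sbC_4$, hence equals $\sbC_4$; so $\sbB\in\mathsf{W}$.  For $\mathsf{N}$, the retract embedding $\sbC_4\to\sbA$ has image the $0$--generated subalgebra of $\sbA$, which lies in every $\sbB\leqslant\sbA$; composing with the restricted crystallization yields a retraction of $\sbB$ onto $\sbC_4$.

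For $\mathbb{P}$ and $\mathbb{P}_\mathbb{U}$:  If the (ultra-)product $\sbA$ is trivial, we are done.  Otherwise, in the product case choose a nontrivial factor $\sbA_{i_0}$ and compose $\sbA\to\sbA_{i_0}\to\sbC_4$; in the ultraproduct case, \L{}o\'s's theorem places the set $S$ of nontrivial indices in $\mathcal{U}$, and the canonical isomorphism $\prod_\mathcal{U}\sbA_i\cong\prod_{\mathcal{U}|_S}\sbA_i$ lets me take every factor crystalline.  The product of the maps $h_i$ descends to $\prod_{\mathcal{U}|_S}\sbC_4\cong\sbC_4$ (using finiteness of $\sbC_4$), giving a homomorphism $\sbA\to\sbC_4$ whose image is $\sbC_4$ by the $0$--generatedness argument.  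For $\mathsf{N}$, the backward embeddings $g_i\colon\sbC_4\to\sbA_i$---extended by the unique trivial homomorphism at any trivial index in the direct-product case---assemble into a diagonal $g\colon\sbC_4\to\sbA$, injective because at least one $g_i$ is; Remark~\ref{retract remark} then supplies the retract identity $h\circ g=\textup{id}$ automatically.

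The main point requiring care is that a trivial IRL admits no homomorphism to $\sbC_4$ (there is no fixed point of $\neg$ in $\sbC_4$), so in the ultraproduct case one cannot naively take a product of crystallizing maps over all indices.  The ultrafilter-restriction trick, based on the fact that the set of nontrivial indices is $\mathcal{U}$--large whenever the ultraproduct is nontrivial, is exactly the remedy; beyond this, every step is routine and relies only on the finiteness and $0$--generatedness of $\sbC_4$, together with Theorem~\ref{combined}(\ref{idempotence f and e 2}) and Remark~\ref{retract remark}.
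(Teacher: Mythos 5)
Your proof is correct and follows essentially the same route as the paper's: closure under $\mathbb{S}$, $\mathbb{P}$ and $\mathbb{P}_\mathbb{U}$, driven by the finiteness and $0$--generatedness of $\sbC_4$ together with Remark~\ref{retract remark}. The only (harmless) divergences are that you handle trivial factors explicitly and construct the diagonal embedding of $\sbC_4$ into the ultraproduct by hand, where the paper instead cites the first-order definability of having a subalgebra isomorphic to $\sbC_4$.
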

\begin{proof}
As $\mathsf{W}$ and $\mathsf{N}$ are isomorphically closed, we must show that they are closed under
$\mathbb{S}$,
$\mathbb{P}$ and $\mathbb{P}_\mathbb{U}$, bearing
Remark~\ref{retract remark} in mind.  If $\sbB\in\mathbb{S}(\sbA)$ and $h\colon\sbA\mrig\sbC_4$ is a homomorphism, then so is $h|_B\colon\sbB\mrig\sbC_4$,
while any embedding $\sbC_4\mrig\sbA$ maps into $\sbB$, as $\sbC_4$ is $0$--generated.  Thus, $\mathsf{W}$
and $\mathsf{N}$ are closed under $\mathbb{S}$.
%Because $\sbC_4$ is $0$--generated, $\mathsf{N}$ is closed under subalgebras.
Let $\{\sbA_i\colon i\in I\}$ be a subfamily of $\mathsf{W}$, where, without loss of generality, $I\neq\emptyset$.
For any $j\in I$,
%we can compose
the
%$j$-th
projection $\prod_{i\in I}\sbA_i\mrig\sbA_j$ can be composed with a homomorphism
%from
$\sbA_j\mrig\sbC_4$, so $\prod_{i\in I}\sbA_i\in\mathsf{W}$.  If, moreover, $\sbA_i\in\mathsf{N}$ for all $i$, then
$\sbC_4$ embeds diagonally into $\prod_{i\in I}\sbA_i$,
%and for any $i\in I$, we can compose the $i$-th projection with a homomorphism from $\sbA_i$ to $\sbC_4$,
whence $\prod_{i\in I}\sbA_i\in\mathsf{N}$.  Every ultraproduct
of $\{\sbA_i\colon i\in I\}$ can be mapped into $\sbC_4$, as in the proof of Lemma~\ref{retract lemma}\,((\ref{retract 1})$\;\Rig\;$(\ref{retract 2})).
Also, as $\sbC_4$ is finite and of finite type, the property of having a subalgebra
isomorphic to $\sbC_4$ is first order-definable and therefore persists in ultraproducts.
Thus, $\mathsf{W}$ and $\mathsf{N}$ are closed under $\mathbb{P}$ and $\mathbb{P}_\mathbb{U}$.
%On the other hand, every ultraproduct of $\{\sbA_i\colon i\in I\}$ can be mapped into $\sbC_4$, as in the proof of Lemma~\ref{retract lemma}\,((\ref{retract 1})$\;\Rig\;$(\ref{retract 2})).
\end{proof}

Nevertheless, $\mathsf{W}$ and $\mathsf{N}$ are not varieties, i.e., they are not closed under $\mathbb{H}$.  To see this, consider any simple De Morgan monoid $\sbA$ of which
$\sbC_4$ is a proper subalgebra, and let $\sbB=\sbC_4\times\sbA$.
Then
%$\sbC_4$ embeds diagonally into $\sbB$ and is a projective image of $\sbB$, so
$\sbB\in\mathsf{N}$, by Remark~\ref{retract remark2}.  Now $\sbA\in\mathbb{H}(\sbB)$ but $\sbA\notin\mathsf{W}$, because
%there is no homomorphism from $\sbA$ into $\sbC_4$ (as
$\sbA$ is simple and not isomorphic to $\sbC_4$.  Concrete examples of finite simple $1$--generated
De Morgan monoids having $\sbC_4$ as sole proper subalgebra
are given in
%depicted at the beginning of
Section~\ref{other covers c4}.
%below (recall Lemma~\ref{fsi si simple}(\ref{simple})).
%infinitely many others are described in Example~\ref{primes}.

An [I]RL is said to be {\em semilinear\/} if it is a subdirect product of totally ordered algebras.  The semilinear De Morgan
monoids are axiomatized, relative to $\mathsf{DMM}$, by $e\leqslant (x\rig y)\vee(y\rig x)$ \,\cite{HRT02}.
%
%\medskip
%
The
%two of the above
examples in Section~\ref{other covers c4} show that even the
semilinear anti-idempotent algebras in $\mathsf{W}$ or $\mathsf{N}$
%satisfying $x\leqslant f^2$
do not form a variety.
Note that $\mathsf{N}$ contains (semilinear) algebras that are not anti-idempotent.
%satisfying $x\leqslant f^2$,
For instance, $\sbC_4\times\sbC_4^\#\in\mathsf{N}$ does not satisfy $x\leqslant f^2$, where
%$\sbC_4\times\sbC_4^\#$, where
$\sbC_4^\#$ denotes the rigorously compact extension of $\sbC_4$ by new extrema $\bot,\top$.
%Such exceptions cannot be rigorously compact algebras, however, by Lemma~\ref{at most one hom}(\ref{at most one hom 2}).

As $\mathsf{W}$ and $\mathsf{N}$ are not varieties, it is not obvious that either of them possesses a largest subvariety,
but we shall show that both do.  Purely equational axioms will be needed in the proof,
and the opaque postulate (\ref{q law}), which
abbreviates an equation, is introduced below for that reason.
The following convention helps to eliminate some burdensome notation.
%$\mathsf{U}$ is the largest subvariety of $\mathsf{W}$ and $\mathsf{M}$ the largest subvariety of $\mathsf{N}$,
%whence $\mathsf{M}\subseteq\mathsf{U}$.

%A square-increasing IRL is said to be \emph{anti-idempotent} if it satisfies
%$x\leqslant f^2$ (or equivalently, $\neg(f^2)\leqslant x$).  This terminology
%is justified by Theorem~\ref{combined}(\ref{sug cor}).

\begin{convention}\label{0 1 convention}
\textup{\,In an anti-idempotent IRL, we define\,
\[
\textup{$\1\seteq f^2$ \,\and\,
$\0\seteq\neg \1=\neg(f^2)$.}
\]
(These abbreviations will be used when they enhance readability, rather than always.  The typeface distinguishes
them from standard uses of $0,1$.)}
\end{convention}
%\noindent
%For obvious reasons, we tend to reserve these abbreviations for IRLs satisfying $x\leqslant 1$ (and hence $0\leqslant x$).

%-------

%We are now ready to show that the quasivariety $\mathsf{W}$ in Definition~\ref{u and n}(\ref{u}) has a largest subvariety.

\begin{defn}
\textup{\,We denote by $\mathsf{U}$ the variety of De Morgan monoids satisfying
%$x\leqslant \1$ and
\begin{eqnarray}
&& x^2\vee(\neg x)^2=\1 \label{to B or not to B}\\
&& \1\rig(x\vee y)\leqslant(\1\rig x)\vee(\1\rig y) \label{1 join irred}\\
%&& (\1\rig x^2)\bcdw(\1\rig(\neg y)^2)\bcdw(x\rig y)=0 \ \textup{ and} \label{not below}\\
&& \1\bcdw x\bcdw y\bcdw q(x)\bcdw q(y)\leqslant q(x\bcdw y)\wedge q(x\vee y)\wedge q(x\rig y)\wedge(\1\bcdw(x\rig y)), \label{q law}
\end{eqnarray}
where $q(x)\seteq \1\rig(\neg x)^2$.
\,(Note that $\mathsf{U}$ consists of anti-idempotent algebras, by (\ref{to B or not to B}), so
our use of the symbol $\1$ in this definition is justified.)}
\end{defn}

\begin{lem}\label{w + rig comp = u}
Every rigorously compact member of\/ $\mathsf{W}$ belongs to\/ $\mathsf{U}$\textup{.}
\end{lem}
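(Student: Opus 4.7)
The plan is as follows. Let $\sbA$ be a rigorously compact IRL in $\mathsf{W}$. If $|A|=1$ there is nothing to prove, so I assume $\sbA$ is nontrivial; then there is a homomorphism $h\colon\sbA\mrig\sbC_4$, necessarily surjective since $\sbC_4$ is $0$-generated. Lemma~\ref{at most one hom}(\ref{at most one hom 2}) gives $\0\leqslant a\leqslant\1$ for every $a\in A$, where $\0\seteq\neg(f^2)$ and $\1\seteq f^2$ are the extrema of $\sbA$, and Lemma~\ref{tight}(\ref{tight 1}) provides the tight preimages $h^{-1}[\{\1\}]=\{\1\}$ and $h^{-1}[\{\0\}]=\{\0\}$.

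Axiom (\ref{to B or not to B}) is verified by direct calculation on the four elements of $\sbC_4$; applying $h$ and using the tight preimage of $\1$ then lifts the identity to $\sbA$. For (\ref{1 join irred}), note that $\sbC_4$ is totally ordered, so Lemma~\ref{tight}(\ref{tight 3}) yields that $\1$ is join-irreducible in $\sbA$, and hence join-prime by distributivity. From $\1\rig z=\neg(\1\bcdw\neg z)$ and rigorous compactness, $\1\rig z=\1$ if $z=\1$ and $\1\rig z=\0$ otherwise. Hence if $\1\rig(x\vee y)=\1$, then $x\vee y=\1$, forcing $x=\1$ or $y=\1$ by join-primality and thus $(\1\rig x)\vee(\1\rig y)=\1$; the remaining case $\1\rig(x\vee y)=\0$ is immediate.

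The main work is (\ref{q law}). I would first observe that $q(a)\in\{\0,\1\}$ for every $a\in A$: direct calculation in $\sbC_4$ yields $q(\0)=q(e)=\1$ and $q(f)=q(\1)=\0$, so $h(q(a))\in\{\0,\1\}$, and the tight preimages force $q(a)\in\{\0,\1\}$. By rigorous compactness ($\1\bcdw b=\1$ whenever $b\neq\0$), the left-hand side $\1\bcdw x\bcdw y\bcdw q(x)\bcdw q(y)$ evaluates to $\0$ if any factor is $\0$ (in which case the inequality holds trivially), and to $\1$ otherwise. In the nontrivial case, $q(x)=q(y)=\1$ and $x,y\neq\0$; applying $h$ and using the tight preimage of $\0$ we get $h(x)=h(y)=e$. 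Computation in $\sbC_4$ then gives $h(x\bcdw y)=h(x\vee y)=h(x\rig y)=e$, so the tight preimages yield $q(x\bcdw y)=q(x\vee y)=q(x\rig y)=\1$ in $\sbA$, while $h(\1\bcdw(x\rig y))=\1$ gives $\1\bcdw(x\rig y)=\1$; hence the right-hand side equals $\1$. The main obstacle is simply the bookkeeping for (\ref{q law}); conceptually the entire proof turns on the observation that both $\1\rig(\cdot)$ and $q(\cdot)$ become $\{\0,\1\}$-valued under the combined hypotheses of crystallinity and rigorous compactness.
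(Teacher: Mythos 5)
Your proof is correct and rests on exactly the same engine as the paper's: the (surjective) homomorphism $h\colon\sbA\mrig\sbC_4$ together with the tight preimages $h^{-1}[\{\0\}]=\{\0\}$ and $h^{-1}[\{\1\}]=\{\1\}$ from Lemma~\ref{tight}(\ref{tight 1}). The paper packages the verification more uniformly---it notes that $\sbC_4$ satisfies $[\1\rig(x\vee y)]\rig[(\1\rig x)\vee(\1\rig y)]=\1$ and the analogous equation for (\ref{q law}), lifts these equations to $\sbA$ via the tight preimage of $\1$, and then recovers the inequalities by (\ref{t order})---whereas you unwind the same computation by hand, using the $\{\0,\1\}$-valuedness of $\1\rig(\cdot)$ and $q(\cdot)$; this is only a difference of bookkeeping, not of method.
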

\begin{proof}
Let $\sbA\in\mathsf{W}$ be rigorously compact.  We may assume that $\sbA$ is nontrivial, so there is a (surjective) homomorphism
from $\sbA$ to $\sbC_4$.
%, and $\sbA$ is anti-idempotent (as noted after Definition~\ref{u and n}).
%Then $\sbA$ satisfies $x\leqslant \1$, by Lemma~\ref{at most one hom}(\ref{at most one hom 2}).
%If $\mathsf{K}$ contains a nontrivial idempotent algebra, then it contains $\mathbf{2}$ or $\sbS_3$, by the remarks preceding Corollary~\ref{osm varieties},
%%Theorem~\ref{sug thm},
%but these algebras can't be mapped onto $\sbC_4$, a contradiction.  Therefore, $\mathsf{K}$ has no nontrivial idempotent member.
%Then $\mathsf{K}$ satisfies $0\leqslant x\leqslant \1$ (Corollary~\ref{sug cor}), so $\sbA$ is rigorously compact
%(Theorem~\ref{dm fsi rigorously compact}).
%%Therefore, $\1$ is join-irreducible in $\sbA$, by Lemma~\ref{tight}(\ref{tight 3}),
%%from which it follows easily that $\sbA$ satisfies (\ref{1 join irred}).
Because $\sbC_4$ satisfies (\ref{to B or not to B}),
\begin{eqnarray*}
&&
%\begin{center}
%$
\!\!\!\!\!\!\!\!\![\1\rig(x\vee y)]\rig [(\1\rig x)\vee(\1\rig y)]=\1 \ \textup{ and}
%$ and
%\\{\medskip}
\\
%%&&
%%
%\[
&& \!\!\!\!\!\!\!\!\![\1\bcdw x\bcdw y\bcdw q(x)\bcdw q(y)]\rig [q(x\bcdw y)\wedge q(x\vee y)\wedge q(x\rig y)\wedge(\1\bcdw(x\rig y))]=\1,
%\]
\end{eqnarray*}
it follows from Lemma~\ref{tight}(\ref{tight 1}) that $\sbA$ satisfies the same laws.\footnote{\,To validate the last equation in
$\sbC_4$ quickly, note that the premise of the implication is $\0$ unless $x$ and $y$ are both $e$, in which case both the premise and the
conclusion will be $\1$.}
Then $\sbA$ satisfies (\ref{1 join irred}) and (\ref{q law}),
by (\ref{t order}), because $e\leqslant \1$.  Thus, $\sbA\in\mathsf{U}$.
\end{proof}

\begin{thm}\label{t largest}
\,$\mathsf{U}$ is the largest subvariety of\/ $\mathsf{W}$\textup{,} i.e., $\mathsf{U}$ is the largest variety of crystalline (or trivial) De Morgan monoids.
%Consequently, $\mathsf{M}\subseteq\mathsf{U}$\textup{.}
\end{thm}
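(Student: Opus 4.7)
The plan is to prove two inclusions: $\mathsf{U}\subseteq\mathsf{W}$, and $\mathsf{K}\subseteq\mathsf{U}$ for every subvariety $\mathsf{K}$ of $\mathsf{W}$.

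The second inclusion is the easier one. Fix $n\in\omega$. The free algebra $F=\sbF_\mathsf{K}(n)$ is a subdirect product of its SI homomorphic images, each of which is $n$-generated, and hence rigorously compact by Theorem~\ref{dmm fsi}(\ref{rigorously compact generation}); since each lies in $\mathsf{K}\subseteq\mathsf{W}$, Lemma~\ref{w + rig comp = u} places it in $\mathsf{U}$, and so $F\in\mathsf{U}$ (as $\mathsf{U}$ is a variety). Thus every equation of $\mathsf{U}$ in at most $n$ variables is satisfied in $\mathsf{K}$, and letting $n$ range over $\omega$ yields $\mathsf{K}\subseteq\mathsf{U}$.

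For $\mathsf{U}\subseteq\mathsf{W}$, Theorem~\ref{u and n quasivarieties} makes $\mathsf{W}$ a quasivariety, so Birkhoff's Subdirect Decomposition Theorem reduces the task to showing that every nontrivial SI member $\sbB$ of $\mathsf{U}$ is crystalline. Equation~(\ref{to B or not to B}) forces $b\leqslant\1=f^2$ for every $b\in B$, so $\sbB$ is bounded with extrema $\0,\1$, and hence rigorously compact by Theorem~\ref{dmm fsi}(\ref{dm fsi rigorously compact}). In this rigorously compact setting, $\1\rig a$ equals $\1$ when $a=\1$ and $\0$ otherwise, so~(\ref{1 join irred}) translates into join-primeness of $\1$ in $\sbB$ (and, dually via $\neg$, into meet-primeness of $\0$). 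Combining join-primeness of $\1$ with~(\ref{to B or not to B}) gives $B=T\cup\neg T$, where $T=\{b\in B:b^2=\1\}$ and $\neg T=\{\neg b:b\in T\}$. Applying~(\ref{q law}) with $x=y=a$ to any $a\in T\cap\neg T$ yields a contradiction, since the right-hand side is bounded above by $q(a^2)=q(\1)=\0$ while the left-hand side collapses to $\1$; thus $T\cap\neg T=\emptyset$. I then define $h\colon\sbB\to\sbC_4$ to send $\0,\1$ to themselves, the elements of $\neg T\setminus\{\0\}$ to $e$, and those of $T\setminus\{\1\}$ to $f$.

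The main obstacle is verifying that $h$ is a homomorphism. Preservation of $\neg$ and the constants is immediate from the construction. The crucial role of~(\ref{q law}) is this: in the rigorously compact setting, $q(a)=\1$ iff $a\in\neg T$, while $\1\bcdw a=\1$ iff $a\neq\0$, so~(\ref{q law}) reduces to the statement that whenever $a,b\in\neg T\setminus\{\0\}$, each of $a\bcdw b$, $a\vee b$, and $a\rig b$ lies in $\neg T$, and $a\rig b\neq\0$; meet-primeness of $\0$ (together with $a\wedge b\leqslant a\bcdw b$, from~(\ref{square increasing cor})) then upgrades the first three to membership in $\neg T\setminus\{\0\}$. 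Applying $\neg$ lifts this closure to $T\setminus\{\1\}$. A case analysis on the block-memberships of $a$ and $b$—dispatching the boundary cases via join-primeness of $\1$ and meet-primeness of $\0$, and the interior cases via the two closure statements—completes the verification of homomorphicity for each of $\bcdw$, $\wedge$, $\vee$, $\rig$. This shows $\sbB\in\mathsf{W}$, and hence $\mathsf{U}\subseteq\mathsf{W}$.
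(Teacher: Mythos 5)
Your proof is correct and follows essentially the same route as the paper: the maximality direction rests on Lemma~\ref{w + rig comp = u} applied to finitely generated SI members, and the containment $\mathsf{U}\subseteq\mathsf{W}$ is obtained by using (\ref{to B or not to B})--(\ref{q law}), rigorous compactness, and the irreducibility of the bounds to partition a nontrivial SI algebra into $\{\0\}$, $\{\1\}$ and two $\neg$-conjugate blocks that map onto $e$ and $f$ in $\sbC_4$. The only real variation is that you obtain the disjointness of the two blocks directly by substituting $x=y=a$ into (\ref{q law}), whereas the paper deduces it from closure of the block under $\rig$; both work.
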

\begin{proof}
To see that $\mathsf{U}\subseteq\mathsf{W}$, let $\sbA\in\mathsf{U}$ be SI.  It suffices to show that $\sbA\in\mathsf{W}$, because $\mathsf{W}$, like
any quasivariety, is closed under $\mathbb{IP}_\mathbb{S}$.
Now $\sbA$ is nontrivial and bounded by $\0,\1$, so $\0<e\leqslant \1$ and
$\sbA$ is rigorously compact, by Theorem~\ref{dmm fsi}(\ref{dm fsi rigorously compact}).  It follows from (\ref{1 join irred}),
Lemma~\ref{fsi si simple}(\ref{prime}) and (\ref{t order}) that $\1$ is join-irreducible (whence $\0$ is meet-irreducible) in $\sbA$.
Let
\[
B=\{a\in A : \,a\neq \0 \textup{ \,and\, } (\neg a)^2=\1\} \textup{ \,and\, } B'=\{a\in A : \neg a\in B\}.
\]
Then
%Note that
$e\in B$ (by definition of $\1$) and
%that
$\1\notin B$ (as $\0^2=\0\neq \1$), so $e<\1$.

We claim that $B$ is closed under the operations $\bcdw,\rig,\wedge,\vee$ of $\sbA$.
Indeed, let $b,c\in B$, so $\0<b,c\in A$ and $(\neg b)^2=\1=(\neg c)^2$, i.e., $q(b)=\1=q(c)$.  Then $b\wedge c\neq \0$ and $(\neg(b\wedge c))^2=\1$
(because $(\neg(b\wedge c))^2\geqslant(\neg b)^2$), so $b\wedge c\in B$.  Clearly, $b\vee c\neq \0$.  Also, $b\bcdw c\neq \0$, by (\ref{square increasing cor}),
and $\1\bcdw b\bcdw c\bcdw q(b)\bcdw q(c)=\1$, by rigorous compactness.  Then, by (\ref{q law}), each of $q(b\bcdw c),q(b\vee c),q(b\rig c)$ and $\1\bcdw(b\rig c)$
is $\1$.  Thus, $\1=(\neg(b\bcdw c))^2=(\neg(b\vee c))^2=(\neg(b\rig c))^2$, again by rigorous compactness, and $b\rig c\neq \0$.  This shows that
$b\bcdw c,\,b\vee c,\,b\rig c\in B$, as claimed.

Let $a\in A\bs\{\0,\1\}$.  Since $\1$ is join-irreducible, (\ref{to B or not to B}) shows that $a\in B$ or $\neg a\in B$, i.e., $a\in B\cup B'$.
Suppose $a\in B\cap B'$, i.e., $a,\neg a\in B$.  Then
%$(\neg a \rig a)^2=\1$, as $B$ is closed under $\rig$.  But
$\neg a\rig a=\neg((\neg a)^2)=\neg \1$ (as $a\in B$) $=\0$, so $(\neg a\rig a)^2=\0\neq \1$, so $\neg a\rig a\notin B$,
contradicting the fact that $B$ is closed under $\rig$.  Therefore, $A$ is the disjoint union of $B$, $B'$, $\{\0\}$ and $\{\1\}$.

Suppose
%Let
$b,c\in B$, with
% and suppose
$\neg c\leqslant b$.  Then $b\neq \1$, so $\neg b\neq \0$ and $b^2\geqslant(\neg c)^2=\1$, so $b^2=\1$, hence $\neg b\in B$,
i.e., $b\in B\cap B'=\emptyset$, a contradiction.  Thus, no element of $B$ has a lower bound in $B'$.  This, together with the meet-
[resp.\ join-] irreducibility of $\0$ [resp.\ $\1$], shows that $b\wedge d\in B$ and $b\vee d\in B'$ for all $b\in B$ and $d\in B'$.

Let $h\colon A\mrig C_4$ be the function such that $h(\0)=\0$ and $h(\1)=\1$ and $h(b)=e$ and $h(\neg b)=f$ for all $b\in B$.
It follows readily from the above conclusions that $h$ is a homomorphism from $\sbA$ to $\sbC_4$, so $\sbA\in\mathsf{W}$, as required.

Finally, let $\mathsf{K}$ be a subvariety of $\mathsf{W}$.  The finitely generated SI algebras in $\mathsf{K}$ are rigorously
compact, by Theorem~\ref{dmm fsi}(\ref{rigorously compact generation}),
%{dm fsi rigorously compact},
so they belong to $\mathsf{U}$, by Lemma~\ref{w + rig comp = u}.
Thus, $\mathsf{K}\subseteq\mathsf{U}$.
%
%, and let $\sbA\in\mathsf{K}$ be SI, with $h\colon\sbA\mrig\sbC_4$ a
%(surjective) homomorphism.  It suffices to show that $\sbA\in\mathsf{U}$, as that will yield $\mathsf{K}\subseteq\mathsf{U}$.
%Now $\mathsf{K}$ has no nontrivial idempotent member, by Theorem~\ref{idempotence f and e}, because an algebra satisfying
%$f\leqslant e$ cannot be mapped homomorphically onto $\sbC_4$, which satisfies $e<f$.
%Thus, $\mathsf{K}$ satisfies $\0\leqslant x\leqslant \1$ (Corollary~\ref{sug cor}), so $\sbA$ is rigorously compact
%(Theorem~\ref{dm fsi rigorously compact}).  Consequently, $\sbA\in \mathsf{U}$, by Lemma~\ref{w + rig comp = u}.
\end{proof}

\begin{remk}\label{jamie remark}
\textup{In $\sbC_4$, we have $f\rig a=\0$ iff $a\in\{\0,e\}$, while $a\rig e=\0$ iff $a\in\{f,\1\}$.  Therefore,
$\sbC_4$ satisfies $(f\rig x)\vee(x\rig e)\neq \0$, and hence also
\begin{equation}\label{jamie}
((f\rig x)\vee (x\rig e))\rig \0=\0.
\end{equation}
So, because every SI homomorphic image of a member of $\mathsf{U}$ is rigorously compact and crystalline,
%and since $\sbC_4$ satisfies
it follows from Lemma~\ref{tight}(\ref{tight 1}) that $\mathsf{U}$ satisfies (\ref{jamie}).  Note that $\mathsf{N}$ and $\mathsf{W}$
%(and therefore $\mathsf{W}$)
do not satisfy (\ref{jamie}), as (\ref{jamie}) fails in the algebra $\sbC_4\times\sbC_4^\#$ mentioned before Convention~\ref{0 1 convention}.}
\end{remk}
%$f\rig h(a)=0$ forces $h(a)\in\{0,e\}$, while $h(a)\rig e=0$ forces $h(a)\in\{f,1\}$

%-------

\begin{defn}\label{m defn}
\textup{\,We denote by $\mathsf{M}$ the variety of anti-idempotent De Morgan monoids satisfying $e\leqslant f$ and
%$x\leqslant \1$ and
(\ref{jamie}).}
%\begin{equation}\label{jamie}
%((f\rig x)\vee (x\rig e))\rig 0=0.
%\end{equation}
\end{defn}

%Before showing that $\mathsf{M}$ is the largest subvariety of $\mathsf{N}$, it is convenient to prove the following.
\begin{lem}\label{c4 only simple in m}
$\,\,\sbC_4$ is a retract of every nontrivial member of\/ $\mathsf{M}$\textup{.}
\end{lem}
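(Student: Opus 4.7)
The plan is to apply Lemma~\ref{retract lemma} with $\mathsf{K}=\mathsf{M}$ and test algebra $\sbC_4$. First I would check that $\sbC_4\in\mathsf{M}$: it is finite, simple, and $0$-generated by Theorem~\ref{0 gen simples}, it is anti-idempotent with $e<f$, and it satisfies (\ref{jamie}) by Remark~\ref{jamie remark}. It then remains to verify the two conditions in Lemma~\ref{retract lemma}(ii): that $\sbC_4$ embeds into every nontrivial member of $\mathsf{M}$, and that every simple algebra in $\mathsf{M}$ is isomorphic to $\sbC_4$.

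The embedding condition is immediate: a nontrivial $\sbA\in\mathsf{M}$ is anti-idempotent with $e\leqslant f$, so $e<f$ by Lemma~\ref{pre m}(\ref{pre m 1}), and then $\sbC_4$ embeds into $\sbA$ by Lemma~\ref{pre m}(\ref{pre m 0}).

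For the simple-implies-$\sbC_4$ condition I would take a simple $\sbA\in\mathsf{M}$. By Theorem~\ref{dmm fsi}(\ref{dm fsi rigorously compact}) it is rigorously compact with $\bot=\0=\neg(f^2)$ and $\top=\1=f^2$, and anti-idempotence forbids $f=f^2$ (which would force the algebra to be idempotent), so $\0<e<f<\1$ is a copy of $\sbC_4$ in $\sbA$. Supposing for contradiction that some $a\in A\setminus\{\0,e,f,\1\}$ exists, I would use the FSI splitting $A=[e)\cup(f\,]$ (Theorem~\ref{dmm fsi}(\ref{splitting})) and the involution to assume $a>e$. Theorem~\ref{combined}(\ref{anti-idempotent 2}) gives $a\rig e=\0$, and combining this with (\ref{jamie}) and the rigorous-compactness identity $y\rig\0=\0$ iff $y\neq\0$ yields $f\rig a\neq\0$; simplicity then rules out $a\geqslant f$. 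Applying (\ref{jamie}) at $\neg a$ shows $\neg a\not>e$ (else $f\rig\neg a=a\rig e=\0$ and $\neg a\rig e=\0$ would collapse (\ref{jamie})), so $\neg a$ is incomparable with $e$ and hence $\neg a\wedge e=\0$ by simplicity. Applying (\ref{jamie}) once more at $a\wedge f\geqslant e$ and computing $f\rig(a\wedge f)=(\neg a\vee e)\rig e=(\neg a\rig e)\wedge e=\0$ then forces $a\wedge f=e$, whose involutive image is $\neg a\vee e=f$. Thus $\neg a$ is a complement of $e$ in the distributive interval $[\0,f]$.

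The decisive step is to exhibit a second complement. I would set $y:=\neg a\rig e$ and note that (\ref{jamie}) at $\neg a$ gives $y\neq\0$; running the dualized ``$\wedge f=e$'' argument at the weird element $\neg y>e$ would then give $y\vee e=f$, so $y$ is also a complement of $e$ in $[\0,f]$. Uniqueness of complements in a distributive lattice forces $y=\neg a$, i.e., $\neg a\rig e=\neg a$, whence $(\neg a)^2\leqslant e$ by residuation. Rigorous compactness and simplicity then give $(\neg a)^2=e$, and the square-increasing law forces $\neg a\leqslant(\neg a)^2=e$, contradicting the incomparability of $\neg a$ with $e$. The hardest part of the argument is the dualized chase through (\ref{jamie}) needed to establish $y\vee e=f$; once that is in hand, the distributive-uniqueness argument delivers the contradiction cleanly.
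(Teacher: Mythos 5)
Your proposal is correct, but the heart of the argument---showing that every simple member of $\mathsf{M}$ is isomorphic to $\sbC_4$---follows a genuinely different route from the paper's. Both proofs reduce to that step in the same way, via Lemma~\ref{retract lemma} together with Lemma~\ref{pre m}(\ref{pre m 1}),(\ref{pre m 0}). The paper then passes from (\ref{jamie}) to its De Morgan dual $(f\bcdw x)\wedge(f\bcdw\neg x)\neq\1$ and, using the consequence $f\vee x\leqslant f\bcdw x$ of $e\leqslant f$ (for $x\geqslant e$), first forces $[e,f]=\{e,f\}$ and then shows every element is comparable with $e$ by tracking where $f\bcdw a$ and $f\bcdw\neg a$ can lie. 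You instead keep (\ref{jamie}) in residuated form, combine it with Theorem~\ref{combined}(\ref{anti-idempotent 2}) (which gives $a\rig e=\0$ whenever $e<a$) to show that any $a>e$ outside $C_4$ makes $\neg a$ a complement of $e$ in the distributive interval $[\0,f]$, and then manufacture a second complement $\neg a\rig e$; uniqueness of complements in a distributive lattice yields $\neg a\rig e=\neg a$, hence $(\neg a)^2\leqslant e$ by residuation, and the square-increasing law contradicts the incomparability of $\neg a$ with $e$. This complement trick is an appealing use of distributivity that the paper does not exploit, and it avoids the paper's separate treatment of the interval $[e,f]$. The one point you should spell out is the ``dualized'' run of the argument at $\neg y=\neg a\bcdw f$: one must verify that this element really is a weird element, i.e.\ $>e$ and distinct from $f$ and $\1$. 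That does follow from facts already in hand---$f\nleqslant a$ gives $\neg a\bcdw f\nleqslant f$ by (\ref{involution-fusion law}), so $\neg a\bcdw f>e$ by the splitting in Theorem~\ref{dmm fsi}(\ref{splitting}), while $\neg a\bcdw f=\1$ would force $y=\0$, which you have excluded---but it is not automatic and deserves a sentence. With that filled in, your argument is sound.
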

\begin{proof}
Because $\mathsf{M}$ satisfies $e\leqslant f$, it also satisfies
\begin{equation}
x\leqslant f\bcdw x,
\label{1}
\end{equation}
and therefore
\begin{equation}
e\leqslant x \;\Longrightarrow\; f\vee x\leqslant f\bcdw x. \label{2}
\end{equation}
As $\mathsf{M}$ satisfies (\ref{jamie}) and $\0\rig \0=\1$, its nontrivial members satisfy
\[
(f\rig x)\vee (x\rig e)\neq \0, \textup{ \ i.e., \ } \neg(f\bcdw\neg x)\vee\neg(f\bcdw x)\neq \0,
\]
or equivalently (by De Morgan's laws),
\begin{equation}\label{jamie 2}
(f\bcdw x)\wedge(f\bcdw \neg x)\neq \1.
\end{equation}

By Lemma~\ref{pre m}(\ref{pre m 1}),(\ref{pre m 0}), every nontrivial member of $\mathsf{M}$ satisfies $e<f$ and has a subalgebra isomorphic to $\sbC_4$.
%, and therefore satisfies $e<f$.
So, by Lemma~\ref{retract lemma}, it suffices to show that
every simple member of $\mathsf{M}$ is isomorphic to $\sbC_4$.  Suppose $\sbA\in\mathsf{M}$ is simple.
We may assume that $\sbC_4\in\mathbb{S}(\sbA)$.

%For $a,b\in A$,
%with $a\leqslant b$,
%we use $[a,b]$ to denote the interval $\{c\in A:a\leqslant c\leqslant b\}$.
We claim that the intervals $[\0,e]$, $[e,f]$ and $[f,\1]$ of $\sbA$ are doubletons, i.e.,
\begin{equation}\label{just two}
[\0,e]=\{\0,e\} \textup{ \,and\, } [e,f]=\{e,f\} \textup{ \,and\, } [f,\1]=\{f,\1\}.
\end{equation}
%(Here, of course, $[a,b]\seteq\{c\in A:a\leqslant c\leqslant b\}$.)
The first and third assertions in (\ref{just two}) follow from Lemma~\ref{fsi si simple}(\ref{simple}) and involution properties.
To prove the middle equation, suppose $a\in A$ with $e<a<f$.  As $f=\neg e$, it follows that $e<\neg a<f$ and, by (\ref{2}),
$f=f\vee a\leqslant f\bcdw a$.  As $e\bcdw a\nleqslant e$,
%=f\rig f$,
we have $f\bcdw a\nleqslant f$ (by (\ref{involution-fusion law})),
%(by residuation),
so $f<f\bcdw a$.  Then
$f\bcdw a=\1$, as $[f,\1]=\{f,\1\}$.  By symmetry, $f\bcdw\neg a=\1$, so $(f\bcdw a)\wedge(f\bcdw\neg a)=\1$,
contradicting (\ref{jamie 2}).  Therefore, $[e,f]=\{e,f\}$, as claimed.

To complete the proof,
it suffices to show that every element of $\sbA$ is comparable with $e$, as that will
imply, by involution properties, that every element is comparable with $f$, forcing $A=\{\0,e,f,\1\}=C_4$.

Suppose, on the contrary, that $a\in A$ is incomparable with $e$, i.e., $\neg a$ is incomparable with $f$.  As $a\nleqslant e$
and $\neg a\nleqslant f$, we have $e<e\vee a$ and $f<f\vee\neg a$, as well as $e\leqslant\neg a$ (by Theorem~\ref{dmm fsi}(\ref{splitting})),
i.e., $a\leqslant f$.
So, by (\ref{2}), $f\vee\neg a\leqslant f\bcdw\neg a$, hence $f<f\bcdw\neg a$, and so $f\bcdw\neg a=\1$,
because $[f,\1]=\{f,\1\}$.

Again, as $e\bcdw a\nleqslant e$,
%=f\rig f$,
we have $f\bcdw a\nleqslant f$, so $e\leqslant f\bcdw a$, by Theorem~\ref{dmm fsi}(\ref{splitting}).  This, with $e<f$,
gives $e\leqslant f\wedge(f\bcdw a)$.  Also, $a\leqslant f\bcdw a$, by (\ref{1}), so $a\leqslant f\wedge (f\bcdw a)$.  Therefore,
$e\vee a\leqslant f\wedge (f\bcdw a)$.

If we can argue that $f\wedge (f\bcdw a)<f$, then $e<e\vee a<f$, contradicting the fact that
$[e,f]=\{e,f\}$.  So, to finish the proof, it suffices to show that $f$ is incomparable with $f\bcdw a$, and we have already
shown that $f\bcdw a\nleqslant f$.  If $f<f\bcdw a$, then $f\bcdw a=\1$, as $[f,\1]=\{f,\1\}$, but since $f\bcdw\neg a=\1$, this
yields $(f\bcdw a)\wedge(f\bcdw\neg a)=\1$, contradicting (\ref{jamie 2}).  Therefore, $f$ and $f\bcdw a$ are indeed incomparable,
as required.
\end{proof}
\begin{thm}\label{m}
$\,\mathsf{M}$ is the largest subvariety of\/ $\mathsf{N}$\textup{.}
%i.e., it is the largest variety of De Morgan monoids satisfying condition~\textup{(\ref{psc main 4})} of Theorem~\textup{\ref{psc main}.}
\end{thm}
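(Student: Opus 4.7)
The plan is to establish the theorem by two inclusions. The inclusion $\mathsf{M}\subseteq\mathsf{N}$ is already in hand: Lemma~\ref{c4 only simple in m} says that $\sbC_4$ is a retract of every nontrivial member of $\mathsf{M}$, which is precisely the defining condition for membership in $\mathsf{N}$ (Definition~\ref{u and n}(\ref{n})). So the real content is the maximality claim: if $\mathsf{K}$ is any subvariety of $\mathsf{N}$, then $\mathsf{K}\subseteq\mathsf{M}$.

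To prove the maximality claim, I would verify that $\mathsf{K}$ satisfies each of the three defining postulates of $\mathsf{M}$ (cf.\ Definition~\ref{m defn}): anti-idempotence ($x\leqslant f^2$), the inequality $e\leqslant f$, and equation~(\ref{jamie}). For the first two, note that every SI member $\sbA$ of $\mathsf{K}$ is nontrivial, so, being in $\mathsf{N}$, it has $\sbC_4$ as a retract; by the definition of a retract this entails an embedding $\sbC_4\hookrightarrow\sbA$. Thus $\sbC_4$ embeds into every SI member of $\mathsf{K}$, and Lemma~\ref{pre m}(\ref{pre m 2}) then yields both anti-idempotence and $e\leqslant f$ as equations of $\mathsf{K}$.

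For equation~(\ref{jamie}), I would argue indirectly through $\mathsf{U}$. Since $\mathsf{N}\subseteq\mathsf{W}$ (Definition~\ref{u and n}(\ref{n})), we have $\mathsf{K}\subseteq\mathsf{W}$, and Theorem~\ref{t largest} then places $\mathsf{K}$ inside $\mathsf{U}$. Remark~\ref{jamie remark} records that $\mathsf{U}$ satisfies~(\ref{jamie}), so $\mathsf{K}$ does as well. Combining the three conclusions gives $\mathsf{K}\subseteq\mathsf{M}$, completing the proof.

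Because all of the key lemmas have been set up in advance, I do not anticipate a serious obstacle here; the argument is essentially a bookkeeping exercise, with the only mildly subtle step being the passage from ``SI member contains $\sbC_4$'' to the equational conclusions of Lemma~\ref{pre m}(\ref{pre m 2}), which in turn relies on the fact that $\sbC_4$ is $0$--generated so that a diagonal embedding into any subdirect product factors through the codomain. The rest is a matter of citing Theorem~\ref{t largest} and Remark~\ref{jamie remark} to extract the non-obvious equation~(\ref{jamie}) from the ambient structure.
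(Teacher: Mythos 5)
Your proposal is correct and follows essentially the same route as the paper: the inclusion $\mathsf{M}\subseteq\mathsf{N}$ via Lemma~\ref{c4 only simple in m}, anti-idempotence and $e\leqslant f$ for a subvariety $\mathsf{K}$ of $\mathsf{N}$ via Lemma~\ref{pre m}(\ref{pre m 2}), and equation~(\ref{jamie}) by passing through $\mathsf{N}\subseteq\mathsf{W}$, Theorem~\ref{t largest} and Remark~\ref{jamie remark}. The only cosmetic difference is that the paper treats $e\leqslant f$ as immediate rather than extracting it from Lemma~\ref{pre m}(\ref{pre m 2}); both justifications are valid.
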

\begin{proof}
By Lemma~\ref{c4 only simple in m}, $\mathsf{M}$ is a subvariety of $\mathsf{N}$.  Let $\mathsf{K}$ be any subvariety of $\mathsf{N}$.
Clearly, $\mathsf{K}$ satisfies $e\leqslant f$ and, by Lemma~\ref{pre m}(\ref{pre m 2}),
% and \ref{retract lemma},
%$\mathsf{K}$ satisfies $e\leqslant f$ and
its members are anti-idempotent.
%$x\leqslant \1$.
Now $\mathsf{K}$ is
a subvariety of $\mathsf{U}$, by Theorem~\ref{t largest}, because $\mathsf{N}\subseteq\mathsf{W}$.  By Remark~\ref{jamie remark},
%As we noted before Definition~\ref{m defn},
(\ref{jamie}) is satisfied by $\mathsf{U}$, so it holds in $\mathsf{K}$.  Thus, $\mathsf{K}\subseteq\mathsf{M}$.
%
%It remains to show that (\ref{jamie}) holds in $\mathsf{K}$.  Let $\sbA\in\mathsf{K}$ and $a\in A$.  By Corollary~\ref{rigorously compact generation},
%we may assume that $\sbA$ is rigorously compact, so it's enough to show that $(f\rig a)\vee (a\rig e)\neq \0$.  Suppose, on the contrary, that $(f\rig a)\vee (a\rig e)=\0$.
%Then $f\rig a=\0=a\rig e$.  By definition of $\mathsf{K}$, there is a homomorphism $h$ from $\sbA$ onto $\sbC_4$, so in $\sbC_4$, we have $f\rig h(a)=\0=h(a)\rig e$.
%But $f\rig h(a)=\0$ forces $h(a)\in\{\0,e\}$, while $h(a)\rig e=\0$ forces $h(a)\in\{f,1\}$, by definition of $\sbC_4$.  This contradiction
%shows that $\mathsf{K}\subseteq\mathsf{M}$, as required.
\end{proof}

\begin{cor}\label{m cor}
$\,\mathsf{M}$ is the class of all algebras in\/ $\mathsf{U}$ satisfying\/ $e\leqslant f$\textup{.}
In particular, $\mathsf{M}$ satisfies\/ \textup{(\ref{to B or not to B}), (\ref{1 join irred})} and\/ \textup{(\ref{q law}).}
%and is axiomatized, relative to\/ $\mathsf{U}$\textup{,} by\/ $e\leqslant f$\textup{.}
\end{cor}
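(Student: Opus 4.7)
The plan is to assemble the corollary from three ingredients already established: Theorem~\ref{m} (which places $\mathsf{M}$ inside $\mathsf{N}$), Theorem~\ref{t largest} (which places any subvariety of $\mathsf{W}$ inside $\mathsf{U}$), and Remark~\ref{jamie remark} (which observes that $\mathsf{U}$ itself satisfies \textup{(\ref{jamie})}). No new computation is needed.

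For the inclusion $\mathsf{M}\subseteq\{\sbA\in\mathsf{U}:\sbA\models e\leqslant f\}$, I would argue as follows. By Theorem~\ref{m}, $\mathsf{M}$ is a subvariety of $\mathsf{N}$, and Definition~\ref{u and n}(\ref{n}) gives $\mathsf{N}\subseteq\mathsf{W}$; hence $\mathsf{M}$ is a subvariety of $\mathsf{W}$. By Theorem~\ref{t largest}, every such subvariety lies in $\mathsf{U}$, so $\mathsf{M}\subseteq\mathsf{U}$. The equation $e\leqslant f$ is part of Definition~\ref{m defn}, so this half of the equality is complete. The \emph{in particular} claim also drops out at this point: since $\mathsf{M}\subseteq\mathsf{U}$, every axiom of $\mathsf{U}$---in particular \textup{(\ref{to B or not to B})}, \textup{(\ref{1 join irred})} and \textup{(\ref{q law})}---is satisfied by $\mathsf{M}$.

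For the reverse inclusion, let $\sbA\in\mathsf{U}$ satisfy $e\leqslant f$. The parenthetical remark in the definition of $\mathsf{U}$ records that axiom \textup{(\ref{to B or not to B})} forces $\sbA$ to be anti-idempotent; Remark~\ref{jamie remark} records that $\sbA$ satisfies \textup{(\ref{jamie})}. With the standing hypothesis $e\leqslant f$, these are exactly the defining conditions in Definition~\ref{m defn}, so $\sbA\in\mathsf{M}$. This completes the equality $\mathsf{M}=\{\sbA\in\mathsf{U}:\sbA\models e\leqslant f\}$.

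Since every step is essentially a citation of a prior result, there is no genuine obstacle; the only thing to be careful about is invoking Remark~\ref{jamie remark} rather than trying to rederive \textup{(\ref{jamie})} from the defining axioms of $\mathsf{U}$, which would duplicate work already done via the representation of SI members of $\mathsf{U}$ as rigorously compact crystalline algebras.
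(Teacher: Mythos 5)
Your proof is correct and follows exactly the route the paper intends: the paper states this corollary without a displayed proof, and the argument it relies on is precisely your chain of citations ($\mathsf{M}\subseteq\mathsf{N}\subseteq\mathsf{W}$ via Theorem~\ref{m} and Definition~\ref{u and n}, then $\mathsf{M}\subseteq\mathsf{U}$ via Theorem~\ref{t largest}; conversely anti-idempotence from (\ref{to B or not to B}) and (\ref{jamie}) from Remark~\ref{jamie remark}). Nothing is missing.
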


%Corollary~\ref{t largest cor} specializes as follows.

\begin{cor}\label{n vs m}
Every rigorously compact algebra in\/ $\mathsf{N}$ belongs to\/ $\mathsf{M}$\textup{.}
\end{cor}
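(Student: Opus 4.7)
The plan is to combine Lemma~\ref{w + rig comp = u} with Corollary~\ref{m cor}. Let $\sbA\in\mathsf{N}$ be rigorously compact. Since $\mathsf{N}\subseteq\mathsf{W}$ by Definition~\ref{u and n}, Lemma~\ref{w + rig comp = u} gives $\sbA\in\mathsf{U}$. By Corollary~\ref{m cor}, $\mathsf{M}$ consists exactly of the algebras in $\mathsf{U}$ that satisfy $e\leqslant f$, so it suffices to verify that inequality in $\sbA$.

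I would split into two cases. If $\sbA$ is trivial, then $e^\sbA=f^\sbA$ and the inequality holds vacuously; in any event, $\mathsf{M}$, being a variety, contains every trivial algebra. If $\sbA$ is nontrivial, then by Definition~\ref{u and n}(\ref{n}) the algebra $\sbC_4$ is a retract of $\sbA$, so the associated homomorphism $g\colon\sbC_4\mrig\sbA$ is injective. Because $g$ is a lattice homomorphism it preserves the order, and since $e<f$ in $\sbC_4$, we obtain $e^\sbA=g(e^{\sbC_4})\leqslant g(f^{\sbC_4})=f^\sbA$, as required.

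The main obstacle, such as it is, is conceptual: crystalline De Morgan monoids (members of $\mathsf{W}$) need not satisfy $e\leqslant f$ (witness $\sbC_4\times\sbC_4^\#$ from the discussion before Convention~\ref{0 1 convention}), so the argument genuinely relies on the retract hypothesis built into $\mathsf{N}$ and not merely on the existence of a surjection onto $\sbC_4$. It is the embedding of $\sbC_4$ into $\sbA$ supplied by the retract that transports $e<f$ from $\sbC_4$ up to $\sbA$.
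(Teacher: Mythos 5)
Your proof is correct and follows essentially the same route as the paper: Lemma~\ref{w + rig comp = u} (via $\mathsf{N}\subseteq\mathsf{W}$) places the algebra in $\mathsf{U}$, and Corollary~\ref{m cor} reduces the problem to checking $e\leqslant f$, which the embedding of $\sbC_4$ supplied by the retract delivers. The paper's proof is just the two citations, so your write-up merely makes explicit the step it leaves tacit.
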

\begin{proof}
This follows from Lemma~\ref{w + rig comp = u} and Corollary~\ref{m cor}.
\end{proof}

At this point in our account, $\mathsf{N}$ and $\mathsf{M}$ are organizational tools, suggested by
Theorem~\ref{slaney onto c4}.  They will assume an additional
significance when we discuss structural completeness in a subsequent paper.  (The structurally complete varieties of
De Morgan monoids fall into two classes---a denumerable family that is fully understood, and a more opaque family
of subvarieties of $\mathsf{M}$.)
%(see Section~\ref{psc dmm section}).

%\begin{cor}\label{slaney 2}
%Let\/ $\sbA\in\mathsf{M}$ be rigorously compact and nontrivial.
%\begin{enumerate}
%\item\label{unique hom}
%There is a unique (surjective) homomorphism from\/ $\sbA$ to\/ $\sbC_4$\textup{.}
%
%\smallskip
%
%\item\label{0 meet irreducible}
%In\/ $\sbA$\textup{,} the element\/ $\0=\neg(f^2)$ is meet-irreducible, i.e., $1=f^2$ is join-irreducible.
%\end{enumerate}
%\end{cor}
%\begin{proof}
%(\ref{unique hom}) follows from Lemmas~\ref{c4 only simple in m} and \ref{at most one hom}, while (\ref{0 meet irreducible}) follows from
%(\ref{unique hom}) and Lemma~\ref{tight}(\ref{tight 3}), as $\sbC_4$ is totally ordered.
%\end{proof}

%Before establishing that $\mathsf{W}$ has a largest subvariety, we need to discuss a construction.

\section{Skew Reflections and $\mathsf{U}$}
%and $\mathsf{M}$}
\label{skew reflections}

We are going to provide a representation theorem for
algebras in $\mathsf{U}$,
% or $\mathsf{M}$,
using ideas of Slaney \cite{Sla93}.\,\footnote{\,The nomenclature of \cite{Sla93} is untypical.  There, `De Morgan monoids' were not required to be distributive, and likewise the `Dunn monoids' of Definition~\ref{dunn monoid definition}.}
%As far as we know, however, \cite{Sla93} is unique in this respect.}

\begin{defn}\label{skew reflection definition}
\textup{Let $\sbB=\langle B;\bcdw^\sbB,\rig^\sbB,\wedge^\sbB,\vee^\sbB,e\rangle$ be a square-increasing RL,
%Dunn monoid,
with lattice order $\leqslant^\sbB$.
Let $B'=\{b':b\in B\}$ be a disjoint copy of the set $B$, let $\0,\1$ be distinct non-elements of $B\cup B'$, and let $S=B\cup B'\cup\{\0,\1\}$.
Let $\leqslant$ be a binary relation on $S$ such that}
\begin{enumerate}
\item\label{skew lattice}
\textup{\,$\leqslant$ is a lattice order whose restriction to $B^2$ is $\leqslant^\sbB$}
\setcounter{newexmp}{\value{enumi}}
\end{enumerate}
\textup{(the meet and join operations of $\langle S;\leqslant\rangle$ being denoted by $\wedge$ and $\vee$, respectively), and for all $b,c\in B$,}
\begin{enumerate}
\setcounter{enumi}{\value{newexmp}}
\item\label{skew antitone'}
\textup{\,$b'\leqslant c'$ \,iff\, $c\leqslant b$,}

%\smallskip

\item\label{skew residuation}
\textup{\,$b\leqslant c'$ \,iff\, $e\leqslant (b\bcdw^\sbB c)'$,}

%\smallskip

\item\label{skew asymmetry}
\textup{\,$b'\nleqslant c$,}
% \,and}

%\smallskip

\item\label{skew bounds}
\textup{\,$\0\leqslant b\leqslant \1$ and $\0\leqslant b'\leqslant \1$.}
\setcounter{newexmp}{\value{enumi}}
\end{enumerate}
\textup{The {\em skew\/} $\leqslant$--{\em reflection\/} $\textup{S}^\leqslant(\sbB)$ {\em of\/} $\sbB$ is the algebra $\langle S;\bcdw,\wedge,\vee,\neg,e\rangle$ such that}
\begin{enumerate}
\setcounter{enumi}{\value{newexmp}}
\item\label{skew comm}
\textup{\,$\bcdw$ is a commutative binary operation on $S$, extending $\bcdw^\sbB$,}

%\smallskip

\item\label{skew absorption}
\textup{\,$a\bcdw \0=\0$ \,for all $a\in S$, and if $\0\neq a\in S$, then $a\bcdw \1=\1$,}

%\smallskip

\item\label{skew fusion}
\textup{\,$b\bcdw c'=(b\rig^\sbB c)'$ \,and\, $b'\bcdw c'=\1$ \,for all $b,c\in B$,}

%\smallskip

\item\label{skew involution}
\textup{\,$\neg \0=\1$ \,and\, $\neg \1=\0$ \,and\, $\neg b=b'$ \,and\, $\neg (b')=b$ \,for all $b\in B$.}
\end{enumerate}
\textup{A {\em skew reflection of\/} $\sbB$ is any algebra of the form $\textup{S}^\leqslant(\sbB)$, where $\leqslant$ is a binary relation on $S$ satisfying (\ref{skew lattice})--(\ref{skew bounds}).
(Some examples are pictured before Lemma~\ref{3 cases}.)}
\end{defn}

Definition~\ref{skew reflection definition} is essentially due to Slaney \cite{Sla93}.  (In \cite{Sla93}, (\ref{skew residuation}) is formulated
in an ostensibly more general manner, as
\[
\textup{for all $a,b,c\in B$, we have \,$a\bcdw^\sbB b\leqslant c'$ \,iff\, $a\leqslant (b\bcdw^\sbB c)'$.}
\]
This follows from (\ref{skew residuation}), however.
% and the associativity of $\bcdw^\sbB$.
Indeed, for $a,b,c\in B$,
\[
\textup{$a\bcdw^\sbB b\leqslant c'$ \,iff\, $e\leqslant
%\left
((a\bcdw^\sbB b)\bcdw^\sbB c
%\right
)'=
%\left
(a\bcdw^\sbB (b\bcdw^\sbB c)
%\right
)'$ \,iff\, $a\leqslant(b\bcdw^\sbB c)'$.)}
\]
%It is normally quicker to verify (\ref{skew residuation}) in small examples.
\indent
By an {\em RL--subreduct\/} of an IRL $\sbA=\langle A;\bcdw,\wedge,\vee,\neg,e\rangle$, we mean a subalgebra of the RL--reduct $\langle A;\bcdw,\rig,\wedge,\vee,e\rangle$ of $\sbA$.

\begin{thm}\label{slaney fact 1}
\textup{(\cite[Fact~1]{Sla93})} \,A skew reflection\/ $\textup{S}^\leqslant(\sbB)$ of a square-increasing RL\/
%Dunn monoid\/
$\sbB$ is a square-increasing IRL, and $\sbB$ is an RL--subreduct
of\/ $\textup{S}^\leqslant(\sbB)$\textup{.}
\end{thm}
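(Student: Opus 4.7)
The plan is to verify, one by one, the axioms defining a square-increasing IRL for $\textup{S}^\leqslant(\sbB)$. Several are built into Definition~\ref{skew reflection definition}: condition~(i) makes $\langle S;\wedge,\vee\rangle$ a lattice whose restriction to $B^2$ coincides with that of $\sbB$ (using~(iv) to confirm that greatest lower bounds and least upper bounds of $B$-pairs, computed in $S$, remain in $B$); condition~(ix) forces $\neg\neg x=x$ by direct inspection of each type; and condition~(vi) gives commutativity of $\bcdw$. The substantive work is therefore to prove (a) that $e$ is an identity for $\bcdw$, (b) that $\bcdw$ is associative, (c) the involution--fusion law~(\ref{involution-fusion law}), (d) the inequality $x\leqslant x^2$, and (e) that $\sbB$ is an RL-subreduct.

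Each verification proceeds by case analysis on the type of each argument, namely whether it lies in $B$, in $B'$, or equals $\0$ or $\1$, making essential use of (vii) and (viii) to reduce `mixed' fusions to expressions computable inside $\sbB$. For (a): $e\bcdw b=b$ by (vi); $e\bcdw b'=(e\rig^\sbB b)'=b'$ by (viii) and the RL-law $e\rig^\sbB b=b$; and $e\bcdw\0=\0$, $e\bcdw\1=\1$ by (vii), noting $e\neq\0$. For (b), triples lying entirely in $B$ are associative by $\sbB$; triples containing $\0$ collapse to $\0$ on both sides; triples containing $\1$ but no $\0$ collapse to $\1$; and the remaining `mixed' cases reduce, via repeated application of (viii), to the RL-identity $a\rig^\sbB(b\rig^\sbB c)=(a\bcdw^\sbB b)\rig^\sbB c$ of~$\sbB$.

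The main obstacle is~(c), the law $x\bcdw y\leqslant z\;\Leftrightarrow\;\neg z\bcdw y\leqslant\neg x$, which requires a more delicate split. If either of $x,y$ is $\0$, or $z=\1$, both inequalities are automatic; if $z=\0$, or either of $x,y$ equals $\1$, both sides reduce (using (vii) together with the easy fact that $a\bcdw b=\0$ forces $a=\0$ or $b=\0$) to the same single condition. The remaining eight subcases have $x,y,z\in B\cup B'$ and are checked one at a time: in some, both sides rewrite (via (ii), (iii) and (viii)) to a common inequality $u\bcdw^\sbB v\leqslant w$ in~$\sbB$; in the others, both sides compare a $B'$-element with a $B$-element and so are both false by~(iv).

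Finally, (d) is immediate: $b\leqslant b^2$ holds in $\sbB$; $(b')^2=\1\geqslant b'$ by (viii) and (v); and $\0^2=\0$, $\1^2=\1$ by (vii). For (e), closure of $B$ under $\bcdw,\wedge,\vee$ follows from (vi) and the first paragraph; the derived residual on $\textup{S}^\leqslant(\sbB)$ agrees with $\rig^\sbB$ on $B$ by the short computation $b\rig c=\neg(b\bcdw c')=\neg((b\rig^\sbB c)')=b\rig^\sbB c$, using (viii) and (ix); and $e\in B$ by construction.
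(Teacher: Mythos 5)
The paper does not prove this statement at all: it is imported verbatim as Fact~1 of Slaney \cite{Sla93}, so there is no internal argument to compare against. Your direct, axiom-by-axiom verification is the natural (and essentially the only) way to establish it, and I find it correct. The checklist is complete for the paper's definition of an IRL (lattice, commutative monoid, $\neg\neg x=x$, law (\ref{involution-fusion law})) plus the square-increasing law and the subreduct claim, and the key observations you rely on are all sound: products of elements of $S\smallsetminus\{\0\}$ never equal $\0$ (needed for associativity with $\1$ and for the degenerate cases of (\ref{involution-fusion law})); and condition (iv) of Definition~\ref{skew reflection definition}, together with (v), does force meets and joins of $B$--pairs computed in $S$ to land in $B$ and hence agree with those of $\sbB$. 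Two spots in your write-up are slightly imprecise, though easily repaired. First, in the eight subcases of (c) with $x,y,z\in B\cup B'$, the ``both false'' instances are not always literally a comparison of a $B'$--element with a $B$--element: e.g.\ for $x,z\in B$ and $y\in B'$ the right-hand side is $\1\leqslant\neg x$, which fails because $\neg x\in B'$ cannot be $\1$, not by (iv). Second, in the subcase $x,y\in B$, $z\in B'$, the two sides do not reduce to an inequality $u\bcdw^\sbB v\leqslant w$ inside $\sbB$ but rather, via condition (iii), to statements of the form $e\leqslant(u\bcdw^\sbB v\bcdw^\sbB w)'$, which coincide by commutativity and associativity of $\bcdw^\sbB$. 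Neither point affects the validity of the argument.
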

\begin{remk}\label{skew remark}
\textup{In a skew reflection $\textup{S}^\leqslant(\sbB)$ of a square-increasing RL
%Dunn monoid
$\sbB$, we have $f=e'$, hence $f^2=\1$, so $\textup{S}^\leqslant(\sbB)$ is anti-idempotent and
%satisfies $\0\leqslant x\leqslant \1$.
our use of
$\0,\1$ in Definition~\ref{skew reflection definition} is consistent with Convention~\ref{0 1 convention}.
%, and
By definition,
$\textup{S}^\leqslant(\sbB)$ is rigorously compact.  Because it has $\sbB$ as an RL--subreduct,
$\textup{S}^\leqslant(\sbB)$ satisfies
%(\ref{to B or not to B}) and
%(\ref{q law}),
%as well as
$(f\rig x)\vee(x\rig e)\neq \0$, and hence also (\ref{jamie}).  It satisfies (\ref{to B or not to B}) and (\ref{q law}) as
well.\footnote{\,In verifying (\ref{q law}), we may assume that its
left-hand side
%LHS
is not $\0$, so
$x,y,q(x),q(y)\neq \0$.  This forces $x,y\in B$, whence each conjunct of the right-hand side
%RHS
is $\1$.}
%)
%\begin{equation}\label{jamie}
%((f\rig x)\vee (x\rig e))\rig \0=\0.
%\end{equation}
%%(\ref{jamie}).
The fact that elements of $B$ lack lower bounds in $B'$ has two easy but important consequences.  First,
%\\[0.25pc]
\begin{center}
%$\mbox{}$\quad\quad\quad
$\textup{S}^\leqslant(\sbB)$ is simple iff $\sbB$ is trivial (i.e., $e$ is the least element of $\sbB$),
%\\[0.25pc]
\end{center}
in view of Lemma~\ref{fsi si simple}(\ref{simple}).  Secondly, by Lemma~\ref{fsi si simple}(\ref{si}),
%\\[0.25pc]
\begin{center}
%$\mbox{}$\quad\quad\quad\quad\quad\quad\quad\quad\quad\;
$\textup{S}^\leqslant(\sbB)$ is SI iff $\sbB$ is SI or trivial.
%\\[0.25pc]
\end{center}
Specifically, when $\sbB$ is not trivial, an element of $\textup{S}^\leqslant(\sbB)$
is the greatest strict lower bound of $e$ in $\textup{S}^\leqslant(\sbB)$ iff it is the greatest strict lower bound of $e$ in $\sbB$.}

\textup{Elements of $B$ might lack
%have no
upper bounds in $B'$, e.g., $\sbD_4$ arises in this way from a trivial RL.
Such cases are eliminated in the next theorem, however.}
%That case is excluded in Theorem~\ref{onto c4}.}
%In a skew reflection $\textup{S}^\leqslant(\sbB)$ of a square-increasing RL\,
%%Dunn monoid $\sbB$, i
\end{remk}
\begin{thm}\label{onto c4}
The following two conditions on a square-increasing IRL\/ $\sbA$ are equivalent.
\begin{enumerate}
\item\label{rc and hom to c4}
There is a homomorphism\/ $h\colon\sbA\mrig\sbC_4$ and\/ $\sbA$ is rigorously compact.
%\/ \textup{(}hence\/ $\sbA\in\mathsf{U}$\textup{).}
%crystalline.
%\textup{.}
\item\label{skew and 0}
$\sbA$ is a skew reflection of a square-increasing RL\/ $\sbB$\textup{,}
%Dunn monoid
and\/ $\0$ is meet-irreducible in\/ $\sbA$\textup{.}
\setcounter{newexmp}{\value{enumi}}
\end{enumerate}
In this case, in the notation of\/ Definition~\textup{\ref{skew reflection definition},}
\begin{enumerate}
\setcounter{enumi}{\value{newexmp}}
\item\label{h properties}
$h$ is unique and surjective, and\/ $\1$ is join-irreducible in\/ $\sbA$\textup{;}
\item\label{lattice closure}
$b\wedge c'\in B$ and\/ $b\vee c'\in B'$ for all\/ $b,c\in B$\textup{,} so each element of\/ $B$ has an upper bound in\/ $B'$\textup{,}
and elements of\/ $B'$ have lower bounds in\/ $B$\textup{;}
\item\label{skew distributivity}
if\/ $\sbB$ is distributive and\/ $\sbA$ is modular, then\/ $\sbA$ is distributive and therefore a De Morgan monoid, belonging to\/ $\mathsf{U}$\textup{.}
%\item\label{in u}
%if\/ $\sbA$ is a De Morgan monoid, then\/ $\sbA\in\mathsf{U}$\textup{.}
%\item\label{skew comparability}
%every element of\/ $B$ has an upper bound in\/ $B'$\textup{.}
%if\/ $\sbB$ has a least element, $\bot$ say, then\/ $\bot\leqslant b'$ and\/ $b\leqslant \bot'$ for all\/ $b\in B$\textup{.}
%, and if $\sbA$ is modular, then it is distributive, and therefore a De Morgan monoid.
\end{enumerate}
\end{thm}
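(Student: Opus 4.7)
The plan is to establish (i)$\Leftrightarrow$(ii) first, and then to read off (iii)--(v) from the construction. In the direction (ii)$\Rightarrow$(i), rigorous compactness comes for free from Remark~\ref{skew remark}, so the task is to build a homomorphism $h\colon\sbA\rig\sbC_4$ by the natural assignment that sends $\0$ to $\0$, sends $\1$ to $\1$, collapses $B$ to $e$, and collapses $B'$ to $f$. The catch is that preservation of $\wedge$ and $\vee$ across the two classes is non-trivial and forces us to prove (iv) first. This is where the meet-irreducibility of $\0$ enters: for $b\in B$ and $c'\in B'$, the element $b\wedge c'$ must lie in $B\cup\{\0\}$, since Definition~\ref{skew reflection definition}(\ref{skew asymmetry}) rules out $B'$ and $b\neq\1$ rules out $\1$, and the meet-irreducibility excludes $\0$. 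The join case is dual, via join-irreducibility of $\1$, which follows from meet-irreducibility of $\0$ by the involution. With (iv) in hand, each of the nine clauses of Definition~\ref{skew reflection definition} dictates the value of $h$ on one type of argument pair, and verifying that $h$ is a homomorphism reduces to a routine case split.

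For (i)$\Rightarrow$(ii), Lemma~\ref{at most one hom} immediately gives uniqueness of $h$ and identifies $\neg(f^2)$ and $f^2$ as the extrema of $\sbA$, so the labels $\0,\1$ are consistent; surjectivity of $h$ follows from $\sbC_4$ being $0$--generated, and Lemma~\ref{tight}(\ref{tight 1}) pins the fibres of $\0$ and $\1$ as singletons. Setting $B\seteq h^{-1}[\{e\}]$ and $B'\seteq h^{-1}[\{f\}]$, closure of $\{e\}$ under the basic operations of $\sbC_4$ makes $B$ an RL--subreduct $\sbB$ of $\sbA$, with $B'=\{\neg b:b\in B\}$. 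Verifying clauses (\ref{skew lattice})--(\ref{skew involution}) of Definition~\ref{skew reflection definition} is then checklist work: the order conditions use that $\neg$ is an order-reversing bijection, (\ref{skew residuation}) uses $\neg c=c\rig f$ together with residuation, (\ref{skew asymmetry}) uses $f\nleqslant e$ in $\sbC_4$, and the product clauses (\ref{skew comm})--(\ref{skew fusion}) combine rigorous compactness with $b\bcdw\neg c=\neg(b\rig c)$ together with the observation that $b'\bcdw c'$ must land in $h^{-1}[\{f^2\}]=\{\1\}$. Meet-irreducibility of $\0$ then falls out of Lemma~\ref{tight}(\ref{tight 3}), since $\sbC_4$ is totally ordered.

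Assertion (iii) is covered by Lemma~\ref{at most one hom} together with the involution argument above, and (iv) is the intermediate step already proved. For (v), the plan is to invoke the classical theorem that a modular lattice is distributive iff it contains no sublattice isomorphic to $M_3$. Any hypothetical such sublattice $\{x,y,z\}$ cannot intersect $\{\0,\1\}$, by pairwise incomparability, so all three elements lie in $B\cup B'$; if all three lie in $B$ (respectively $B'$), we contradict distributivity of $\sbB$ (respectively its dual). Otherwise the split is two-and-one across $B$ and $B'$, and (iv) together with the fact that $B,B'$ are sublattices forces the three pairwise joins (or meets) to split across $B$ and $B'$, contradicting $B\cap B'=\emptyset$ since they must all coincide. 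Distributivity of $\sbA$ follows, so $\sbA$ is a De Morgan monoid; membership in $\mathsf{U}$ then drops out of Remark~\ref{skew remark} and Lemma~\ref{w + rig comp = u}. I expect the main obstacle to be the bookkeeping of the nine skew-reflection clauses in (i)$\Rightarrow$(ii); the $M_3$--exclusion argument in (v) is the main conceptual step.
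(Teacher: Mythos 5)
Your proposal is correct and follows essentially the same route as the paper's proof: both directions use the fibres $B=h^{-1}[\{e\}]$, $B'=h^{-1}[\{f\}]$ together with Lemmas~\ref{tight} and \ref{at most one hom}, the claim (iv) is established from meet-irreducibility of $\0$ exactly as in the paper and then drives the preservation of $\wedge,\vee$, and (v) is the same $M_3$-exclusion argument (your two-and-one case analysis just spells out what the paper compresses into the remark that $B$ and $B'$ are distributive sublattices of $B\cup B'$).
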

\begin{proof}
Note first that, in (\ref{h properties}), the uniqueness of $h$ follows from Lemma~\ref{at most one hom}(\ref{at most one hom 1}) (and its surjectivity
from the fact that $\sbC_4$ is $0$--generated).

(\ref{rc and hom to c4})$\;\Rightarrow\;$(\ref{skew and 0}):
Being crystalline, $\sbA$ is nontrivial.
The set $B\seteq h^{-1}[\{e\}]$ is the universe of an RL--subreduct $\sbB$ of $\sbA$, which inherits the square-increasing law,
%so $\sbB$ is a square-increasing RL,
%Dunn monoid
and $b\mapsto b'\seteq\neg b$ defines an antitone bijection from $B$ onto $B'\seteq h^{-1}[\{f\}]$.  Clearly, $B\cap B'=\emptyset$ and
no element of $B'$ is a lower bound of an element of $B$, because $h$ is isotone and $e<f$ in $\sbC_4$.  As $h$ fixes $\0$ and $\1$,
Lemma~\ref{tight} shows that $\sbA$ is anti-idempotent, with
%satisfies $\0\leqslant x\leqslant \1$, with
$h^{-1}[\{\0\}]=\{\0\}$ and $h^{-1}[\{\1\}]=\{\1\}$, and that $\0$ [resp.\ $\1$]
is meet- [resp.\ join-] irreducible in $\sbA$, finishing the proof of (\ref{h properties}).  In particular,
$A=B\cup B'\cup\{\0\}\cup\{\1\}$ (disjointly).

We verify that
%, with respect to the present notation,
$\sbA$ satisfies conditions (\ref{skew residuation})
and (\ref{skew fusion}) of Definition~\ref{skew reflection definition}.  Let $b,c\in B$.  Because $B$ is closed under the operation
$\bcdw$ of $\sbA$, we have
\begin{align*}
& \textup{$b\leqslant c'$ \,iff\, $b\bcdw e\leqslant \neg c$
%c\rig f$
\,iff\, $b\bcdw c\leqslant f$ \,(by (\ref{involution-fusion law}), deployed in $\sbA$), \,iff\,
$e\leqslant
(b\bcdw c)'$.}
\end{align*}
Clearly, $b\bcdw c'=
(b\rig c)'$ and $h(b'\bcdw c')=\neg h(b)\bcdw\neg h(c)=f^2
%\bcdw f
=\1$, so $b'\bcdw c'=\1$.
%
%\nleqslant a$, so $c'\rig a\notin B\cup\{1\}$, again by residuation in $\sbA$.
%If we can show that $c'\rig a\notin B'$, then $c'\rig a=0$, whence
%\[
%c'\bcdw a'=
%\neg(c'\rig a)=
%\neg 0 = 1,
%\]
%as required.  It remains to show that $c'\rig a\notin B'$.  Suppose, on the contrary, that
%$c'\rig a=d'$
%for some $d\in B$.  Then $d'\bcdw c'\leqslant a$, and
%$d\rig e\leqslant d'$, by Theorem~\ref{skew properties}(\ref{skew not empty}), so $(d\rig e)\bcdw c'\leqslant d'\bcdw c'\leqslant a$, contradicting our finding that
%$b\bcdw c'\nleqslant a$ for all $b\in B$.
%
This completes the proof that $\sbA=\textup{S}^\leqslant(\sbB)$, where $\leqslant$ is the lattice order of $\sbA$.

(\ref{skew and 0})$\;\Rightarrow\;$(\ref{rc and hom to c4}):
%Let $\sbA=\textup{S}^\leqslant(\sbB)$, where $\sbB$ is a square-increasing RL
%%Dunn monoid.
%and $0$ is meet-irreducible in $\sbA$.
%
Rigorous compactness
%of $\sbA$
was noted in Remark~\ref{skew remark}.
%For any Dunn monoid $\sbB$,
Definition~\ref{skew reflection definition} shows
%if $0$ is meet-irreducible in $\sbA=\textup{S}^\leqslant(\sbB)$, then there is a homomorphism
that $\bcdw,\neg$ and $e$ are preserved by the function $h\colon\sbA\mrig\sbC_4$ such that $h(\0)=\0$, $h(\1)=\1$, $h(b)=e$ and $h(b')=f$
for all $b\in B$.  As $\0$ is meet-irreducible (whence $\1$ is join-irreducible) in $\sbA$, the map $h$ preserves $\wedge,\vee$ too.
Indeed, if $b,c\in B$, then $b\geqslant b\wedge c'\neq \0$ and $b$ has no lower bound in $B'$, so $b\wedge c'\in B$ and, by involution
properties, $b\vee c'\in B'$.  This proves (\ref{rc and hom to c4}) and (\ref{lattice closure}).

%Definition~\ref{skew reflection definition} shows
%%if $0$ is meet-irreducible in $\sbA=\textup{S}^\leqslant(\sbB)$, then there is a homomorphism
%that $\bcdw,\neg$ and $e$ are preserved by the function $h\colon\sbA\mrig\sbC_4$ such that $h(0)=0$, $h(1)=1$, $h(b)=e$ and $h(b')=f$
%for all $b\in B$.  As $0$ is meet-irreducible (whence $1$ is join-irreducible) in $\sbA$, the map $h$ preserves $\wedge,\vee$ too.
%Indeed, if $b,c\in B$, then $b\geqslant b\wedge c'\neq 0$ and $b$ has no lower bound in $B'$, so $b\wedge c'\in B$ and, by involution
%properties, $b\vee c'\in B'$.  This proves (\ref{rc and hom to c4}) and (\ref{lattice closure}).
%%
%%Under these conditions, i

By (\ref{lattice closure}), when $\textup{S}^\leqslant(\sbB)$ is modular,
%lattice theory (cf.\
%\cite[Thms.\,I.3.5,~I.3.6]{BS81} show that $\textup{S}^\leqslant(\sbB)$
it will be distributive iff the five-element lattice with three atoms doesn't embed into the sublattice $B\cup B'$ of $\textup{S}^\leqslant(\sbB)$;
see \cite[Thms.\,I.3.5,~I.3.6]{BS81}.
That
%will be
is true if $\sbB$ is distributive, as $B$ and $B'$ are then distributive sublattices of $B\cup B'$.
This, with Lemma~\ref{w + rig comp = u}, proves (\ref{skew distributivity}).
%, in view of Lemma~\ref{w + rig comp = u}.
%, as elements of $B$ lack lower bounds
%
%and by item~(\ref{skew asymmetry}) of
%Definition~\ref{skew reflection definition}, no element
%%of $B'$ is a lower bound of an element of $B$.
%%
%Uniqueness of the homomorphism $h$ follows from Lemma~\ref{at most one hom}, and surjectivity because $\sbC_4$ is $0$--generated.
%
%Finally, suppose $\bot$ is the least element of $\sbB$, i.e., $\bot'$ is the greatest element of $B'$ within $\sbA$.
%By (\ref{lattice closure}), $\bot'\leqslant e\vee\bot'\in B'$, so $e\vee\bot'=\bot'$, whence
%$e\leqslant\bot'=(\bot\bcdw b)'$ for all $b\in B$, by Lemma~\ref{bounds}.  Then $\bot\leqslant b'$, by Definition~\ref{skew reflection definition}(\ref{skew residuation}),
%i.e., $b\leqslant\bot'$, proving (\ref{skew comparability}).
\end{proof}

\begin{defn}\label{dunn monoid definition}
\textup{A {\em Dunn monoid\/} is a square-increasing distributive RL.}
\end{defn}
%As Dunn monoids need not admit an involution, their lattice reducts need not be self-dual.

Dunn monoids originate in \cite{Dun66} and acquired their name in \cite{Mey72}.

\begin{cor}\label{m representation 2}
A De Morgan monoid belongs to\/ $\mathsf{U}$ iff it
%satisfies\/ $e\leqslant f$ and
is isomorphic to a subdirect product of skew reflections of Dunn monoids, where\/ $\0$ is meet-irreducible in each subdirect factor.
%In this case, it belongs to\/ $\mathsf{M}$ iff it also satisfies\/ $e\leqslant f$\textup{.}
\end{cor}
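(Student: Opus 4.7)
My plan is to derive the corollary as a direct consequence of Birkhoff's Subdirect Decomposition Theorem together with Theorem~\ref{onto c4}, using Theorem~\ref{t largest} and Theorem~\ref{dmm fsi} for the forward direction and Lemma~\ref{w + rig comp = u} for the converse.

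For the forward implication, let $\sbA \in \mathsf{U}$. Because $\mathsf{U}$ is a variety, the Subdirect Decomposition Theorem exhibits $\sbA$ as (isomorphic to) a subdirect product of SI algebras $\sbA_i \in \mathsf{U}$. It therefore suffices to show that each such $\sbA_i$ is a skew reflection of a Dunn monoid with $\0$ meet-irreducible. Now each $\sbA_i$ is FSI, hence bounded and thus rigorously compact by Theorem~\ref{dmm fsi}(\ref{dm fsi rigorously compact}); and it is crystalline, since the proof of Theorem~\ref{t largest} supplies a homomorphism from $\sbA_i$ onto $\sbC_4$. So condition~(\ref{rc and hom to c4}) of Theorem~\ref{onto c4} holds for each $\sbA_i$, and the equivalence with condition~(\ref{skew and 0}) gives $\sbA_i = \textup{S}^{\leqslant_i}(\sbB_i)$ for some square-increasing RL $\sbB_i$, with $\0$ meet-irreducible in $\sbA_i$.

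It remains to verify that each $\sbB_i$ is distributive. By Theorem~\ref{slaney fact 1}, $\sbB_i$ is an RL-subreduct of $\sbA_i$, so its lattice reduct embeds into the distributive lattice reduct of $\sbA_i \in \mathsf{DMM}$; hence $\sbB_i$ is distributive, i.e.\ a Dunn monoid.

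For the converse, suppose $\sbA \in \mathsf{DMM}$ embeds subdirectly in a product of skew reflections $\textup{S}^{\leqslant_i}(\sbB_i)$ of Dunn monoids $\sbB_i$, with $\0$ meet-irreducible in each factor. Each factor is a homomorphic image of $\sbA$, hence itself a De Morgan monoid (so in particular modular). By Remark~\ref{skew remark} each factor is rigorously compact, and by Theorem~\ref{onto c4}((\ref{skew and 0})$\Rightarrow$(\ref{rc and hom to c4})) each factor carries a homomorphism onto $\sbC_4$; thus each factor lies in $\mathsf{W}$. Lemma~\ref{w + rig comp = u} then places each factor in $\mathsf{U}$, and the closure of the variety $\mathsf{U}$ under subdirect products yields $\sbA \in \mathsf{U}$.

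The substantive content is entirely discharged in Theorem~\ref{onto c4}; the only delicate point in the present argument is the transfer of distributivity in the forward direction, where we must ensure that the underlying RL produced by Theorem~\ref{onto c4} is not merely square-increasing but in fact a Dunn monoid. This is handled by the RL-subreduct statement in Theorem~\ref{slaney fact 1}. In the converse direction, no extra distributivity argument is needed because the subdirect factors are automatically De Morgan monoids (as homomorphic images of $\sbA \in \mathsf{DMM}$), so we may bypass Theorem~\ref{onto c4}(\ref{skew distributivity}) and route through $\mathsf{W}$ instead.
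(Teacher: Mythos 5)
Your proof is correct and follows essentially the same route as the paper's: subdirect decomposition into SI factors, Theorem~\ref{onto c4} in both directions, distributivity of the underlying RLs transferred via the RL--subreduct statement of Theorem~\ref{slaney fact 1}, and the converse routed through $\mathsf{W}$ and Lemma~\ref{w + rig comp = u} (which is also how the paper establishes Theorem~\ref{onto c4}(\ref{skew distributivity})). One small repair in the forward direction: ``each $\sbA_i$ is FSI, hence bounded'' is not a valid inference, since FSI [I]RLs need not be bounded; the boundedness of the SI factors comes instead from the anti-idempotence of $\mathsf{U}$ (postulate (\ref{to B or not to B}) forces $\neg(f^2)\leqslant x\leqslant f^2$), after which Theorem~\ref{dmm fsi}(\ref{dm fsi rigorously compact}) yields rigorous compactness as you intend.
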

\begin{proof}
The forward implication follows from Theorem~\ref{onto c4} and the
%Birkhoff's
Subdirect Decomposition Theorem,
% \cite[Thm.~II.8.6]{BS81},
because the SI hom\-omorphic
images of members of $\mathsf{U}$ are bounded by $\0,\1$, are rigorously compact
(Theorem~\ref{dmm fsi}(\ref{dm fsi rigorously compact})) and are still crystalline ($\mathsf{U}$ being a variety), and because RL--subreducts of De Morgan monoids inherit distributivity.
Conversely, by Remark~\ref{skew remark},
%Theorem~\ref{slaney fact 1} and Remark~\ref{skew remark} show that
skew reflections of Dunn monoids satisfy the defining postulates of $\mathsf{U}$, except possibly for (\ref{1 join irred}) and distributivity (which are effectively
given here),
%except possibly for
%$e\leqslant f$ and
%distributivity.  A
and $\mathsf{U}$, like any quasivariety, is closed under $\mathbb{IP}_\mathbb{S}$.
%follows from Theorem~\ref{onto c4} and
%%Theorem~\ref{m representation}
%and Corollary~\ref{rigorously compact generation}.
\end{proof}

\begin{lem}\label{redundancy}
Let\/
%$\sbB$ be a square-increasing RL and\/
$\sbA=\textup{S}^\leqslant(\sbB)$ be a skew reflection of a square-increasing RL\/ $\sbB$\textup{,}
where\/ $\sbA$ satisfies\/ $e\leqslant f$\textup{.} Then, in the notation of Definition~\textup{\ref{skew reflection definition},} \begin{enumerate}
\item\label{b below b rig e prime}
$b\leqslant (b\rig e)'$ for all\/ $b\in B$\textup{,} and\/
\item\label{0 meet irreducible}
$\0$ is meet-irreducible and\/ $\1$ is join irreducible in\/ $\sbA$\textup{.}
\end{enumerate}
\end{lem}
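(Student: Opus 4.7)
For \textbf{(i)}, I apply the skew residuation condition \textup{(\ref{skew residuation})} with $c := b \rig^\sbB e$: it then suffices to show $e \leqslant (b \bcdw^\sbB (b \rig^\sbB e))'$ in $\sbA$. The standard RL law \textup{(\ref{x y law})} gives $b \bcdw^\sbB (b \rig^\sbB e) \leqslant^\sbB e$, which transfers to $\leqslant$ by condition \textup{(\ref{skew lattice})}. Antitonicity \textup{(\ref{skew antitone'})} then yields $e' \leqslant (b \bcdw^\sbB (b \rig^\sbB e))'$, and composing with the hypothesis $e \leqslant f = e'$ (recall $f = e'$ in any skew reflection, per Remark~\ref{skew remark}) completes the chain.

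For \textbf{(ii)}, since $\neg$ is an anti-automorphism of the lattice reduct, it suffices to prove that $\0$ is meet-irreducible; join-irreducibility of $\1$ then follows by applying $\neg$ and using $\neg \0 = \1$. Assume for contradiction that $x \wedge y = \0$ with $x, y \neq \0$. If either argument equals $\1$ the other would be $\0$, so $x, y \in B \cup B'$, and I treat three sub-cases. The case $x, y \in B$ is impossible because the $\sbB$-meet $x \wedge^\sbB y$ lies in $B$ (hence is strictly above $\0$) and remains a lower bound of $x, y$ in $\sbA$ by condition \textup{(\ref{skew lattice})}, forcing $x \wedge y \geqslant x \wedge^\sbB y > \0$. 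The case $x, y \in B'$ is dual: applying $\neg$ turns it into the assertion that the $\sbA$-join of two elements of $B$ cannot reach $\1$, which follows by the symmetric argument using $\sbB$-joins (which lie in $B$ and hence cannot equal $\1 \notin B$).

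The crux is the mixed case $x = b \in B$, $y = c' \in B'$, and it is here that part \textbf{(i)} is indispensable. Part \textbf{(i)} gives $c \leqslant (c \rig^\sbB e)'$; applying the antitone map $\neg$ produces an element $c \rig^\sbB e \in B$ with $c \rig^\sbB e \leqslant c'$ in $\sbA$. Then $b \wedge^\sbB (c \rig^\sbB e) \in B$ is a common lower bound of $b$ and $c'$ in $\sbA$ that is strictly above $\0$, so $b \wedge c' > \0$, contradicting the assumption. The main obstacle in the whole argument is precisely this mixed case: without the witness supplied by \textbf{(i)}, there is no obvious structural reason why an element of $B$ and an element of $B'$ should have a common lower bound strictly above $\0$, and this is exactly why \textbf{(i)} has been packaged as a preliminary conclusion of the lemma.
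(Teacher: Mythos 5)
Your proof is correct and takes essentially the same route as the paper's: part (i) is obtained from the skew residuation condition applied with $c=b\rig^{\sbB}e$ together with $e\leqslant f=e'$ and (\ref{x y law}), and part (ii) reduces meet-irreducibility of $\0$ to the mixed case, where the element $c\rig^{\sbB}e\in B$ below $c'$ (supplied by (i)) yields the lower bound $b\wedge(c\rig^{\sbB}e)\in B$ of $b$ and $c'$, exactly as in the paper. The only cosmetic difference is that you spell out the $B$--$B$ and $B'$--$B'$ cases, which the paper compresses into the remark that $B$ and $B'$ are sublattices of $\sbA$.
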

\begin{proof}
(\ref{b below b rig e prime})
Let $b\in B$.  By (\ref{x y law}), $b\bcdw(b\rig e)\leqslant e$, so $e\leqslant f\leqslant (b\bcdw(b\rig e))'$.
Then $b\leqslant (b\rig e)'$, by Definition~\ref{skew reflection definition}(\ref{skew residuation}).

(\ref{0 meet irreducible})
Let $b,c\in B$.
%As $B$ and $B'$ are closed under the operations $\wedge,\vee$ of $\sbA$, it suffices (by involution properties)
%to show that $b\wedge c'\neq 0$.
By (\ref{b below b rig e prime}), $c\leqslant (c\rig e)'$, i.e., $c\rig e\leqslant c'$.  Because
$\sbB$ is an RL--subreduct of $\sbA$ and $\0\notin B$, we have
$b\wedge c'\geqslant b\wedge (c\rig e)\in B$, so
$b\wedge c'\neq\0$.  As $B$ and $B'$ are both sublattices of $\sbA$,
%$\langle A;\wedge,\vee\rangle$,
this shows that $\0$
is meet-irreducible (whence $\1$ is join-irreducible) in $\sbA$.
\end{proof}

\begin{cor}\label{m representation 3}
A De Morgan monoid belongs to\/ $\mathsf{M}$ iff it
satisfies\/ $e\leqslant f$ and
is isomorphic to a subdirect product of skew reflections of Dunn monoids.
\end{cor}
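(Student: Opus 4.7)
The plan is to derive Corollary~\ref{m representation 3} by combining three prior ingredients: Corollary~\ref{m cor} (which identifies $\mathsf{M}$ with the subclass of $\mathsf{U}$ satisfying $e \leqslant f$), Corollary~\ref{m representation 2} (the representation theorem for $\mathsf{U}$), and Lemma~\ref{redundancy}(\ref{0 meet irreducible}) (which renders the meet-irreducibility hypothesis automatic when $e \leqslant f$). The central observation is that Lemma~\ref{redundancy} was set up precisely to let us drop the requirement that $\0$ be meet-irreducible in each subdirect factor.

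For the forward direction, I would assume $\sbA \in \mathsf{M}$. By Corollary~\ref{m cor}, $\sbA$ belongs to $\mathsf{U}$ and satisfies $e \leqslant f$. Applying Corollary~\ref{m representation 2} then exhibits $\sbA$ as a subdirect product of skew reflections of Dunn monoids, as required (the meet-irreducibility of $\0$ in each factor is still present, but we simply suppress it in the conclusion).

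For the converse, I would suppose $\sbA$ satisfies $e \leqslant f$ and is (up to isomorphism) a subdirect product of skew reflections $\textup{S}^{\leqslant_i}(\sbB_i)$ of Dunn monoids $\sbB_i$. The first step is to note that each subdirect projection is surjective, so each skew reflection $\textup{S}^{\leqslant_i}(\sbB_i)$ inherits the identity $e \leqslant f$ from $\sbA$. Lemma~\ref{redundancy}(\ref{0 meet irreducible}) then yields that $\0$ is meet-irreducible in every $\textup{S}^{\leqslant_i}(\sbB_i)$. Consequently, the hypotheses of the backward implication of Corollary~\ref{m representation 2} are met, placing $\sbA$ inside $\mathsf{U}$. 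Since $\sbA$ also satisfies $e \leqslant f$, Corollary~\ref{m cor} puts $\sbA$ in $\mathsf{M}$.

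There is no serious obstacle; the entire argument is a bookkeeping assembly of already-established results. The only point requiring a little care is the use of surjectivity of subdirect projections to transfer the identity $e \leqslant f$ down to each factor, so that Lemma~\ref{redundancy} becomes applicable factor-wise. After that, the two equivalences \emph{``$\sbA \in \mathsf{U}$ iff subdirect product of skew reflections with $\0$ meet-irreducible''} and \emph{``$\sbA \in \mathsf{M}$ iff $\sbA \in \mathsf{U}$ and $e \leqslant f$''} combine cleanly to give the stated characterisation.
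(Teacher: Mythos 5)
Your proposal is correct and follows exactly the paper's route: the paper's own proof of Corollary~\ref{m representation 3} is a one-line citation of Lemma~\ref{redundancy}(\ref{0 meet irreducible}) together with Corollaries~\ref{m cor} and \ref{m representation 2}, which is precisely the assembly you carry out. Your added remark about transferring $e\leqslant f$ to each subdirect factor via the surjective projections is the (implicit) detail that makes the lemma applicable factor-wise, so nothing is missing.
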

\begin{proof}
This follows from Lemma~\ref{redundancy}(\ref{0 meet irreducible}) and
Corollaries~\ref{m cor} and \ref{m representation 2}.
\end{proof}
\begin{remk}\label{68 remark}
\textup{We can now complete the proof of Corollary~\ref{68 cor}.  In \cite{Sla85}, Slaney showed that the free $0$--generated De Morgan monoid $\sbF$ is $\textup{$\mathbf{2}\times\sbD_4\times\sbA$}$,
where $\sbA$ is a skew reflection of the direct product of four
%explicit
Dunn monoids, called `the $\al$ segments of CA6, CA10a, CA10b and CA14'.
In each of $\mathbf{2}$, $\sbD_4$ and the four `$\al$ segments', $e$ has just one strict lower bound.
In the four-fold direct product, therefore, the neutral element has $2^4$ lower bounds, so $e^\sbA$ has $2^4+1$ lower bounds in $\sbA$,
by Definition~\ref{skew reflection definition}(\ref{skew asymmetry}),(\ref{skew bounds}).
%in view of
%%So, from
%the structure of skew reflections.
Thus,
%it follows that
the number of lower bounds of $e^\sbF$ in $\sbF$ (including $e^\sbF$ itself) is\,
%\[
$2\times 2\times (2^4
%(2\times 2\times 2\times 2)
+1)=68$.}
%\]}
\end{remk}

\section{Reflections and $\mathsf{M}$}\label{reflections section}

\begin{defn}\label{reflection}
\textup{Let $\sbB$
be a square-increasing RL,
%Dunn monoid,
with lattice order $\leqslant^\sbB$, and let
$S=B\cup B'\cup\{\0,\1\}$, where $B'=\{b':b\in B\}$ is a disjoint copy of $B$ and $\0,\1$ are distinct non-elements of $B\cup B'$.
Let $\leqslant$ be the unique partial order of $S$ whose restriction to $B^2$ is $\leqslant^\sbB$, such that
\[
\textup{$b\leqslant c'$ \,for all\, $b,c\in B$}
\]
and conditions (\ref{skew antitone'}), (\ref{skew asymmetry}) and (\ref{skew bounds}) of Definition~\ref{skew reflection definition} hold.  As
(\ref{skew lattice}) and (\ref{skew residuation}) obviously hold too, we may define the \emph{reflection} $\textup{R}(\sbB)$ of $\sbB$
to be the resulting skew reflection $\textup{S}^\leqslant(\sbB)$.  This definition is essentially due to Meyer; see \cite{Mey73} or \cite[pp.\,371--373]{AB75}.}
\end{defn}

By Theorem~\ref{slaney fact 1}, every Dunn monoid $\sbB$ is an RL--subreduct of its reflection $\textup{R}(\sbB)$, and $\textup{R}(\sbB)$
satisfies $e\leqslant f$ (by definition) and is distributive (as $\sbB$ is), so $\textup{R}(\sbB)\in\mathsf{M}$, by
Corollary~\ref{m representation 3}.
%ies~\ref{m cor} and \ref{m representation 2}.
%Theorem~\ref{skew properties}(\ref{skew modular}).
Conversely, the RL--reduct of an algebra from $\mathsf{M}$ is of course a Dunn monoid, whence so are its subalgebras.  This justifies a variant
of the `Crystallization Fact' of \cite[p.\,124]{Sla89}:

\begin{thm}\label{subreducts}
The variety of Dunn monoids coincides with the class of all RL--sub\-reducts of members of\/ $\mathsf{M}$\textup{.}
\end{thm}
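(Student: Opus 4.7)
The plan is to establish the equality by proving the two inclusions between the class of Dunn monoids and the class of RL-subreducts of members of $\mathsf{M}$.

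For the containment that every Dunn monoid arises as an RL-subreduct of some algebra in $\mathsf{M}$, I would use the reflection construction of Definition~\ref{reflection}. Given a Dunn monoid $\sbB$, form $\textup{R}(\sbB)$. By Theorem~\ref{slaney fact 1}, $\textup{R}(\sbB)$ is a square-increasing IRL and $\sbB$ is an RL-subreduct of it. What remains is to show $\textup{R}(\sbB) \in \mathsf{M}$, for which I would appeal to Corollary~\ref{m representation 3}: it characterises $\mathsf{M}$ as the class of De Morgan monoids satisfying $e \leqslant f$ that embed subdirectly into skew reflections of Dunn monoids. The inequality $e \leqslant f$ holds in $\textup{R}(\sbB)$ by the very definition of the reflection (since $e \in B$, $f = e' \in B'$, and $b \leqslant c'$ for all $b,c \in B$). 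Distributivity of the underlying lattice of $\textup{R}(\sbB)$ follows because its lattice is the ordinal sum $\{\0\} \oplus B \oplus B' \oplus \{\1\}$ of distributive lattices (note $B'$ is the order dual of $B$, hence distributive when $\sbB$ is), and ordinal sums of distributive lattices remain distributive. Thus $\textup{R}(\sbB)$ is a De Morgan monoid, is a skew reflection of the Dunn monoid $\sbB$, and satisfies $e \leqslant f$, placing it in $\mathsf{M}$.

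For the reverse containment, let $\sbA \in \mathsf{M}$ and let $\sbC$ be an RL-subreduct of $\sbA$. Since $\mathsf{M} \subseteq \mathsf{DMM}$, the algebra $\sbA$ is a distributive square-increasing IRL, so its RL-reduct is a distributive square-increasing RL, i.e.\ a Dunn monoid in the sense of Definition~\ref{dunn monoid definition}. Because square-increase and lattice distributivity are preserved by taking subalgebras, $\sbC$ inherits these properties and is therefore a Dunn monoid as well.

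I do not expect any serious obstacle: essentially all the substantive content, including the fact that $\textup{R}(\sbB) \in \mathsf{M}$ and the preservation of the Dunn-monoid axioms under subalgebras, has been assembled in Theorem~\ref{slaney fact 1}, Corollary~\ref{m representation 3}, and the preceding discussion. The only thing that requires a brief check is distributivity of $\textup{R}(\sbB)$, which, unlike for arbitrary skew reflections treated in Theorem~\ref{onto c4}\textup{(v)}, does not require a separate modularity hypothesis here because of the simpler ordinal-sum shape of the reflection's lattice.
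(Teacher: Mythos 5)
Your proposal is correct and follows essentially the same route as the paper: the forward inclusion via Theorem~\ref{slaney fact 1} and Corollary~\ref{m representation 3} applied to the reflection $\textup{R}(\sbB)$, and the reverse inclusion by noting that RL--reducts of members of $\mathsf{M}$ and their subalgebras are Dunn monoids. The only difference is that you spell out the distributivity of $\textup{R}(\sbB)$ via the ordinal-sum structure of its lattice, which the paper asserts without elaboration; that justification is valid.
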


%The reflection embedding for Dunn monoids is essentially due to Meyer \cite{Mey73}.
%It corresponds to the case $b\Mo c = b$ (for all $b,c\in B$) in Remark~\ref{wannenburg algebras}.

\begin{cor}\label{m undecidable}
The equational theory of\/ $\mathsf{M}$ is undecidable.
\end{cor}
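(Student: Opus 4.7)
The plan is to reduce the equational theory of the variety of Dunn monoids---which is known to be undecidable, essentially by Urquhart's undecidability theorem for the positive relevance logic $\mathbf{R}^+$---to that of $\mathsf{M}$, using Theorem~\ref{subreducts} as the bridge.

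The key technical step is to verify the following equivalence: for every equation $\alpha = \beta$ formulated in the RL signature (i.e., not involving $\neg$), $\alpha = \beta$ is valid in every Dunn monoid iff $\alpha = \beta$ is valid in every member of $\mathsf{M}$. For the forward direction, let $\sbA \in \mathsf{M}$; its RL-reduct is trivially an RL-subreduct of itself, hence a Dunn monoid by Theorem~\ref{subreducts}. If $\alpha = \beta$ holds in all Dunn monoids, it holds in this RL-reduct, and because $\alpha,\beta$ mention only RL-symbols, it holds in $\sbA$ as well. For the reverse direction, any Dunn monoid $\sbB$ is, by Theorem~\ref{subreducts}, (isomorphic to) an RL-subreduct of some $\sbA \in \mathsf{M}$; if $\alpha = \beta$ holds in $\sbA$, it holds in the RL-reduct of $\sbA$ and hence in every subalgebra of that reduct, including $\sbB$.

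This equivalence furnishes a trivial many-one reduction of the equational theory of Dunn monoids to that of $\mathsf{M}$: a Dunn monoid equation is submitted unchanged to a putative decision procedure for $\mathsf{M}$. Since the former theory is undecidable, so is the latter.

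The main obstacle does not lie in the proof itself, which is essentially a one-line reduction once Theorem~\ref{subreducts} is in hand, but in accurately invoking the undecidability of the equational theory of the variety of Dunn monoids. This is the algebraic counterpart of the undecidability of $\mathbf{R}^+$, established by Urquhart; once cited, the argument above completes the proof.
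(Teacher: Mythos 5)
Your proposal is correct and follows exactly the paper's own route: the paper's proof is a one-line appeal to Theorem~\ref{subreducts} together with Urquhart's result that the equational theory of Dunn monoids is undecidable, and your argument is just that reduction written out in full (the explicit verification that an RL-equation holds in all Dunn monoids iff it holds in all of $\mathsf{M}$ is a correct, if routine, elaboration of what the paper leaves implicit).
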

\begin{proof}
This follows from Theorem~\ref{subreducts}, because Urquhart \cite[p.\,1070]{Urq84} proved that the equational theory of Dunn monoids is undecidable.
\end{proof}
\begin{cor}\label{no fmp for m}
$\mathsf{M}$ is not generated (as a variety) by its finite members.
\end{cor}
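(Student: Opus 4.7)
The plan is to derive the result from Corollary~\ref{m undecidable} by a standard recursion-theoretic argument, using the fact that $\mathsf{M}$ is finitely axiomatizable. First I would observe that $\mathsf{M}$ is defined, within the finitely axiomatized variety $\mathsf{DMM}$, by the three further equations $x \leqslant f^2$ (anti-idempotence), $e \leqslant f$, and (\ref{jamie}) (recalling that $\1 = f^2$ and $\0 = \neg(f^2)$ are term-definable), so $\mathsf{M}$ has a finite equational basis.

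Next I would apply the classical observation (going back to Harrop/McKinsey) that a finitely axiomatizable variety generated by its finite members has a decidable equational theory. The two halves of the argument: on the one hand, the set of equations valid in $\mathsf{M}$ is recursively enumerable because it is the equational consequence of a finite set of axioms (enumerate Birkhoff-style derivations). On the other hand, if $\mathsf{M}$ were generated by its finite members, then $\mathsf{M}$ would satisfy an equation $\varepsilon$ iff every finite member of $\mathsf{M}$ satisfies $\varepsilon$; but the finite members of $\mathsf{M}$ are precisely the finite models of the finite axiomatization, and these form a recursively enumerable set on which satisfaction of a given equation is decidable. So the complement of the equational theory would also be r.e., making the whole theory decidable.

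The conclusion would then contradict Corollary~\ref{m undecidable}. Hence $\mathsf{M}$ cannot be generated as a variety by its finite members. There is no real obstacle here; the only thing to take care of is to state the finite axiomatization of $\mathsf{M}$ cleanly (so that the recursion-theoretic argument applies verbatim) and to cite Corollary~\ref{m undecidable} as the source of the contradiction.
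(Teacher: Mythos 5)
Your proposal is correct and is exactly the argument the paper intends: its proof reads ``This follows from Corollary~\ref{m undecidable}, as $\mathsf{M}$ is finitely axiomatized,'' leaving implicit the standard Harrop-style recursion-theoretic reasoning that you spell out. Your unpacking of the finite axiomatization of $\mathsf{M}$ and of the two recursive enumerations is accurate and fills in precisely the routine details the authors omit.
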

\begin{proof}
This follows from Corollary~\ref{m undecidable}, as $\mathsf{M}$ is finitely axiomatized.
\end{proof}
Clearly, in the statements of Theorem~\ref{subreducts} and Corollary~\ref{m undecidable}, we may replace $\mathsf{M}$ by any variety $\mathsf{K}$
such that $\mathsf{M}\subseteq\mathsf{K}\subseteq\mathsf{DMM}$.  The same applies to Corollary~\ref{no fmp for m} if $\mathsf{K}$ is also finitely
axiomatized.  In particular, the variety $\mathsf{U}$ is not generated by its finite members.

%\section{The Lattice of Subvarieties of $\mathsf{M}$}

%By Theorem~\ref{psc main}, the undiscovered SC varieties of De Morgan monoids all lie within $\mathsf{M}$.
%In this section and the next, therefore, we
%analyze the subvariety lattice of $\mathsf{M}$.
%%, settling questions of structural completeness where possible.

The notational conventions
of Definition~\ref{skew reflection definition} are assumed in the next lemma.

\begin{lem}\label{hs lem}
Let\/ $\sbB$ be a Dunn monoid.
\begin{enumerate}
\item\label{s}
If\/ $\sbC$ is a subalgebra of $\sbB$\textup{,} then\/
$C\cup\{c'\colon c\in C\}\cup\{\0,\1\}$
is the universe of a subalgebra of\/ $\textup{R}(\sbB)$ that is isomorphic to $\textup{R}(\sbC)$\textup{,} and every
subalgebra of\/ $\textup{R}(\sbB)$ arises in this way from a subalgebra of\/ $\sbB$\textup{.}

\smallskip
\item\label{h}
If\/ $\theta$ is a congruence of\/ $\sbB$\textup{,} then
\[
\textup{\quad\quad $\textup{R}(\theta)\seteq\theta\cup\{\langle a',b'\rangle\colon\langle a,b\rangle\in \theta\}\cup\{\langle \0,\0\rangle,\,\langle \1,\1\rangle\}$}
\]
is a congruence of\/ $\textup{R}(\sbB)$\textup{,} and\/ $\textup{R}(\sbB)/\textup{R}(\theta)\cong\,\textup{R}(\sbB/\theta)$\textup{.}  Also, every proper congruence of\/
$\textup{R}(\sbB)$ has the form\/ $\textup{R}(\theta)$ for some\/ $\theta\in{{\mathit{Con}}}\,\sbB$\textup{.}

\smallskip

\item\label{pu}
If\/ $\{\sbB_i\colon i\in I\}$ is a family of Dunn monoids and\/ $\mathcal{U}$ is an ultrafilter over\/ $I$\textup{,} then\/
$\prod_{i\in I}\textup{R}(\sbB_i)/\mathcal{U}\,\cong\textup{R}\!\left(\prod_{i\in I}\sbB_i/\mathcal{U}\right)$\textup{.}
\end{enumerate}
\end{lem}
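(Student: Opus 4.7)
The three claims are the standard ``subalgebras / congruences / ultraproducts'' package for the reflection construction, and each is best attacked directly using Definition~\ref{skew reflection definition}.

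For part~(\ref{s}), I verify closure of $C \cup \{c' : c \in C\} \cup \{\0,\1\}$ under the operations of $\textup{R}(\sbB)$ by a short case analysis from clauses (\ref{skew comm})--(\ref{skew involution}); the key points are that $\bcdw$ on mixed pairs is governed by $b \bcdw c' = (b \rig^\sbB c)'$ (so closure demands $\sbC$ be closed under $\rig^\sbB$), while $\wedge, \vee$ on $B \cup B'$ reduce to the operations of $\sbB$ and its dual, because $b \leq c'$ for all $b,c \in B$. The resulting subalgebra is plainly isomorphic to $\textup{R}(\sbC)$. Conversely, any $\sbD \leq \textup{R}(\sbB)$ contains $e$, hence also $\neg e$, $f^2 = \1$, and $\0 = \neg\1$; then $C \seteq D \cap B$ is a subalgebra of $\sbB$ (closure under $\rig^\sbB$ uses $b \rig^\sbB c = \neg(b \bcdw c')$), and $\neg$-closure of $D$ together with the disjointness of $B \cup B' \cup \{\0,\1\}$ yields $D = C \cup C' \cup \{\0,\1\}$.

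For part~(\ref{h}), the forward direction---that $\textup{R}(\theta)$ is a congruence with $\textup{R}(\sbB)/\textup{R}(\theta) \cong \textup{R}(\sbB/\theta)$---is a routine case analysis on the four-zone partition $B \cup B' \cup \{\0\} \cup \{\1\}$, with mixed-zone compatibility for $\bcdw$ following from compatibility of $\theta$ with $\rig^\sbB$; the isomorphism is induced by $b/\textup{R}(\theta) \mapsto b/\theta$, $b'/\textup{R}(\theta) \mapsto (b/\theta)'$, $\0 \mapsto \0$, $\1 \mapsto \1$. The hard part---that every proper congruence has the form $\textup{R}(\theta)$---I handle via the deductive-filter/congruence correspondence for RLs. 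In $\textup{R}(\sbB)$, every element of $B' \cup \{\1\}$ lies above $e$ by definition of a reflection, so every deductive filter $G$ contains $B' \cup \{\1\}$; and if $G$ is proper then $\0 \notin G$ (as $\0$ is the minimum), whence $G = H \cup B' \cup \{\1\}$ with $H \seteq G \cap B$ easily a deductive filter of $\sbB$. A zone-by-zone computation of $a \leftrightarrow b$, using the crucial identity $c' \rig b = \neg(c' \bcdw b') = \neg \1 = \0$ for $b,c \in B$, then shows that no distinct zones are collapsed and that the associated congruence is exactly $\textup{R}(\theta)$, where $\theta$ corresponds to $H$. This converse is the principal obstacle in the lemma.

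For part~(\ref{pu}), I exploit that $\0 = \neg(f^2)$ and $\1 = f^2$ are term-definable in the reflection, together with the four-zone decomposition. For each $[x_i]_\mathcal{U}$ in $\prod_i \textup{R}(\sbB_i)/\mathcal{U}$, the sets $I_B = \{i : x_i \in B_i\}$, $I_{B'}$, $I_{\0}$, $I_{\1}$ partition $I$, so exactly one lies in $\mathcal{U}$, independently of the chosen representative. The map $\phi\colon \prod_i \textup{R}(\sbB_i)/\mathcal{U} \to \textup{R}(\prod_i \sbB_i/\mathcal{U})$ sends $[x_i]_\mathcal{U}$ to its natural counterpart: a class in $\prod_i \sbB_i/\mathcal{U}$ on the $I_B$-case (selecting representatives in $B_i$ on $I_B$), the primed counterpart on the $I_{B'}$-case, and $\0$ or $\1$ otherwise. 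Bijectivity and the homomorphism property then follow from \L os' Theorem applied zone-by-zone to the defining equations of Definition~\ref{skew reflection definition}(\ref{skew comm})--(\ref{skew involution}).
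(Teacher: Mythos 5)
Your proof is correct and follows essentially the same strategy as the paper's: routine zone-by-zone verifications for the forward halves of (i) and (ii), a converse for (ii) resting on the fact that $b'\bcdw c'=\1$ blocks any identification of $B$ with $B'$ under a proper congruence, and for (iii) the natural four-zone map, well defined because the ultrafilter selects exactly one zone per element. The only cosmetic difference is in the converse of (ii), which you route through the deductive-filter correspondence (computing $a\leftrightarrow c'=\0\notin G$), whereas the paper argues directly that a congruence identifying $a$ with $b'$ must identify $\1=a'\bcdw b'$ with $a\bcdw b\in B$, contradicting rigorous compactness via Lemma~\ref{tight}(\ref{tight 1}); both arguments turn on the same identity.
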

\begin{proof}
The first assertions in (\ref{s}) and (\ref{h}) are straightforward.  For the final assertions, one shows that if $\sbD$ is a subalgebra  and $\varphi$ a proper congruence
of $\textup{R}(\sbB)$, then $\sbD$ is the reflection of the subalgebra of $\sbB$ on $D\cap B$, while $\varphi=\textup{R}(B^2\cap\varphi)$.
To see that $\varphi\subseteq\textup{R}(B^2\cap\varphi)$, observe that if $\varphi$ identifies $a$
with $b'$ ($a,b\in B$), and therefore $a'$ with $b$, it must identify $\1=a'\bcdw b'$ with $b\bcdw a\in B$.
% and also $0=\neg(a'\bcdw b')$ with $\neg(b\bcdw a)\in B'$.
But this contradicts Lemma~\ref{tight}(\ref{tight 1}), because $\textup{R}(\sbB)$ is rigorously compact.
%the propriety of $\varphi$, because congruence classes in any enriched lattice are convex sets.

(\ref{pu}) \,For each $i\in I$, let $\0_i$ and $\1_i$ denote the extrema of $\textup{R}(\sbB_i)$ and, for convenience, define $\ov{\0}_i=\{\0_i\}$
and $\ov{\1}_i=\{\1_i\}$ and $(B')_i=B_i'$.  By $\0,\1$, we mean (for the moment) the extrema of $\textup{R}\!\left(\prod_{i\in I}\sbB_i/\mathcal{U}\right)$.
Consider $x\in\prod_{i\in I}\textup{R}(B_i)$.  As $\mathcal{U}$ is an ultrafilter, there is a unique $F(x)\in\{B,B',\ov{\0},\ov{\1}\}$
such that
\[
\{i\in I: \,x(i)\in F(x)_i\}\in\mathcal{U}
\]
(see \cite[Cor.~IV.3.13(a)]{BS81}).  If $F(x)$ is $\ov{\0}$ [resp.\ $\ov{\1}$], define $h(x)$ to be $\0$ [resp.\ $\1$].
If $F(x)=B$, define $h(x)=z/\mathcal{U}$, where $z\in\prod_{i\in I}B_i$ and, for each $i\in I$,
\begin{equation*}
z(i) \; = \; \left\{ \begin{array}{ll}
                           x(i)   & \mbox{if\, $x(i)\in B_i$;} \\[0.2pc]
                           e^{\sbB_i} & \mbox{otherwise.}
                           \end{array}
                   \right.
\end{equation*}
If $F(x)=B'$, define $h(x)=(z/\mathcal{U})'$, where
$z\in\prod_{i\in I}B_i$ and, for each $i\in I$,
\begin{equation*}
z(i) \; = \; \left\{ \begin{array}{l}
                           \textup{the unique $b\in B_i$ such that $x(i)=b'$, if this exists;} \\[0.1pc]
                           e^{\sbB_i}, \;\, \mbox{otherwise.}
                           \end{array}
                   \right.
\end{equation*}
Then $h$ is a homomorphism from $\prod_{i\in I}\textup{R}(\sbB_i)$ onto $\textup{R}\!\left(\prod_{i\in I}\sbB_i/\mathcal{U}\right)$, whose kernel is
$\{\langle x,y\rangle\in\left(\prod_{i\in I}\textup{R}(B_i)\right)^2:\{i\in I:x(i)=y(i)\}\in\mathcal{U}\}$, so the result follows from the Homomorphism Theorem.
\end{proof}

\begin{defn}\label{r defn}
\emph{Given a variety $\mathsf{K}$ of Dunn monoids, the \emph{reflection} $\mathbb{R}(\mathsf{K})$ of $\mathsf{K}$
is the subvariety $\mathbb{V}\{\textup{R}(\sbB):\sbB\in\mathsf{K}\}$ of $\mathsf{M}$.}
\end{defn}

As a function from the lattice of varieties of Dunn monoids to the subvariety lattice of $\mathsf{M}$, the operator $\mathbb{R}$
is obviously isotone.
\begin{lem}\label{r injective}
$\mathbb{R}$ is order-reflecting and therefore injective.
\end{lem}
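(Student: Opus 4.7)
The plan is to show the stronger statement: if $\mathsf{K}_1$ and $\mathsf{K}_2$ are varieties of Dunn monoids with $\mathbb{R}(\mathsf{K}_1)\subseteq\mathbb{R}(\mathsf{K}_2)$, then $\mathsf{K}_1\subseteq\mathsf{K}_2$. Injectivity then follows by applying this implication in both directions. I would first dispatch the SI (or trivial) members of $\mathsf{K}_1$, then reduce the general case to this via Birkhoff's Subdirect Decomposition Theorem.

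Fix an SI member $\sbB$ of $\mathsf{K}_1$. By Remark~\ref{skew remark}, the reflection $\textup{R}(\sbB)$ is SI, and by hypothesis it lies in $\mathbb{R}(\mathsf{K}_2)=\mathbb{V}(\mathsf{L})$, where $\mathsf{L}\seteq\{\textup{R}(\sbC):\sbC\in\mathsf{K}_2\}$. Because $\mathsf{DMM}$ is congruence distributive, J\'{o}nsson's Theorem gives $\textup{R}(\sbB)\in\mathbb{HSP}_\mathbb{U}(\mathsf{L})$. The three parts of Lemma~\ref{hs lem} then collapse this closure: by (\ref{pu}), an ultraproduct of members of $\mathsf{L}$ is (isomorphic to) the reflection of the corresponding ultraproduct in $\mathsf{K}_2$; by (\ref{s}), a subalgebra of such a reflection is itself a reflection of a member of $\mathsf{K}_2$; and by (\ref{h}), every proper homomorphic image of such a reflection is the reflection of a homomorphic image and hence again lies in $\mathsf{L}$ (up to isomorphism), while improper images are trivial. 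Since $\mathsf{K}_2$ is closed under $\mathbb{HSP}_\mathbb{U}$, it follows that every nontrivial member of $\mathbb{HSP}_\mathbb{U}(\mathsf{L})$ is isomorphic to $\textup{R}(\sbC)$ for some $\sbC\in\mathsf{K}_2$. Applied to the nontrivial $\textup{R}(\sbB)$, this yields $\textup{R}(\sbB)\cong\textup{R}(\sbC)$ for some $\sbC\in\mathsf{K}_2$.

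To transfer this isomorphism down to the Dunn monoid level, I would invoke Lemma~\ref{at most one hom}(\ref{at most one hom 1}): each of $\textup{R}(\sbB)$ and $\textup{R}(\sbC)$ admits a \emph{unique} homomorphism to $\sbC_4$, whose fibre over $e$ is precisely the universe of its RL-subreduct, namely $B$ and $C$ respectively. Any isomorphism $\textup{R}(\sbB)\cong\textup{R}(\sbC)$ must, by uniqueness, commute with these homomorphisms, and so restricts to a bijection $B\to C$ that preserves $\bcdw$, $\wedge$, $\vee$ and $e$, and also $\rig$ (the last because $x\rig y=\neg(x\bcdw\neg y)$ in the ambient IRL and $B,C$ are RL-subreducts). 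Hence $\sbB\cong\sbC\in\mathsf{K}_2$. A trivial $\sbB\in\mathsf{K}_1$ lies in $\mathsf{K}_2$ automatically.

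For an arbitrary $\sbB\in\mathsf{K}_1$, the Subdirect Decomposition Theorem writes $\sbB$ as a subdirect product of SI homomorphic images $\sbB_i$, each of which belongs to the variety $\mathsf{K}_1$. The previous paragraphs give $\sbB_i\in\mathsf{K}_2$ for every $i$, so $\sbB\in\mathbb{IP}_\mathbb{S}(\mathsf{K}_2)\subseteq\mathsf{K}_2$, completing the proof. The main obstacle is the step from $\textup{R}(\sbB)\cong\textup{R}(\sbC)$ to $\sbB\cong\sbC$, since an isomorphism of reflections need not, a priori, respect the partition into $B$-, $B'$-, and extremal parts; this obstacle is overcome precisely by the uniqueness of the homomorphism into $\sbC_4$.
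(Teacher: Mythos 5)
Your proof is correct and follows essentially the same route as the paper's: reduce to SI members via subdirect decomposition, apply J\'{o}nsson's Theorem, and use the three parts of Lemma~\ref{hs lem} to conclude $\textup{R}(\sbB)\cong\textup{R}(\sbC)$ for some $\sbC$ in the larger variety. The only difference is that you justify the final descent from $\textup{R}(\sbB)\cong\textup{R}(\sbC)$ to $\sbB\cong\sbC$ explicitly (via the uniqueness of the homomorphism onto $\sbC_4$ from Lemma~\ref{at most one hom}), a step the paper dispatches with ``whence.''
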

\begin{proof}
Let $\mathbb{R}(\mathsf{K})\subseteq\mathbb{R}(\mathsf{L})$, where $\mathsf{K}$ and $\mathsf{L}$ are varieties of Dunn monoids.
We must show that $\mathsf{K}\subseteq\mathsf{L}$.  Let $\sbA\in\mathsf{K}$ be SI.  It suffices to show that $\sbA\in\mathsf{L}$.
By assumption, $\textup{R}(\sbA)\in\mathbb{R}(\mathsf{L})$.  Also, $\textup{R}(\sbA)$ is SI (because $\sbA$ is), so by
J\'{o}nsson's Theorem, $\textup{R}(\sbA)\in\mathbb{HSP}_\mathbb{U}\{\textup{R}(\sbB):\sbB\in\mathsf{L}\}$.  Because $\mathsf{L}$
is closed under $\mathbb{H}$, $\mathbb{S}$ and $\mathbb{P}_\mathbb{U}$, it follows from Lemma~\ref{hs lem} that
$\textup{R}(\sbA)\cong\textup{R}(\sbB)$ for some $\sbB\in\mathsf{L}$, whence $\sbA\cong\sbB$, and so $\sbA\in\mathsf{L}$.
\end{proof}

A {\em Brouwerian algebra\/} is an
%integral idempotent RL, i.e., an
RL satisfying $x\bcdw y=x\wedge y$, or equivalently, a Dunn monoid satisfying $x\leqslant e$.
%Such algebras are
%distributive, by (\ref{fusion distributivity}), so the variety
%%$\mathsf{BRA}$
%of Brouwerian algebras comprises exactly the integral
%Dunn monoids.
Every variety of countable type has at most $2^{\aleph_0}$ subvarieties, and it is known that there are $2^{\aleph_0}$ distinct
varieties of Brouwerian algebras \cite{Wro74}.  So, the injectivity of $\mathbb{R}$ in Lemma~\ref{r injective} yields the following conclusion.

\begin{thm}\label{continuum}
The variety\/ $\mathsf{M}$ has\/ $2^{\aleph_0}$ distinct subvarieties.
\end{thm}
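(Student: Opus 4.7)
The plan is to combine Lemma~\ref{r injective} with the cited result of Wro\'nski that there exist $2^{\aleph_0}$ distinct varieties of Brouwerian algebras. First I would observe that every Brouwerian algebra is, by definition, a Dunn monoid satisfying the additional identity $x\leqslant e$, and that this inclusion preserves varieties: a variety of Brouwerian algebras is automatically a variety of Dunn monoids (it is defined by equations in the Dunn monoid signature, together with $x\leqslant e$, and is closed under $\mathbb{H},\mathbb{S},\mathbb{P}$ within Dunn monoids as well).

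Then I would apply Lemma~\ref{r injective}: the reflection operator $\mathbb{R}$, mapping varieties of Dunn monoids into subvarieties of $\mathsf{M}$, is order-reflecting and therefore injective. Restricting $\mathbb{R}$ to the lattice of varieties of Brouwerian algebras thus yields an injection into the subvariety lattice of $\mathsf{M}$. Since the domain has cardinality $2^{\aleph_0}$, the subvariety lattice of $\mathsf{M}$ has at least $2^{\aleph_0}$ elements.

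For the matching upper bound, I would invoke the general observation already mentioned just before the theorem: a variety of countable type admits at most $2^{\aleph_0}$ subvarieties, because each subvariety is determined by an equational theory in the countable set of formal equations on countably many variables. Since $\mathsf{DMM}$, and hence $\mathsf{M}$, has a finite (hence countable) signature, the upper bound applies.

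There is no serious obstacle here once Lemma~\ref{r injective} is in hand; the proof is essentially a one-line consequence of the lemma together with the Wro\'nski result. The only minor point to verify carefully is that varieties of Brouwerian algebras genuinely sit inside the lattice of varieties of Dunn monoids in a cardinality-preserving way, but this is immediate because distinct Brouwerian varieties are distinct as classes and so remain distinct as Dunn monoid varieties.
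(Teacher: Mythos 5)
Your argument is correct and is essentially identical to the paper's proof: both obtain the lower bound by restricting the injective operator $\mathbb{R}$ of Lemma~\ref{r injective} to the $2^{\aleph_0}$ varieties of Brouwerian algebras (which are Dunn monoid varieties, as Brouwerian algebras are Dunn monoids satisfying $x\leqslant e$), and both get the matching upper bound from the fact that a variety of countable type has at most $2^{\aleph_0}$ subvarieties.
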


\section{Covers of Atoms}

%In a lattice $\sbL$, an element $c$ \emph{covers} (or is a \emph{cover} of) an element $b$ if $b<c$ and there is no element
%$d$ with $b<d<c$.
When a lattice $\sbL$ has a least element $\bot$, its \emph{atoms} are the covers of $\bot$.
%minimal elements of $L\bs\{\bot\}$.
Provided that $\sbL$ is modular, the join of any two
distinct atoms covers each join-and, so a cover $c$ of an atom is
interesting when it is \emph{not} the join of two atoms.  If $\sbL$ is distributive, that is
equivalent to the ostensibly stronger demand that $c$ be join-irreducible.

The lattice of subvarieties of a congruence distributive variety $\mathsf{E}$ is itself distributive
\cite[Cor.~4.2]{Jon67}.  Therefore, once the atoms of this lattice
have been determined, the immediate concern is to
identify the join-irreducible covers of each atom $\mathsf{E}'$; we refer to these as covers
of $\mathsf{E}'$ \emph{within $\mathsf{E}$}.  In particular, it behoves us to investigate
the join-irreducible covers, within $\mathsf{DMM}$, of the four varieties in Theorem~\ref{atoms}.

%Recall that $\mathbb{P}_\mathbb{U}(\mathsf{L})\subseteq\mathbb{I}(\mathsf{L})$ for any finite set $\mathsf{L}$ of
%finite similar algebras.
%Given a class $\mathsf{L}$ of algebras, let us denote by $\mathsf{L}_\textup{FSI}$ the class of all FSI
%members of $\mathsf{L}$.

%Let $\mathsf{S}$ be a nontrivial variety of De Morgan monoids.  By Theorem~\ref{atoms}, $\mathsf{S}$ includes an algebra
%$\sbX\in\{\mathbf{2},\sbS_3,\sbC_4,\sbD_4\}$.
%By a \emph{cover of} $\mathbb{V}(\sbX)$ \emph{within} $\mathsf{S}$, we mean a variety $\mathsf{K}$, with
%$\mathbb{V}(\sbX)\subsetneq\mathsf{K}\subseteq\mathsf{S}$, such that no proper subvariety of $\mathsf{K}$ properly contains
%$\mathbb{V}(\sbX)$.\footnote{\,For $\mathsf{S}=\mathsf{DMM}$, the logic $\,\vdash^\mathsf{K}$
%is then \emph{pre-maximal\/} in the lattice of axiomatic extensions of $\mathbf{R}^\mathbf{t}$, i.e., it is not a co-atom
%of this lattice, but each of its consistent axiomatic proper extensions is a co-atom.}

By Theorem~\ref{odd sm}(\ref{osm varieties}),
$\mathbb{V}(\sbS_5)$ is a join-irreducible cover of $\mathbb{V}(\sbS_3)$ within $\mathsf{DMM}$.

For each $\sbX\in\{\mathbf{2},\sbS_3,\sbC_4,\sbD_4\}$ and each variety $\mathsf{K}$ of De Morgan monoids, if
%\,For any such $\mathsf{K}$, if
$\textup{$\sbA\in(\mathsf{K}\bs\mathbb{I}(\sbX))_\textup{FSI}$}$ is nontrivial, then
% and FSI and $\sbA\not\cong\sbX$, then
%by J\'{o}nsson's Theorem,
$\sbA\notin\mathbb{V}(\sbX)$, by J\'{o}nsson's Theorem, because the nontrivial members of $\mathbb{HS}(\sbX)$
belong to $\mathbb{I}(\sbX)$.
In this case, if $\mathsf{K}$ covers $\mathbb{V}(\sbX)$, then $\mathsf{K}=\mathbb{V}(\sbA,\sbX)$, so if $\mathsf{K}$ is also
join-irreducible, it
coincides with $\mathbb{V}(\sbA)$.  In other words:
\begin{fact}\label{fact 1}
If\/ $\sbX\in\{\mathbf{2},\sbS_3,\sbC_4,\sbD_4\}$\textup{,} then every join-irreducible cover of\/ $\mathbb{V}(\sbX)$ within\/ $\mathsf{DMM}$
is generated by each of its nontrivial FSI members, other than the isomorphic copies of\/ $\sbX$\textup{.}
\end{fact}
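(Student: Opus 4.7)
The plan is to follow almost verbatim the reasoning sketched in the paragraph immediately preceding the statement. The main ingredients are J\'onsson's Theorem (stated in the excerpt), Theorem~\ref{0 gen simples}, and the observation that each of the four listed algebras is simple, hence has only trivial or isomorphic homomorphic images.

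First I would pin down the key auxiliary claim: for each $\sbX \in \{\mathbf{2},\sbS_3,\sbC_4,\sbD_4\}$, every nontrivial member of $\mathbb{HS}(\sbX)$ is isomorphic to $\sbX$. The algebras $\mathbf{2}$, $\sbC_4$ and $\sbD_4$ are $0$--generated (Theorem~\ref{0 gen simples}), so they have no proper subalgebras, while the only proper subalgebra of $\sbS_3$ is the trivial one $\{e\}$ (since $e=f$ in $\sbS_3$ forces $\{e\}$ to be closed under $\neg$). Each $\sbX$ is simple by Lemma~\ref{fsi si simple}(\ref{simple}), so nontrivial homomorphic images of $\sbX$ are isomorphic to $\sbX$. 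Combined with the fact that ultraproducts of a single finite algebra yield only isomorphic copies (so $\mathbb{P}_\mathbb{U}(\sbX)\subseteq\mathbb{I}(\sbX)$), J\'onsson's Theorem gives
\[
\mathbb{V}(\sbX)_\textup{FSI}\;\subseteq\;\mathbb{HSP}_\mathbb{U}(\sbX)\;=\;\mathbb{HS}(\sbX),
\]
so every nontrivial FSI member of $\mathbb{V}(\sbX)$ is isomorphic to $\sbX$.

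Next I would carry out the cover argument. Let $\mathsf{K}$ be a join-irreducible cover of $\mathbb{V}(\sbX)$ within $\mathsf{DMM}$, and let $\sbA\in\mathsf{K}_\textup{FSI}$ be nontrivial with $\sbA\not\cong\sbX$. By the previous paragraph, $\sbA\notin\mathbb{V}(\sbX)$, so the variety $\mathbb{V}(\sbA)\vee\mathbb{V}(\sbX)$ lies in $\mathsf{K}$ but strictly above $\mathbb{V}(\sbX)$. Since $\mathsf{K}$ covers $\mathbb{V}(\sbX)$, we get $\mathsf{K}=\mathbb{V}(\sbA)\vee\mathbb{V}(\sbX)$. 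Join-irreducibility of $\mathsf{K}$ in the (distributive) subvariety lattice of $\mathsf{DMM}$ then forces $\mathsf{K}=\mathbb{V}(\sbA)$, as $\mathbb{V}(\sbX)\subsetneq\mathsf{K}$.

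There is no genuine obstacle here; the only point requiring minor care is the verification of the $\mathbb{HS}(\sbX)$ claim for each of the four atoms, which is essentially bookkeeping using Theorem~\ref{0 gen simples} and the simplicity of each $\sbX$. The argument does not depend on which of the four atoms is considered, so a single uniform statement suffices.
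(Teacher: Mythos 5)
Your proposal is correct and follows essentially the same route as the paper's own justification (the paragraph immediately preceding the Fact): one shows the nontrivial members of $\mathbb{HS}(\sbX)$ all lie in $\mathbb{I}(\sbX)$, applies J\'{o}nsson's Theorem to conclude $\sbA\notin\mathbb{V}(\sbX)$, and then uses the cover property together with join-irreducibility in the distributive subvariety lattice to get $\mathsf{K}=\mathbb{V}(\sbA)$. The extra bookkeeping you supply for the subalgebras and simplicity of each of the four atoms is accurate.
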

%For distinct $\sbX,\sbY\in\{\mathbf{2},\sbS_3,\sbC_4,\sbD_4\}\cap\mathsf{S}$, we refer to $\mathbb{V}(\sbX,\sbY)$
%as a \emph{join of atoms}.
%% (within $\mathsf{S}$).
%In this case, $\mathbb{V}(\sbX,\sbY)$ is a cover of $\mathbb{V}(\sbX$) within $\mathsf{S}$, again by J\'{o}nsson's Theorem.
%We are mainly interested in identifying covers of $\mathbb{V}(\sbX)$ that are \emph{not} joins of atoms.

%We noted earlier that t

%Obviously, $\mathbb{V}(\sbC_4)$ is the only atom i
In the subvariety lattice of $\mathsf{U}$, the only atom is $\mathbb{V}(\sbC_4)$ (as $\mathsf{U}\subseteq\mathsf{W}$),
so every cover of $\mathbb{V}(\sbC_4)$ within $\mathsf{U}$ is join-irreducible.

Up to isomorphism, there are just seven nontrivial $0$--generated [F]SI De Morgan monoids, all of which are finite.  They were identified
by Slaney \cite{Sla85}, and are denoted by $\sbC_2,\sbC_3,\dots,\sbC_8$ in \cite{Sla89}.

Slaney's $\sbC_2$, $\sbC_3$
and $\sbC_4$ are our $\mathbf{2}$, $\sbD_4$ and $\sbC_4$, respectively.  His $\sbC_5,\dots,\sbC_8$ will not be defined in full here, but
their significant additional properties, for present purposes, are as follows.  For $n\in\{5,6,7,8\}$, $\sbC_n$ is
anti-idempotent and not totally ordered (in fact, $e$ and $f$ are incomparable), with $\left|(e]\right|=3$,
so $\sbC_n$ is not simple.  It is
therefore a homomorphic pre-image of $\sbC_4$, by
%Theorems~\ref{0 gen simples},
Theorems~\ref{jonsson's other theorem} and \ref{slaney onto c4},
i.e., $\sbC_n\in\mathsf{W}$.
%.  (This is obvious from their presentation in \cite[pp.\,119--120]{Sla89}, but it also follows
%from Theorems~\ref{0 gen simples}, \ref{jonsson's other theorem} and \ref{slaney onto c4}.)
%
%Suppose $n\in\{5,6,7,8\}$.
But, because $\sbC_n$ is rigorously compact (Theorem~\ref{dmm fsi}(\ref{dm fsi rigorously compact}))
and violates $e\leqslant f$, Lemma~\ref{w + rig comp = u} shows that
$\textup{$\sbC_n\in\mathsf{U}\bs\mathsf{M}$}$, whence $\mathbb{V}(\sbC_n)\subseteq\mathsf{U}$.  Moreover, as $\left|(e]\right|=3$,
$\sbC_n$ has just three deductive filters,
and hence just three factor algebras.  The class of nontrivial members of $\mathbb{HS}(\sbC_n)$ is therefore
$\mathbb{I}(\sbC_n,\sbC_4)$, because $\sbC_n$ is $0$--generated.  Thus, $\mathbb{V}(\sbC_n)$ is a
(join-irreducible) cover of $\mathbb{V}(\sbC_4)$ within $\mathsf{U}$, by J\'{o}nsson's Theorem.
% each of $\sbC_5,\dots,\sbC_8$, however, we have $e\nleqslant f$.

\begin{thm}\label{covers of all atoms}\
%Let\/ $\mathsf{K}$ be a variety of De Morgan monoids.
\begin{enumerate}
\item\label{covers of v2}
$\mathbb{V}(\mathbf{2})$ has no join-irreducible cover within\/ $\mathsf{DMM}$\textup{.}
%$\mathsf{K}$ is a cover of\/ $\mathbb{V}(\mathbf{2})$ within\/ $\mathsf{DMM}$ iff\/ $\mathsf{K}$ is an appropriate join of atoms,
%i.e., $\mathsf{K}$
%is\/ $\mathbb{V}(\mathbf{2},\sbS_3)$ or\/ $\mathbb{V}(\mathbf{2},\sbC_4)$ or\/ $\mathbb{V}(\mathbf{2},\sbD_4)$\textup{.}

\smallskip

\item\label{covers of vs3}
$\mathbb{V}(\sbS_5)$ is the only join-irreducible cover of\/
%$\mathsf{K}$ is a cover of\/
$\mathbb{V}(\sbS_3)$ within\/ $\mathsf{DMM}$\textup{.}
%iff\/ $\mathsf{K}$ is\/ $\mathbb{V}(\sbS_5)$ or an appropriate join of atoms.

\smallskip

\makeatletter
\renewcommand{\labelenumi}{\text{(\theenumi)}}
\renewcommand{\theenumi}{\roman{enumi}}
\renewcommand{\theenumii}{\roman{enumii}}
\renewcommand{\labelenumii}{\text{(\theenumii)}}
\renewcommand{\p@enumii}{\theenumi(\theenumii)}
\makeatother

\item\label{covers of vc4}
If\/ $\mathsf{K}$ is a join-irreducible cover of\/ $\mathbb{V}(\sbC_4)$ within\/ $\mathsf{DMM}$\textup{,} then\/ $\mathsf{K}$
consists of anti-idempotent algebras and exactly one of the following holds.
\setcounter{newexmp}{\value{enumi}}
\end{enumerate}
\makeatletter
\renewcommand{\labelenumi}{\text{(\theenumi)}}
\renewcommand{\theenumi}{\arabic{enumi}}
\renewcommand{\theenumii}{\arabic{enumii}}
\renewcommand{\labelenumii}{\text{(\theenumii)}}
\renewcommand{\p@enumii}{\theenumi(\theenumii)}
\makeatother
\begin{quote}
\begin{enumerate}
%
%\smallskip
%
\item\label{covers of vc4 1}
$\mathsf{K}\subseteq\mathsf{M}$\textup{.}

\item\label{covers of vc4 4}
$\mathsf{K}=\mathbb{V}(\sbC_n)$ for some\/ $n\in\{5,6,7,8\}$\textup{.}

\item\label{covers of vc4 2}
\text{$\mathsf{K}=\mathbb{V}(\sbA)$ for some simple\/ $1$--generated De Morgan monoid $\sbA$\textup{,}}
\text{where\/ $\sbC_4$ is a proper subalgebra of\/
%embeds into $\sbA$ but is not isomorphic to
$\sbA$\textup{.}}
%
%\item\label{covers of vc4 3}
%$\mathsf{K}=\mathbb{V}(\sbS_3,\sbC_4)$\textup{.}

%where\/ $\sbC_n$ is as defined in\/ \textup{\cite{Sla89}.}
%
%\mathbb{V}(\sbA,\sbC_4)$ for some finite\/ $0$--generated subdirectly irreducible De Morgan monoid\/
%$\sbA$ that is not isomorphic to\/ $\sbC_4$\textup{.}
\end{enumerate}
\end{quote}
\makeatletter
\renewcommand{\labelenumi}{\text{(\theenumi)}}
\renewcommand{\theenumi}{\roman{enumi}}
\renewcommand{\theenumii}{\roman{enumii}}
\renewcommand{\labelenumii}{\text{(\theenumii)}}
\renewcommand{\p@enumii}{\theenumi(\theenumii)}
\makeatother
%\smallskip
%
\begin{enumerate}
\setcounter{enumi}{\value{newexmp}}
\item\label{covers of vd4}
If\/ $\mathsf{K}$ is a join-irreducible cover of\/ $\mathbb{V}(\sbD_4)$ within\/ $\mathsf{DMM}$\textup{,} then
%exactly one of the following holds.
%\makeatletter
%\renewcommand{\labelenumi}{\text{(\theenumi)}}
%\renewcommand{\theenumi}{\arabic{enumi}}
%\renewcommand{\theenumii}{\arabic{enumii}}
%\renewcommand{\labelenumii}{\text{(\theenumii)}}
%\renewcommand{\p@enumii}{\theenumi(\theenumii)}
%\makeatother
%\begin{enumerate}
%
%\smallskip
%
%\item\label{covers of vd4 1}
%$\mathsf{K}$ is an appropriate join of atoms.
%
%\item\label{covers of vd4 2}
$\mathsf{K}=\mathbb{V}(\sbA)$ for some simple\/ $1$--generated De Morgan monoid\/ $\sbA$\textup{,} where\/ $\sbD_4$
is a proper subalgebra of\/
%embeds into\/ $\sbA$ but is not isomorphic to\/
$\sbA$\textup{.}  In this case, $\mathsf{K}$ consists of anti-idempotent algebras.
%\end{enumerate}
%\end{enumerate}
%
%\smallskip
%
%\noindent
%Moreover, the covers
%%\/ $\mathsf{K}$
%of\/ $\mathbb{V}(\sbC_4)$ identified in case~\textup{(3)} are just\/
%%$\mathsf{K}$ is\/
%$\mathbb{V}(\mathbf{2},\sbC_4)$\textup{,} $\mathbb{V}(\sbD_4,\sbC_4)$ and\/ $\mathbb{V}(\sbC_n)$\textup{,}
%$n\in\{5,6,7,8\}$\textup{,} where\/
%$\sbC_n$ is as defined in\/ \textup{\cite{Sla89}.}
\end{enumerate}
\end{thm}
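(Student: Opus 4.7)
The unifying strategy is Fact~\ref{fact 1}: any join-irreducible cover $\mathsf{K}$ of an atom $\mathbb{V}(\sbX)$ within $\mathsf{DMM}$ satisfies $\mathsf{K}=\mathbb{V}(\sbA)$ for every nontrivial FSI $\sbA\in\mathsf{K}$ with $\sbA\not\cong\sbX$, and the pairwise incomparability of the four atoms (Theorem~\ref{atoms}) turns the appearance of a second atom in $\mathsf{K}$ into an immediate witness of join-reducibility. Since (by the MRW dichotomy recalled in the introduction) any non-Sugihara subvariety of $\mathsf{DMM}$ must contain $\sbC_4$ or $\sbD_4$, and every nontrivial Sugihara variety contains $\mathbf{2}$ or $\sbS_3$, this mechanism delivers (i) outright, yields the anti-idempotence in (iii) and (iv) via Theorem~\ref{combined}(\ref{sug cor}), and excludes $\sbD_4$ from any join-irreducible cover of $\mathbb{V}(\sbC_4)$ and $\sbC_4$---together with every $\sbC_n$ (which maps onto $\sbC_4$)---from any join-irreducible cover of $\mathbb{V}(\sbD_4)$. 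For (ii), the residual case of an odd-Sugihara generator is controlled by Theorem~\ref{odd sm}(\ref{osm varieties}) together with a direct J\'onsson's-Theorem check that $\mathbb{V}(\sbS_5)$ has only $\sbS_3$ and $\sbS_5$ as nontrivial FSI members, pinning down $\mathbb{V}(\sbS_5)$ as the unique join-irreducible cover.

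For (iii) the case split is driven by two conditions: (a) $\mathsf{K}\subseteq\mathsf{M}$, and (b) the presence in $\mathsf{K}$ of some $\sbC_n$ with $n\in\{5,6,7,8\}$. If (a) holds we are in case~1; if (b) holds then by the cover property $\mathsf{K}=\mathbb{V}(\sbC_n)$, giving case~2. Suppose neither. Pick any FSI generator $\sbA\not\cong\sbC_4$ of $\mathsf{K}$; by Lemma~\ref{fsi si simple}(\ref{fsi}) the $0$-generated subalgebra of $\sbA$ is FSI and $0$-generated, hence by Slaney's enumeration (recalled before the statement) it lies in $\{\mathbf{2},\sbC_4,\sbD_4,\sbC_5,\ldots,\sbC_8\}$, and the exclusions above leave only $\sbC_4$. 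So $\sbC_4\subsetneq\sbA$. Pick $a\in A\setminus C_4$, let $\sbB=\langle a\rangle$ (so $\sbC_4\subsetneq\sbB$), and take a simple factor $\sbS$ of $\sbB$ via Theorem~\ref{jonsson's other theorem}. The key step is that $\sbS$ cannot be isomorphic to $\sbC_4$: otherwise $\sbC_4$ would be both a subalgebra and a homomorphic image of $\sbB$, hence (because $\sbC_4$ is $0$-generated) a retract of $\sbB$ by Remark~\ref{retract remark}; since $\sbB$ is rigorously compact by Theorem~\ref{dmm fsi}(\ref{rigorously compact generation}), this would place $\sbB\in\mathsf{N}$ and then, via Corollary~\ref{n vs m}, $\sbB\in\mathsf{M}$, forcing $\mathsf{K}=\mathbb{V}(\sbB)\subseteq\mathsf{M}$ (by Fact~\ref{fact 1}), contradicting the failure of~(a). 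Hence $\sbS\not\cong\sbC_4$, so Fact~\ref{fact 1} gives $\mathsf{K}=\mathbb{V}(\sbS)$; and the $0$-generated subalgebra of $\sbS$ is $\sbC_4$ by the same exclusion, so $\sbC_4\subsetneq\sbS$, and we are in case~3.

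Part (iv) runs on the same blueprint with $\sbD_4$ replacing $\sbC_4$. The exclusion of $\sbC_4$ from $\mathsf{K}$ (and hence of each $\sbC_n$) leaves $\sbD_4$ as the only $0$-generated FSI in $\mathsf{K}$, so the $0$-generated subalgebra of any FSI generator $\sbA\not\cong\sbD_4$ is $\sbD_4$ and we may build $\sbB$ and its simple factor $\sbS$ exactly as in (iii). Here the ruling out of $\sbS\cong\sbD_4$ is cleaner: Theorem~\ref{slaney onto c4} applied to $\sbB\twoheadrightarrow\sbS$ (target nontrivial and $0$-generated) forces the quotient to be an isomorphism unless $\sbS\cong\sbC_4$, and $\sbC_4\notin\mathsf{K}$; so $\sbB\cong\sbS\cong\sbD_4$, contradicting $\sbD_4\subsetneq\sbB$. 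The main obstacle throughout is exactly this final descent step---producing a simple $1$-generated generator without regressing into $\mathbb{V}(\sbC_4)$ (resp.\ $\mathbb{V}(\sbD_4)$)---which depends on the delicate interaction between Theorem~\ref{slaney onto c4} and the retract-based $\mathsf{N}/\mathsf{M}$ machinery of Section~\ref{crystalline varieties}. Disjointness of the three cases in (iii) is a routine check: simple members of $\mathsf{M}$ are isomorphic to $\sbC_4$ by Lemma~\ref{c4 only simple in m}, separating cases~1 and~3; and each $\sbC_n$ is $0$-generated and non-simple, separating case~2 from both case~1 and case~3.
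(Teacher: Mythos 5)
Your proposal is correct and follows essentially the same route as the paper's proof: Fact~\ref{fact 1} together with the pairwise incomparability of the atoms, Slaney's classification of the nontrivial $0$--generated FSI De Morgan monoids, Theorem~\ref{slaney onto c4}, and the retract--plus--rigorous-compactness chain through Remark~\ref{retract remark} and Corollary~\ref{n vs m} for case~(1). The only real divergence is local and harmless: where the paper passes from the finitely generated SI generator $\sbA$ to a simple quotient $\sbE$ and then down to a $1$--generated subalgebra $\sbE'\supsetneq\sbX$, you first descend to a $1$--generated subalgebra $\sbB\supsetneq\sbX$ of $\sbA$ and then take a simple quotient of $\sbB$ (applying the $\mathsf{N}$/$\mathsf{M}$ argument to $\sbB$ rather than to $\sbA$); both orderings deliver the required simple $1$--generated witness.
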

\begin{proof}
Let $\sbX\in\{\mathbf{2},\sbS_3,\sbC_4,\sbD_4\}$, and let $\mathsf{K}$ be a join-irreducible
cover of $\mathbb{V}(\sbX)$ within $\mathsf{DMM}$.
As $\mathbb{V}(\sbX)\subsetneq\mathsf{K}$, there exists a
finitely generated SI algebra $\textup{$\sbA\in\mathsf{K}\bs\mathbb{V}(\sbX)$}$.  Then
$\mathsf{K}=\mathbb{V}(\sbA)$, by Fact~\ref{fact 1}.
%remarks preceding the present theorem.
%\sbX)$, because
%$\mathsf{K}$ covers $\mathbb{V}(\sbX)$.
Note that $\sbA$ is rigorously compact, by Theorem~\ref{dmm fsi}(\ref{rigorously compact generation}).
%{dm fsi rigorously compact}.
Let $\sbB$ be the $0$--generated subalgebra of $\sbA$, so $\sbB$ is FSI, by Lemma~\ref{fsi si simple}(\ref{fsi}).  Now $\sbB$
is finite, by the aforementioned result of Slaney, so $\sbB$ is SI or trivial.

If $\sbB$ is trivial, then $\sbA$ is an odd Sugihara monoid (by Theorem~\ref{combined}(\ref{idempotence f and e})),
whence $\mathsf{K}$ consists of odd Sugihara monoids, forcing $\sbX=\sbS_3$ and $\mathsf{K}=\mathbb{V}(\sbS_5)$
%(by Theorem~\ref{}) so
(by Theorem~\ref{odd sm}(\ref{osm varieties})), as $\mathsf{K}$ covers $\mathbb{V}(\sbX)$.
%shows that
%$\sbS_3\in\mathbb{V}(\sbA)$.  In this case, $\sbX=\sbS_3$ (otherwise $\mathsf{K}=\mathbb{V}(\sbX,\sbS_3)$ would
%contradict the join-irreducibility of $\mathsf{K}$).
%%if $\sbX\neq\sbS_3$, then $\mathsf{K}=\mathbb{V}(\sbX,\sbS_3)$ (a join of atoms), while if $\sbX=\sbS_3$, then
%Then $\mathsf{K}$ consists of odd Sugihara monoids and covers $\mathbb{V}(\sbS_3)$, so $\mathsf{K}=\mathbb{V}(\sbS_5)$,
%again by Theorem~\ref{odd sm}(\ref{osm varieties}).

We may therefore assume that $\sbB$ is nontrivial, in view of the present theorem's statement.  By Theorems~\ref{jonsson's other theorem}
and \ref{slaney onto c4}, $\sbB$ is simple or crystalline, so by Theorem~\ref{0 gen simples}, we may assume that $\sbB\in\{\mathbf{2},\sbD_4\}$
or $\sbC_4\in\mathbb{H}(\sbB)$.

If $\sbB=\mathbf{2}$, then $\sbA$ is idempotent (by Theorem~\ref{combined}(\ref{idempotence f and e 2})).  In this case, if $\sbX\neq\mathbf{2}$, then
$\mathsf{K}=\mathbb{V}(\sbX,\mathbf{2})$, while if $\sbX=\mathbf{2}$, then $\sbA\not\cong\mathbf{2}$
(as $\sbA\notin\mathbb{V}(\sbX)$), so
$\sbS_3\in\mathbb{H}(\sbA)$ (by the remark preceding Theorem~\ref{odd sm}),
%(\ref{osm varieties})),
whereupon $\mathsf{K}=\mathbb{V}(\mathbf{2},\sbS_3)$.  Either way, this contradicts the join-irreducibility of $\mathsf{K}$, so
$\sbB\neq\mathbf{2}$, whence $\sbD_4=\sbB$ or $\sbC_4\in\mathbb{H}(\sbB)$.

%It follows that $\sbX\neq\mathbf{2}$, again because $\mathsf{K}$ is join-irreducible and so cannot be
%$\mathbb{V}(\mathbf{2},\sbD_4)$ or $\mathbb{V}(\mathbf{2},\sbC_4)$.
For the same reason, the cases $\sbX\neq\sbD_4=\sbB$
%$\sbB=\sbD_4\neq\sbX$
%and $\sbX\neq\sbC_4\cong\sbB$
%$\sbB\cong\sbC_4\neq\sbX$
and $\sbX\neq\sbC_4\in\mathbb{H}(\sbB)$
%\bs\mathbb{I}(\sbB)$
are ruled out, as $\mathsf{K}$ would be $\mathbb{V}(\sbX,\sbD_4)$ in the first of these, and $\mathbb{V}(\sbX,\sbC_4)$ in
the second.
%
%If $\sbB=\sbD_4\neq\sbX$, then $\mathsf{K}=\mathbb{V}(\sbD_4,\sbX)$.
%If $\sbX\neq\sbC_4\in\mathbb{H}(\sbB)\bs\mathbb{I}(\sbB)$, then
%$\mathsf{K}=\mathbb{V}(\sbC_4,\sbX)$.
If $\sbX=\sbC_4\in\mathbb{H}(\sbB)\bs\mathbb{I}(\sbB)$, then $\mathsf{K}=\mathbb{V}(\sbB)$,
instantiating (\ref{covers of vc4})(\ref{covers of vc4 4}),
%(2),
in view of Slaney's findings.  The assertion
`$\sbX=\sbC_4\in\mathbb{H}(\sbB)\bs\mathbb{I}(\sbB)$' may therefore be assumed false.
%, because
%preclude this case, as
(The exclusivity claim in (\ref{covers of vc4}) will be proved separately below.)
%In this case, $\mathsf{K}$ consists of anti-idempotent algebras, as
%$\sbC_5,\dots,\sbC_8$ satisfy $x\leqslant f^2$.

%X

%This leaves only the cases

%We may therefore assume that
It follows that $\sbB\cong\sbX\in\{\sbC_4,\sbD_4\}$.  We identify $\sbB$ with $\sbX$ and refer henceforth
only to the latter.
% to consider.
%In these cases, $\sbX\in\mathbb{S}(\sbA)$, so $\mathsf{K}=\mathbb{V}(\sbA)$.
% and $\sbB$ is simple.
%If $\mathsf{K}$ has
%a nontrivial idempotent member, then $\mathsf{K}$ is $\mathbb{V}(\sbX,\mathbf{2})$ or $\mathbb{V}(\sbX,\sbS_3)$,
%%(a join of atoms),
%so w
%In these cases,
Thus, $\sbX$ is a subalgebra of $\sbA$, and $\sbX\neq\sbA$ (as $\sbA\notin \mathbb{V}(\sbX)$),
so $\sbA$ is not $0$--generated.  Also,
%\in\mathbb{S}(\sbA)$ and
$\mathsf{K}$ has no nontrivial idempotent member (otherwise $\mathsf{K}$ would be
$\mathbb{V}(\sbX,\mathbf{2})$ or $\mathbb{V}(\sbX,\sbS_3)$), so $\mathsf{K}$ consists of anti-idempotent algebras, by
Theorem~\ref{combined}(\ref{sug cor}).
%Thus, $\mathsf{K}$ consists of bounded algebras, by Corollary~\ref{sug cor}.  In particular,

By Theorem~\ref{jonsson's other theorem}, there is a surjective homomorphism $h\colon\sbA\mrig\sbE$ for some simple $\sbE\in\mathsf{K}$.
%, where $\sbE$ is simple.
Now $\sbE\not\cong\sbD_4$, by Theorem~\ref{slaney onto c4}, because $\sbA$ is not $0$--generated.
%$\sbA\notin\mathbb{V}(\sbX)$.

If
%Finally, suppose that
$\sbX=\sbC_4$, then
%.  Then
$\sbC_4\in\mathbb{S}(\sbA)$.
%and $\mathsf{K}=\mathbb{V}(\sbA)$.
If, moreover, $\text{$\sbC_4\in\mathbb{H}(\sbA)$}$, then
$\sbA\in\mathsf{N}$, by Remark~\ref{retract remark}, so $\sbA\in\mathsf{M}$, by Corollary~\ref{n vs m}, because $\sbA$ is rigorously compact.  In this case,
$\mathsf{K}\subseteq\mathsf{M}$, because $\mathsf{K}=\mathbb{V}(\sbA)$.

We may therefore assume that $\sbX=\sbD_4$ or $\sbX=\sbC_4\notin\mathbb{H}(\sbA)$.  In both cases, $\sbE\not\cong\sbX$.
%Suppose $\sbB=\sbX=\sbD_4$, so $\sbD_4\in\mathbb{S}(\sbA)$.
As $\sbE$ is a nontrivial member of $\mathsf{K}$, it is not idempotent, so
the subalgebra $h[\sbX]$ of $\sbE$ cannot be trivial (by
Theorem~\ref{combined}(\ref{idempotence f and e 2})).
Therefore, $h|_X$ embeds $\sbX$ into $\sbE$, because $\sbX$ is simple.  Since $\sbX$ is $0$--generated and finite,
it is isomorphic to a proper subalgebra of a $1$--generated subalgebra $\sbE'$ of $\sbE$.
As $\sbE$ is simple, so is $\sbE'$, by the CEP for IRLs.
%, $\sbE'$ is simple.
Thus, because $\sbX\ncong\sbE'\in\mathsf{K}$, Fact~\ref{fact 1} gives
%, and by J\'{o}nsson's Theorem,
%$\sbE'\in\mathsf{K}\bs\mathbb{V}(\sbX)$.  So,
$\mathsf{K}=\mathbb{V}(\sbE')$, witnessing
(\ref{covers of vc4})(\ref{covers of vc4 2}) or (\ref{covers of vd4}).
%(2)
%(3).
%
%We may now imitate the previous paragraph, giving $\sbC_4$
%the role of $\sbD_4$, and conclude that $\mathsf{K}=\mathbb{V}(\sbE',\sbC_4)$ for a simple $1$--generated algebra $\sbE'\in\mathsf{K}\bs\mathbb{V}(\sbC_4)$,
%where $\sbC_4\in\mathbb{IS}(\sbE')$.

For the mutual exclusivity claim in (\ref{covers of vc4}), note that (\ref{covers of vc4 1}) precludes (\ref{covers of vc4 4})
(as $\textup{$\sbC_n\notin\mathsf{M}$}$) and (\ref{covers of vc4 2}) (as $\sbC_4\notin\mathbb{H}(\sbA)$
%is not a homomorphic image of
for the simple generator $\sbA$ of $\mathsf{K}$ in (\ref{covers of vc4 2})).  Also, (\ref{covers of vc4 4}) precludes (\ref{covers of vc4 2}), by Corollary~\ref{semisimple}, because $\sbC_n$ is SI but not simple.
%, whereas all SI members of a variety generated by simple De Morgan monoids are simple (by J\'{o}nsson's Theorem,
%the CEP for $\mathsf{DMM}$, and the first order characterization of simplicity in Lemma~\ref{fsi si simple}(\ref{simple}),
%which persists in ultraproducts).
\end{proof}
%
%The following companion to
%J\'{o}nsson's Theorem will be needed below.
%%(See \cite{CD90} for a quasivarietal generalization.)
%\begin{thm}\label{edpm}
%\textup{(\cite{BP86,CD90})}
%In a congruence distributive variety\/ $\mathsf{K}$\textup{,} the following conditions are equivalent.
%\begin{enumerate}
%\item\label{edpm1}
%For each\/ $\sbA\in\mathsf{K}$\textup{,} the intersection of any two finitely generated congruences of\/ $\sbA$ is finitely generated.
%\item\label{edpm2}
%The class of FSI members of\/ $\mathsf{K}$ is closed under taking subalgebras and ultraproducts.
%\end{enumerate}
%\end{thm}
%The lattice isomorphism between congruences and deductive filters of any square-increasing [I]RL preserves compactness, which amounts
%to finite generation.  Clearly, every finitely generated deductive filter is principal (being generated by the lattice meet of the
%original generators), so condition~\ref{edpm1} holds for any variety of De Morgan monoids, and we infer:
%\begin{cor}\label{edpm cor for dmm}
%The class of FSI \,De Morgan monoids is closed under\/ $\mathbb{S}$ and\/ $\mathbb{P}_\mathbb{U}$\textup{.}
%\end{cor}
%
%By Theorem~\ref{covers of all atoms} and Corollary~\ref{semisimple}, the non-semisimple covers of atoms in the subvariety lattice of
%$\mathsf{DMM}$ (regardless of join-irreducibility) are $\mathbb{V}(\sbS_5)$, $\mathbb{V}(\sbC_5),\dots,\mathbb{V}(\sbC_8)$ and the ones
%contained in $\mathsf{M}$.
If $\mathsf{K}$ and $\sbA$ are as in Theorem~\ref{covers of all atoms}(\ref{covers of vc4})(\ref{covers of vc4 2})
[resp.\ \ref{covers of all atoms}(\ref{covers of vd4})], then $\mathsf{K}$ is semisimple, by Corollary~\ref{semisimple}.
%a discriminator variety (see Footnote~\ref{discriminator}), so
%it consists of Boolean products of simple algebras, in the sense of \cite[Sec.\,IV.8--9]{BS81}.
If, moreover, $\sbA$
is finite, then the class of simple members of $\mathsf{K}$ is $\mathbb{I}(\sbC_4,\sbA)$ [resp.\ $\mathbb{I}(\sbD_4,\sbA)$],
%$\sbC_4$ [resp.\ $\sbD_4$] and $\sbA$ are the only simple members of $\mathsf{K}$,
by J\'{o}nsson's Theorem and the CEP.  The options for $\sbA$ are discussed in
Sections~\ref{other covers c4} and \ref{other covers d4}.

%\enlargethispage{7pt}

An immediate consequence of Theorem~\ref{covers of all atoms}(\ref{covers of vc4}) is the following.
\begin{cor}\label{covers in u - m}
The varieties\/ $\mathbb{V}(\sbC_5)$\textup{,} $\mathbb{V}(\sbC_6)$\textup{,} $\mathbb{V}(\sbC_7)$
and\/ $\mathbb{V}(\sbC_8)$ are exactly the covers of\/ $\mathbb{V}(\sbC_4)$
within\/ $\mathsf{U}$ that are not within\/ $\mathsf{M}$\textup{.}
\end{cor}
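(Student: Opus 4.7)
The plan is to derive the corollary directly from Theorem~\ref{covers of all atoms}(\ref{covers of vc4}), together with the observations already recorded in the text about the algebras $\sbC_5,\sbC_6,\sbC_7,\sbC_8$. First, I would note that any cover $\mathsf{K}$ of $\mathbb{V}(\sbC_4)$ within $\mathsf{U}$ is automatically a cover of $\mathbb{V}(\sbC_4)$ within $\mathsf{DMM}$, since every subvariety of $\mathsf{K}$ lies inside the variety $\mathsf{U}$ and hence any potential intermediate subvariety of $\mathsf{DMM}$ would already be an intermediate subvariety of $\mathsf{U}$. Moreover, because $\mathbb{V}(\sbC_4)$ is the unique atom in the subvariety lattice of $\mathsf{U}\subseteq\mathsf{W}$, such a $\mathsf{K}$ is necessarily join-irreducible, so Theorem~\ref{covers of all atoms}(\ref{covers of vc4}) places $\mathsf{K}$ in exactly one of the three mutually exclusive alternatives (1), (2), (3) of that theorem.

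Assuming now that $\mathsf{K}\not\subseteq\mathsf{M}$, alternative (1) is excluded outright. For alternative (3), $\mathsf{K}=\mathbb{V}(\sbA)$ for some simple De Morgan monoid $\sbA$ properly containing $\sbC_4$, so in particular $\sbA\not\cong\sbC_4$. Since the nontrivial algebra $\sbA$ lies in $\mathsf{U}\subseteq\mathsf{W}$, there exists a surjective homomorphism $h\colon\sbA\mrig\sbC_4$, and $h$ is non-constant because $\sbC_4$ is nontrivial; but then simplicity of $\sbA$ forces $h$ to be an isomorphism, contradicting $\sbA\not\cong\sbC_4$. Thus only alternative (2) can occur, giving $\mathsf{K}=\mathbb{V}(\sbC_n)$ for some $n\in\{5,6,7,8\}$.

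For the converse direction, the discussion preceding Theorem~\ref{covers of all atoms} already shows, for each $n\in\{5,6,7,8\}$, that $\sbC_n$ lies in $\mathsf{U}\smallsetminus\mathsf{M}$ and that the only nontrivial proper homomorphic image of $\sbC_n$ is (isomorphic to) $\sbC_4$, from which J\'{o}nsson's Theorem yields that $\mathbb{V}(\sbC_n)$ is a cover of $\mathbb{V}(\sbC_4)$ within $\mathsf{U}$; and from $\sbC_n\notin\mathsf{M}$ we immediately get $\mathbb{V}(\sbC_n)\not\subseteq\mathsf{M}$. I anticipate no real obstacle here: the corollary is essentially a packaging of Theorem~\ref{covers of all atoms} with the prior analysis of Slaney's $0$--generated algebras $\sbC_5,\dots,\sbC_8$, the only mildly substantive step being the exclusion of case~(3) via the simplicity/crystallinity argument above.
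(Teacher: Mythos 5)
Your proof is correct and follows essentially the same route the paper intends: the paper states this corollary without proof as an ``immediate consequence'' of Theorem~\ref{covers of all atoms}(\ref{covers of vc4}), relying on the preceding remarks that every cover of $\mathbb{V}(\sbC_4)$ within $\mathsf{U}$ is join-irreducible and that each $\mathbb{V}(\sbC_n)$, $n\in\{5,\dots,8\}$, is such a cover lying in $\mathsf{U}\smallsetminus\mathsf{M}$. Your exclusion of alternative (3) via crystallinity plus simplicity is exactly the argument the authors deploy elsewhere (e.g.\ in the discussion after Theorem~\ref{u and n quasivarieties}), so the proposal just makes explicit what the paper leaves implicit.
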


In the next section, we shall show that $\mathbb{V}(\sbC_4)$ has just six covers within $\mathsf{M}$.
%, and a further four within $\mathsf{U}$.
%The following lemma will be needed.
Some preparatory results will be required.  The subalgebra of an algebra $\sbA$ generated by a subset
$X$ of $A$ shall be denoted by $\boldsymbol{\mathit{Sg}}^\sbA X$.

%(In Lemma~\ref{not so simple dunn}, $\mathsf{U}$ could replace
%$\mathsf{M}$, but Corollary~\ref{covers in u - m} renders such generalization redundant.)

%\begin{lem}\label{cover precondition}
%\end{lem}
%\begin{proof}
%\end{proof}

\begin{lem}\label{not so simple dunn}
%Let\/ $\mathsf{K}$ be a subvariety of\/ $\mathsf{M}$\textup{.}
Let\/ $\mathsf{K}$ be a cover of\/ $\mathbb{V}(\sbC_4)$ within\/ $\mathsf{U}$\textup{.} Then\/ $\mathsf{K}=\mathbb{V}(\sbA)$
for some skew reflection\/ $\sbA$ of
%SI algebra\/ $\sbA\in\mathsf{M}\bs\mathbb{V}(\sbC_4)$\textup{,} where\/ $\sbA$ is a skew reflection\/ $\textup{S}^\leqslant(\sbB)$
an SI\, Dunn monoid\/ $\sbB$\textup{,} where\/ $\0$ is meet-irreducible
%(and\/ $\1$ join-irreducible)
in\/ $\sbA$\textup{,}
and\/ $\sbA$ is generated by the greatest strict lower bound of\/ $e$ in\/ $\sbB$\textup{.}
%\textup{,}
%\textup{(}which exists by Lemma~\textup{\ref{fsi si simple}(\ref{si})).}
\end{lem}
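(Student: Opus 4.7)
\emph{Plan.} Note first that $\mathsf{K}$ is join-irreducible (since $\mathbb{V}(\sbC_4)$ is the only atom of the subvariety lattice of $\mathsf{U}$) and is also a cover of $\mathbb{V}(\sbC_4)$ within $\mathsf{DMM}$, because every subvariety of $\mathsf{K}$ lies in $\mathsf{U}$. Choose a finitely generated SI algebra $\sbA^\circ\in\mathsf{K}\setminus\mathbb{V}(\sbC_4)$; by Fact~\ref{fact 1}, $\mathsf{K}=\mathbb{V}(\sbA^\circ)$. Theorem~\ref{dmm fsi}(\ref{rigorously compact generation}) makes $\sbA^\circ$ rigorously compact, and since $\sbA^\circ\in\mathsf{U}\subseteq\mathsf{W}$ it is crystalline, so Theorem~\ref{onto c4} presents $\sbA^\circ$ as a skew reflection of a Dunn monoid $\sbB^\circ$ (an RL--subreduct of $\sbA^\circ$, hence distributive), with $\0$ meet-irreducible in $\sbA^\circ$. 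By Remark~\ref{skew remark}, the SI--ness of $\sbA^\circ$ forces $\sbB^\circ$ to be SI or trivial; the trivial alternative would make $\sbA^\circ$ four-element and---given that $\0$ is meet-irreducible in $\sbA^\circ$ and $\sbD_4\notin\mathsf{U}$---isomorphic to $\sbC_4$, contradicting our choice of $\sbA^\circ$. Hence $\sbB^\circ$ is SI and, by Lemma~\ref{fsi si simple}(\ref{si}), possesses a greatest strict lower bound $r$ of $e$.

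Set $\sbA=\boldsymbol{\mathit{Sg}}^{\sbA^\circ}\{r\}$. As a subalgebra of an FSI algebra it is FSI (Lemma~\ref{fsi si simple}(\ref{fsi})); being finitely generated it is rigorously compact (Theorem~\ref{dmm fsi}(\ref{rigorously compact generation})); and restricting the unique homomorphism $\sbA^\circ\mrig\sbC_4$ gives a surjection $\sbA\mrig\sbC_4$ (since $\sbC_4$ is $0$--generated), so $\sbA$ is crystalline as well. Applying Theorem~\ref{onto c4} once more, $\sbA$ itself is a skew reflection of a Dunn monoid $\sbB$ (necessarily contained in $\sbB^\circ$ and containing $r$), with $\0$ meet-irreducible in $\sbA$. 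Any strict lower bound of $e$ in $\sbB$ is a strict lower bound of $e$ in $\sbB^\circ$ and so is bounded above by $r$, whence $r$ remains the greatest strict lower bound of $e$ in $\sbB$; by Lemma~\ref{fsi si simple}(\ref{si}), $\sbB$ is SI.

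By construction $\sbA$ is generated by $r$, so it remains only to confirm $\mathbb{V}(\sbA)=\mathsf{K}$. By Fact~\ref{fact 1} this reduces to $\sbA\ncong\sbC_4$; but $\sbA\cong\sbC_4$ would force $|B|=1$, i.e., $\sbB$ trivial, contradicting the presence of $r<e$ in $\sbB$. The main delicacy is the second application of Theorem~\ref{onto c4}: one must check that the subalgebra $\sbA$ is rigorously compact and crystalline (both straightforward, as indicated), after which its skew-reflection form and the meet-irreducibility of $\0$ come for free, together with the inheritance of $r$ as the top element of $(e]$ in the smaller Dunn monoid $\sbB$.
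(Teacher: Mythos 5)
Your proposal is correct and follows essentially the same route as the paper: take an SI algebra in $\mathsf{K}\setminus\mathbb{V}(\sbC_4)$, represent it as a skew reflection of a Dunn monoid with $\0$ meet-irreducible (the paper invokes Corollary~\ref{m representation 2}, which packages the same application of Theorem~\ref{onto c4} that you make directly), and then pass to the subalgebra generated by the greatest strict lower bound of $e$, checking via Fact~\ref{fact 1} that it still generates $\mathsf{K}$. The only cosmetic differences are that you secure rigorous compactness from finite generation rather than from the boundedness of anti-idempotent SI algebras, and that you spell out (correctly, and slightly more carefully than the paper) why a trivial base Dunn monoid would force the algebra to be $\sbC_4$ rather than $\sbD_4$.
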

\begin{proof}
%Since $\mathsf{K}$ is a variety with $\mathbb{V}(\sbC_4)\subsetneq\mathsf{K}\subseteq\mathsf{U}$,
By assumption, there is
an SI algebra $\sbG\in\mathsf{K}\bs\mathbb{V}(\sbC_4)$, and
$\mathsf{K}$ is join-irreducible in the subvariety lattice of $\mathsf{DMM}$.
As $\sbG\in\mathsf{U}$,
%and, by Lemma~\ref{fsi si simple}(\ref{si}), $e$ has a greatest strict lower bound in $\sbA$, which we denote by $c$.
Corollary~\ref{m representation 2}
%and Theorem~\ref{onto c4} show
shows that $\sbG$ is a skew reflection of a Dunn monoid $\sbH$, and $\0$ is meet-irreducible
in $\sbG$.  Now $\sbH$ is nontrivial, because $\sbG\not\cong\sbC_4$, so Remark~\ref{skew remark} shows that $\sbH$ is SI,
and that $\sbH$ includes the greatest
strict lower bound of $e$ in $\sbG$, which we denote by $c$.
%As $0<c<e$,
Then $\sbA\seteq\boldsymbol{\mathit{Sg}}^\sbG\{c\}\in\mathsf{K}$ is SI, by Lemma~\ref{fsi si simple}(\ref{si}),
and
%$\boldsymbol{\mathit{Sg}}^\sbA\{c\}
$\sbA\not\cong\sbC_4$, as $\0<c<e$.
Consequently,
%By J\'{o}nsson's Theorem, therefore,
%every SI member of $\mathbb{V}(\sbC_4)$ is isomorphic to $\sbC_4$, so
%$\boldsymbol{\mathit{Sg}}^\sbA\{c\}\in\mathsf{K}\bs\mathbb{V}(\sbC_4)$ (by J\'{o}nsson's Theorem), and so
$\mathsf{K}=\mathbb{V}(\sbA)$,
%(\boldsymbol{\mathit{Sg}}^\sbA\{c\})$,
by Fact~\ref{fact 1}.
% (because $\mathsf{K}$ covers $\mathbb{V}(\sbC_4)$).
Clearly, $\sbA$
%$\boldsymbol{\mathit{Sg}}^\sbA\{c\}$
is the skew reflection of the SI Dunn monoid
%the subalgebra of $\sbB$ on
$\sbB\seteq\boldsymbol{\mathit{Sg}}^\sbA(H\cap A)$,
%{\mathit{Sg}}^\sbA\{c\}$,
with respect to the restricted order of
%inherited from
$\sbA$, and $\0$ is meet-irreducible in $\sbA$.
%$\boldsymbol{\mathit{Sg}}^\sbA\{c\}$.
%Thus, we may assume, without loss of generality, that $\sbA=\boldsymbol{\mathit{Sg}}^\sbA\{c\}$.
\end{proof}

%Although it is not obvious, we shall eventually establish a stronger form of Lemma~\ref{not so simple dunn}, in which
%$\sbA$ and $\sbB$ can be chosen finite, with $\sbB$ simple.  The element denoted by $c$ in the proof above will then
%be the least element of $\sbB$, and it will be convenient to denote it as $\bot$, rather than $c$.  And the next lemma
%will emerge as a converse for Lemma~\ref{not so simple dunn}.

A partial converse of Lemma~\ref{not so simple dunn} is supplied below.  It extends the claim about
$\sbC_5,\dots,\sbC_8$ preceding Theorem~\ref{covers of all atoms}.

\begin{lem}\label{simple dunn}
If a skew reflection $\sbA\in\mathsf{U}$ of a finite simple Dunn monoid $\sbB$ is generated by the least element of
%unique strict lower bound of\/ $e$ in\/
$\sbB$\textup{,} then\/ $\mathbb{V}(\sbA)$ is a (join-irreducible) cover of\/ $\mathbb{V}(\sbC_4)$ within\/ $\mathsf{U}$\textup{.}
%
%Let\/ $\sbA\in\mathsf{M}$ be finite and SI, and identify\/ $\sbC_4$ with the smallest subalgebra of\/ $\sbA$\textup{.}
%Then\/ $\mathbb{V}(\sbA)$ is a cover of\/ $\mathbb{V}(\sbC_4)$ within\/ $\mathsf{M}$ iff\/ $\sbC_4$ is the only proper
%subalgebra of\/ $\sbA$ and the lower bounds of\/ $e$ in\/ $\sbA$ form a three-element chain\/ $0<\wb<e$\textup{.}
%In this case, moreover,
%%Let\/ $\sbA$ be a finite SI\, De Morgan monoid.  If\/ $\mathbb{V}(\sbA)$ is a cover of\/ $\mathbb{V}(\sbC_4)$ within\/ $\mathsf{M}$\textup{,}
%%then\/
%\begin{enumerate}
%\item\label{simple dunn 1}
%$\sbA$ is a skew reflection of a simple Dunn monoid\/ $\sbB$\textup{,} and
%
%\smallskip
%
%\item\label{simple dunn 2}
%%the lower bounds of\/ $e$ in\/ $\sbA$ form a three-element chain\/ $0<\wb<e$\textup{,} where\/
%$\wb$ is both the least element of\/ $\sbB$ and the unique atom of\/ $\sbA$\textup{,} and\/ $\wb$ generates the algebra\/ $\sbA$\textup{.}
%\end{enumerate}
\end{lem}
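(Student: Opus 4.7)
The plan is to show (a) $\mathbb{V}(\sbC_4) \subsetneq \mathbb{V}(\sbA) \subseteq \mathsf{U}$, and (b) any subvariety $\mathsf{L}$ of $\mathsf{U}$ with $\mathbb{V}(\sbC_4) \subsetneq \mathsf{L} \subseteq \mathbb{V}(\sbA)$ must coincide with $\mathbb{V}(\sbA)$. Item (a) will be straightforward: $\sbA$ is finite (since $\sbB$ is) and SI by Remark~\ref{skew remark} (as the nontrivial simple $\sbB$ is SI), its $0$--generated subalgebra is a copy of $\sbC_4$, and $c$ witnesses that this inclusion is proper; combined with $\sbA \in \mathsf{U}$, this yields (a).

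For (b) I will compute $\mathbb{HS}(\sbA)$ and invoke J\'onsson's Theorem. Concerning subalgebras: any $\sbC \leqslant \sbA$ is itself a skew reflection of the sub-Dunn monoid $\sbC_0 \seteq \sbC \cap B$ of $\sbB$. If $\sbC_0 = \{e\}$, then $\sbC = \sbC_4$. Otherwise, the CEP (inherited by Dunn monoids as a subvariety of RLs) together with the simplicity of $\sbB$ will force $\sbC_0$ to be simple---any congruence of $\sbC_0$ extends to $\sbB$ and must thus restrict back as $\Delta$ or $\nabla$. Lemma~\ref{fsi si simple}(\ref{simple}) then yields a unique strict lower bound $c_0$ of $e$ in $\sbC_0$, and the uniqueness of $c$ as the strict lower bound of $e$ in $\sbB$ forces $c_0 = c$, so $c \in \sbC$; since $c$ generates $\sbA$, this gives $\sbC = \sbA$.

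For the quotients: $\sbC_4$ is simple, so $\sbC_4$ is its only nontrivial quotient. For $\sbA$, being a finite SI square-increasing IRL, the deductive filters are the principal up-sets $[d)$ with $d \leqslant e$, and the elements of $\sbA$ below $e$ are exactly $\{\0, c, e\}$: the elements of $B$ below $e$ are $\{c,e\}$ by the simplicity of $\sbB$, while no element of $B'$ lies below any element of $B$ by Definition~\ref{skew reflection definition}(\ref{skew asymmetry}). Hence $\sbA$ has exactly three quotients: itself, $\sbA/[c)$, and the trivial algebra. Since $\leibniz[c)$ is the monolith of $\sbA$, the quotient $\sbA/[c)$ is simple; as a simple nontrivial member of $\mathsf{U} \subseteq \mathsf{W}$, it admits a nontrivial homomorphism to $\sbC_4$, and this must be surjective (because $\sbC_4$ has no proper nontrivial sub-IRL) and injective (by simplicity), so $\sbA/[c) \cong \sbC_4$.

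Putting these together, $\mathbb{HS}(\sbA)$ contains, up to isomorphism, only the trivial algebra, $\sbC_4$, and $\sbA$. J\'onsson's Theorem, applied in the congruence distributive variety $\mathsf{DMM}$ with the finite algebra $\sbA$, extends this to the SI members of $\mathbb{V}(\sbA)$. Any subvariety $\mathsf{L}$ with $\mathbb{V}(\sbC_4) \subsetneq \mathsf{L} \subseteq \mathbb{V}(\sbA)$ must therefore contain a nontrivial SI member not isomorphic to $\sbC_4$, which is forced to be isomorphic to $\sbA$, whence $\mathsf{L} = \mathbb{V}(\sbA)$. I expect the main obstacle to be the subalgebra analysis: CEP must be combined with the simplicity of $\sbB$ to rule out any `intermediate' subalgebras strictly between $\sbC_4$ and $\sbA$, and this is what ultimately uses the hypothesis that $\sbA$ is generated by $c$.
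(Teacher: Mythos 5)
Your proof is correct and follows essentially the same route as the paper: show $\sbA$ is SI and distinct from $\sbC_4$, show that $\mathbb{HS}(\sbA)$ contains (up to isomorphism) only the trivial algebra, $\sbC_4$ and $\sbA$, and finish with J\'onsson's Theorem. One assertion in your step (a) is false as stated: the $0$--generated subalgebra of $\sbA$ need not be a copy of $\sbC_4$, because $\sbA$ may itself be $0$--generated---this happens for Slaney's algebras $\sbC_5,\dots,\sbC_8$, which are skew reflections of the two-element Dunn monoid and have no proper subalgebras at all, so that $\sbC_4$ is only a homomorphic image of $\sbA$, not a subalgebra. The slip is harmless: the containment $\mathbb{V}(\sbC_4)\subseteq\mathbb{V}(\sbA)$ that you want follows from $\sbA/[c)\cong\sbC_4$, which you prove anyway, and your subalgebra dichotomy ($\sbC\cap B=\{e\}$ versus $c\in C$) remains valid whether or not the first case actually occurs. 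Finally, your detour through the CEP to show that $\sbC\cap B$ is simple is more machinery than is needed: since $\sbB$ is simple, the strict lower bounds of $e$ in $\sbA$ are exactly $\0$ and $c$, so any subalgebra either contains $c$ (hence equals $\sbA$, as $c$ generates) or has $\0$ as the sole strict lower bound of $e$ (hence is simple and, lying in $\mathsf{W}$, isomorphic to $\sbC_4$); this shortcut is how the paper argues, but your version reaches the same conclusion.
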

\begin{proof}
Let $\wb$ be the least element of $\sbB$.  By Lemma~\ref{fsi si simple}(\ref{simple}),
$\wb$ is the only strict lower bound of $e$ in $\sbB$.
%Of course, $\wb$ is the greatest strict lower bound of $e$ in $\sbA$, as elements of $B$ have no lower bounds in $B'$.
The lower bounds of $e$ in $\sbA$ therefore form the chain $\0<\wb<e$, so $\sbA\not\cong\sbC_4$, but
%because elements of $B$ have no lower bounds in $B'$.  In particular,
$\sbA$ is SI, by Remark~\ref{skew remark}.
%Lemma~\ref{fsi si simple}(\ref{si}).
%As $\sbA\in\mathsf{M}$, we can identify $\sbC_4$ with the smallest subalgebra of $\sbA$.
%As noted in the previous proof, all SI members of $\mathbb{V}(\sbC_4)$ are isomorphic to $\sbC_4$, but
%Now $\sbA\not\cong\sbC_4$,
%because $\0<\wb<e$, so
Therefore, $\sbA\notin\mathbb{V}(\sbC_4)$, by J\'{o}nsson's Theorem, and so $\mathbb{V}(\sbC_4)\subsetneq\mathbb{V}(\sbA)\subseteq\mathsf{U}$.
To see that $\mathbb{V}(\sbA)$ covers $\mathbb{V}(\sbC_4)$, let $\sbE\in\mathbb{V}(\sbA)\bs\mathbb{V}(\sbC_4)$ be SI.
We must show that $\sbA\in\mathbb{V}(\sbE)$.
%By J\'{o}nsson's Theorem, and s
Since $\sbA$ is finite, J\'{o}nsson's Theorem gives $\sbE\in\mathbb{HS}(\sbA)$.  Any subalgebra $\sbD$ of $\sbA$ is nontrivial, so
$e$ has a strict lower bound in $\sbD$, by (\ref{t reg}).
If $\0$ is the only strict lower bound of $e$ in $\sbD$, then $\sbD$ is a simple member of $\mathsf{U}$
(by Lemma~\ref{fsi si simple}(\ref{simple})), whence $\sbD\cong\sbC_4$ (as $\mathsf{U}\subseteq\mathsf{W}$).
% and \ref{retract lemma}.
Otherwise, $\wb\in D$, in which case $\sbD=\sbA$, as $\sbA$ is generated by $\wb$.
Thus, $\mathbb{S}(\sbA)\subseteq\{\sbC_4,\sbA\}$, and so $\sbE$ is a homomorphic image of $\sbA$ (as $\sbC_4$ is simple).  Now $\sbA$ has only three
deductive filters (because $\left|(e]\right|=3$
%has just three lower bounds
in $\sbA$), so $\sbA$ has just three factor algebras, of which $\sbA$
and a trivial algebra are two.  The other is isomorphic to $\sbC_4$, as $\sbA\in\mathsf{U}\subseteq\mathsf{W}$.
%by Lemma~\ref{c4 only simple in m}.
Therefore, $\sbE\cong\sbA$, whence
$\sbA\in\mathbb{V}(\sbE)$, as required.
\end{proof}

%In the next section, we shall show that $\mathbb{V}(\sbC_4)$ has just six covers within $\mathsf{M}$.

The RL--reducts of $\mathbf{2}$, $\sbS_3$ and $\sbC_4$ shall be denoted by $\mathbf{2}^+$, $\sbS_3^+$ and $\sbC_4^+$, respectively.
(In fact, $\sbS_3$ and $\sbS_3^+$ are termwise equivalent, because $\neg x$ is definable as $x\rig e$ in $\sbS_3^+$.)
The following result will be needed later.
%in the next section.
\begin{thm}\label{dunn atoms}
Let\/ $\sbB$ be a square-increasing RL that is SI.  Let\/ $c$ be the greatest strict lower bound of\/ $e$ in\/ $\sbB$\/
\textup{(}which exists, by Lemma~\textup{\ref{fsi si simple}(\ref{si})).}

If\/ $c\rig e=e$\textup{,} then\/
$\mathit{Sg}^\sbB\{c\}=\{c,e\}$ and\/ $\boldsymbol{\mathit{Sg}}^\sbB\{c\}\cong\mathbf{2}^+$\textup{.}

If\/ $c\rig e\neq e$\textup{,} then\/ $\boldsymbol{\mathit{Sg}}^\sbB\{c\}\cong\sbS_3^+$\textup{,} its lattice
reduct being\/ $c<e<c\rig e$\textup{.}
\end{thm}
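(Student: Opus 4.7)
The plan is to first establish two idempotence facts. Since $c\leqslant e$, the law \textup{(\ref{square increasing cor2})} (or $c^2\leqslant c\bcdw e=c$ together with square-increasing) gives $c^2=c$. Writing $d\seteq c\rig e$, we have $e\leqslant d$ because $c\bcdw e=c\leqslant e$ yields $e\leqslant c\rig e$ by residuation. Moreover, using the standard RL inequality $(a\rig b)\bcdw(a'\rig b')\leqslant(a\bcdw a')\rig(b\bcdw b')$, we obtain $d^2\leqslant c^2\rig e^2=c\rig e=d$, so $d^2=d$.

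In Case~1 ($c\rig e=e$, i.e.\ $d=e$), I claim $\{c,e\}$ is closed under all RL operations, hence equals $\mathit{Sg}^\sbB\{c\}$. For the residuals, $c\rig c\leqslant c\rig e=e$ by \textup{(\ref{isotone})}, while $e\leqslant c\rig c$ by \textup{(\ref{t laws})}; the remaining $e\rig c=c$ and $e\rig e=e$ are instances of \textup{(\ref{t laws})}. Fusion and lattice operations are immediate from $c\leqslant e$ and \textup{(\ref{square increasing cor2})}. The resulting two-element RL has fusion equal to meet, matching $\mathbf{2}^+$.

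In Case~2 ($d>e$), the crux is to show $c\bcdw d=c$. Residuation gives $c\bcdw d\leqslant e$. If $c\bcdw d=e$, then idempotence of $d$ would yield $d=e\bcdw d=(c\bcdw d)\bcdw d=c\bcdw d^2=c\bcdw d=e$, contradicting $d>e$. Hence $c\bcdw d<e$; since $c=c\bcdw e\leqslant c\bcdw d$ (using $e\leqslant d$ and \textup{(\ref{isotone})}) and $c$ is the \emph{greatest} strict lower bound of $e$, we conclude $c\bcdw d=c$. From this, the remaining operations fall out in a handful of lines: $c\rig c=d$ (the reverse inequality $d\leqslant c\rig c$ comes from $c\bcdw d=c\leqslant c$); $c\rig d=c\rig(c\rig e)=(c\bcdw c)\rig e=c\rig e=d$; $d\rig c=c$ (from $d\bcdw c=c$ together with $d\rig c\leqslant e\rig c=c$); $d\rig e=c$ (from $c\leqslant d\rig e\leqslant e\rig e=e$ and the impossibility $d\rig e=e$, which would force $d\leqslant e$); $d\rig d=d$; and $e\rig x=x$ by \textup{(\ref{t laws})}. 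Since $c<e<d$ is then a chain, the lattice operations preserve $\{c,e,d\}$, and the resulting operation table matches $\sbS_3^+$ via $c\mapsto -1,\;e\mapsto 0,\;d\mapsto 1$.

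The main obstacle is the Case~2 claim $c\bcdw d=c$: it rests on exploiting the idempotence of $d$ together with the SI hypothesis (via the maximality of $c$ below $e$) to rule out $c\bcdw d=e$, a step without which the generated subalgebra could acquire additional elements and fail to coincide with $\{c,e,d\}$.
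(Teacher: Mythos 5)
Your proof is correct, and while it follows the same two-case skeleton as the paper (splitting on whether $c\rig e=e$, with Case~1 handled identically), it diverges substantively in Case~2. There the paper only establishes $(c\rig e)\rig e=c$ and then delegates the closure of the chain $c<e<c\rig e$ and the isomorphism with $\sbS_3^+$ to Galatos \cite[Thm.~5.7]{Gal05}, adding a remark that the idempotence hypothesis of that theorem can be weakened to the square-increasing law. You instead make the argument self-contained: the idempotence of $d=c\rig e$ (obtained from the fusion-of-residuals inequality, itself an easy consequence of \textup{(\ref{x y law})}, \textup{(\ref{isotone})} and \textup{(\ref{residuation})}) is used to rule out $c\bcdw d=e$, whence the maximality of $c$ below $e$ forces $c\bcdw d=c$, and the full operation table then follows by routine residuation arguments. (The paper's alternative route to one of your entries is worth noting: it gets $c\rig c=c\rig e$ directly from \textup{(\ref{contraction})} via $c\rig c\leqslant c\rig e\leqslant c\rig(c\rig c)\leqslant c\rig c$, without first computing $c\bcdw d$.) What your version buys is independence from the external reference and from the need to check that Galatos's proof survives the weakening of idempotence to square-increasingness; what it costs is the explicit verification of the nine residuation and six fusion entries, which the citation absorbs.
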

\begin{proof}
As $c<e$, we have $c^2=c$, by (\ref{square increasing cor2}), and $e\leqslant c\rig e$, by (\ref{t order}).
Then $c\rig c=c\rig e$, because (\ref{isotone}), (\ref{t laws}) and (\ref{contraction}) yield
\[
c\rig c\,\leqslant\, c\rig e\,\leqslant\, c\rig(c\rig c)\,\leqslant\, c\rig c.
\]
Therefore, in view of (\ref{t laws}), if $c\rig e=e$, then $\{c,e\}$ is the universe of a subalgebra of $\sbB$, isomorphic to $\mathbf{2}^+$.

We may now assume that $e<c\rig e$.  Then $(c\rig e)\rig e\leqslant e\rig e=e$, by (\ref{isotone}), whereas
$e\nleqslant (c\rig e)\rig e$, by (\ref{t order}), so $(c\rig e)\rig e<e$.  Then $(c\rig e)\rig e\leqslant c$,
by definition of $c$, so $(c\rig e)\rig e=c$, by (\ref{x y law}).  It suffices, therefore, to show that the chain $(c\rig e)\rig e<e<c\rig e$
constitutes a subalgebra of $\sbB$, isomorphic to $\sbS_3^+$, but this was already proved by Galatos \cite[Thm.~5.7]{Gal05}.
Although its statement in \cite{Gal05} assumes idempotence (and a weak form of commutativity) for fusion, all appeals to
idempotence in the proof require only the square-increasing law.
\end{proof}

%\medskip

\section{Covers of $\mathbb{V}(\sbC_4)$ within $\mathsf{M}$}

If $\mathsf{K}$ is a cover of $\mathbb{V}(\sbC_4)$ within $\mathsf{M}$, then by Lemma~\ref{not so simple dunn}
and Remark~\ref{skew remark}, there exist $\sbA$, $\sbB$ and $\wb$ such that
$\mathsf{K}=\mathbb{V}(\sbA)$,
\begin{itemize}
%$\mathsf{K}=\mathbb{V}(\sbA)$ for an SI\, skew reflection
%$\sbA=\boldsymbol{\mathit{Sg}}^\sbA\{\wb\}$
\item
$\sbB$ is an SI Dunn monoid, $\sbA$ is
%satisfies $e<f$ and is
a skew reflection of
%an SI
% SI\,
%Dunn monoid
$\sbB$ in which\\ $e<f$, and $\sbA=\boldsymbol{\mathit{Sg}}^\sbA\{\wb\}$, where
$\bot\in B$ is the greatest strict lower\\ bound of $e$ in $\sbA$.
% for
%, such that $B$ includes
%some $\wb\in B$ is
%the greatest strict lower bound $\wb$ of $e$ in $\sbA$,
%and $\sbA=\boldsymbol{\mathit{Sg}}^\sbA\{\wb\}$,
%where $\wb\in B$ is the greatest strict lower bound of $e$ in $\sbA$,
%, and $\sbA$ satisfies $e<f$.
\end{itemize}
%by Lemma~\ref{not so simple dunn}.
% and Remark~\ref{skew remark}.
%
The displayed properties of $\sbA$, $\sbB$ and $\bot$ will now be \emph{assumed},
until the `conclusions' after Lemma~\ref{case 4 only}.  By Lemma~\ref{redundancy}(\ref{0 meet irreducible}),
they imply that $\0$ is meet-irreducible (and $\1$ join-irreducible) in $\sbA$.
We shall prove that they also
force $\sbA$ to be finite and $\sbB$ simple, with $\left|A\right|\leq 14$ (i.e., $\left|B\right|\leq 6$).
%This will involve some work.

%Let $\mathsf{K}$ be a cover of $\mathbb{V}(\sbC_4)$ within $\mathsf{M}$.
%By Lemma~\ref{not so simple dunn} and Remark~\ref{skew remark},
%$\mathsf{K}=\mathbb{V}(\sbA)$ for an SI\, skew reflection $\sbA=\boldsymbol{\mathit{Sg}}^\sbA\{\wb\}$ of an SI\, Dunn monoid $\sbB$,
%where $\wb\in B$ is the greatest strict lower bound of $e$ in $\sbA$, and $\0$ is meet-irreducible
%(and $\1$ join-irreducible) in $\sbA$.  Of course, $e<f$ in $\sbA$, as $\sbA\in\mathsf{M}$.

We define $\wa=\wb\rig e$, so $\wa\in B$.
By Theorem~\ref{dunn atoms}, $\boldsymbol{\mathit{Sg}}^\sbB\{\bot\}$ consists of $\wb,e,\wa$ and is isomorphic to
$\mathbf{2}^+$ (with $e=\wa$) or to
$\sbS_3^+$ (with $e<\wa$).  The respective tables for $\bcdn$ and $\rig$ in
%$\sbS_3^+$ are
$\boldsymbol{\mathit{Sg}}^\sbB\{\bot\}$
are recalled below.  (There is no guarantee that $\boldsymbol{\mathit{Sg}}^\sbB\{\bot\}$ exhausts $\sbB$.)
\begin{center}
%\qquad\\{\vspace{-1mm}}
{\small \begin{tabular}{c|cc}
$\bcdw$ & $\bot$ & $\top$ \\ \hline\vspace{-4mm}\\
$\bot$ & $\bot$ & $\bot$ \\
$\top$ & $\bot$ & $\top$
\end{tabular}
\quad\;
\begin{tabular}{c|cc}
$\rig$ & $\bot$ & $\top$ \\ \hline\vspace{-4mm}\\
$\bot$ & $\top$ & $\top$ \\
$\top$ & $\bot$ & $\top$
\end{tabular}
\quad or \quad
\begin{tabular}{c|ccc}
$\bcdw$ & $\bot$ & $e$ & $\top$ \\ \hline\vspace{-4mm}\\
$\bot$ & $\bot$ & $\bot$ & $\bot$ \\
$e$ & $\bot$ & $e$ & $\top$ \\
$\top$ & $\bot$ & $\top$ & $\top$
\end{tabular}
\quad\;
\begin{tabular}{c|ccc}
$\rig$ & $\bot$ & $e$ & $\top$ \\ \hline\vspace{-4mm}\\
$\bot$ & $\top$ & $\top$ & $\top$ \\
$e$ & $\bot$ & $e$ & $\top$ \\
$\top$ & $\bot$ & $\bot$ & $\top$
\end{tabular}}
\end{center}

%\smallskip

\noindent
%The elements $c, c\rig e$ from Theorem~\ref{dunn atoms} correspond to $\bot,\top$, respectively.
%There is no guarantee that $\boldsymbol{\mathit{Sg}}^\sbB\{\bot\}$ exhausts $\sbB$.

%In particular, $\wb,\wa$ are idempotent and $\wa=\wb\rig\wb$.
%There is no guarantee that the Dunn monoid $\sbB$ is generated by $\wb$.
%$\boldsymbol{\mathit{Sg}}^\sbB \{\wb\}=\sbB$.
\begin{warning}
\textup{Although $\wb,\wa$ will turn out to be extrema for $\sbB$, that fact will emerge only after
Lemma~\ref{case 4 only}.  Until then, our use of these symbols should not be taken to justify claims like `$b\bcdw\wb=\wb$
for all $b\in B$' on
%general grounds.
the basis of
%Lemma~
(\ref{bounds}) alone.
%In what follows, s
Such claims will be justified directly when needed.}
\end{warning}
%\noindent
All this notation will remain fixed until the `conclusions' after Lemma~\ref{case 4 only}.
We use freely the notation from Definition~\ref{skew reflection definition} as well, e.g., $\wb'=\neg^\sbA\wb\in B'$
and $\wap=\neg^{\sbA\,}\wa\in B'$.  \,The superscript $\sbA$ will normally be omitted.

%\begin{lem}

\begin{thm}\label{reflection criterion}
The algebra\/
$\sbA$ is the reflection of\/ $\sbB$ iff\/ $e$ and\/ $\wap$ are comparable.  In this case, $e<\wap$ and\/
$\sbB=\boldsymbol{\mathit{Sg}}^\sbB\{\wb\}$\textup{,} so $\sbB$ consists of\/ $\wb,e,\wa$ only, and
\begin{enumerate}
\item\label{r(2)}
$e=\wa$ iff\/ $\sbB\cong\mathbf{2}^+$\textup{,} iff\/ $\sbA\cong\textup{R}(\mathbf{2}^+)$\textup{;}
\item\label{r(s3)}
$e\neq\wa$ iff\/ $\sbB\cong\sbS_3^+$\textup{,} iff\/ $\sbA\cong\textup{R}(\sbS_3^+)$\textup{.}
\end{enumerate}
%If\/ $e<\wap$\textup{,} then\/
%$\sbB\cong\mathbf{2}^+$ and\/ $\sbA\cong\textup{R}(\mathbf{2}^+)$\textup{.}
%$\boldsymbol{\mathit{Sg}}^\sbB \{\wb\}\cong\mathbf{2}^+$\textup{,} then\/
%$\wb$ generates $\sbB$ and\/
%$\sbA\cong\textup{R}(\mathbf{2}^+)$\textup{.}
\end{thm}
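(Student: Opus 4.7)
The plan is to establish both implications separately and then read off the specific isomorphism types from Theorem~\ref{dunn atoms}. The forward direction is immediate from Definition~\ref{reflection}: if $\sbA=\textup{R}(\sbB)$, then $b\leqslant c'$ for all $b,c\in B$, and in particular $e\leqslant\wap$, so $e$ and $\wap$ are comparable.

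For the converse, suppose $e$ and $\wap$ are comparable. Since $\wap\in B'$ and $e\in B$, Definition~\ref{skew reflection definition}(\ref{skew asymmetry}) forces $\wap\nleqslant e$, hence $e<\wap$. The key step is to show that
\[
T=\{\wb,e,\wa,\wb',f,\wap,\0,\1\}
\]
is closed under the operations of $\sbA$. Since $\wb\in T$ and $\sbA=\boldsymbol{\mathit{Sg}}^\sbA\{\wb\}$, this forces $\sbA=T$, from which $B=T\cap B=\{\wb,e,\wa\}$ follows (as $B$, $B'$, $\{\0\}$, $\{\1\}$ are pairwise disjoint by construction of a skew reflection), and then the reflection property $b\leqslant c'$ for all $b,c\in B$ is precisely what was checked during the closure verification.

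The closure check goes as follows. Closure of $T$ under $\neg$ is immediate from Definition~\ref{skew reflection definition}(\ref{skew involution}); closure under $\bcdw$ follows from Definition~\ref{skew reflection definition}(\ref{skew comm})--(\ref{skew fusion}) together with the fact, given by Theorem~\ref{dunn atoms}, that $\{\wb,e,\wa\}$ is closed under $\bcdw^\sbB$ and $\rig^\sbB$. The content lies in closure under $\wedge$ and $\vee$. Within $\{\wb,e,\wa\}$ this is automatic (it is a chain), and within $\{\wb',f,\wap\}$ it follows by applying $\neg$; so it suffices to verify that every element of $\{\wb,e,\wa\}$ lies below every element of $\{\wb',f,\wap\}$ in $\sbA$, since then meets and joins between the two collapse to one of the factors. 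Applying Definition~\ref{skew reflection definition}(\ref{skew residuation}), each such inequality $b\leqslant c'$ is equivalent to $e\leqslant(b\bcdw^\sbB c)'$; since $\{\wb,e,\wa\}$ is closed under $\bcdw^\sbB$, the nine inequalities reduce to the three base cases $e\leqslant\wb'$, $e\leqslant f$, $e\leqslant\wap$. The third is the hypothesis, the second is given in the standing assumptions preceding the theorem ($e<f$), and the first follows from $\wap\leqslant\wb'$---an instance of Definition~\ref{skew reflection definition}(\ref{skew antitone'}), valid because $\wb\leqslant\wa$ in $\sbB$---combined with the hypothesis.

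Once $\sbB=\{\wb,e,\wa\}$ and $\sbA\cong\textup{R}(\sbB)$ are in hand, items~(\ref{r(2)}) and~(\ref{r(s3)}) drop out of Theorem~\ref{dunn atoms}: the case $\wa=e$ gives $\sbB\cong\mathbf{2}^+$, and hence $\sbA\cong\textup{R}(\mathbf{2}^+)$, whereas $\wa\neq e$ gives $\sbB\cong\sbS_3^+$, and hence $\sbA\cong\textup{R}(\sbS_3^+)$. The main delicacy is the closure-under-lattice-operations step, but this is bookkeeping rather than a real obstacle, since the reduction through Definition~\ref{skew reflection definition}(\ref{skew residuation}) and~(\ref{skew antitone'}) collapses everything onto the three inequalities the hypotheses supply for free.
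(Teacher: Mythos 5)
Your proof is correct and takes essentially the same route as the paper's: both arguments identify the eight elements $\0,\wb,e,\wa,\wap,f,\wb',\1$ as a subuniverse (using Definition~\ref{skew reflection definition}(\ref{skew residuation}) for the key cross-inequalities between $B$ and $B'$) and then invoke $\sbA=\boldsymbol{\mathit{Sg}}^\sbA\{\wb\}$ to conclude that these elements exhaust $\sbA$, after which Theorem~\ref{dunn atoms} yields the two isomorphism types. The only cosmetic difference is that you obtain $e<\wap$ directly from comparability plus Definition~\ref{skew reflection definition}(\ref{skew asymmetry}), whereas the paper uses isotonicity of the unique homomorphism onto $\sbC_4$; both are valid.
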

\begin{proof}
In the first assertion, necessity follows from the definition of reflection.  Conversely, suppose $e$ and $\wap$ are comparable, and let
$h$ be the unique homomorphism from $\sbA$ to $\sbC_4$.  As $h$ is isotone and $h(e)=e<f=h(\wap)$, we can't have
$\wap\leqslant e$, so $e<\wap=(\wa\bcdw\wa)'$.  Then, by Definition~\ref{skew reflection definition}(\ref{skew residuation}),
$\wa\leqslant\wap$, so
\begin{equation}\label{chain}
\0<\wb<e\leqslant\wa<\wap\leqslant f<\wb'<\1,
\end{equation}
where $e=\wa$ iff $\wap=f$.
%By Theorem~\ref{dunn atoms}, t
The elements $\wb,e,\wa\in B$ are closed under $\bcdw,\rig$, so
items (\ref{skew comm})--(\ref{skew involution}) of Definition~\ref{skew reflection definition}
ensure that the elements listed in (\ref{chain}) are closed under $\bcdw,\wedge,\vee,\neg$.  They include
$e$, so they constitute a subalgebra of $\sbA$.  As they also include $\wb$, which generates $\sbA$, they exhaust
$\sbA$.  Consequently, $\sbA=\textup{R}(\sbB)$, where $\sbB$ consists of $\wb,e,\wa$ only, and is therefore
generated by $\wb$.  Then (\ref{r(2)}) and (\ref{r(s3)}) follow from Theorem~\ref{dunn atoms}.
\end{proof}
\begin{cor}\label{not 2}
If\/ $\boldsymbol{\mathit{Sg}}^\sbB\{\wb\}\not\cong\sbS_3^+$\textup{,} then\/ $\sbA\cong\textup{R}(\mathbf{2}^+)$\textup{.}
\end{cor}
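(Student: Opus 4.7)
The plan is to combine Theorem~\ref{dunn atoms} with Theorem~\ref{reflection criterion} via a short case analysis. The key observation is that the standing setup gives us everything we need to apply Theorem~\ref{dunn atoms} to $\sbB$: it is an SI Dunn monoid, hence a square-increasing RL that is SI, and $\bot \in B$ is the greatest strict lower bound of $e$ in $\sbB$ (because it is the greatest strict lower bound of $e$ in $\sbA$, and $\sbB$ is a sublattice containing $\bot$).

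First, I would invoke Theorem~\ref{dunn atoms}: the subalgebra $\boldsymbol{\mathit{Sg}}^\sbB\{\bot\}$ is isomorphic either to $\mathbf{2}^+$ (when $\bot \rig e = e$) or to $\sbS_3^+$ (when $\bot \rig e \neq e$). The hypothesis $\boldsymbol{\mathit{Sg}}^\sbB\{\bot\} \not\cong \sbS_3^+$ therefore forces $\top = \bot \rig e = e$.

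Next, I would translate this back into $\sbA$: by the definition of $\neg$ in the skew reflection, $\top' = \neg\, \top = \neg\, e = f$. Since the standing hypothesis on $\sbA$ includes $e < f$, we get $e < \top'$, so $e$ and $\top'$ are comparable. Theorem~\ref{reflection criterion} now applies and yields that $\sbA$ is the reflection of $\sbB$. Since $e = \top$, part~(\ref{r(2)}) of that theorem gives $\sbA \cong \textup{R}(\mathbf{2}^+)$, as required.

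There is essentially no obstacle here: the corollary is a direct consequence of the two preceding theorems, with the only content being to note that $e < f$ (a standing hypothesis on $\sbA$) supplies precisely the comparability of $e$ and $\top'$ needed to feed Theorem~\ref{reflection criterion}.
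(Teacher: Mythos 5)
Your proof is correct and follows essentially the same route as the paper's: Theorem~\ref{dunn atoms} forces $\boldsymbol{\mathit{Sg}}^\sbB\{\wb\}\cong\mathbf{2}^+$ and hence $\wa=e$, so $\wap=e'=f>e$ gives the comparability needed to invoke Theorem~\ref{reflection criterion}(\ref{r(2)}). No issues.
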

\begin{proof}
In this case, $\boldsymbol{\mathit{Sg}}^\sbB\{\wb\}\cong\mathbf{2}^+$, by Theorem~\ref{dunn atoms}, so
$e=\wa$.  As $\sbA\in\mathsf{M}$, we have $e<f=e'=\wa'$, so the result follows from Theorem~\ref{reflection criterion}.
\end{proof}

%\[
%\wb\bcdw f=(\wb\rig e)'=f=\wap=\wb\bcdw \wb',
%\]

%Each of $\textup{R}(\mathbf{2}^+)$ and $\textup{R}(\sbS_3^+)$ is generated by the greatest strict lower bound of $e$.
%The conclusions of Theorem~\ref{reflection criterion} are reversible in the sense
By Lemma~\ref{simple dunn}, $\textup{R}(\mathbf{2}^+)$ and $\textup{R}(\sbS_3^+)$
%do
generate covers of $\mathbb{V}(\sbC_4)$ within $\mathsf{M}$, as each is generated by its own
unique atom.
%greatest strict lower bound for $e$.
%
Because our aim is now to isolate the \emph{other} covers of $\mathbb{V}(\sbC_4)$ within $\mathsf{M}$,
the previous two results allow us to assume, until further notice,
%for the remainder of this section,
that
\begin{itemize}
\item
$\boldsymbol{\mathit{Sg}}^\sbB \{\wb\}$ is isomorphic to $\sbS_3^+$ and has universe $\wb<e<\top$, and
%that
\item
$\sbA$ is \emph{not} the reflection of $\sbB$, i.e., $e$ and $\wap$ are incomparable.
\end{itemize}

Consider the formal diagram below.

{\tiny

\thicklines
\begin{center}
\begin{picture}(80,157)(88,48)

\put(120,65){\circle*{5}}

\put(118.5,64.5){\line(1,1){20}}
\put(120,65){\line(1,1){40}}
\put(121.5,65.5){\line(1,1){20}}

\put(160,105){\circle*{3}}
\put(160,105){\line(-1,1){60}}

\put(140,125){\circle*{3}}
\put(120,145){\circle*{3}}

\put(100,125){\line(1,1){40}}
\put(140,165){\circle*{3}}
\put(140,165){\line(-1,1){20}}

\put(120,185){\circle*{3}}
\put(100,165){\circle*{3}}

\put(140,85){\circle*{5}}
\put(119.5,64.5){\line(-1,1){20}}
\put(120,65){\line(-1,1){20}}
\put(120.5,65.5){\line(-1,1){20}}
\put(100,85){\circle*{5}}

\put(98.5,84.5){\line(1,1){20}}
\put(100,85){\line(1,1){40}}
\put(101.5,85.5){\line(1,1){20}}

\put(120,105){\circle*{5}}
\put(100,125){\circle*{5}}
\put(80,145){\circle*{5}}

\put(80,145){\line(1,1){40}}

\put(139.5,84.5){\line(-1,1){60}}
\put(140,85){\line(-1,1){60}}
\put(140.5,85.5){\line(-1,1){60}}

\put(116.5,191){\small ${\wb'}$}
\put(144,163){\small ${f}$}
\put(127,143.5){\small ${f\wedge(\wap\vee \wa)}$}
\put(144,124){\small ${\wap\vee e}$}
\put(164,103){\small ${\wap}$}

\put(68,165){\small ${\wap\vee\wa}$}
\put(67.5,142.5){\small ${\wa}$}
\put(69,120){\small ${f\wedge\wa}$}

\put(59,102.5){\small ${e\vee(\wap\wedge\wa)}$}
\put(89,82){\small ${e}$}
\put(145,78){\small $\wap\wedge\wa$}
\put(117,52){\small $\wb$}

%%%%%%

\end{picture}\nopagebreak
\end{center}

}

%Consider the formal diagram above.
\noindent
By Theorem~\ref{onto c4}(\ref{lattice closure}),
%As elements of $B$ have no lower bounds in $B'$, and as $\sbB$ is an RL--subreduct of $\sbA$,
the labels on the thicker points all identify elements of $B$, but we do \emph{not} claim that the twelve depicted elements are distinct in $\sbA$.
(It will turn out that they exhaust $\sbA$, but that is not yet obvious.)

%\noindent
\begin{lem}\label{closure}
The subset of\/ $A$ comprising the elements depicted above is closed under the operations\/ $\wedge,\vee$ and\/ $\neg$ of\/ $\sbA$\textup{.}

Moreover, the label on the diagrammatic join of any two elements is the actual join in\/ $\sbA$ of the labels on those elements,
and similarly for meets.
%
%Also, the diagram order is sound, i.e., wherever\/ $x$ is depicted as a lower bound of\/ $y$, then\/ $x\leqslant y$ in $\sbA$\textup{.}
%
%Consequently, the label on the diagrammatically indicated join of any two elements is the actual join in\/ $\sbA$ of the labels on those elements,
%and similarly for meets.
\end{lem}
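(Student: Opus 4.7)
The plan is to exploit the involution $\neg$ of $\sbA$ together with the partition $A=B\cup B'\cup\{\0,\1\}$ coming from the skew-reflection structure. A direct check shows that $\neg$ sends the twelve depicted elements to one another in six pairs: $\wb\leftrightarrow\wb'$, $e\leftrightarrow f$, $\wa\leftrightarrow\wap$, $\wap\wedge\wa\leftrightarrow\wap\vee\wa$, $e\vee(\wap\wedge\wa)\leftrightarrow f\wedge(\wap\vee\wa)$, and $f\wedge\wa\leftrightarrow\wap\vee e$. Hence the depicted set $T$ is closed under $\neg$; and since $\neg$ is a lattice anti-automorphism of $\sbA$ and the six pairs sit symmetrically in the diagram (the $\neg$-image of a point is its reflection through the centre), closure of $T$ under $\wedge$ is equivalent to closure under $\vee$, and the diagrammatic joins match the actual joins iff the diagrammatic meets do. It therefore suffices to verify closure under $\wedge$ and to match the diagrammatic meets.

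Next, I would partition $T$ according to the skew reflection. The elements $\wb,e,\wa$ lie in $B$ by hypothesis. Since $\0$ is meet-irreducible in $\sbA$ by Lemma~\ref{redundancy}(\ref{0 meet irreducible}), Theorem~\ref{onto c4}(\ref{lattice closure}) applies and yields $\wap\wedge\wa=\wa\wedge\wa'\in B$ and $f\wedge\wa=\wa\wedge e'\in B$; and $e\vee(\wap\wedge\wa)\in B$ because $B$ is a sublattice of $\sbA$. By $\neg$-duality, the six ``upper'' elements then lie in $B'$.

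The critical third step is to verify that the depicted order matches the actual order on $T$. The chain $\wb<e<\wa$ in $B$ is the $\sbS_3^+$ structure of $\boldsymbol{\mathit{Sg}}^\sbB\{\wb\}$, and its $\neg$-image $\wap<f<\wb'$ lies in $B'$. The first new cross-partition inequality is $\wb\leq\wap$: by Definition~\ref{skew reflection definition}(\ref{skew residuation}), $\wb\leq\wa'$ iff $e\leq(\wb\bcdw\wa)'=\wb'$ (using $\wb\bcdw\wa=\wb$ read from the $\sbS_3^+$ table), and $e\leq\wb'$ holds because $e\leq f$ in $\mathsf{M}$ and $f\leq\wb'$ by antitonicity of the bijection $b\mapsto b'$. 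Analogous applications of (\ref{skew residuation}) supply the remaining mixed comparisons, and the standing hypothesis that $e$ and $\wap$ are incomparable (recall Theorem~\ref{reflection criterion}) is invoked repeatedly to exclude collapses that would identify distinct labels. With the order in hand, closure under $\wedge$ and agreement with the labelled meets follow by evaluating each pairwise meet: within $B$ or within $B'$ the sublattice structure of $\sbB$ (respectively its $\neg$-image) suffices, while a mixed meet $b\wedge c'$ lands in $B$ by Theorem~\ref{onto c4}(\ref{lattice closure}) and is then pinned down by the established order.

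The main obstacle is this last step: the systematic case-by-case bookkeeping, using (\ref{skew residuation}) and the incomparability of $e$ and $\wap$, that each pairwise meet of labels in $T$ really equals the label attached to its diagrammatic meet. Each individual check is short, but several must be carried out, and care is needed to use the precise form of Definition~\ref{skew reflection definition} in the genuinely skew case rather than silently importing relations that hold only in the ordinary reflection.
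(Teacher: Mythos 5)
Your overall architecture is the paper's: first check that the depicted order is sound (the two nontrivial cross-partition inequalities being $\wb\leqslant\wap$ and $\wap\wedge\wa\leqslant f$, both obtainable from Definition~\ref{skew reflection definition}(\ref{skew residuation}) --- your derivation of the first is just a direct unwinding of Lemma~\ref{redundancy}(\ref{b below b rig e prime}), which the paper cites instead), then use the $\neg$-pairing of the twelve labels to halve the work, and finally verify the binary meets (the paper does joins) case by case. Up to that point the proposal is sound.

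The gap is in the last step, where you claim that each mixed meet $b\wedge c'$ ``lands in $B$ by Theorem~\ref{onto c4}(\ref{lattice closure}) and is then pinned down by the established order.'' Knowing the meet lies in $B$ does not identify it with one of the five depicted elements of $B$ (nothing guarantees that $\boldsymbol{\mathit{Sg}}^\sbB\{\wb\}$ exhausts $\sbB$), and the order relations among the twelve labels do not determine the meets. The linchpin is $e\wedge\wap$: the diagram demands $e\wedge\wap=\wb$, and this follows only from the standing hypothesis that $\wb$ is the \emph{greatest} strict lower bound of $e$ in all of $\sbA$ (since $e\nleqslant\wap$ forces $e\wedge\wap<e$, hence $e\wedge\wap\leqslant\wb$), not from any order facts internal to the picture; the paper isolates exactly this computation, in the guise of $f\vee\wa=\wb'$, inside its case analysis. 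Moreover, several compound meets and joins --- e.g.\ $(\wap\vee e)\wedge(f\wedge\wa)$, or dually $\wap\vee(f\wedge\wa)=(\wap\vee f)\wedge(\wap\vee\wa)$ --- are reduced to previously computed ones only by invoking the distributivity of $\sbA$, which your proposal never mentions. Both ingredients (the maximality of $\wb$ below $e$, and distributivity) must be supplied before the bookkeeping you describe actually closes.
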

\begin{proof}
Note first that the diagram order is sound, in the sense that wherever $x$ is depicted as a lower bound of $y$, then $x\leqslant y$ in $\sbA$.
This is easy to see, except perhaps for the $\sbA$--inequalities $\wb\leqslant\wap$ and
$\wap\wedge\wa\leqslant f$ ($=e'$).  %For t
The first of these follows from Lemma~\ref{redundancy}(\ref{b below b rig e prime}), as
$\wa=\wb\rig e$, and
%note that, in $\sbA$, we have $\wb\leqslant e<f$, so $e<\wb'=(\wb\bcdw\wa)'$, whereupon
%Definition~\ref{skew reflection definition}(\ref{skew residuation}) delivers $\wb\leqslant\wap$.
the second
%inequality
%follows
from Definition~\ref{skew reflection definition}(\ref{skew residuation}),
%likewise,
because $e\leqslant\wa\leqslant\wa\vee\wap=(\wap\wedge\wa)'=((\wap\wedge\wa)\bcdw e)'$ in $\sbA$.

Closure under $\neg$ follows from the identity $\textup{$\neg\neg x=x$}$ and De Morgan's laws.
%The involution properties of $\sbA$
%, together with (\ref{easy stuff}),
%show the diagram order to be sound.
%inequationally sound, i.e., wherever $x$ is depicted as a lower bound of $y$, then indeed, $x\leqslant y$ in $\sbA$.

Let $x,y$ be expressions from the diagram.  We shall show that $x\vee y$ (computed in $\sbA$) is equal (in $\sbA)$ to the label on the
diagrammatic join of $x,y$.
In view of the chosen labels and De Morgan's laws, the same will then follow for meets.
We go through the possible values of $x$.  Because the diagram order is sound, we can eliminate cases where $y$ is
comparable (according to the diagram) with $x$.  This eliminates $\wb$ and $\wb'$ as values for $x$ and for $y$.

If $x$ is $e$, then the uneliminated values of $y$ are $\wap$ and $\wap\wedge\wa$.  These cases are disposed of by noting that
$e\vee\wap$ and $e\vee(\wap\wedge\wa)$ appear (up to commutativity of $\vee$) in the diagram, and that the diagram
order makes them the least upper bounds, respectively, of $e,\wap$ and of $e,\wap\wedge\wa$.

When $x$ is $\wap\wedge\wa$, the only uneliminated value of $y$ is $e$, which we have just considered.

When $x$ is $e\vee(\wap\wedge\wa)$, the uneliminated possibilities for $y$ are $e,\wap\wedge\wa,\wap$, of which all but $\wap$ have
been considered.  As $e\vee(\wap\wedge\wa)\vee\wap=\wap\vee e$, which appears (in the correct place) in the diagram,
we are done with this case.

%\enlargethispage{55pt}

When $x$ is $\wap$, the only uneliminated choices for $y$, not already considered, are $\wa$ and $f\wedge\wa$.  Now $\wap\vee\wa$
is well-placed in the diagram, and in $\sbA$, we have $\wap\vee(f\wedge\wa)=(\wap\vee f)\wedge(\wap\vee\wa)=f\wedge(\wap\vee\wa)$, which
is also well-placed.

When $x$ is $\wap\vee e$, the only interesting possibilities for $y$ are $\wa$ and $f\wedge\wa$.  And in $\sbA$,
we have well-placed values
$(\wap\vee e)\vee\wa=\wap\vee\wa$ and
%\\[0.2pc]
\[
%\smallskip
%${}\quad\quad\;\;\,
(\wap\vee e)\vee(f\wedge\wa)=(\wap\vee e\vee f)\wedge(\wap\vee e\vee\wa)=f\wedge(\wap\vee\wa).
%$.
%\\{\vspace{0.2pc}}
\]
%
%\smallskip
%
\indent
From cases already considered, it follows that $(f\wedge\wa)\vee y$ is well-placed in the diagram, for every $y$.

When $x$ is $f\wedge(\wap\vee\wa)$, the only interesting $y$ is $\wa$.  In $\sbA$, we have
%\\[0.2pc]
\[
%\smallskip
%
%${}\quad\;\;\,
(f\wedge(\wap\vee\wa))\vee\wa=(f\vee\wa)\wedge((\wap\vee\wa)\vee\wa)=(f\vee\wa)\wedge(\wap\vee\wa).
%$.\\[0.2pc]
%
%\smallskip
\]
\noindent
This expression will simplify
to the well-placed $\wap\vee\wa$, provided that $f\vee\wa=\wb'$, or equivalently, $e\wedge\wap=\wb$ (in $\sbA$), which we now show.
We have already verified that $\wb\leqslant\wap$, so $\wb\leqslant e\wedge\wap$.  As $e\nleqslant\wap$, we have $e\wedge\wap<e$,
whence
%$e\wedge\wap\in B$ (because $0$ is join-irreducible in $\sbA$ and elements of $B$ have no lower bounds in $B'$).  Therefore,
$e\wedge\wap\leqslant\wb$ (by definition of $\wb$), and so $e\wedge\wap=\wb$, as required.

%By Theorem~\ref{reflection criterion}, $\wap$ is incomparable
%with $e$, whence $\wa$ is incomparable with $f$, and so $f\vee\wa>f$.  The upper bounds of $f$ in $\sbA$ form the chain $f<\wb'<1$,
%because the lower bounds of $e$ are $0<\wb<e$.  Recall that $1$ is join-irreducible, by Theorem~\ref{onto c4},
%%Corollary~\ref{slaney 2}(\ref{0 meet irreducible}),
%so $f\vee\wa=\wb'$, as required.

When $x$ is $\wa$, the only new $y$ to consider is $f$, but we have just shown that $\wa\vee f=\wb'$, which is well-placed.

When $x$ is $f$, the only new $y$ is $\wap\vee\wa$, and $\wa\leqslant\wap\vee\wa\leqslant \wb'$.
%which lies between $\wa$ and $\wb'$.
In $\sbA$, we have seen that
$f\vee\top=\wb'=f\vee\wb'$, so $f\vee(\wap\vee\wa)=\wb'$, which is well-placed.

%\enlargethispage{8pt}

%we have
%%already established that $\wb\leqslant e\wedge\wa\wedge\wap$, so
%\[
%\wb'=f\vee\wa\leqslant f\vee(\wap\vee\wa)=(f\vee\wap)\vee\wa=f\vee\wa=\wb',
%\]
%so $f\vee(\wap\vee\wa)=\wb'$, which is well-placed.

When $x$ is $\wap\vee\wa$, there is no longer any unconsidered option for $y$.
\end{proof}

%\pagebreak[3]

%Recall that $\sbB$ is an RL--subreduct of $\sbA$.

\smallskip

\begin{lem}\label{fusion in a}
Fusion in\/ $\sbA$
%\/ \textup{(}and in\/ $\sbB$\textup{)}
behaves as in the following table.\nopagebreak
\end{lem}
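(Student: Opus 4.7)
The plan is to fill in the fusion table cell-by-cell for the fourteen-element universe of $\sbA$, which by Lemma~\ref{closure} comprises $\{\0,\1\}$, the six $B$-elements $\wb,\,\wap\wedge\wa,\,e,\,e\vee(\wap\wedge\wa),\,f\wedge\wa,\,\wa$, and their six $B'$-counterparts $\wb',\,\wap\vee\wa,\,f,\,f\wedge(\wap\vee\wa),\,\wap\vee e,\,\wap$. (The proof will incidentally verify that these fourteen elements are closed under $\bcdw$, confirming that $|A|=14$.)

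First I would dispose of the structural entries. Products involving $\0$ or $\1$ are fixed by Definition~\ref{skew reflection definition}(\ref{skew absorption}): $a\bcdw\0=\0$, and $a\bcdw\1=\1$ whenever $a\neq\0$. Any product of two elements of $B'$ equals $\1$, by Definition~\ref{skew reflection definition}(\ref{skew fusion}). Mixed products $b\bcdw c'$ with $b\in B$ and $c'\in B'$ unfold to $(b\rig c)'$, again by Definition~\ref{skew reflection definition}(\ref{skew fusion}), reducing each such entry to a residuation computation inside the Dunn monoid $\sbB$; one then identifies the resulting element of $B'$ with one of the six listed $B'$-elements.

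The remaining entries are the $B\times B$ products, which by Theorem~\ref{onto c4}(\ref{lattice closure}) all lie in $B$. Here I would exploit commutativity and distributivity of $\sbB$ (so that fusion distributes over joins via~(\ref{fusion distributivity})), the square-increasing law together with~(\ref{square increasing cor}) and~(\ref{square increasing cor2}) (so that $x\wedge y\leqslant x\bcdw y$, with equality when $x,y\leqslant e$), and the subalgebra $\boldsymbol{\mathit{Sg}}^\sbB\{\wb\}\cong\sbS_3^+$, whose table already supplies the products among $\wb,\,e,\,\wa$. Easy entries then follow directly; for instance, $\wb\bcdw(\wap\wedge\wa)=\wb$ because the upper bound $\wb\bcdw\wa=\wb$ coincides with the lower bound $\wb\wedge(\wap\wedge\wa)=\wb$ obtained from Lemma~\ref{closure} and~(\ref{square increasing cor}).

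The main obstacle will be the handful of middle-level entries such as $(\wap\wedge\wa)^2$, $(\wap\wedge\wa)\bcdw\wa$, $\wa^2$, $(f\wedge\wa)^2$, and $(f\wedge\wa)\bcdw\wa$. For each of these, the strategy is to bracket the value from below using square-increasingness (possibly after distributing over a known join, e.g.\ writing $\wa$ as a suitable join and then expanding) and from above using residuation ($x\bcdw y\leqslant z$ iff $y\leqslant x\rig z$), then to locate the result within the six-element set $B$ by invoking the lattice structure from Lemma~\ref{closure} to rule out rival candidates via incomparabilities visible in the Hasse diagram. The payoff is a complete multiplication table and, as a by-product, the assertion (used in the next stage of the argument) that $\sbA$ has exactly fourteen elements and that $\sbB$ is finite.
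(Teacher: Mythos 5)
Your method for the entries the table actually asserts is essentially the paper's: the $\wb$-row follows from $\wb\leqslant x\leqslant\wa$ together with $\wb^{2}=\wb=\wb\bcdw\wa$ and (\ref{isotone}); the $e$-row is neutrality; and the remaining entries are sandwich arguments in which the decisive upper bound comes from the mixed product $\wa\bcdw\wap=(\wa\rig\wa)'=\wap$ supplied by Definition~\ref{skew reflection definition}(\ref{skew fusion}) and the $\sbS_3^+$ tables, with (\ref{fusion distributivity}) handling the $e\vee(\wap\wedge\wa)$ entries. That part of your plan is fine.

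There is, however, a genuine error: you list $(f\wedge\wa)^{2}$ among the entries to be determined, but that cell is deliberately left \emph{blank} in the table, and no argument available at this stage can fill it. The standing hypotheses (a skew reflection of an SI Dunn monoid with $\boldsymbol{\mathit{Sg}}^\sbB\{\wb\}\cong\sbS_3^+$ and $e,\wap$ incomparable) are consistent both with $(f\wedge\wa)^{2}=f\wedge\wa$ (Cases~I and III, where $f\wedge\wa$ is $e$, resp.\ $e\vee(\wap\wedge\wa)$, both idempotent) and with $(f\wedge\wa)^{2}=\wa$ (Cases~II and IV, by Lemma~\ref{2 cases}); the paper only completes this entry after the case split. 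Your bracketing yields only $f\wedge\wa\leqslant(f\wedge\wa)^{2}\leqslant\wa$, which does not close, and ``incomparabilities visible in the Hasse diagram'' are not yet usable, because the diagram preceding Lemma~\ref{closure} is formal: the twelve depicted elements have not been shown distinct, and in Cases~I--III they are not. For the same reason your parenthetical claim that the computation ``confirms $|A|=14$'' is wrong ($|A|$ is $8$, $10$, $12$ or $14$ according to the case; only $|A|\leq 14$ emerges, and only after the case analysis). Finally, computing the full $14\times 14$ table is more than the lemma claims and would in particular presuppose the residuation table of Lemma~\ref{residuation in a} (since $b\bcdw c'=(b\rig c)'$), which is proved afterwards and carries the same case-dependent blanks.
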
\nopagebreak
\begin{center}
{\small {\begin{tabular}{r|cccccc}
$\bcdw$ & $\;\wb\;$ & $\;e\;$ & $\;\wap\wedge\wa\;$ & $\;e\vee(\wap\wedge\wa)\;$ & $\;f\wedge\wa\;$ & $\;\wa\;$\\[0.15pc] \hline\vspace{-4mm}\\
$\wb$ & $\wb$ & $\wb$ & $\wb$ & $\wb$ & $\wb$ & $\wb^{\,}$\\[0.15pc]
$e$ & $$ & $e$ & $\wap\wedge\wa$ & $e\vee(\wap\wedge\wa)$ & $f\wedge\wa$ & $\wa$\\[0.15pc]
$\wap\wedge \wa$ & $$ & $$ & $\wap\wedge\wa$ & $\wap\wedge\wa$ & $\wap\wedge\wa$ & $\wap\wedge\wa$\\[0.15pc]
$e\vee(\wap\wedge\wa)$ & $$ & $$ & $$ & $e\vee(\wap\wedge\wa)$ & $f\wedge\wa$ & $\wa$\\[0.15pc]
$f\wedge\wa$ & $$ & $$ & $$ & $$ & $$ & $\wa$\\[0.15pc]
$\wa$ & $$ & $$ & $$ & $$ & $$ & $\wa$
\end{tabular}}}
\end{center}
\begin{proof}
%The first row is justified by (\ref{easy stuff}) and t
The submatrix involving only $\wb,e,\wa$ is justified,
because $\boldsymbol{\mathit{Sg}}^\sbB\{\wb\}$ is an RL--subreduct of $\sbA$.
If $x\in\{\wap\wedge\wa,\,e\vee(\wap\wedge\wa),\,f\wedge\wa\}$, then
$\wb\leqslant x\leqslant \wa$, so $\wb\bcdw x=\wb$, by (\ref{isotone}), since $\wb^2=\wb=\wb\bcdw\wa$.
This justifies the first row; the second records the neutrality of $e$ in $\sbA$.

To see that $(\wap\wedge\wa)\bcdw\wa=\wap\wedge\wa$, note the following consequences of
(\ref{isotone}), Definition~\ref{skew reflection definition}(\ref{skew fusion}) and the tables for $\boldsymbol{\mathit{Sg}}^\sbB\{\wb\}$:
\begin{align*}
& (\wap\wedge\wa)\bcdw\wa\leqslant(\wap\bcdw\wa)\wedge\wa^2=(\wa\rig\wa)'
\wedge\wa\\
& =\wa'\wedge\wa=(\wap\wedge\wa)\bcdw e\leqslant(\wap\wedge\wa)\bcdw\wa.
\end{align*}
For any $x\in\{\wap\wedge\wa, \ e\vee(\wap\wedge\wa), \ f\wedge\wa\}$, we now have
\[
\wap\wedge\wa\leqslant(\wap\wedge\wa)^2\leqslant(\wap\wedge\wa)\bcdw x\leqslant(\wap\wedge\wa)\bcdw\wa=\wap\wedge\wa,
\]
so $(\wap\wedge\wa)\bcdw x=\wap\wedge\wa$.  \,If $y\in\{e\vee(\wap\wedge\wa), \ f\wedge\wa,\,\wa\}$, then
\[
\wa=e\bcdw\wa\leqslant y\bcdw\wa\leqslant\wa^2=\wa,
\]
whence $y\bcdw\wa=\wa$.  By (\ref{fusion distributivity}) and the idempotence of $\wap\wedge\wa$,
\[
(e\vee(\wap\wedge\wa))^2=e\vee(\wap\wedge\wa)\vee(\wap\wedge\wa)^2=e\vee(\wap\wedge\wa).
%\qedhere
\]
Finally, by (\ref{fusion distributivity}) and since $\wap\leqslant f$, we have
\begin{align*}
& (e\vee(\wap\wedge\wa))\bcdw(f\wedge\wa)=(e\bcdw(f\wedge\wa))\vee((\wap\wedge\wa)\bcdw(f\wedge\wa))\\
& =
(f\wedge\wa)\vee(\wap\wedge\wa)=f\wedge\wa.\qedhere
\end{align*}
\end{proof}

\enlargethispage{12pt}

\smallskip

\begin{lem}\label{residuation in a}
Residuation in\/ $\sbA$
%and\/ $\sbB$
behaves as in the following table.
\end{lem}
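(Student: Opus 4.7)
The plan is to derive each entry of the residuation table from Lemma~\ref{fusion in a} and Lemma~\ref{closure}, using the IRL identity $x\rig y=\neg(x\bcdw\neg y)$ from~(\ref{neg properties}). For each of the six generators $y$, Lemma~\ref{closure} exhibits $\neg y$ as one of the twelve labelled elements of $\sbA$, typically a member of $B'$ expressed as a join or meet of the primitive $B'$-elements $\wb'$, $f$, $\wap$. So the computation of $x\rig y$ reduces to evaluating $x\bcdw\neg y$ and then negating via De Morgan's laws. The distributivity of fusion over lattice joins~(\ref{fusion distributivity}), together with those De Morgan laws, breaks every such $x\bcdw\neg y$ into lattice combinations of primitive products $b\bcdw c'$ with $b\in B$ and $c'\in\{\wb',f,\wap\}$, and Definition~\ref{skew reflection definition}(\ref{skew fusion}) turns each primitive product into $(b\rig^\sbB c)'$.

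I would then determine the required $\sbB$-residuals $b\rig^\sbB c$ (for $c\in\{\wb,e,\wa\}$ and $b$ ranging over the six generators) by appealing to Theorem~\ref{slaney fact 1}: since $\sbB$ is an RL--subreduct of $\sbA$, we may identify $\rig^\sbB$ with $\rig$ whenever both arguments lie in $B$. Many of these coincide with entries in the residuation table of the three-element $\sbS_3^+$-subalgebra $\{\wb,e,\wa\}$ of $\sbB$, which is already explicit. I would build up the table in a safe order to avoid circular appeals: first the $x=e$ row by~(\ref{t laws}); then the $x=\wb$ row by isotonicity~(\ref{isotone}) applied to $\wb\bcdw z=\wb$ from Lemma~\ref{fusion in a}; then the $y=\wa$ column, using that $x\rig\wa=\neg(x\bcdw\wap)$ equals $x\rig^\sbB\wa$ by Definition~\ref{skew reflection definition}(\ref{skew fusion}); and finally the remaining entries involving the middle elements $\wap\wedge\wa$, $e\vee(\wap\wedge\wa)$, $f\wedge\wa$, whose behaviour can be reduced, via distributivity and the lattice diagram, to products already computed.

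The main obstacle is bookkeeping rather than conceptual novelty: with thirty-six entries and twelve named elements, some intermediate expressions involve distributing joins in both arguments of $\bcdw$ before De Morgan simplification. A cheap sanity check for every tentatively computed value $z$ is to verify $x\bcdw z\leqslant y$ directly against Lemma~\ref{fusion in a} and confirm, using the explicit joins and meets from Lemma~\ref{closure}, that $x\bcdw w\nleqslant y$ for each element $w$ strictly above $z$ in the twelve-element lattice; passing this check both pins down the entry and shows that $x\rig y$ belongs to the twelve-element set, so that — combined with Lemmas~\ref{closure} and \ref{fusion in a} — the twelve elements will form a subalgebra of $\sbA$, which together with $\0$ and $\1$ accounts for at most fourteen elements of $A$.
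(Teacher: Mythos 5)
Your overall strategy---grinding the entries out of the fusion table, the lattice facts of Lemma~\ref{closure}, the $\sbS_3^+$ subtable, and the basic RL identities, in a safe order beginning with the $e$ and $\wb$ rows and the $\wa$ column---is essentially what the paper does; the paper simply leans more on $x\rig(y\wedge z)=(x\rig y)\wedge(x\rig z)$, $(x\vee y)\rig z=(x\rig z)\wedge(y\rig z)$ and isotonicity squeezes than on $x\rig y=\neg(x\bcdw\neg y)$. The genuine problem is your fallback mechanism. At this stage it is \emph{not} known that the twelve depicted elements together with $\0,\1$ exhaust $A$---that is exactly what Lemmas~\ref{closure}--\ref{residuation in a} are being assembled to establish. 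Checking that $z$ is the largest element \emph{of the twelve} with $x\bcdw z\leqslant y$ therefore yields only $z\leqslant x\rig y$; it neither ``pins down the entry'' nor ``shows that $x\rig y$ belongs to the twelve-element set'', since some $w\in A$ outside the diagram could a priori satisfy $x\bcdw w\leqslant y$ with $w\nleqslant z$. Each entry has to be certified by identities valid throughout $\sbA$, which is what the paper's derivations do.

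This matters because your primary reduction is circular precisely where the fallback would be invoked: for $y\in\{\wb,e,\wa\}$ the element $\neg y$ is a ``primitive'' member of $B'$, so Definition~\ref{skew reflection definition}(vii) gives $x\bcdw\neg y=(x\rig^{\sbB}y)'$ and the computation returns $x\rig y=x\rig y$. Those three columns need subpermutation/isotonicity sandwiches instead (e.g.\ $x\leqslant\wa=\wb\rig\wb$ gives $\wb\leqslant x\rig\wb$ by (\ref{subpermutation}), while $e\leqslant x$ gives $x\rig\wb\leqslant e\rig\wb=\wb$), and the column $y=e\vee(\wap\wedge\wa)$ needs further squeezes since neither $\bcdw$ nor $\rig$ distributes usefully over the join in that position. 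Note also that five of the thirty-six entries---$(\wap\wedge\wa)\rig\wb$, $(\wap\wedge\wa)\rig e$, $(e\vee(\wap\wedge\wa))\rig e$, $(f\wedge\wa)\rig e$ and $(f\wedge\wa)\rig(e\vee(\wap\wedge\wa))$---are deliberately left blank in the lemma: their values depend on which of Cases~I--IV obtains and are only settled in Lemmas~\ref{3 cases}--\ref{case 4 only}, so a plan to determine all thirty-six at this point must stall on them.
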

\begin{center}
{\small {\begin{tabular}{r|cccccc}
$\rig$ & $\;\wb\;$ & $\;e\;$ & $\;\wap\wedge\wa\;$ & $\;e\vee(\wap\wedge\wa)\;$ & $\;f\wedge\wa\;$ & $\;\wa\;$\\[0.15pc] \hline\vspace{-3mm}\\
$\wb$ & $\wa^{\,}$ & $\wa$ & $\wa$ & $\wa$ & $\wa$ & $\wa^{\,}$\\[0.15pc]
$e$ & $\wb$ & $e$ & $\wap\wedge\wa$ & $e\vee(\wap\wedge\wa)$ & $f\wedge\wa$ & $\wa$\\[0.15pc]
$\wap\wedge \wa$ & $$ & $$ & $\wa$ & $\wa$ & $\wa$ & $\wa$\\[0.15pc]
$e\vee(\wap\wedge\wa)$ & $\wb$ & $$ & $\wap\wedge\wa$ & $e\vee(\wap\wedge\wa)$ & $f\wedge\wa$ & $\wa$\\[0.15pc]
$f\wedge\wa$ & $\wb$ & $$ & $\wap\wedge\wa$ & $$ & $e\vee(\wap\wedge\wa)$ & $\wa$\\[0.15pc]
$\wa$ & $\wb$ & $\wb$ & $\wap\wedge\wa$ & $\wap\wedge\wa$ & $\wap\wedge\wa$ & $\wa$
\end{tabular}}}
\end{center}
\begin{proof}
All elements in the table lie between $\wb$ and $\wa$.
The submatrix involving only $\wb,e,\wa$ documents residuation in $\boldsymbol{\mathit{Sg}}^\sbB\{\wb\}$,
so the first row and last column follow from (\ref{isotone}), because $\wa=\wb\rig\wb=\wb\rig\wa=\wa\rig\wa$.
The second row is justified by (\ref{t laws}).
Then
%\[
%\textup{
$(\wap\wedge\wa)\rig\wap=\neg((\wap\wedge\wa)\bcdw\wa)=(\wap\wedge\wa)'$
%\textup{
(Lemma~\ref{fusion in a})
%}
$=\wap\vee\wa$,
%}
%\]
so by (\ref{and distributivity}),
\[
(\wap\wedge\wa)\rig(\wap\wedge\wa)
=((\wap\wedge\wa)\rig\wap)\wedge((\wap\wedge\wa)\rig\wa)=(\wap\vee\wa)\wedge\wa=\wa.
\]
%Here, the first equality invokes (\ref{and distributivity}).
Now, for each $x\in\{e\vee(\wap\wedge\wa),\,f\wedge\wa\}$,
\[
\wa=(\wap\wedge\wa)\rig(\wap\wedge\wa)\leqslant (\wap\wedge\wa)\rig x\leqslant (\wap\wedge\wa)\rig\wa=\wa,
\]
by (\ref{isotone}), whence $(\wap\wedge\wa)\rig x=\wa$.

Clearly, $e\vee(\wap\wedge\wa)\leqslant\wa=\wb\rig\wb$, so (\ref{subpermutation}) and (\ref{isotone}) yield
%by (\ref{permutation}),
%Moreover, $\wb$ is the least element of $B$, which is closed under $\rig$, so (\ref{isotone}) and (\ref{t laws}) give
\[
\wb\leqslant (e\vee(\wap\wedge\wa))\rig \wb\leqslant e\rig \wb=\wb,
\]
whence $(e\vee(\wap\wedge\wa))\rig \wb=\wb$.  By Lemma~\ref{fusion in a}, $e\vee(\wap\wedge\wa)$ is an idempotent upper bound of $e$,
so $(e\vee(\wap\wedge\wa))\rig(e\vee(\wap\wedge\wa))=e\vee(\wap\wedge\wa)$, by (\ref{3 conditions}).
%For $b\seteq e\vee(\wap\wedge\wa)$, we have $e\leqslant b=b^2$ (Lemma~\ref{fusion in a}),
%so $b\rig b=b$, by (\ref{3 conditions}).
Also, by
%And, having shown above that $(\wap\wedge\wa)\rig e=\wb$, we may infer from
(\ref{or distributivity}),
%\begin{align*}
%(e\vee(\wap\wedge\wa))\rig e=(e\rig e)\wedge((\wap\wedge\wa)\rig e)=e\wedge\wb \textup{ (by (\ref{seven})) }=\wb;
%\end{align*}
\begin{align*}
 (e\vee(\wap\wedge\wa))\rig(\wap\wedge\wa)=(e\rig(\wap\wedge\wa))\wedge((\wap\wedge\wa)\rig(\wap\wedge\wa))\\
 =(\wap\wedge\wa)\wedge\wa=\wap\wedge\wa;
\end{align*}
%\pagebreak[1]
%
%\begin{align*}
% (e\vee(\wap\wedge\wa))\rig(e\vee(\wap\wedge\wa))\\
%=(e\rig(e\vee(\wap\wedge\wa)))\wedge((\wap\wedge\wa)\rig(e\vee(\wap\wedge\wa)))\\
%=(e\vee(\wap\wedge\wa))\wedge\wa=e\vee(\wap\wedge\wa);
%\end{align*}
%
\begin{align*}
(e\vee(\wap\wedge\wa))\rig(f\wedge\wa)=(e\rig(f\wedge\wa))\wedge((\wap\wedge\wa)\rig(f\wedge\wa))\\
=(f\wedge\wa)\wedge\wa=f\wedge\wa.
\end{align*}
Similarly, from $f\wedge\wa\leqslant\wa=\wb\rig\wb$ and
$e\leqslant f\wedge\wa$ and (\ref{subpermutation}), (\ref{isotone}), we obtain $\wb\leqslant(f\wedge\wa)\rig \wb\leqslant e\rig \wb=\wb$, hence
$(f\wedge\wa)\rig \wb=\wb$.  Also, by (\ref{and distributivity}),
%\enlargethispage{4pt}
%\enlargethispage{17pt}
\begin{align*}
(f\wedge\wa)\rig(\wap\wedge\wa)=((f\wedge\wa)\rig\wap)\wedge((f\wedge\wa)\rig\wa)\\
=\neg((f\wedge\wa)\bcdw\wa)\wedge\wa=\wap\wedge\wa\textup{ \,(by Lemma~\ref{fusion in a});}
\end{align*}
\begin{align*}
(f\wedge\wa)\rig(f\wedge\wa)=((f\wedge\wa)\rig f)\wedge((f\wedge\wa)\rig\wa)=\neg(f\wedge\wa)\wedge\wa\\
=(e\vee\wap)\wedge\wa=e\vee(\wap\wedge\wa) \textup{ \,(by distributivity, since $e\leqslant\wa$);}
\end{align*}
%Recall that $\wa\rig \wb=\wb=\wa\rig e$, as $\boldsymbol{\mathit{Sg}}^{\sbB}\{\bot\}\cong\sbS_3^+$.
%And, by (\ref{isotone}), $(f\wedge\wa)\rig e$ lies between $\wa\rig e$ and $(e\vee(\wap\wedge\wa))\rig e$, both
%of which are $\wb$, so $(f\wedge\wa)\rig e=\wb$.
%Also, by (\ref{and distributivity}),
\begin{align*}
\wa\rig(\wap\wedge\wa)=(\wa\rig\wap)\wedge(\wa\rig\wa)=\neg(\wa\bcdw\wa)\wedge\wa=\wap\wedge\wa;\\
\wa\rig(f\wedge\wa)=(\wa\rig f)\wedge(\wa\rig\wa)=\neg\wa\wedge\wa=\wap\wedge\wa,
\end{align*}
so, because $\wap\wedge\wa\leqslant e\vee(\wap\wedge\wa)\leqslant f\wedge\wa$, (\ref{isotone}) yields
%\[
%\wap\wedge\wa=\wa\rig(\wap\wedge\wa)\leqslant\wa\rig(e\vee(\wap\wedge\wa))\leqslant\wa\rig(f\wedge\wa)=\wap\wedge\wa,
%\]
%whence
\[
\wa\rig(e\vee(\wap\wedge\wa))=\wap\wedge\wa.\qedhere
\]
\end{proof}

%With reference to the diagram of possibly different elements of $\sbA$ before Lemma~\ref{closure},
%r

Recall that $\sbA$ has the following properties (under present assumptions):
\begin{itemize}
\item $[\wap)\cap(\wa]=\emptyset$, \,by the definition of a skew reflection,
\item $\wb<e<\wa$ and $\wap<f<\wb'$, \,as $\boldsymbol{\mathit{Sg}}^\sbB\{\wb\}\cong\sbS_3^+$, and
\item $e$ and $\wap$ are incomparable, \,by Theorem~\ref{reflection criterion}, as $\sbA\neq\textup{R}(\sbB)$.
\end{itemize}
There are only four ways to identify elements from the diagram preceding Lemma~\ref{closure} while respecting these rules.
Thus, $\sbA$ must have one of the four Hasse diagrams below, where the thicker points denote the elements
of $\sbB$.

%\begin{figure}

{\small

\thicklines
\begin{center}
\begin{picture}(80,190)(90,1)

\put(-27,132){\line(0,1){15}}
\put(-27,147){\circle*{3}}

\put(3,102){\line(-1,1){30}}

\put(-12,87){\circle*{5}}
\put(-27,102){\circle*{5}}
\put(-42,117){\circle*{5}}

\put(-27,132){\circle*{3}}
\put(-12,117){\circle*{3}}

\put(-42,117){\line(1,1){15}}

\put(-27,102){\line(1,1){15}}

\put(-12,87){\line(1,1){15}}
\put(3,102){\circle*{3}}

\put(-13,87){\line(-1,1){30}}
\put(-12,87){\line(-1,1){30}}
\put(-11,87){\line(-1,1){30}}

\put(-12,87){\line(0,-1){15}}
\put(-12,72){\circle*{3}}

\put(-24,133){\small ${\wb'}$}
\put(-9,118){\small ${f}$}
\put(6,100){\small ${\wap}$}

\put(-53,114){\small ${\wa}$}

\put(-36,97){\small ${e}$}
\put(-23,82){\small $\wb$}

\put(-14,61){\small $\0$}

\put(-29,152){\small $\1$}

\put(-35,18){\small {\sc Case~I}}

%
%%%%%%
%

\put(55,140){\line(0,1){15}}
\put(55,155){\circle*{3}}

\put(85,80){\line(0,-1){15}}
\put(85,65){\circle*{3}}

\put(100,95){\line(-1,1){45}}

\put(85,80){\circle*{5}}
\put(70,95){\circle*{5}}
\put(55,110){\circle*{5}}

\put(70,125){\circle*{3}}
\put(85,110){\circle*{3}}

\put(55,110){\line(1,1){15}}

\put(70,95){\line(1,1){15}}

\put(85,80){\line(1,1){15}}
\put(100,95){\circle*{3}}

\put(84,80){\line(-1,1){45}}
\put(85,80){\line(-1,1){45}}
\put(86,80){\line(-1,1){45}}

\put(40,125){\circle*{5}}
\put(40,125){\line(1,1){15}}
\put(55,140){\circle*{3}}

\put(58,141){\small ${\wb'}$}
\put(73,126){\small ${f}$}
\put(103,93){\small ${\wap}$}

\put(29,122){\small ${\wa}$}

\put(61,90){\small ${e}$}
\put(74,75){\small $\wb$}

\put(53,160){\small $\1$}
\put(83,54){\small $\0$}

\put(52,18){\small {\sc Case~II}}

%
%%%%%%
%

\put(172,148){\line(0,1){15}}
\put(172,163){\circle*{3}}

\put(157,73){\line(0,-1){15}}
\put(157,58){\circle*{3}}

\put(187,103){\line(-1,1){30}}

\put(172,88){\circle*{5}}
\put(157,103){\circle*{5}}
\put(142,118){\circle*{5}}

\put(157,133){\circle*{3}}
\put(172,118){\circle*{3}}

\put(142,118){\line(1,1){30}}
\put(172,148){\circle*{3}}

\put(141,88){\line(1,1){15}}
\put(142,88){\line(1,1){45}}
\put(143,88){\line(1,1){15}}

\put(187,133){\circle*{3}}
\put(187,133){\line(-1,1){15}}

\put(157,73){\circle*{5}}

\put(156,73){\line(1,1){15}}
\put(157,73){\line(1,1){30}}
\put(158,73){\line(1,1){15}}

\put(187,103){\circle*{3}}

\put(171,88){\line(-1,1){30}}
\put(172,88){\line(-1,1){30}}
\put(173,88){\line(-1,1){30}}

\put(156,73){\line(-1,1){15}}
\put(157,73){\line(-1,1){15}}
\put(158,73){\line(-1,1){15}}

\put(142,88){\circle*{5}}

\put(175,149){\small ${\wb'}$}
\put(191,131){\small ${f}$}
\put(190,101){\small ${\wap}$}

\put(131,115){\small ${\wa}$}

\put(132,85){\small ${e}$}
\put(146,68){\small $\wb$}

\put(170,168){\small $\1$}

\put(155,47){\small $\0$}

\put(145,17){\small {\sc Case~III}}

%
%%%%%%
%

\put(265,155){\line(0,1){15}}
\put(265,170){\circle*{3}}

\put(265,65){\line(0,-1){15}}
\put(265,50){\circle*{3}}

\put(265,65){\circle*{5}}

\put(264,65){\line(1,1){15}}
\put(265,65){\line(1,1){30}}
\put(266,65){\line(1,1){15}}

\put(295,95){\circle*{3}}
\put(295,95){\line(-1,1){45}}

\put(265,95){\circle*{5}}
\put(250,110){\circle*{5}}
\put(235,125){\circle*{5}}

\put(250,140){\circle*{3}}
\put(265,125){\circle*{3}}

\put(235,125){\line(1,1){30}}
\put(265,155){\circle*{3}}

\put(250,110){\line(1,1){30}}
\put(280,140){\circle*{3}}
\put(280,140){\line(-1,1){15}}

\put(280,80){\circle*{5}}

\put(264,65){\line(-1,1){15}}
\put(265,65){\line(-1,1){15}}
\put(266,65){\line(-1,1){15}}

\put(250,80){\circle*{5}}

\put(249,80){\line(1,1){15}}
\put(250,80){\line(1,1){30}}
\put(251,80){\line(1,1){15}}

\put(280,110){\circle*{3}}

\put(265,95){\circle*{3}}

\put(279,80){\line(-1,1){45}}
\put(280,80){\line(-1,1){45}}
\put(281,80){\line(-1,1){45}}

\put(268,156){\small ${\wb'}$}
\put(284,138){\small ${f}$}
\put(298,93){\small ${\wap}$}

\put(224,122){\small ${\wa}$}

\put(240,77){\small ${e}$}
\put(254,60){\small $\wb$}

\put(263,174){\small $\1$}

\put(263,39){\small $\0$}

\put(245,17){\small {\sc Case~IV}}

%%%%%%

\end{picture}
\end{center}}
%\end{figure}

\begin{lem}\label{3 cases}
In Cases~II, III and IV, we have\/ $(f\wedge\wa)\rig e=\wb$\textup{.}
\end{lem}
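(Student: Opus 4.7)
My plan is to sandwich $(f\wedge\wa)\rig e$ between $\wb$ and $e$, reduce this interval to the two-element chain $\{\wb,e\}$ using the hypothesis on $\wb$, and then exclude $e$ in the three cases by invoking the strict inequality $f\wedge\wa>e$ visible in those Hasse diagrams.

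For the sandwich, the lower bound $\wb\leqslant(f\wedge\wa)\rig e$ is immediate: Lemma~\ref{fusion in a} records $\wb\bcdw(f\wedge\wa)=\wb\leqslant e$, and the law of residuation~(\ref{residuation}) converts this to $\wb\leqslant(f\wedge\wa)\rig e$. The upper bound $(f\wedge\wa)\rig e\leqslant e$ follows from $e\leqslant f\wedge\wa$ via the antitonicity in~(\ref{isotone}) combined with $e\rig e=e$ from~(\ref{t laws}).

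To collapse $[\wb,e]^\sbA$ to $\{\wb,e\}$, I would invoke the running hypothesis on $\sbA$ together with Remark~\ref{skew remark}, by which $\wb$ is the greatest strict lower bound of $e$ in $\sbA$; hence any element of $\sbA$ that is strictly less than $e$ lies below $\wb$. It follows that $(f\wedge\wa)\rig e\in\{\wb,e\}$. To complete the proof, I need only rule out the value $e$. If $(f\wedge\wa)\rig e=e$, then~(\ref{residuation}) gives $f\wedge\wa=e\bcdw(f\wedge\wa)\leqslant e$, which together with $e\leqslant f\wedge\wa$ forces $f\wedge\wa=e$. Inspection of the Hasse diagrams for Cases~II, III and~IV shows $f\wedge\wa$ as a distinct node strictly above $e$ in each, so this possibility is excluded and $(f\wedge\wa)\rig e=\wb$ in those three cases.

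The one step requiring care is the final exclusion, since it is genuinely case-dependent: in Case~I the identification $f\wedge\wa=e$ does hold, which is precisely why the lemma singles out Cases~II--IV. Beyond that, the argument is short and uses only the already-tabulated fusion entry $\wb\bcdw(f\wedge\wa)=\wb$ together with generic RL inequalities, so I do not anticipate any further obstacle.
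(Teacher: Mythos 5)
Your proof is correct and takes essentially the same route as the paper's: both sandwich $(f\wedge\wa)\rig e$ between $\wb$ and $e$, and then use $f\wedge\wa\nleqslant e$ (read off the Case~II--IV diagrams) together with the fact that $\wb$ is the greatest strict lower bound of $e$ to force the value $\wb$. The only cosmetic differences are that the paper obtains the lower bound from $f\wedge\wa\leqslant\wa=\wb\rig e$ via (\ref{subpermutation}) rather than from the fusion table, and excludes the value $e$ directly via (\ref{t order}) rather than by contradiction.
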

\begin{proof}
%Note that $(f\wedge\wa)\rig e\in B$, as $f\wedge\wa,e\in B$.
From $f\wedge\wa\leqslant\wa=\wb\rig e$ and (\ref{subpermutation})
we infer $\wb\leqslant (f\wedge\wa)\rig e$.  As $e\leqslant f\wedge\wa$, (\ref{isotone}) shows that
$(f\wedge\wa)\rig e\leqslant e\rig e=e$.  In Cases~II, III and IV,
$f\wedge\wa\nleqslant e$, so (\ref{t order}) shows that $(f\wedge\wa)\rig e<e$.
Therefore, $(f\wedge\wa)\rig e\leqslant\wb$ (by definition of $\wb$),
%as $\wb$ is the greatest strict lower
%bound of $e$ in $\sbB$.  On the other hand, f
hence the result.
%$e>(f\wedge\wa)\rig e\in B$.
\end{proof}

\begin{lem}\label{2 cases}
In Cases~II and IV, we have\/ $(f\wedge\wa)^2=\wa$\textup{.}
\end{lem}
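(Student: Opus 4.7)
The strategy exploits Lemma~\ref{residuation in a} together with the criterion (\ref{3 conditions}). By the square-increasing law and (\ref{isotone}), combined with $\wa^2=\wa$ from Lemma~\ref{fusion in a}, we have
\[
f\wedge\wa \;\leqslant\; (f\wedge\wa)^2 \;\leqslant\; \wa^2 \;=\; \wa,
\]
so the task reduces to ruling out the value $f\wedge\wa$ and any element strictly between $f\wedge\wa$ and $\wa$.

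First I would rule out idempotence. Since $e\leqslant f\wedge\wa$, the third clause of (\ref{3 conditions}) gives that $f\wedge\wa$ is idempotent iff $f\wedge\wa=(f\wedge\wa)\rig(f\wedge\wa)$. The residuation table in Lemma~\ref{residuation in a} records $(f\wedge\wa)\rig(f\wedge\wa)=e\vee(\wap\wedge\wa)$, and this element is in any case a lower bound of $f\wedge\wa$. Hence idempotence amounts to the equation $f\wedge\wa=e\vee(\wap\wedge\wa)$. Inspection of the Hasse diagrams for Case~II and Case~IV shows that in each of them the two labels $f\wedge\wa$ and $e\vee(\wap\wedge\wa)$ denote \emph{distinct} elements of $\sbA$ (whereas they are identified in Case~III, which is why that case is excluded). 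It follows that $f\wedge\wa<(f\wedge\wa)^2\leqslant\wa$ in Cases~II and~IV.

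Next I would observe that $(f\wedge\wa)^2$ belongs to $\sbA=\boldsymbol{\mathit{Sg}}^\sbA\{\wb\}$, whose universe is exhausted (modulo the identifications corresponding to each of the four cases) by the $14$ labels in the general Hasse diagram preceding Lemma~\ref{closure}. A second inspection of the diagrams for Case~II and Case~IV shows that in both, the interval $[f\wedge\wa,\wa]$ of $\sbA$ consists of its endpoints only---no other labelled element of $\sbA$ lies strictly between $f\wedge\wa$ and $\wa$. Combined with the strict inequality established above, this forces $(f\wedge\wa)^2=\wa$.

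The only non-routine work is the diagrammatic verification: for each of Case~II and Case~IV separately, one must read off from the displayed Hasse diagram both that $e\vee(\wap\wedge\wa)\neq f\wedge\wa$ and that $\wa$ covers $f\wedge\wa$ in $\sbA$. Once these two visual facts are in hand, the algebraic conclusion follows immediately from (\ref{3 conditions}), Lemma~\ref{residuation in a} and Lemma~\ref{fusion in a}.
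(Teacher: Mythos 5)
Your first two steps match the paper's proof exactly: $(f\wedge\wa)^2\leqslant\wa^2=\wa$ by (\ref{isotone}), and $f\wedge\wa<(f\wedge\wa)^2$ because $(f\wedge\wa)\rig(f\wedge\wa)=e\vee(\wap\wedge\wa)\neq f\wedge\wa$ in Cases~II and~IV, so $f\wedge\wa$ is not idempotent by (\ref{3 conditions}). The gap is in your final step. You conclude from the Case~II and Case~IV diagrams that $\wa$ covers $f\wedge\wa$ \emph{in $\sbA$}, on the grounds that the universe of $\sbA$ is exhausted by the labelled elements. That exhaustiveness is not available at this stage: the paper warns, immediately after the twelve-element diagram, that it does \emph{not} claim the depicted elements exhaust $\sbA$ (``It will turn out that they exhaust $\sbA$, but that is not yet obvious''). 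Proving that the labelled elements do exhaust $A$ (equivalently, that $\left|A\right|\leq 14$) is precisely the goal of the chain of lemmas of which Lemma~\ref{2 cases} is a part, so invoking it here is circular. The four Hasse diagrams record only the order among the labelled elements; a priori $(f\wedge\wa)^2$ could be a hitherto-unlabelled element lying strictly between $f\wedge\wa$ and $\wa$, and your argument gives no reason to exclude this.

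The paper closes exactly this gap with a distributivity argument: if $f\wedge\wa<(f\wedge\wa)^2<\wa$, then $\{f\wedge\wa,\,(f\wedge\wa)^2,\,\wa,\,f,\,\wb'\}$ is a five-element sublattice of $\langle A;\wedge,\vee\rangle$ isomorphic to the pentagon, contradicting the distributivity of $\sbA$. The only nontrivial verification is that $f\vee(f\wedge\wa)^2=\wb'$, which holds because $f$ and $(f\wedge\wa)^2$ are incomparable and $\wb'$ is the \emph{smallest} strict upper bound of $f$ in $\sbA$ (since $\wb$ is the greatest strict lower bound of $e$). Replacing your appeal to the diagrams by this argument completes the proof.
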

\begin{proof}
By (\ref{isotone}), $(f\wedge\wa)^2\leqslant\wa^2=\wa$.
%, because $\sbB$ is an RL--subreduct of $\sbA$.
By Lemma~\ref{residuation in a}, $(f\wedge\wa)\rig(f\wedge\wa)=e\vee(\wap\wedge\wa)$.
In Cases~II and IV, therefore, $(f\wedge\wa)\rig(f\wedge\wa)\neq f\wedge\wa\geqslant e$,
%it follows from (\ref{3 conditions}) that
so $f\wedge\wa$ is not idempotent (by (\ref{3 conditions})), i.e.,
%%By the square-increasing law and (\ref{residuation}), an element $b$ of $B$ is idempotent iff $b^2\leqslant b$, iff $b\leqslant b\rig b$.
%%By Lemma~\ref{residuation in a}, however,
%%
%%Observe that
%%\begin{align*}
%%\[
%$(f\wedge\wa)\rig(f\wedge\wa)=
%%((f\wedge\wa)\rig f)\wedge((f\wedge\wa)\rig\wa)\\
%%=\neg(f\wedge\wa)\wedge\wa=(e\vee\wap)\wedge\wa=
%e\vee(\wap\wedge\wa)$.
%%\]
%%\end{align*}
%%by distributivity.
%As $f\wedge\wa\nleqslant e\vee (\wap\wedge\wa)$ in Cases~II and IV, it follows that $f\wedge\wa$ is not idempotent, whence
$f\wedge\wa<(f\wedge\wa)^2$.  Suppose, with a view to contradiction, that $(f\wedge\wa)^2<\wa$.  Then the
%Hasse
diagram below depicts a
five-element sub-poset of $\langle A;\leqslant\rangle$, which we claim is a sublattice of $\langle A;\wedge,\vee\rangle$.  That will
contradict the distributivity of $\sbA$, finishing the proof.

{\small

\thicklines
\begin{center}
\begin{picture}(100,70)(0,31)
\put(45,43){\circle*{3}}
\put(45,43){\line(1,1){22}}
\put(67,65){\circle*{3}}
%\put(50,33){\line(-1,1){20}}
\put(30,58){\circle*{3}}
\put(30,58){\line(0,1){15}}
\put(30,58){\line(1,-1){15}}
\put(30,73){\circle*{3}}
\put(30,73){\line(1,1){15}}
\put(45,88){\circle*{3}}
\put(45,88){\line(1,-1){22}}
%\put(80,63){\line(-1,1){30}}
\put(32,32){$f\wedge\wa$}
\put(71,61){$f$}
\put(-12,54){$(f\wedge\wa)^2$}
\put(19.5,72){$\wa$}
\put(42,93){$\wb'$}
\end{picture}
\end{center}}

The claim amounts to the assertion that $f\vee(f\wedge\wa)^2=\wb'$.  As $f$ and $(f\wedge\wa)^2$ are incomparable, we have
$f<f\vee(f\wedge\wa)^2\leqslant\wb'$.  In $\sbA$, however, $\wb'$ is the smallest strict upper bound of $f$ (because $\wb$
is the greatest strict lower bound of $e$).  Therefore,
%$\wb'$ covers $f$ in $\sbA$, so indeed,
$f\vee(f\wedge\wa)^2=\wb'$.
%The claim contradicts the distributivity of $\sbA$, so we may conclude that $(f\wedge\wa)^2=\wa$.
\end{proof}

\begin{lem}\label{another 2 cases}
In Cases~III and IV, \,$\wb$ is the value of all three of
\[
(\wap\wedge\wa)\rig e,\;\,
%\,=\,
(\wap\wedge\wa)\rig\wb \text{ \,and\/\, }
(e\vee(\wap\wedge\wa))\rig e.
\]
\end{lem}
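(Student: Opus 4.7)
The plan is to prove the three equations in the order listed, treating the first as the substantive step and the others as quick consequences. The underlying structural observation, read off the Hasse diagrams of Cases~III and~IV, is that $\wb$ has exactly two upper covers in $\sbA$, namely $e$ and $\wap\wedge\wa$; equivalently, every element strictly above $\wb$ dominates $e$ or $\wap\wedge\wa$. Moreover, $\wap\wedge\wa$ and $\wb$ are distinct in these cases (they occupy distinct thick points in the diagrams), so $\wap\wedge\wa\nleqslant e$: otherwise $\wap\wedge\wa$ would be a strict lower bound of $e$ and hence equal to $\wb$ by the maximality of $\wb$.

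For the first identity, set $c\seteq(\wap\wedge\wa)\rig e$. The lower bound $\wb\leqslant c$ follows from $\wap\wedge\wa\leqslant\wa=\wb\rig e$ together with \textup{(\ref{subpermutation})}. For the upper bound, suppose $\wb<c$. By the upper-cover observation, either $e\leqslant c$ or $\wap\wedge\wa\leqslant c$. In the first case, \textup{(\ref{subpermutation})} forces $\wap\wedge\wa\leqslant e\rig e=e$, contradicting $\wap\wedge\wa\nleqslant e$. In the second, \textup{(\ref{isotone})} together with the idempotence of $\wap\wedge\wa$ supplied by Lemma~\ref{fusion in a} gives $\wap\wedge\wa=(\wap\wedge\wa)^2\leqslant(\wap\wedge\wa)\bcdw c\leqslant e$, where the final step invokes \textup{(\ref{x y law})}; again a contradiction. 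Hence $c=\wb$.

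Both remaining identities drop out easily. Since $\wb\leqslant e$, \textup{(\ref{isotone})} gives $(\wap\wedge\wa)\rig\wb\leqslant(\wap\wedge\wa)\rig e=\wb$, while $(\wap\wedge\wa)\bcdw\wb=\wb$ from Lemma~\ref{fusion in a} supplies the reverse inequality; so $(\wap\wedge\wa)\rig\wb=\wb$. For the last identity, \textup{(\ref{or distributivity})} together with $e\rig e=e$ from \textup{(\ref{t laws})} yields
$(e\vee(\wap\wedge\wa))\rig e=(e\rig e)\wedge((\wap\wedge\wa)\rig e)=e\wedge\wb=\wb$.

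The main obstacle is the upper-bound step in the first identity, which rests on the combinatorial fact that $\wb$'s only upper covers in the Hasse diagrams of Cases~III and~IV are $e$ and $\wap\wedge\wa$; this must be extracted by inspection of those diagrams.
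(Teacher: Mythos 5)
Your reductions of the second and third identities to the first are correct, and within the first identity both the lower bound $\wb\leqslant(\wap\wedge\wa)\rig e$ and the contradictions you extract from the two sub-cases $e\leqslant c$ and $\wap\wedge\wa\leqslant c$ are sound. The gap is exactly at the point you flag as the ``main obstacle'': the claim that every element of $\sbA$ strictly above $\wb$ dominates $e$ or $\wap\wedge\wa$ cannot be read off the Hasse diagrams of Cases~III and IV. At this stage those diagrams are only known to depict a twelve-expression subset of $A$ that is closed under $\wedge,\vee,\neg$ (Lemma~\ref{closure}); the text explicitly cautions that it is \emph{not} yet known that these elements exhaust $\sbA$ --- establishing that is the whole purpose of Lemmas~\ref{closure}--\ref{case 4 only}, of which the present lemma is one step. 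So the diagram says nothing about elements of $A$ outside the depicted set, and a priori $(\wap\wedge\wa)\rig e$ could be such an element, lying strictly above $\wb$ without being above either $e$ or $\wap\wedge\wa$. (Moreover, $\sbA$ is not yet known to be finite, so even a correct list of the upper covers of $\wb$ would not justify your ``equivalently'' reformulation.) The only global order-theoretic facts available here are the generic ones --- $\0,\1$ are extrema, $\0$ is meet-irreducible, $A=[e)\cup(f\,]$ --- together with the standing assumption that $\wb$ is the \emph{greatest strict lower bound of} $e$; note that the latter constrains the elements below $e$, not the elements above $\wb$, so it does not dualize into your cover statement.

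The paper closes this gap algebraically rather than combinatorially. Writing $d\seteq(\wap\wedge\wa)\rig e$ and assuming $\wb<d$, it first shows $d=\wa\bcdw d$; since $\wap\wedge\wa\nleqslant e$ gives $e\nleqslant d$, the splitting $A=[e)\cup(f\,]$ yields $d\leqslant f$, whence $d\leqslant\wap$ and then $d\leqslant\wap\wedge\wa$. Combining this with the idempotence of $\wap\wedge\wa$ squeezes $(\wap\wedge\wa)\bcdw d$ strictly between $\wb$ and $e$ (inclusive of $e$), so the maximality of $\wb$ forces $(\wap\wedge\wa)\bcdw d=e$ and hence $e\leqslant(\wap\wedge\wa)^2=\wap\wedge\wa$, contradicting the incomparability of $e$ and $\wap$. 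If you wish to retain your case split on $e\leqslant c$ versus $\wap\wedge\wa\leqslant c$, you must first \emph{prove} that these are the only possibilities for an element $c>\wb$ of the form $(\wap\wedge\wa)\rig e$ --- which in effect requires the very description of $\sbA$ that this sequence of lemmas is in the process of establishing.
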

\begin{proof}
As $\wap\wedge\wa\leqslant\wa=\wb\rig\wb$ and $\wb\leqslant e$, we have
%As $\wb\leqslant e$, we have
\[
\wb\leqslant(\wap\wedge\wa)\rig\bot\leqslant(\wap\wedge\wa)\rig e,
\]
by (\ref{subpermutation}) and (\ref{isotone}).
Thus, the first claim subsumes the second.  Suppose, with a view to contradiction, that
\begin{equation}\label{zero}
\wb<(\wap\wedge\wa)\rig e.
\end{equation}

Since $e\leqslant\wa$, it follows from (\ref{isotone}) that
\begin{equation}\label{one}
(\wap\wedge\wa)\rig e\leqslant\wa\bcdw((\wap\wedge\wa)\rig e).
\end{equation}
On the other hand, $(\wap\wedge\wa)\bcdw\wa=\wap\wedge\wa$, by Lemma~\ref{fusion in a}, so
\[
(\wap\wedge\wa)\bcdw\wa\bcdw((\wap\wedge\wa)\rig e)=(\wap\wedge\wa)\bcdw((\wap\wedge\wa)\rig e)\leqslant e,
\]
by (\ref{x y law}), whence $\wa\bcdw((\wap\wedge\wa)\rig e)\leqslant (\wap\wedge\wa)\rig e$, by (\ref{residuation}).  Then, by (\ref{one}),
\begin{equation}\label{three}
\wa\bcdw((\wap\wedge\wa)\rig e)=(\wap\wedge\wa)\rig e = d, \textup{ say.}
\end{equation}

In Cases~III and IV, $\wap\wedge\wa\nleqslant e$, so $e\nleqslant d$, by (\ref{t order}).  Consequently, $d\leqslant f$, by Theorem~\ref{dmm fsi}(\ref{splitting}),
because $\sbA$ is SI.  Therefore, $e\leqslant\neg d=\neg(d\bcdw \wa)$ (by (\ref{three})) $=d\rig\wap$, so $d\leqslant\wap$, i.e.,
%\begin{equation}\label{four}
$(\wap\wedge\wa)\rig e\leqslant\wap$.
%Then, by involution properties,
%\begin{equation}\label{four}
%\wa\leqslant(\wap\wedge\wa)\bcdw f.
%\end{equation}
Also, $\wb\leqslant\wap\wedge\wa$, so by (\ref{isotone}), $(\wap\wedge\wa)\rig e\leqslant\wb\rig e=\wa$.  Therefore,
%
%Recall that the pre-image of $\{f\}$ under the unique homomorphism $h$ from $\sbA$ to $\sbC_4$ is $B'$.
%%(in Corollary~\ref{slaney 2}(\ref{unique hom})) is
%Clearly, $h((\wap\wedge\wa)\bcdw f)=f$, so
%$\wap\leqslant (\wap\wedge\wa)\bcdw f$.  Then $\wa\vee\wap\leqslant (\wap\wedge\wa)\bcdw f$, by (\ref{four}), so $\neg((\wap\wedge\wa)\bcdw f)\leqslant\neg(\wa\vee\wap)$, i.e.,
\begin{equation}\label{five}
(\wap\wedge\wa)\rig e\leqslant \wap\wedge\wa.
\end{equation}

Now, by (\ref{residuation}),
\begin{align*}
e\geqslant(\wap\wedge\wa)\bcdw((\wap\wedge\wa)\rig e)\geqslant((\wap\wedge\wa)\rig e)^2 \textup{ \,(by (\ref{five}))}\\
\geqslant (\wap\wedge\wa)\rig e>\wb \textup{ \,(by (\ref{zero})).}
\end{align*}
This forces
\begin{equation}\label{six}
(\wap\wedge\wa)\bcdw((\wap\wedge\wa)\rig e)=e,
\end{equation}
by definition of $\wb$.  Then, by Lemma~\ref{fusion in a},
\[
\wap\wedge\wa=(\wap\wedge\wa)^2\geqslant(\wap\wedge\wa)\bcdw ((\wap\wedge\wa)\rig e) \textup{ (by (\ref{five}))} =e \textup{ (by (\ref{six})),}
\]
contradicting the diagrams for Cases~III and IV.  \,Thus,
%\begin{equation}\label{seven}
$(\wap\wedge\wa)\rig e=\wb$.
%\end{equation}

Finally, by (\ref{or distributivity}) and the claim just proved,
\[
(e\vee(\wap\wedge\wa))\rig e=(e\rig e)\wedge((\wap\wedge\wa)\rig e)=e\wedge\wb=\wb.\qedhere
\]
\end{proof}

The next lemma applies to Case~IV.  \,Its statement remains true in Case~II, but is redundant there, as
$\wap\wedge\wa=\wb$ and $e\vee(\wap\wedge\wa)=e$ in Case~II (cf.\ Lemma~\ref{3 cases}).

\begin{lem}\label{case 4 only}
In Case~IV, we have\/
%\[
%(f\wedge\wa)\bcdw(e\vee(\wap\wedge\wa))=f\wedge\wa \text{ \,and\/\, }
$(f\wedge\wa)\rig(e\vee(\wap\wedge\wa))=\wap\wedge\wa$\textup{.}
%\]
\end{lem}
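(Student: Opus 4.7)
The plan is to establish the two inequalities separately, with the harder direction obtained by computing $\neg d$ rather than bounding $d$ directly, where $d\seteq(f\wedge\wa)\rig(e\vee(\wap\wedge\wa))$. The easy inequality $\wap\wedge\wa\leqslant d$ follows from isotonicity of $\rig$ in its second argument (\ref{isotone}) combined with Lemma~\ref{residuation in a}, which gives $(f\wedge\wa)\rig(\wap\wedge\wa)=\wap\wedge\wa$; since $\wap\wedge\wa\leqslant e\vee(\wap\wedge\wa)$, the bound follows.

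For the reverse inequality, the idea is to show that $\neg d=\wap\vee\wa$, whence $d=\wap\wedge\wa$ by involution. First, $\neg d=(f\wedge\wa)\bcdw\neg(e\vee(\wap\wedge\wa))$ by (\ref{neg properties}). De Morgan's laws give $\neg(e\vee(\wap\wedge\wa))=f\wedge(\wap\vee\wa)$, and because $\wap\leqslant f$ in Case~IV (visible in the Hasse diagram), distributivity yields $f\wedge(\wap\vee\wa)=\wap\vee(f\wedge\wa)$. Distributing fusion over join (\ref{fusion distributivity}) now produces
\[
\neg d=((f\wedge\wa)\bcdw\wap)\vee(f\wedge\wa)^2.
\]
Both summands are already in hand. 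Lemma~\ref{2 cases} gives $(f\wedge\wa)^2=\wa$ in Case~IV. For the other summand, Definition~\ref{skew reflection definition}(\ref{skew fusion}), applied with $b=f\wedge\wa\in B$ and $c'=\wap=\wa'$, together with Lemma~\ref{residuation in a}, yields $(f\wedge\wa)\bcdw\wap=((f\wedge\wa)\rig\wa)'=\wa'=\wap$. Therefore $\neg d=\wap\vee\wa$, completing the argument.

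The only real obstacle is spotting the rearrangement: moving to $\neg d$ trades a residuation against a join for a fusion against a meet, which then splits, via distributivity and the absorption $\wap\leqslant f$, into exactly the two expressions that Lemma~\ref{2 cases} (for Case~IV) and Lemma~\ref{residuation in a} have already pinned down. After the strategic step, the computation is essentially bookkeeping in the (I)RL signature.
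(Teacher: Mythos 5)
Your proof is correct, but it takes a genuinely different route from the paper's. The paper first sandwiches $d\seteq(f\wedge\wa)\rig(e\vee(\wap\wedge\wa))$ between $\wap\wedge\wa$ and $e\vee(\wap\wedge\wa)$, excludes the upper endpoint via (\ref{t order}) (since $f\wedge\wa\nleqslant e\vee(\wap\wedge\wa)$ in Case~IV), and then argues by contradiction: if $d$ were strictly intermediate, the five elements $\wb,e,\wap\wedge\wa,d,e\vee(\wap\wedge\wa)$ would form a pentagon sublattice (using that $\wb$ is the greatest strict lower bound of $e$ to get $e\wedge d=\wb$), violating distributivity of $\sbA$. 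You instead compute $d$ outright by passing to $\neg d=(f\wedge\wa)\bcdw\neg(e\vee(\wap\wedge\wa))$ via (\ref{neg properties}), rewriting $\neg(e\vee(\wap\wedge\wa))=f\wedge(\wap\vee\wa)=\wap\vee(f\wedge\wa)$ (using $\wap\leqslant f$), and splitting with (\ref{fusion distributivity}) into $(f\wedge\wa)\bcdw\wap=((f\wedge\wa)\rig\wa)'=\wap$ (Definition~\ref{skew reflection definition}(\ref{skew fusion}) plus Lemma~\ref{residuation in a}) and $(f\wedge\wa)^2=\wa$ (Lemma~\ref{2 cases}); each step checks out, and your ``easy inequality'' is in fact not even needed once $\neg d$ is pinned down. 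The trade-off: the paper's argument leans only on order-theoretic facts already in hand and on lattice distributivity, whereas yours makes substantive use of Lemma~\ref{2 cases} and the skew-reflection fusion rule but delivers the value constructively, without contradiction, and makes transparent exactly where Case~IV enters (only through $(f\wedge\wa)^2=\wa$).
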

\begin{proof}
Observe that
%By Lemma~\ref{residuation in a} and (\ref{isotone}),
\begin{align*}
& \wap\wedge\wa\,=\,(f\wedge\wa)\rig(\wap\wedge\wa) \textup{ \ (by Lemma~\ref{residuation in a})}\\
& \quad\quad\quad\leqslant\,(f\wedge\wa)\rig(e\vee(\wap\wedge\wa)) \textup{ \ (by (\ref{isotone}))}\\
& \quad\quad\quad\leqslant\, e\rig(e\vee(\wap\wedge\wa)) \textup{ \ (by (\ref{isotone}), as $e\leqslant f\wedge\wa$)}\\
& \quad\quad\quad=\,e\vee(\wap\wedge\wa) \textup{ \ (by (\ref{t laws}))}.
\end{align*}
In Case~IV, $f\wedge\wa\nleqslant e\vee(\wap\wedge\wa)$, so $e\nleqslant (f\wedge\wa)\rig(e\vee(\wap\wedge\wa))$, by (\ref{t order}).
Thus, $(f\wedge\wa)\rig(e\vee(\wap\wedge\wa))\neq e\vee(\wap\wedge\wa)$.
Suppose, with a view to contradiction, that
$(f\wedge\wa)\rig(e\vee(\wap\wedge\wa))\neq\wap\wedge\wa$.  Then
\[
\wap\wedge\wa\,<\,(f\wedge\wa)\rig(e\vee(\wap\wedge\wa))\,<\,e\vee(\wap\wedge\wa),
\]
so the Hasse diagram below depicts a five-element sub-poset of $\langle A;\leqslant\rangle$.

{\small

\thicklines
\begin{center}
\begin{picture}(100,70)(35,33)
\put(45,43){\circle*{3}}
\put(45,43){\line(1,1){15}}
\put(60,58){\circle*{3}}
\put(45,43){\line(-1,1){22}}
%\put(30,58){\circle*{3}}
\put(60,58){\line(0,1){15}}
%\put(30,58){\line(1,-1){15}}
%\put(30,73){\circle*{3}}
\put(45,88){\line(-1,-1){22}}
\put(45,88){\circle*{3}}
\put(23,65.5){\circle*{3}}
\put(45,88){\line(1,-1){15}}
\put(60,73){\circle*{3}}
%\put(80,63){\line(-1,1){30}}
\put(42,32){$\wb$}
\put(13,63){$e$}
\put(65,54){$\wap\wedge\wa$}
\put(65,72){$(f\wedge\wa)\rig(e\vee(\wap\wedge\wa))$}
\put(20,93){$e\vee(\wap\wedge\wa)$}
\end{picture}
\end{center}}

Using the fact that $\wb$ is the greatest strict lower bound of $e$ in $\sbA$, we obtain
\[
e\wedge((f\wedge\wa)\rig(e\vee(\wap\wedge\wa)))\leqslant\wb
\]
(cf.\ the proof of Lemma~\ref{2 cases}).  On the other hand,
by Lemma~\ref{fusion in a},
\[
(f\wedge\wa)\bcdw\wb=\wb\leqslant e\vee(\wap\wedge\wa),
\]
so by (\ref{residuation}),
$\wb\leqslant (f\wedge\wa)\rig(e\vee(\wap\wedge\wa))$.  Also, $\wb\leqslant e$, so
\[
\wb\leqslant e\wedge ((f\wedge\wa)\rig(e\vee(\wap\wedge\wa))).
\]
Therefore,
%Using the fact that $e$ covers $\wb$ in $\sbA$, we obtain
%\[
$e\wedge((f\wedge\wa)\rig(e\vee(\wap\wedge\wa)))=\wb$,
%\]
%(cf.\ the proof of Lemma~\ref{2 cases}).  Therefore,
whence the elements depicted above form a sublattice of $\langle A;\wedge,\vee\rangle$,
contradicting the distributivity of $\sbA$.
\end{proof}

This completes the tables from Lemmas~\ref{fusion in a} and \ref{residuation in a} in all cases.
%so $\sbB$ and $\sbA$ to be finite, with at most $6$ and $14$ elements, respectively.
%
\subsection*{Conclusions.}
%\begin{conclusions*}
The above arguments put constraints on
%the algebra
$\sbB$ and on the order $\leqslant$ if $\sbA=\textup{S}^\leqslant(\sbB)$
is to generate a cover of $\mathbb{V}(\sbC_4)$ within $\mathsf{M}$.
In particular, $\sbB$ must be finite and simple, with $\left|B\right|\leq 6$
(i.e., $\left|A\right|\leq 14$), and in each of Cases~I--IV, there is at most one way to choose
$\leqslant$ and the operations $\bcdw,\rig$ on
%intended elements of
$\sbB$ if this is to happen, in view of
Lemmas~\ref{closure}--\ref{case 4 only}.
%In particular, $\sbB$ must be finite and simple, with extrema $\bot,\top$.
It remains, however, to check that in each case, $\sbB$ really is a Dunn monoid
%the tables of Lemmas~\ref{fusion in a} and \ref{residuation in a}
%can be completed---without adding new elements---so as to make $\sbB$ a Dunn monoid,
for which
%$\textup{S}^\leqslant(\sbB)$ has no proper subalgebra other than $\sbC_4$, i.e., for which
$\boldsymbol{\mathit{Sg}}^\sbA\{\wb\}=\sbA$.  If so, then since $\sbB$ is finite and simple, $\mathbb{V}(\sbA)$ will indeed be
a cover of $\mathbb{V}(\sbC_4)$ within $\mathsf{M}$, by
%Corollary~\ref{m representation 2} and
Lemma~\ref{simple dunn}, and
the resulting varieties will be the only covers of $\mathbb{V}(\sbC_4)$ within $\mathsf{M}$, apart
from $\textup{R}(\mathbf{2}^+)$ and $\textup{R}(\sbS_3^+)$.

In Case~I, the intended $\sbB$ is clearly the Dunn monoid $\sbS_3^+$, which is generated by $\wb$, so $\boldsymbol{\mathit{Sg}}^\sbA\{\wb\}=\sbA$.
%We record this as:

%\begin{fact}\label{s(s3)}
%In Case~I,\, $\sbA\cong\textup{S}^\leqslant(\sbS_3^+)$\textup{,} with\/ $\leqslant$ as shown in the diagram.
%\end{fact}
%
%\begin{proof}
%By Lemmas~\ref{fusion in a} and \ref{residuation in a}, the set $\{\wb,e,\wa\}$ is closed under $\bcdw$ and $\rig$
%in $\sbA$.  From this it follows that the elements depicted in Case~I form a
%subalgebra of $\sbA$, as $\sbA$ is rigorously compact.  Because $\sbA=\boldsymbol{\mathit{Sg}}^\sbA\{\wb\}$, we conclude
%that $\sbA\cong\textup{S}^\leqslant(\sbS_3^+)$.
%\end{proof}

In Case~II, $\sbB\cong\sbC_4^+$, so $\sbB$ is a Dunn monoid.  Its elements form the chain
\[
\wb<e<f\wedge\wa<\wa.
\]
As the co-atom of $\sbB$ is $f\wedge\wa$, it is clear that $\wb$ generates
the skew reflection $\sbA$ of $\sbB$ shown in the diagram for Case~II.
%\,In short:

%\begin{fact}\label{s(c4)}
%In Case~II, \,$\sbA\cong\textup{S}^\leqslant(\sbC_4^+)$\textup{,} with\/ $\leqslant$ as shown in the diagram.
%\end{fact}

%The skew reflections in Facts~\ref{s(s3)} and \ref{s(c4)} witness Example~\ref{wannenburg example}, but the remaining
%two cases do not.

In Case~III, the intended elements of $\sbB$ are
\[
\wb,\,e,\,\wap\wedge\wa,\,\wa \textup{ \,and\, } f\wedge\wa=e\vee(\wap\wedge\wa).
\]
That the operations
in the lemmas turn this into a
Dunn monoid (actually, an idempotent one) with neutral element $e$ can be verified mechanically, the only real issues being the associativity of fusion and
the law of residuation; we omit the details.

We shall call this Dunn monoid $\sbT_5$.  It is clear from the above description of its elements that its skew
reflection $\sbA$, in the diagram for Case~III, is generated by $\wb$.
%In summary:

%\begin{fact}\label{s(t5)}
%In Case~III, \,$\sbA\cong\textup{S}^\leqslant(\sbT_5)$\textup{,} with\/ $\leqslant$ as shown in the diagram.
%\end{fact}

Finally, in Case~IV, the intended elements of $\sbB$ are
\[
\wb,\,e,\,\wap\wedge\wa,\,e\vee(\wap\wedge\wa),\,f\wedge\wa \textup{ \,and\, } \wa.
\]
We suppress the mechanical verification that this becomes a Dunn monoid, with neutral element $e$, when equipped with
the operations in the lemmas.

We denote this Dunn monoid by $\sbT_6$.  Again, the above description of its elements shows that its skew reflection $\sbA$,
in the diagram for Case~IV, is generated by $\wb$.
%\end{conclusions*}

%Briefly:

%\begin{fact}\label{s(t6)}
%In Case~IV, \,$\sbA\cong\textup{S}^\leqslant(\sbT_6)$\textup{,} with\/ $\leqslant$ as shown in the diagram.
%\end{fact}

%\medskip

We have now proved the following.

\begin{thm}\label{main covers}
The covers of\/ $\mathbb{V}(\sbC_4)$ within\/ $\mathsf{M}$ are just\/
\[
\text{$\mathbb{V}(\textup{R}(\mathbf{2}^+))$\textup{,}
$\mathbb{V}(\textup{R}(\sbS_3^+))$\textup{,} $\mathbb{V}(\textup{S}^\leqslant(\sbS_3^+))$\textup{,} $\mathbb{V}(\textup{S}^\leqslant(\sbC_4^+))$\textup{,}
$\mathbb{V}(\textup{S}^\leqslant(\sbT_5))$ and\/
$\mathbb{V}(\textup{S}^\leqslant(\sbT_6))$\textup{,}}
\]
for the last four of which\/ $\leqslant$ is as in the respective diagrams of Cases~I--IV.
\end{thm}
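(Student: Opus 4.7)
My plan is to synthesize the case analysis already carried out across Lemmas~\ref{not so simple dunn} through \ref{case 4 only}. By Lemma~\ref{not so simple dunn} and Corollary~\ref{m representation 3}, every cover $\mathsf{K}$ of $\mathbb{V}(\sbC_4)$ within $\mathsf{M}$ has the form $\mathbb{V}(\sbA)$, where $\sbA=\textup{S}^\leqslant(\sbB)$ is a skew reflection of an SI Dunn monoid $\sbB$, $e<f$ in $\sbA$, and $\sbA$ is generated by the greatest strict lower bound $\wb\in B$ of $e$. I would first dispose of the reflection case: by Theorem~\ref{reflection criterion}, $\sbA\cong\textup{R}(\sbB)$ iff $e$ and $\wap$ are comparable, and then $\sbB\in\{\mathbf{2}^+,\sbS_3^+\}$, yielding the first two listed varieties $\mathbb{V}(\textup{R}(\mathbf{2}^+))$ and $\mathbb{V}(\textup{R}(\sbS_3^+))$.

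In the non-reflection case, $e$ and $\wap$ are incomparable and, by Corollary~\ref{not 2}, $\boldsymbol{\mathit{Sg}}^\sbB\{\wb\}\cong\sbS_3^+$. The 12-point diagram preceding Lemma~\ref{closure} lists the candidate elements of $\sbA$, and the three bulleted constraints recorded just before the four Hasse diagrams (disjointness of $[\wap)$ and $(\wa]$; the $\sbS_3^+$-chain $\wb<e<\wa$; and incomparability of $e,\wap$) are restrictive enough to leave precisely four ways to collapse that diagram, namely Cases~I--IV. Lemmas~\ref{closure}--\ref{case 4 only} then force the operations $\bcdw$ and $\rig$ on the ``thick'' subset $B$ in each case, singling out four candidate Dunn monoids $\sbS_3^+$, $\sbC_4^+$, $\sbT_5$, $\sbT_6$.

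To complete the argument, I need to verify two things in each of the four cases: first, that the structure $\sbB$ determined by the lemmas really is a (square-increasing, distributive) RL; second, that $\boldsymbol{\mathit{Sg}}^\sbA\{\wb\}=\sbA$. The generation claim is immediate from the tables of Lemmas~\ref{fusion in a} and \ref{residuation in a}, since every element of the relevant Hasse diagram is visibly a term in $\wb$. For Cases~I and~II the structures are already familiar ($\sbS_3^+$ and $\sbC_4^+$), so only Cases~III and IV require genuine checking. Once $\sbB$ is confirmed to be a Dunn monoid, its simplicity follows because $\wb$ is its only strict lower bound of $e$ (Lemma~\ref{fsi si simple}(\ref{simple})), and then Lemma~\ref{simple dunn} delivers the covering property of $\mathbb{V}(\sbA)$ within $\mathsf{U}$; since $\sbA\in\mathsf{M}$, this is also a covering within $\mathsf{M}$.

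The main obstacle is the mechanical verification that the tables forced by Lemmas~\ref{fusion in a}, \ref{residuation in a}, \ref{another 2 cases}, and \ref{case 4 only} actually define associative commutative monoid operations satisfying residuation~(\ref{residuation}) on the five-element set $T_5$ and the six-element set $T_6$. There is no conceptual shortcut here---one must run through the associativity triples and check residuation column by column---but the computations are finite and the output matches the diagrams of Cases~III and IV, so the verification terminates and establishes the theorem.
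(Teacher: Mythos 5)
Your proposal follows the paper's own proof essentially verbatim: the paper likewise reduces to Lemma~\ref{not so simple dunn}, splits off the reflection case via Theorem~\ref{reflection criterion} and Corollary~\ref{not 2}, observes that the three order constraints leave exactly the four Hasse diagrams of Cases~I--IV, lets Lemmas~\ref{closure}--\ref{case 4 only} force the operation tables, and then (in the ``Conclusions'' subsection) checks case by case that the forced structure is a simple Dunn monoid generated appropriately, so that Lemma~\ref{simple dunn} yields the covering property. The paper also suppresses the same mechanical verifications for $\sbT_5$ and $\sbT_6$ that you flag as the residual work, so there is no substantive difference in route.
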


The claim that these varieties cover $\mathbb{V}(\sbC_4)$ will be strengthened in Theorem~\ref{primitivity}.
%can be strengthened.
%For the remainder of this section,
To facilitate the proof, let $\sbG_1,\dots,\sbG_6$ abbreviate the six algebras
mentioned in Theorem~\ref{main covers}, so that $\mathbb{V}(\sbG_i)$, $i=1,\dots,6$, are the covers
of $\mathbb{V}(\sbC_4)$ within $\mathsf{M}$.  Their varietal join
$\mathbb{V}(\sbG_1,\dots,\sbG_6)$ is locally finite, like any finitely generated variety.
For each $i\in\{1,\dots,6\}$, recall that $\sbG_i$
and $\sbC_4$ are the only subalgebras of $\sbG_i$ and are also, up to isomorphism, the only nontrivial
homomorphic images of $\sbG_i$ (because $\left|(e]\right|=3$ in $\sbG_i$).
By J\'{o}nsson's Theorem, therefore,
\begin{equation}\label{primitivity eqn1}
\textup{if $\emptyset\neq\mathsf{X}\subseteq\{\sbG_1,\dots,\sbG_6,\sbC_4\}$,
then $\mathbb{V}(\mathsf{X})_\textup{SI}=\mathbb{I}(\mathsf{X}\cup\{\sbC_4\})$.}
\end{equation}

\begin{lem}\label{subdirect subalgebras}
Let\/ $\sbZ_1,\dots,\sbZ_n\in\{\sbG_1,\dots,\sbG_6,\sbC_4\}$\textup{,} where\/ $0<n\in\omega$\textup{.}
Then
%each\/ $\sbZ_i$
$\sbZ_1,\dots,\sbZ_n$ are retracts of each
%is a retract of every
algebra that embeds subdirectly into\/
$\prod_{i=1}^n\sbZ_i$\textup{.}
%$1\times\dots\times\sbZ_n$\textup{.}
\end{lem}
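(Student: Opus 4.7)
Fix $j \in \{1, \dots, n\}$; write $\sbZ := \sbZ_j$ and $\pi := \pi_j \colon \sbA \twoheadrightarrow \sbZ$. We shall produce a homomorphism $g \colon \sbZ \to \sbA$ with $\pi \circ g = \textup{id}_Z$, which presents $\sbZ$ as a retract of $\sbA$. We may assume $\sbA$ is nontrivial; since $\sbA$ embeds into $\prod \sbZ_i \in \mathsf{M}$, we have $\sbA \in \mathsf{M}$. The case $\sbZ \cong \sbC_4$ is immediate from Lemma~\ref{c4 only simple in m}, which yields $\sbC_4$ as a retract of every nontrivial member of $\mathsf{M}$ (the retraction being permitted not to coincide with $\pi$). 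Henceforth assume $\sbZ \cong \sbG_k$ for some $k \in \{1, \ldots, 6\}$.

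Let $\wb \in Z$ be the generator singled out in Section~7 (the greatest strict lower bound of $e$ in the underlying Dunn monoid), so that $\sbG_k = \boldsymbol{\mathit{Sg}}^{\sbG_k}\{\wb\}$. Because $\sbG_k$ is $1$-generated by $\wb$, the task reduces to finding $a \in A$ with $\pi(a) = \wb$ such that $\boldsymbol{\mathit{Sg}}^\sbA\{a\} \cong \sbG_k$ via $\pi$, whose inverse then furnishes the required section $g$. By surjectivity of $\pi$, pick any $\tilde{a} \in A$ with $\pi(\tilde{a}) = \wb$. Replace it by $a := \tilde{a} \wedge e^\sbA$, where $e^\sbA \in A$ is a diagonal constant (lying in the $0$-generated copy of $\sbC_4$ inside $\sbA$); since $\wb \leqslant e$ in $\sbG_k$, we still have $\pi(a) = \wb$.

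Consider the subalgebra $\sbD := \boldsymbol{\mathit{Sg}}^\sbA\{a\}$: it is $1$-generated, hence finite, since it lies in the locally finite variety $\mathbb{V}(\sbG_1, \ldots, \sbG_6, \sbC_4) \subseteq \mathsf{M}$, and $\pi|_\sbD$ remains surjective onto $\sbG_k$. The intended argument is to use the classification~\eqref{primitivity eqn1} of finite SI members of $\mathbb{V}(\sbG_1, \ldots, \sbG_6, \sbC_4)$, together with the fact that the only nontrivial homomorphic images of any $\sbG_m$ are $\sbG_m$ itself and $\sbC_4$ (since $|(e\,]| = 3$ in each $\sbG_m$). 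An SI factor of $\sbD$ that surjects onto $\sbG_k$ must, by this classification, itself be isomorphic to $\sbG_k$; the restriction $a \leqslant e^\sbA$ then forces, via the explicit structural constraints on $\sbG_k$ as a skew reflection, the generator $a$ to satisfy exactly the equational type of $\wb$, whence $\sbD \cong \sbG_k$.

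\emph{Main obstacle.} The delicate step is concluding $\sbD \cong \sbG_k$ from the available hypotheses, i.e., ruling out that $\sbD$ is a ``thicker'' $1$-generated algebra that merely surjects onto $\sbG_k$ (for instance, a subdirect product of $\sbG_k$ with further SI factors in $\{\sbG_1, \ldots, \sbG_6, \sbC_4\}$). To overcome this, one combines the rigid structural description of $\sbG_k$ as a skew reflection provided by Section~7 (Theorems~\ref{reflection criterion} and~\ref{main covers}, together with the fusion and residuation tables of Lemmas~\ref{fusion in a} and~\ref{residuation in a}) with the uniqueness of homomorphism onto $\sbC_4$ from rigorously compact algebras (Lemma~\ref{at most one hom}(\ref{at most one hom 1})), applied to the finitely generated (and therefore, modulo passing to an appropriate SI factor, rigorously compact by Theorem~\ref{dmm fsi}(\ref{rigorously compact generation})) algebra $\sbD$. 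Any additional identifications in $\sbD$ beyond those already present in $\sbG_k$ would be shown to contradict the cover analysis of Section~7; after possible further refinement of $a$ by diagonal lattice operations, this forces $\sbD$ to coincide isomorphically with $\sbG_k$ under $\pi$.
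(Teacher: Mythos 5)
There is a genuine gap at precisely the step you flag as the ``main obstacle'', and it is not a technicality. Observing that $\sbG_k$ is a retract of $\sbA$ via $\pi_j$ iff some $a\in\pi_j^{-1}(\wb)$ generates a subalgebra mapped isomorphically onto $\sbG_k$ by $\pi_j$ is a \emph{reformulation} of the claim, not a reduction of it, and the recipe you give for producing such an $a$ fails. Concretely, take $n=2$, $\sbZ_1=\sbG_k$, $\sbZ_2=\sbC_4$ and $\sbA=\sbG_k\times\sbC_4$ (a subdirect subalgebra of itself). The element $\tilde a=\langle\wb,\0\rangle$ satisfies $\pi_1(\tilde a)=\wb$ and $\tilde a=\tilde a\wedge e^{\sbA}$, yet $\boldsymbol{\mathit{Sg}}^{\sbA}\{\tilde a\}$ is \emph{all} of $\sbG_k\times\sbC_4$: by Fleischer's Lemma a proper subdirect subalgebra of $\sbG_k\times\sbC_4$ would have to be the graph of the unique homomorphism $\sbG_k\mrig\sbC_4$, which sends $\wb$ to $e$, not to $\0$. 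So ``$a\leqslant e^{\sbA}$ forces $a$ to satisfy exactly the equational type of $\wb$'' is false. Nor can the defect be cured by further ``diagonal lattice operations'': in $\sbA=\sbG_1\times\sbG_2$ the element whose coordinates are the two atoms above $\0$ generates a subdirect subalgebra having both $\sbG_1$ and $\sbG_2$ as homomorphic images, hence not isomorphic to $\sbG_1$. The real difficulty, which your sketch never engages, is that in an arbitrary subdirect subalgebra $\sbA$ you cannot prescribe the remaining coordinates of a preimage of $\wb$; proving that $\sbA$ nonetheless \emph{contains} a good preimage is the entire content of the lemma, and no chain of inferences in your proposal delivers it.

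For comparison, the paper's proof supplies exactly this missing argument by induction on $n$: it applies Fleischer's Lemma to the subdirect embedding of $\sbA$ into $\sbB\times\sbZ_n$ (where $\sbB$ is the image of $\sbA$ in the first $n-1$ coordinates), writes $A=\{\langle x,y\rangle : g(x)=h(y)\}$ for surjections $g\colon\sbB\mrig\sbC$ and $h\colon\sbZ_n\mrig\sbC$ with $\sbC$ trivial, $\sbZ_n$ or $\sbC_4$, and in each case exhibits an explicit section (for instance $x\mapsto\langle r(x),gr(x)\rangle$), using crucially that the only endomorphism of $\sbC_4$ is the identity to verify that these sections land inside $A$. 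Your appeals to the uniqueness of the homomorphism onto $\sbC_4$ and to the structure theory of Section~7 point at relevant ingredients, but the examples above show that any correct argument must use more than the normalization $a\leqslant e^{\sbA}$ together with the classification of SI members.
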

\begin{proof}
The proof is by induction on $n$.  The case $n=1$ is trivial, so let $n>1$.  For $\sbZ\seteq\prod_{i=1}^n\sbZ_i$,
suppose that
%$\sbA$ embeds subdirectly into $\sbZ\seteq\prod_{i=1}^n\sbZ_i$, i.e.,
$\sbA\in\mathbb{S}(\sbZ)$ and that
$\pi_i[A]=Z_i$ for each canonical projection $\pi_i\colon \sbZ\mrig \sbZ_i$.

Let $\sbB=\pi[\sbA]$, where
$\pi\colon\sbZ\mrig\prod_{i=1}^{n-1}\sbZ_i$ is the homomorphism
\[
\langle z_1,\dots,z_{n-1},z_n\rangle\mapsto\langle z_1,\dots,z_{n-1}\rangle.
\]
Then $\sbB$ embeds subdirectly into $\prod_{i=1}^{n-1}\sbZ_i$, so by the induction hypothesis,
\begin{equation}\label{primitivity eqn2}
\textup{$\sbZ_1,\dots,\sbZ_{n-1}$ are retracts of $\sbB$.}
\end{equation}
Also, $\sbA$ embeds subdirectly into $\sbB\times\sbZ_n$ and shall be identified here with the
image of the obvious embedding.  \emph{Fleischer's Lemma} \cite[Thm.~6.2]{Ber12}
applies to any algebra that embeds subdirectly into the direct product of a pair of algebras from a
congruence permutable variety---in particular, from any variety of IRLs.  According to this lemma, there
exist an algebra $\sbC$ and surjective homomorphisms
%\[
%\textup{
$g\colon\sbB\mrig\sbC$ and $h\colon\sbZ_n\mrig\sbC$
%}
%\]
such that
\begin{equation}\label{primitivity eqn3}
A=\{\langle x,y\rangle\in B\times Z_n : g(x)=h(y)\}.
\end{equation}
As $\sbC\in\mathbb{H}(\sbZ_n)$, we may assume that $\sbC$ is $\sbZ_n$ or $\sbC_4$ or a trivial algebra.

If $\sbC$ is trivial, then $\sbA=\sbB\times\sbZ_n$, by (\ref{primitivity eqn3}), so the retracts of $\sbA$
include $\sbB$ and $\sbZ_n$ (as noted before Theorem~\ref{u and n quasivarieties}) and hence all
of $\sbZ_1,\dots,\sbZ_n$, by (\ref{primitivity eqn2}).

If $\sbC=\sbZ_n\ncong\sbC_4$, then $h$ is an isomorphism and
\[
\sbZ_n\in\mathbb{H}(\sbB)\subseteq\mathbb{HP}_\mathbb{S}(\sbZ_1,\dots,\sbZ_{n-1})\subseteq\mathbb{V}(\sbZ_1,\dots,\sbZ_{n-1}),
\]
but $\sbZ_n$ is SI, so $\sbZ_n\in\mathbb{I}(\sbZ_1,\dots,\sbZ_{n-1})$, by (\ref{primitivity eqn1}),
% (by the remarks preceding the present lemma),
whence $\sbZ_1,\dots,\sbZ_n$ are retracts of $\sbB$, by (\ref{primitivity eqn2}).  In this case, therefore, it
suffices to show that $\sbB$ is a retract of $\sbA$.  As $\textup{id}_B\colon\sbB\mrig\sbB$ and $h^{-1}\circ g\colon\sbB\mrig\sbZ_n$
are homomorphisms, so is the function $k\colon\sbB\mrig\sbB\times\sbZ_n$ defined by $x\mapsto\langle x,h^{-1}g(x)\rangle$,
and $k[B]\subseteq A$, by (\ref{primitivity eqn3}).  Obviously, $\pi\circ k=\textup{id}_B$, so $\pi|_{A}$ is the desired
retraction.

We may therefore assume, for the remainder of the proof,
%henceforth,
that $\sbC=\sbC_4$.

First, let $i\in\{1,\dots,n-1\}$.  By (\ref{primitivity eqn2}), there are homomorphisms $r\colon\sbZ_i\mrig\sbB$ and
$s\colon\sbB\mrig\sbZ_i$ (and hence $s\circ\pi|_A\colon\sbA\mrig\sbZ_i$)
with $s\circ r=\textup{id}_{Z_i}$.  Because $g\circ r\colon\sbZ_i\mrig\sbC_4\in\mathbb{S}(\sbZ_n)$
is a homomorphism, so is the map $p\colon\sbZ_i\mrig\sbB\times\sbZ_n$ defined by $x\mapsto\langle r(x),gr(x) \rangle$.
Now $h|_{C_4}$ is an endomorphism of $\sbC_4$, which can only be $\textup{id}_{C_4}$, so $gr(x)=hgr(x)$ for all $x\in\sbZ_i$,
whence $p[Z_i]\subseteq A$, by (\ref{primitivity eqn3}).  Clearly, $s\circ\pi|_A\circ p=\textup{id}_{Z_i}$, so $\sbZ_i$
is a retract of $\sbA$.

%we may identify $\sbZ_i$ with a subalgebra of $\sbB$.
%Then $g|_{Z_i}$ becomes a homomorphism from $\sbZ_i$ to $\sbC_4$, so the function $p\colon\sbZ_i\mrig\sbB\times\sbZ_n$
%defined by $x\mapsto\langle x,g(x)\rangle$ is also a homomorphism.  Now $h|_{C_4}$ is an endomorphism of $\sbC_4$, which
%can only be $\textup{id}_{C_4}$, so $g(x)=hg(x)$ for all $x\in Z_i$.  This, with (\ref{primitivity eqn3}), shows that
%$p[Z_i]\subseteq A$.  Clearly, $\pi\circ p=\textup{id}_{Z_i}$, so $\sbZ_i$ is a retract of $\sbA$.

It remains to show that $\sbZ_n$ is a retract of $\sbA$.
%The argument is similar.
As $h\colon\sbZ_n\mrig\sbC_4\in\mathbb{S}(\sbB)$ is a
homomorphism, so is the function $t\colon\sbZ_n\mrig\sbB\times\sbZ_n$ given by $x\mapsto\langle h(x),x\rangle$.
Since the endomorphism $g|_{C_4}$ of $\sbC_4$ is the identity map, we have $gh(x)=h(x)$ for all $x\in Z_n$.  Therefore,
$t[Z_n]\subseteq A$, by (\ref{primitivity eqn3}), while $\pi_n|_A\circ t=\textup{id}_{Z_n}$.
% for the canonical projection $\pi_n\colon\sbZ\mrig\sbZ_n$, so $\sbZ_n$ is indeed a retract of $\sbA$.
\end{proof}

%Recall that a [finitely generated] member of a variety $\mathsf{K}$ is projective in $\mathsf{K}$
%iff it is a retract of each of its [finitely generated] homomorphic pre-images in $\mathsf{K}$.

\begin{thm}\label{proj}
In the variety\/ $\mathbb{V}(\sbG_1,\dots,\sbG_6)$\textup{,}
every finite subdirectly irreducible algebra\/ $\sbE$ is projective.
\end{thm}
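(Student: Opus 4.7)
\emph{Proof proposal.} The plan is to show that $\sbE$ is a retract of a suitable finitely generated free algebra of $\mathsf{J}\seteq\mathbb{V}(\sbG_1,\dots,\sbG_6)$; projectivity of $\sbE$ in $\mathsf{J}$ then follows from the standard fact that an algebra in a variety is projective iff it is a retract of a free algebra of that variety.

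By J\'{o}nsson's Theorem applied to the finite set $\{\sbG_1,\dots,\sbG_6\}$, together with (\ref{primitivity eqn1}), every finite SI member of $\mathsf{J}$ is isomorphic to one of $\sbC_4,\sbG_1,\dots,\sbG_6$, so we may assume that $\sbE$ lies in this list. Set $n=\left|E\right|$. Since $\mathsf{J}$ is finitely generated, it is locally finite, and the free algebra $\sbF\seteq\sbF_\mathsf{J}(n)$ is therefore finite and carries some surjective homomorphism $q\colon\sbF\mrig\sbE$. Because $\sbE\cong\sbF/\ker q$ is SI, the congruence $\ker q$ is completely meet-irreducible in $\boldsymbol{\mathit{Con}}\,\sbF$, and hence appears in the standard subdirect decomposition of $\sbF$ by its (completely) meet-irreducible congruences. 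This decomposition is finite, as $\sbF$ is, and its factors all lie in $\{\sbC_4,\sbG_1,\dots,\sbG_6\}$.

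Lemma~\ref{subdirect subalgebras} therefore applies to this subdirect embedding and yields, among other conclusions, that $\sbE$ itself is a retract of $\sbF$. Hence $\sbE$ is projective in $\mathsf{J}$, as required.

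The only subtle point is ensuring that $\sbE$ (rather than just some other algebra from the list) occurs among the subdirect factors of $\sbF$; this is exactly what the subdirect irreducibility of $\sbE$ buys us, via the complete meet-irreducibility of $\ker q$. Everything else is an assembly of standard facts about local finiteness, subdirect decompositions, and the equivalence between projectivity and being a retract of a free algebra.
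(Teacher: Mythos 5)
Your proof is correct, and it runs on the same engine as the paper's: a subdirect decomposition of a finite pre-image of $\sbE$ into copies of $\sbC_4,\sbG_1,\dots,\sbG_6$ (via (\ref{primitivity eqn1})), followed by Lemma~\ref{subdirect subalgebras}. The packaging differs in two respects. First, you use the characterization of projectivity as ``retract of a free algebra'' and so need only analyse the single pre-image $\sbF_\mathsf{J}(n)$, whereas the paper uses the characterization ``retract of every finitely generated homomorphic pre-image'' and must treat an arbitrary finite $\sbA$ mapping onto $\sbE$; the two characterizations are equivalent in a variety, so either works. Second, you guarantee that $\sbE$ itself occurs among the subdirect factors by decomposing $\sbF$ along all of its completely meet-irreducible congruences, one of which is $\ker q$; the paper instead takes an arbitrary subdirect decomposition of $\sbA$ with factors $\sbZ_1,\dots,\sbZ_n$ and argues after the fact --- via (\ref{primitivity eqn1}) applied to $\mathbb{V}(\sbZ_1,\dots,\sbZ_n)$, together with the fact that $\sbC_4$ is a retract of every nontrivial member of $\mathsf{M}$ --- that $\sbE$ is isomorphic to some $\sbZ_i$ or to $\sbC_4$, hence a retract of $\sbA$ in either case. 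Your use of the complete meet-irreducibility of $\ker q$ neatly sidesteps that case distinction. One cosmetic point: Lemma~\ref{subdirect subalgebras} is stated for factors literally drawn from $\{\sbG_1,\dots,\sbG_6,\sbC_4\}$, while your factors $\sbF/\theta$ are merely isomorphic to such algebras, so one should compose the subdirect embedding with the evident isomorphisms before applying the lemma; this is harmless.
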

\begin{proof}
Recall that a [finitely generated] member of a variety $\mathsf{K}$ is projective in $\mathsf{K}$
iff it is a retract of each of its [finitely generated] homomorphic pre-images in $\mathsf{K}$.
Let $\sbA\in\mathsf{J}\seteq\mathbb{V}(\sbG_1,\dots,\sbG_6)$ be a finitely generated homomorphic pre-image
of $\sbE$.  Then $\sbA$ is finite (as $\mathsf{J}$ is locally finite) and nontrivial.  Also, $\mathsf{J}_\textup{SI}=\mathbb{I}(\sbG_1,\dots,\sbG_6,\sbC_4)$, by (\ref{primitivity eqn1}),
and there are only finitely many maps
from $\sbA$ to members of $\{\sbG_1,\dots,\sbG_6,\sbC_4\}$.  Therefore, by the Subdirect Decomposition Theorem,
there exist an integer $n>0$ and (not necessarily distinct) algebras $\sbZ_1,\dots,\sbZ_n\in\{\sbG_1,\dots,\sbG_6,\sbC_4\}$
such that $\sbA$ embeds subdirectly into $\prod_{i=1}^n\sbZ_i$.  By Lemma~\ref{subdirect subalgebras}, $\sbZ_1,\dots,\sbZ_n$
are retracts of $\sbA$.  Now $\sbE$ is an SI member of
$\mathbb{HP}_\mathbb{S}(\sbZ_1,\dots,\sbZ_n)\subseteq\mathbb{V}(\sbZ_1,\dots,\sbZ_n)$, so
$\sbE\in\mathbb{I}(\sbZ_1,\dots,\sbZ_n,\sbC_4)$, by (\ref{primitivity eqn1}).  As $\sbC_4$ is a retract
of each nontrivial member of $\mathsf{M}$, this show that $\sbE$ is a retract of $\sbA$, and hence that
$\sbE$ is projective in $\mathsf{J}$.
\end{proof}

\begin{thm}\label{primitivity}
Every subquasivariety of\/ $\mathbb{V}(\sbG_1,\dots,\sbG_6)$ is a variety.
\end{thm}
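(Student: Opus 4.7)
The plan is to show that any subquasivariety $\mathsf{K}$ of $\mathsf{J} \seteq \mathbb{V}(\sbG_1,\dots,\sbG_6)$ is closed under homomorphic images, using Theorem~\ref{proj} together with Birkhoff's Subdirect Decomposition Theorem. The argument will rest on the observation that every finite SI algebra in $\mathsf{J}$ is a retract of any of its homomorphic pre-images in $\mathsf{J}$, not merely of its finitely generated ones.

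Let $\sbA \in \mathsf{K}$, and let $\sbB$ be a homomorphic image of $\sbA$. First, I would apply the Subdirect Decomposition Theorem to write $\sbB$ as a subdirect product of SI homomorphic images of itself; call these $\sbE_i$ ($i \in I$). Each $\sbE_i$ is a homomorphic image of $\sbA$ and lies in $\mathsf{J}_\textup{SI}$, which by (\ref{primitivity eqn1}) equals $\mathbb{I}(\sbG_1,\dots,\sbG_6,\sbC_4)$. In particular, each $\sbE_i$ is finite. Because $\mathsf{K}$ is closed under $\mathbb{I}$, $\mathbb{P}$ and $\mathbb{S}$ (being a quasivariety), it will suffice to prove that each $\sbE_i \in \mathsf{K}$, for then $\sbB$ embeds into the product $\prod_{i \in I}\sbE_i \in \mathsf{K}$, whence $\sbB \in \mathsf{K}$.

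So fix $i$ and let $h\colon \sbA \mrig \sbE_i$ be a surjective homomorphism. The task is to show that $\sbE_i \in \mathbb{IS}(\sbA)$, as then $\sbE_i \in \mathsf{K}$. For each of the finitely many elements $x \in E_i$, pick a preimage $a_x \in h^{-1}[\{x\}]$ and let $\sbA_0$ be the subalgebra of $\sbA$ generated by $\{a_x : x \in E_i\}$. Then $\sbA_0$ is a finitely generated member of $\mathsf{J}$, and $h$ restricts to a surjection $\sbA_0 \mrig \sbE_i$ (since $\{x : x \in E_i\}$ is in its image and $\sbE_i$ is a subalgebra of itself). By Theorem~\ref{proj}, $\sbE_i$ is projective in $\mathsf{J}$, so there is a homomorphism $g\colon \sbE_i \mrig \sbA_0$ such that $h\circ g = \textup{id}_{E_i}$. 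Composing $g$ with the inclusion $\sbA_0 \hookrightarrow \sbA$ yields an embedding of $\sbE_i$ into $\sbA$, so $\sbE_i \in \mathbb{IS}(\sbA) \subseteq \mathsf{K}$.

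This completes the proof. The only potential obstacle is the projectivity step: Theorem~\ref{proj} concerns retracts of \emph{finitely generated} pre-images, whereas $\sbA$ need not be finitely generated. This gap is bridged routinely by passing to the finitely generated subalgebra $\sbA_0$ of $\sbA$ generated by one preimage of each element of the finite set $E_i$, which is exactly what finiteness of $\sbE_i$ buys us.
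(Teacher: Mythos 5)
Your proof is correct, but it follows a different route from the paper's. The paper disposes of the theorem in two lines by citing Gorbunov's criterion: a locally finite variety has no subquasivarieties other than its subvarieties iff every finite SI member embeds into each of its homomorphic pre-images; Theorem~\ref{proj} supplies the embeddings, and local finiteness of $\mathbb{V}(\sbG_1,\dots,\sbG_6)$ does the rest. What you have done, in effect, is reprove the relevant direction of Gorbunov's criterion in this special case: you show directly that a subquasivariety $\mathsf{K}$ is closed under $\mathbb{H}$ by subdirectly decomposing a homomorphic image $\sbB$ of $\sbA\in\mathsf{K}$ into SI factors $\sbE_i$, observing via (\ref{primitivity eqn1}) that these are finite, and using projectivity (Theorem~\ref{proj}) to embed each $\sbE_i$ back into $\sbA$, whence $\sbB\in\mathbb{IP}_\mathbb{S}\{\sbE_i\}\subseteq\mathsf{K}$. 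All the steps check out, including your bridging of the finitely-generated issue by passing to the subalgebra $\sbA_0$ generated by one preimage of each element of the finite algebra $\sbE_i$ (though note that the definition of projectivity recalled in the proof of Theorem~\ref{proj} already asserts retractness with respect to \emph{all} homomorphic pre-images, the bracketed finitely generated version being the equivalent test for finitely generated algebras, so this care is not strictly needed). Your argument buys self-containedness at the cost of a little length; the paper's buys brevity at the cost of an external citation.
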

\begin{proof}
Gorbunov \cite{Gor76} proved that a locally finite variety $\mathsf{K}$ has no subquasi\-variety other than its subvarieties
iff every finite SI
member of $\mathsf{K}$ embeds into each of its homomorphic pre-images in $\mathsf{K}$
(cf.\ \cite[Sec.~9]{OR07}).  This, with Theorem~\ref{proj}, delivers the result, because
$\mathbb{V}(\sbG_1,\dots,\sbG_6)$ is locally finite.
\end{proof}

Combining Theorem~\ref{main covers} and Corollary~\ref{covers in u - m}, we obtain the following.

\begin{cor}\label{10 covers}
There are just ten covers of\/ $\mathbb{V}(\sbC_4)$ within\/ $\mathsf{U}$\textup{,} viz.\ the six listed in Theorem~\textup{\ref{main covers}} and\/
$\mathbb{V}(\sbC_5),\dots,\mathbb{V}(\sbC_8)$\textup{.}
\end{cor}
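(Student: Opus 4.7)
The plan is to assemble the statement by combining Theorem~\ref{main covers} with Corollary~\ref{covers in u - m}. Recall the observation (made just before Theorem~\ref{covers of all atoms}) that $\mathbb{V}(\sbC_4)$ is the only atom in the subvariety lattice of $\mathsf{U}$, so a cover of $\mathbb{V}(\sbC_4)$ within $\mathsf{U}$ need not be separately shown to be join-irreducible. The only substantive point is the case split according to whether a cover lies in $\mathsf{M}$ or not.

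Suppose $\mathsf{K}$ is a cover of $\mathbb{V}(\sbC_4)$ within $\mathsf{U}$. If $\mathsf{K}\subseteq\mathsf{M}$, then $\mathsf{K}$ is in fact a cover of $\mathbb{V}(\sbC_4)$ within $\mathsf{M}$: any variety $\mathsf{L}$ with $\mathbb{V}(\sbC_4)\subsetneq\mathsf{L}\subsetneq\mathsf{K}$ automatically lies in $\mathsf{K}\subseteq\mathsf{M}\subseteq\mathsf{U}$, contradicting the cover property of $\mathsf{K}$ within $\mathsf{U}$. Hence Theorem~\ref{main covers} identifies $\mathsf{K}$ as one of the six varieties listed there. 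If instead $\mathsf{K}\not\subseteq\mathsf{M}$, then Corollary~\ref{covers in u - m} pins $\mathsf{K}$ down as one of $\mathbb{V}(\sbC_5),\mathbb{V}(\sbC_6),\mathbb{V}(\sbC_7),\mathbb{V}(\sbC_8)$.

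Conversely, each of the ten listed varieties really is a cover of $\mathbb{V}(\sbC_4)$ within $\mathsf{U}$. For $\mathbb{V}(\sbC_5),\dots,\mathbb{V}(\sbC_8)$ this is given by Corollary~\ref{covers in u - m}. For each of the six varieties $\mathbb{V}(\sbG)$ supplied by Theorem~\ref{main covers}, we have $\mathbb{V}(\sbG)\subseteq\mathsf{M}\subseteq\mathsf{U}$, and any would-be interloper $\mathsf{L}$ with $\mathbb{V}(\sbC_4)\subsetneq\mathsf{L}\subsetneq\mathbb{V}(\sbG)$ satisfies $\mathsf{L}\subseteq\mathsf{M}$, contradicting the cover property of $\mathbb{V}(\sbG)$ within $\mathsf{M}$. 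No duplication between the two groups is possible, since the first six lie in $\mathsf{M}$ while the latter four do not, so the total is exactly ten.

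There is no hard step here: the real work was carried out in establishing Theorem~\ref{main covers} and Corollary~\ref{covers in u - m}. The only minor point requiring care is the transfer of the cover property between $\mathsf{M}$ and $\mathsf{U}$, which is immediate once one notes that any subvariety of a variety contained in $\mathsf{M}$ is itself contained in $\mathsf{M}$.
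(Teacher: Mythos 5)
Your proof is correct and follows the paper's route exactly: the paper's entire proof of this corollary is the one-line observation that it follows by combining Theorem~\ref{main covers} with Corollary~\ref{covers in u - m}. The details you supply (that the covering relation transfers between the subvariety lattices of $\mathsf{M}$ and $\mathsf{U}$ because the relevant intervals coincide, and that the two lists cannot overlap) are the routine verifications the paper leaves implicit, and you have them right.
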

%\begin{proof}
%This follows from Theorem~\ref{main covers} and Corollary~\ref{covers in u - m}.
%\end{proof}
%
%A variety $\mathsf{K}$ is said to be \emph{finitely generated} if $\mathsf{K}=\mathbb{V}(\sbA)$ for some finite algebra $\sbA$ (or equivalently,
%$\mathsf{K}=\mathbb{V}(\mathsf{L})$ for some finite set $\mathsf{L}$ of finite algebras).  Finitely generated varieties are locally finite
%\cite[Thm.~II.10.16]{BS81}.

In contrast with Theorem~\ref{primitivity},
%{covers of vc4 in m are sc},
for each $n\in\{5,6,7,8\}$, the quasivariety
$\mathbb{Q}(\sbC_n)$ omits $\sbC_4$, and is therefore strictly smaller than $\mathbb{V}(\sbC_n)$.
Indeed,
%This is due to the relation $\mathbb{Q}=\mathbb{IP}_\mathbb{S}\mathbb{SP}_\mathbb{U}$ and the
%subdirect irreducibility of $\sbC_4$.  Alternatively,
the quasi-equation $e=e\wedge f\;\Longrightarrow\;x=y$
holds in $\sbC_n$ but not in $\sbC_4$.

By Theorem~\ref{covers of all atoms} and Corollaries~\ref{semisimple} and \ref{10 covers}, the non-semisimple covers of atoms in the subvariety lattice of
$\mathsf{DMM}$ (regardless of join-irreducibility) are just $\mathbb{V}(\sbS_5)$
%, $\mathbb{V}(\sbC_5),\dots,\mathbb{V}(\sbC_8)$
and the ones contained in $\mathsf{U}$.  All of these are finitely generated varieties.
Example~\ref{powers of 2} will show, however, that
%, outside $\mathsf{M}$ but within $\mathsf{DMM}$, however,
$\mathbb{V}(\sbC_4)$ has at least one join-irreducible cover within $\mathsf{DMM}$
%(but not within $\mathsf{U}$)
that is not finitely generated.

%Corollary~\ref{10 covers} shows that
%%%Theorem~\ref{main covers} shows that
%%\begin{cor}\label{covers of vc4 in u are fg}
%every cover of $\mathbb{V}(\sbC_4)$ within $\mathsf{U}$ is finitely generated.
%%\end{cor}
%Example~\ref{powers of 2} will show, however, that
%%, outside $\mathsf{M}$ but within $\mathsf{DMM}$, however,
%$\mathbb{V}(\sbC_4)$ has at least one join-irreducible cover within $\mathsf{DMM}$
%%(but not within $\mathsf{U}$)
%that is not finitely generated.
%%It is described below.

%\begin{problem}
%{\sl Is\/ $\mathsf{M}$ structurally complete?}
%\end{problem}

%We remark that $\textup{S}^\leqslant(\sbT_6)$

\section{Other Covers of $\mathbb{V}(\sbC_4)$}\label{other covers c4}

We have seen that each cover of $\mathbb{V}(\sbC_4)$ within $\mathsf{U}$ is generated by a finite non-simple
algebra.  By Lemma~\ref{pre m}(\ref{pre m 3}), a
%finite
\emph{simple} De Morgan monoid $\sbA$ is anti-idempotent if it has $\sbC_4$
as a subalgebra (cf.\ Theorem~\ref{covers of all atoms}(\ref{covers of vc4})(\ref{covers of vc4 2})).
If $\sbA$ is finite as well, then it
%a finite \emph{simple} De Morgan monoid,
%having $\sbC_4$ as a subalgebra (as in Theorem~\ref{covers of all atoms}(\ref{covers of vc4})(\ref{covers of vc4 2})),
generates a cover of $\mathbb{V}(\sbC_4)$ exactly when $\sbC_4$ is its \emph{only} proper subalgebra,
by J\'{o}nsson's Theorem and the CEP.  In that case,
by the same arguments, $\mathbb{V}(\sbA)$ is join-irreducible in the subvariety lattice of $\mathsf{DMM}$.

In fact, $\mathbb{V}(\sbC_4)$ has infinitely many
finitely generated covers within $\mathsf{DMM}$ witnessing
Theorem~\ref{covers of all atoms}(\ref{covers of vc4})(\ref{covers of vc4 2}),
%(3),
as the next example
shows.

\begin{exmp}\label{primes}
\textup{For each positive integer $p$, let $\sbA_p$ be the rigorously compact De Morgan monoid on the chain $0<1<2<4<8<\,\dots\,<2^p<2^{p+1}$,
where fusion is multiplication, truncated at $2^{p+1}$.  Thus, $\left|A_p\right|=p+3$ and $e$ is the integer $1$,
while $f=2^p$ and $\neg (2^k)=2^{p-k}$ for all $k\in\{0,1,\dots,p\}$.
Clearly, $\sbA_p$ is simple and generated by $2$, and we may identify $\sbC_4$ with the subalgebra of $\sbA_p$ on $\{0,1,2^p,2^{p+1}\}$.}
%Because $\sbA$ is finite and simple, J\'{o}nsson's Theorem shows

\textup{Now suppose $p$ is prime.  We claim that
%$\mathbb{V}(\sbA_p)$ covers $\mathbb{V}(\sbC_4)$, i.e., that
% within $\mathsf{DMM}$ iff
$\sbC_4$ is the only proper subalgebra of $\sbA_p$.}
%, which we proceed to prove.}

\textup{It suffices to show that, whenever $k\in\{1,2,\dots,p-1\}$, then $2\in\mathit{Sg}^{\sbA_p}\{2^k\}$.  The proof is by induction on $k$
and the base case is trivial, so let $k>1$.  As $p$ is prime, it is not divisible by $k$, whence there is a positive integer $n$ such that $kn\in
%(which belongs to $\mathit{Sg}^{\sbA_p}\{k\}$)
\{p-1,p-2,\dots,p-(k-1)\}$, so $\neg(2^{kn})\in\{2,4,\dots,2^{k-1}\}\cap\mathit{Sg}^{\sbA_p}\{2^k\}$.
%Therefore, there exists $m\in\mathit{Sg}^{\sbA_p}\{k\}\cap\{1,2,\dots,k-1\}$.
%If $\neg(2^{kn})=2$, we are done.  Otherwise, a
Because $\neg(2^{kn})=2^r$, where $1\leq r<k$, the induction hypothesis implies that
$2\in\mathit{Sg}^{\sbA_p}\{\neg(2^{kn})\}\subseteq\mathit{Sg}^{\sbA_p}\{2^k\}$, as required.}

\textup{Thus, $\mathbb{V}(\sbA_p)$ is a (join-irreducible) cover of $\mathbb{V}(\sbC_4)$ within $\mathsf{DMM}$.  And
by J\'{o}nsson's Theorem, $\mathbb{V}(\sbA_p)\neq\mathbb{V}(\sbA_q)$ for distinct
primes $p,q$, vindicating the claim preceding this example.}
\end{exmp}

The $\wedge,\vee$ reduct of a simple De Morgan monoid is a self-dual
distributive lattice in which $e$ is an atom and $f$ a co-atom.
It is therefore not difficult to verify that, up to isomorphism, there are just eight
%Each of the six
simple
%$1$--generated
De Morgan monoids $\sbA$ on at most $6$ elements (and none on $7$ elements)
%$5$ or $6$ elements
%depicted below has
such that $\sbC_4$ is the only proper subalgebra of $\sbA$.
Six such algebras are depicted below; the other two are $\sbA_2$ and $\sbA_3$
from Example~\ref{primes}.  Each of these eight
%(anti-idempotent)
De Morgan monoids is $1$--generated
and
%after Theorem~\ref{u and n quasivarieties}
generates a (join-irreducible) cover of $\mathbb{V}(\sbC_4)$ exemplifying
Theorem~\ref{covers of all atoms}(\ref{covers of vc4})(\ref{covers of vc4 2}).

\bigskip

{\tiny
\thicklines
\begin{center}
\begin{picture}(250,88)(173,20)
%
%%%%%%
%
\put(122,45.5){\circle*{4}}
\put(122,63){\circle*{4}}
\put(122,81.5){\circle*{4}}
\put(122,98){\circle*{4}}
\put(122,78){\line(0,1){20}}
\put(122,28){\circle*{4}}
\put(122,28){\line(0,1){60}}

\put(127,96){\small $f^2=f\bcdw a$}

\put(127,79){\small $f$}
\put(127,61){\small{$a=a^2=\neg a$}}
\put(127,43){\small $e$}

\put(127,25){\small $\neg(f^2)$}

%%%%%%%

%\put(245,28){\circle*{4}}
%\put(245,42){\circle*{4}}
%\put(245,56){\circle*{4}}
%\put(245,70){\circle*{4}}
%\put(245,84){\circle*{4}}
%\put(245,98){\circle*{4}}
%\put(245,78){\line(0,1){20}}
%\put(245,28){\line(0,1){60}}

%\put(250,96){\small $f^2=f\bcdw a=(\neg a)^2$}
%\put(250,81){\small $f$}

%\put(250,67){\small $\neg a=a\bcdw\neg a$}
%\put(250,53){\small{$a=a^2$}}
%\put(250,39){\small $e$}

%\put(250,25){\small $\neg(f^2)$}

%%%%%%%

\put(195,28){\circle*{4}}
\put(195,42){\circle*{4}}
\put(195,56){\circle*{4}}
\put(195,70){\circle*{4}}
\put(195,84){\circle*{4}}
\put(195,98){\circle*{4}}
\put(195,78){\line(0,1){20}}
\put(195,28){\line(0,1){60}}

\put(200,96){\small $f^2=f\bcdw a=(\neg a)^2$}
\put(200,81){\small $f$}

\put(200,67){\small $\neg a=a\bcdw\neg a$}
\put(200,53){\small{$a=a^2$}}
\put(200,39){\small $e$}

\put(200,25){\small $\neg(f^2)$}

%%%%%%%

\put(295,28){\circle*{4}}
\put(295,42){\circle*{4}}
\put(295,56){\circle*{4}}
\put(295,70){\circle*{4}}
\put(295,84){\circle*{4}}
\put(295,98){\circle*{4}}
\put(295,78){\line(0,1){20}}
\put(295,28){\line(0,1){60}}

\put(300,96){\small $f^2=f\bcdw a=(\neg a)^2$}
\put(300,81){\small $f=a^2=a\bcdw\neg a$}

\put(300,67){\small $\neg a$}
\put(300,53){\small{$a$}}
\put(300,39){\small $e$}

\put(300,25){\small $\neg(f^2)$}

%%%%%%%

\put(400,45.5){\circle*{4}}
\put(382.5,63){\circle*{4}}
\put(417.5,63){\circle*{4}}

\put(382.5,63){\line(1,1){17.5}}
\put(382.5,63){\line(1,-1){17.5}}

\put(417.5,63){\line(-1,1){17.5}}
\put(417.5,63){\line(-1,-1){17.5}}

\put(400,80.5){\circle*{4}}
\put(400,98){\circle*{4}}
\put(400,28){\circle*{4}}
\put(400,28){\line(0,1){17.5}}
\put(400,81.5){\line(0,1){17.5}}

\put(405,96){\small $f^2=a^2=(\neg a)^2$}

\put(405,80){\small $f=a\bcdw \neg a$}
\put(373,61){\small{$a$}}
\put(423,61){\small{$\neg a$}}
\put(405,43){\small $e$}

\put(405,25){\small $\neg(f^2)$}

\end{picture}
\end{center}
}

\medskip

{\tiny

\thicklines
\begin{center}
\begin{picture}(280,70)(115,42)

\put(160,95){\circle*{4}}
\put(160,95){\line(-1,-1){10}}
\put(150,85){\circle*{4}}
\put(150,85){\line(-1,-1){10}}
\put(140,75){\circle*{4}}

\put(175,80){\circle*{4}}
\put(175,80){\line(-1,-1){10}}
\put(165,70){\circle*{4}}
\put(165,70){\line(-1,-1){10}}
\put(155,60){\circle*{4}}

\put(160,95){\line(1,-1){15}}
\put(150,85){\line(1,-1){15}}
\put(140,75){\line(1,-1){15}}

\put(158,102){\small ${f^2=(\neg a)^2=f\bcdw a}$}
\put(86,73){\small ${a\bcdw\neg a=\neg a}$}
\put(141,87){\small $f$}
\put(180,78){\small $a=a^2$}
\put(153,47){\small $\neg(f^2)$}
\put(169,64){\small $e$}

%\put(54,77){\small ${\sbD_6}\colon$}

%%%%%%

\put(320,95){\circle*{4}}
\put(320,95){\line(-1,-1){10}}
\put(310,85){\circle*{4}}
\put(310,85){\line(-1,-1){10}}
\put(300,75){\circle*{4}}

\put(335,80){\circle*{4}}
\put(335,80){\line(-1,-1){10}}
\put(325,70){\circle*{4}}
\put(325,70){\line(-1,-1){10}}
\put(315,60){\circle*{4}}

\put(320,95){\line(1,-1){15}}
\put(310,85){\line(1,-1){15}}
\put(300,75){\line(1,-1){15}}

\put(318,102){\small ${f^2=(\neg a)^2=f\bcdw a=a^2}$}
\put(285,72){\small $\neg a$}
\put(263,86){\small $a\bcdw\neg a=f$}
\put(340,78){\small $a$}
\put(313,47){\small $\neg(f^2)$}
\put(329,64){\small $e$}

\end{picture}\nopagebreak
\end{center}}

\medskip

%{\tiny
%\thicklines
%\begin{center}
%\begin{picture}(250,118)(288,-20)
%%
%%%%%%%
%%
%
%%%%%%%%
%
%
%%%%%%%%
%
%\put(395,45.5){\circle*{4}}
%\put(377.5,63){\circle*{4}}
%\put(412.5,63){\circle*{4}}
%
%\put(377.5,63){\line(1,1){17.5}}
%\put(377.5,63){\line(1,-1){17.5}}
%
%\put(412.5,63){\line(-1,1){17.5}}
%\put(412.5,63){\line(-1,-1){17.5}}
%
%
%\put(395,80.5){\circle*{4}}
%\put(395,28){\circle*{4}}
%\put(395,28){\line(0,1){17.5}}
%
%\put(395,-7){\circle*{4}}
%\put(377.5,10.5){\circle*{4}}
%\put(412.5,10.5){\circle*{4}}
%
%\put(377.5,10.5){\line(1,1){17.5}}
%\put(377.5,10.5){\line(1,-1){17.5}}
%
%\put(412.5,10.5){\line(-1,1){17.5}}
%\put(412.5,10.5){\line(-1,-1){17.5}}
%
%\put(393,88){\small $f^2=f\bcdw \neg a$}
%\put(292,61){\small{$a\bcdw b=c^2=a^2=a$}}
%\put(417,61){\small{$f$}}
%\put(400,43){\small $c=b\bcdw c=a\wedge f$}
%
%\put(400,27){\small $b=b^2=e\vee \neg a$}
%\put(417.5,9.5){\small $\neg a=(\neg a)^2=a\bcdw\neg a$}
%\put(368,8.5){\small $e$}
%\put(391,-17.5){\small $\neg(f^2)$}
%
%\end{picture}
%\end{center}
%}
%
%\medskip

%(3).
%Another two serving the same purpose are shown below.
%\noindent

The exhaustiveness
%completeness
of this eight-item list will not be proved here, as we shall not rely on
it below.\footnote{\,Readers wanting to confirm it should
%digest the arguments up to Theorem~\ref{even}, and then
note that all self-dual distributive
lattices on $5$, $6$ or $7$ elements are pictured above, except for the $7$--element chain (ruled out
by Theorem~\ref{even}) and the $7$--element lattice that stacks one $4$--element diamond on another,
gluing them at the juncture.  The latter supports several simple De Morgan monoids
$\sbA$ that extend $\sbC_4$,
%anti-idempotent De Morgan monoids $\sbA$,
but in each case, the vertical `midpoint' $a$ of $\sbA$ is a fixed point of $\neg$, and $a\bcdw f=f^2$, by
Theorem~\ref{combined}(\ref{anti-idempotent 2}) and Lemma~\ref{pre m}(\ref{pre m 3}),
while $a\leqslant a^2=a\bcdw\neg a\leqslant f$, so
$a^2\in[a,f]=\{a,f\}$.  Thus, $\boldsymbol{\mathit{Sg}}^\sbA\{a\}$ is a proper subalgebra of $\sbA$,
strictly containing $\sbC_4$, so $\mathbb{V}(\sbA)$ does not cover $\mathbb{V}(\sbC_4)$.
The arguments for the lattices depicted above are no more difficult.}
Some features of the covers of $\mathbb{V}(\sbC_4)$ consisting of semilinear algebras deserve
to be established, however.
We consider first the case where $\sbA$ has an idempotent element outside $\sbC_4$.

\begin{thm}\label{first two}
Let\/ $\sbA$ be a simple totally ordered De Morgan monoid, having $\sbC_4$ as a proper subalgebra,
and suppose\/ $a^2=a\in A\bs C_4$\textup{.}  Then\/ $a$ generates a subalgebra of\/ $\sbA$ isomorphic
to one of the first two algebras pictured above.
%at the beginning of this section.
\end{thm}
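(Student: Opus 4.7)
The plan is to pin down $\sbB \seteq \boldsymbol{\mathit{Sg}}^\sbA\{a\}$ by locating $a$ in the chain and computing the products forced by idempotence of $a$. First I note that $\sbC_4 \subseteq \sbB$ automatically, since $e$ is distinguished in the signature and $f = \neg e$, $\1 = f^2$, $\0 = \neg \1$ lie in every subalgebra of $\sbA$. By Lemma~\ref{pre m}(\ref{pre m 3}), $\sbA$ is anti-idempotent, so $\0,\1$ are its extrema. Lemma~\ref{fsi si simple}(\ref{simple}) identifies $\0$ as the only strict lower bound of $e$; combined with $a \notin C_4$ this forces $e < a$, and by the same reasoning $e < \neg a$ (ruling out $\neg a = e$ lest $a = f \in C_4$, and $\neg a < e$ lest $a = \1$). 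Theorem~\ref{combined}(\ref{anti-idempotent 2}) then gives $a \bcdw f = \neg a \bcdw f = \1$, while (\ref{3 conditions}) applied to the idempotent $a > e$ gives $a \bcdw \neg a = \neg a$.

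Next I would rule out the possibility $\neg a < a$. In that case, isotonicity (\ref{isotone}) applied to $\neg a \leqslant a$ yields $(\neg a)^2 \leqslant \neg a \bcdw a = \neg a$; combined with the square-increasing law, this makes $\neg a$ itself idempotent. Then (\ref{3 conditions}) applied to $\neg a$ (using $e < \neg a$) yields $\neg a \bcdw a = a$, contradicting the already established equality $a \bcdw \neg a = \neg a \neq a$. So we are left with either $\neg a = a$ or $\neg a > a$, and these two alternatives will produce the two pictured algebras.

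If $\neg a = a$, then the chain $\0 < e < a < f < \1$ is visibly closed under $\bcdw$ (using $a^2 = a$, $a \bcdw f = \1$, and the $\sbC_4$--table), under $\neg$ (since $a$ is self-negating), and trivially under $\wedge, \vee$. Hence $\sbB$ consists of exactly these five elements, matching the first pictured algebra.

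The main obstacle is the remaining case $a < \neg a$, where the delicate point is to show $(\neg a)^2 = \1$. My plan is to use (\ref{neg properties}) to rewrite $\neg((\neg a)^2) = \neg a \rig a$, and then to evaluate the right-hand side. By (\ref{t order}), the inequality $e \leqslant \neg a \rig a$ is equivalent to $\neg a \leqslant a$, which is false; so in the totally ordered $\sbA$ we have $\neg a \rig a < e$, and by simplicity (Lemma~\ref{fsi si simple}(\ref{simple})) the only strict lower bound of $e$ is $\0$, giving $\neg a \rig a = \0$ and hence $(\neg a)^2 = \1$. Once this is in hand, a routine check confirms that the six-element chain $\0 < e < a < \neg a < f < \1$ is closed under all operations (using $a^2 = a$, $a \bcdw \neg a = \neg a$, $(\neg a)^2 = a \bcdw f = \neg a \bcdw f = f^2 = \1$), matching the second pictured algebra.
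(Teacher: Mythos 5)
Your proof is correct and follows essentially the same route as the paper's: place $a$ strictly between $e$ and $f$, establish $a\bcdw\neg a=\neg a$ and $a\bcdw f=\neg a\bcdw f=f^2$, exclude $\neg a<a$, and split into the cases $\neg a=a$ and $a<\neg a$. The only cosmetic difference is that the paper obtains $a\leqslant\neg a$ and $(\neg a)^2=f^2$ in one line each from the contraposition law (\ref{involution-fusion law}), whereas you reach the same two facts via (\ref{3 conditions}) and via $\neg a\rig a=\0$ respectively.
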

\begin{proof}
By Lemmas~\ref{fsi si simple}(\ref{simple}) and \ref{pre m}(\ref{pre m 3}),
% show that
$\sbA$ is anti-idempotent, with $e<a<f$ and $e<\neg a<f$.
Now $a\leqslant \neg a$, by (\ref{involution-fusion law}), as
$a^2\leqslant f$.  Also, $a\bcdw \neg a=\neg a$,
by (\ref{3 conditions}), and $f\bcdw a=f^2=f\bcdw\neg a$, by Theorem~\ref{combined}(\ref{anti-idempotent 2}).
If $\neg a=a$, then $\boldsymbol{\mathit{Sg}}^\sbA\{a\}$ matches the first of the two pictured algebras.
If $\neg a\nleqslant a$ then $(\neg a)^2\nleqslant f$, by (\ref{involution-fusion law}),
whence $(\neg a)^2=f^2$ and $\boldsymbol{\mathit{Sg}}^\sbA\{a\}$ matches the second pictured algebra.
\end{proof}

%Again, consider a simple totally ordered De Morgan monoid $\sbA$, having $\sbC_4$ as a proper subalgebra,
%(so $\sbA$ is anti-idempotent, by Lemma~\ref{fsi si simple}(\ref{simple})).  Suppose first that $\sbA$
%has an idempotent element $a\in A\bs C_4$.  Then $a$ and $\neg a$ lie strictly between $e$ and $f$
%(by Lemma~\ref{fsi si simple}(\ref{simple})), so $a\leqslant \neg a$ (by (\ref{involution-fusion law}), as
%$a^2\leqslant f$).  If $\neg a<\nleqslant a$ then $(\neg a)^2\nleqslant f$ (again by (\ref{involution-fusion law})),
%whence $(\neg a)^2=f^2$.  Regardless of whether $a=\neg a$ or $a<\neg a$, we have $a\bcdw \neg a=\neg a$
%(by (\ref{3 conditions})) and $f\bcdw a=f^2=f\bcdw\neg a$ (by Theorem~\ref{combined}(\ref{anti-idempotent 2})),
%so $a$ generates a subalgebra of $\sbA$ isomorphic
%to one of the first two algebras pictured at the beginning of this section.

Now we consider the case where $\sbA$ has no idempotent element outside $\sbC_4$, assuming that $\sbA$ is finite.

\begin{thm}\label{pre even}
Let\/ $\sbA$ be a finite simple totally ordered De Morgan monoid, having\/ $\sbC_4$ as a proper subalgebra, where no
element of\/ $A\bs C_4$ is idempotent.

Let\/ $c$ be the cover of\/ $e$ in\/ $\sbA$\textup{,} and\/
$n$ the smallest positive integer such that\/ $c^{n+1}=c^{n+2}$\textup{.}  Then
\begin{enumerate}
\item\label{i}
$c^n=f$ and\/ $c^{n+1}=f^2$\textup{;}

\item\label{ii}
$\neg (c^{m+1})<c^{n-m}\leqslant\neg(c^m)$ for each positive integer\/ $m<n$\textup{;}

%\item\label{iii}
%$c^m\bcdw\neg(c^m)=f$ for each positive integer\/ $m\leq n$\textup{;}

\item\label{iv}
$b\bcdw\neg b=f$ for all\/ $b\in A\bs \{f^2,\neg(f^2)\}$\textup{.}
\end{enumerate}

\indent If, moreover, $\left|A\right|$ is odd, then\/
%$\neg$ has a fixed point\/ $a\in A$\textup{,} and\/
$\boldsymbol{\mathit{Sg}}^\sbA\{a\}\cong\sbA_2$ (as defined in Example~\textup{\ref{primes}),}
where\/ $a$ is the fixed point of\/ $\neg$ in\/ $\sbA$\textup{.}
\end{thm}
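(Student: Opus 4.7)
The plan starts with two structural observations that pin down the extremes of the total order of $\sbA$. Since $\sbA$ is simple with $\sbC_4$ as a subalgebra, Lemma~\ref{pre m}(\ref{pre m 3}) makes $\sbA$ anti-idempotent, and Lemma~\ref{fsi si simple}(\ref{simple}) (with involution) forces $\neg(f^2)$ to be the unique element strictly below $e$ and $f^2$ the unique element strictly above $f$; in particular $\neg c$ is the immediate predecessor of $f$, and $f^2$ the immediate successor of $f$. Moreover, the idempotents of $\sbA$ are exactly $\{\neg(f^2), e, f^2\}$, since $f$ is non-idempotent in any nontrivial anti-idempotent IRL (Theorem~\ref{combined}(\ref{idempotence f and e})) and $A \bs C_4$ contains no idempotents by assumption.

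For part (i), the chain $c \leqslant c^2 \leqslant c^3 \leqslant \dots$ stabilizes in the finite algebra $\sbA$, so $c^{n+1}$ is idempotent; being strictly above $e$, it must equal $f^2$. Applying (\ref{involution-fusion law}) to $c \bcdw c^n = f^2 \nleqslant f$ yields $c^n \nleqslant \neg c$, whence $c^n \geqslant f$. The degenerate case $c = f$ (forcing $n = 1$) gives part (i) at once; otherwise $c < f$ implies $n \geqslant 2$, so $c^n < c^{n+1} = f^2$ by minimality of $n$, and the fact that $f^2$ covers $f$ delivers $c^n = f$. Part (ii) is then immediate: from $c^m \bcdw c^{n-m} = f$ and $c^{m+1} \bcdw c^{n-m} = f^2 \nleqslant f$, (\ref{neg properties 0}) and (\ref{residuation}) yield $c^{n-m} \leqslant \neg(c^m)$ and $c^{n-m} \nleqslant \neg(c^{m+1})$, respectively, and total ordering upgrades the second to strict.

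For part (iv), the unique strict lower bound of $e$ being $\neg(f^2)$ forces $A \bs \{\neg(f^2), f^2\} \subseteq [e, f^2)$; the case $b = e$ is trivial, so assume $b > e$. Since $b \bcdw \neg b \leqslant f$ always holds and $\neg(b \bcdw \neg b) = b \rig b \geqslant e$, the claim $b \bcdw \neg b = f$ reduces to $b \rig b = e$. If instead $b \rig b > e$, then $b \rig b \geqslant c$, so (\ref{residuation}) gives $b \bcdw c \leqslant b$, while (\ref{isotone}) supplies the reverse inequality, hence $b \bcdw c = b$; iterating this equation and using $c^n = f$ from part (i) would yield $b \bcdw f = b$, contradicting Theorem~\ref{combined}(\ref{anti-idempotent 2}), which forces $b \bcdw f = f^2$ whenever $b > e$.

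Finally, suppose $|A|$ is odd. Involution is an involutive bijection pairing each element with its negation and fixing self-negated points; as $\{\neg(f^2), f^2\}$ and $\{e, f\}$ are two such pairs, oddness guarantees a fixed point $a \in A \bs \{\neg(f^2), e, f, f^2\}$, and antitonicity together with total ordering pins $e < a < f$. Part (iv) then gives $a^2 = a \bcdw \neg a = f$, and Theorem~\ref{combined}(\ref{anti-idempotent 2}) gives $a \bcdw f = f^2$; hence $\{\neg(f^2), e, a, f, f^2\}$ is closed under every operation, coincides with $\boldsymbol{\mathit{Sg}}^\sbA\{a\}$, and matches $\sbA_2$ of Example~\ref{primes} term-for-term. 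I expect the two main obstacles to be the identification $c^{n+1} = f^2$ via the idempotent inventory, and the fixed-point argument $b \bcdw c = b \Rightarrow b \bcdw f = b$ in part (iv); with these two anchors in place, the remaining manipulations are largely routine.
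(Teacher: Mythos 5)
Your proof is correct and, for parts (i), (ii) and the odd-cardinality claim, it is essentially the paper's own argument (identifying $c^{n+1}$ as the unique idempotent above $e$, using (\ref{involution-fusion law}) to get $c^n\nleqslant\neg c$, and reading off (ii) from $c^m\bcdw c^{n-m}=f$ and $c^{m+1}\bcdw c^{n-m}=f^2$). The one point of divergence is the endgame of (iv): both proofs extract $b\bcdw c\leqslant b$ from the supposition $b\bcdw\neg b<f$, but the paper then locates $b$ between consecutive powers of $c$ (choosing $p$ with $c^p\leqslant b<c^{p+1}$ and deriving $c^{p+1}\leqslant b\bcdw c\leqslant b<c^{p+1}$), whereas you upgrade to the equation $b\bcdw c=b$, iterate it to $b\bcdw c^n=b\bcdw f=b$, and contradict Theorem~\ref{combined}(\ref{anti-idempotent 2}). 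The two finishes are equally short; yours trades the combinatorial sandwiching for an appeal to the structural fact $a\bcdw f=f^2$ (for $a>e$) in simple anti-idempotent IRLs, which you also reuse in the final closure computation for $\boldsymbol{\mathit{Sg}}^\sbA\{a\}$. No gaps: your unstated step ``$c<f$ implies $n\geqslant 2$'' is indeed the contrapositive of what your inequalities $c\leqslant f\leqslant c^n$ give when $n=1$, and the minimality of $n$ correctly yields $c^n<c^{n+1}$ when $n\geqslant 2$.
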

\begin{proof}
Again, recall that $\sbA$ is anti-idempotent, with $A=\{\neg(f^2),f^2\}\cup[e,f]$, and note that $f$ covers $\neg c$,
%is the greatest strict lower bound of $f$,
by definition of $c$.

(\ref{i})\,
As $c^{n+1}$ is idempotent, we have $e<c^{n+1}\in C_4$ (by assumption), whence $c^{n+1}=f^2$.  As $c^n$ is not idempotent,
$c^n<f^2$, i.e., $c^n\leqslant f$.  But $c^n\nleqslant \neg c$ (by (\ref{involution-fusion law}), since $c^{n+1}\nleqslant f$),
so $c^n=f$, because $f$ covers $\neg c$.

(\ref{ii})\, Consider a positive integer $m<n$.  We cannot have $c^{n-m}\leqslant \neg (c^{m+1})$, otherwise
$c^{n+1}=c^{m+1}\bcdw c^{n-m}\leqslant c^{m+1}\bcdw\neg(c^{m+1})\leqslant f$ (by (\ref{neg properties 0})), a contradiction.
Thus, $\neg(c^{m+1})<c^{n-m}$.  By (\ref{residuation}), $c^{n-m}\leqslant c^m\rig c^n=c^m\rig f=\neg(c^m)$.

%(\ref{iii})\, Consider a positive integer $m\leq n$.  If $e<c^m\rig c^m$, then $c\leqslant c^m\rig c^m$, by definition
%of $c$, whence $c^{m+1}\leqslant c^m$, by (\ref{residuation}), contradicting the definition of $n$.  Therefore,
%$c^m\rig c^m=e$, in view of (\ref{t laws}), i.e., $c^m\bcdw\neg(c^m)=f$ (by (\ref{neg properties})).

(\ref{iv})\, Let $b\in A\bs \{f^2,\neg(f^2)\}$.  Then $b\bcdw\neg b\leqslant f$, by (\ref{neg properties 0}).
Since $b\bcdw\neg b=f$ for $b\in\{e,f\}$, we may assume that $e<b<f$, i.e., $c\leqslant b\leqslant \neg c$.
%so $e<b<f$,
Suppose $b\bcdw\neg b<f$, i.e., $b\bcdw\neg b\leqslant\neg c$.
%as $f$ covers $\neg c$,
Then $b\bcdw c\leqslant b$,
by (\ref{involution-fusion law}).  As
%$b\notin C_4$, we have
$c\leqslant b<f=c^n$, we have
%by definition of $c$, so
$c^p\leqslant b< c^{p+1}$ for some positive integer
$p<n$.  Then $c^{p+1}\leqslant b\bcdw c\leqslant b< c^{p+1}$, a contradiction.
Thus,
%so $b=c^{p+1}$, whence $b\bcdw\neg b=f$, by
%(\ref{iii}).  This contradiction shows that
$b\bcdw\neg b=f$.

Finally, let $\left|A\right|$ be odd, so $\neg a=a$ for some (unique)
$a\in A$, as $\neg$ is a bi\-jection.  Then $a\notin C_4$, so $a^2=a\bcdw\neg a=f$, by (\ref{iv}), whence $\textit{Sg}^\sbA\{a\}=C_4\cup\{a\}$
and $\boldsymbol{\mathit{Sg}}^\sbA\{a\}\cong\sbA_2$.
\end{proof}

%This (with J\'{o}nsson's Theorem) prevents $\sbA$ from generating a cover of $\mathbb{V}(\sbC_4)$, unless
%$\left|A\right|=5$, and we may draw the following conclusion.

The third example pictured above shows that, in Theorem~\ref{pre even}, when
%$\left|A\right|$ is even,
%the powers of $c$ need not be closed under $\neg$, and
$\sbA$ has even cardinality, it need not have a subalgebra of the form $\sbA_p$ for $p>1$.

\begin{thm}\label{even}
If\/ $\mathbb{V}(\sbA)$ is a cover of\/ $\mathbb{V}(\sbC_4)$ within\/ $\mathsf{DMM}$\textup{,} where\/ $\sbA$ is finite, simple
and totally ordered, then\/ $\left|A\right|$ is\/ $5$ or an even number.
\end{thm}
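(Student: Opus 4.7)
The plan is to rule out $|A|$ being an odd number different from $5$. Because $\sbC_4$ is a proper subalgebra of $\sbA$, we have $|A|\geq 5$; so under this assumption $|A|\geq 7$. By Lemma~\ref{pre m}(\ref{pre m 3}), $\sbA$ is anti-idempotent (since it is simple and has $\sbC_4$ as a subalgebra), and one checks that the only idempotents of $\sbC_4$ are $e$, $\neg(f^2)$ and $f^2$; moreover, $\neg$ has no fixed point in $C_4$. As $|A|$ is odd and $\neg$ is a bijection on $A$, there exists a (unique) fixed point $a\in A$ of $\neg$, and necessarily $a\in A\ssetminus C_4$.

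I would then split on whether some element of $A\ssetminus C_4$ is idempotent. If yes, apply Theorem~\ref{first two} to obtain that this element generates a subalgebra of $\sbA$ with either $5$ or $6$ elements; call it $\sbB$. If not, apply the last clause of Theorem~\ref{pre even} to the fixed point $a$ to obtain $\sbB\seteq\boldsymbol{\mathit{Sg}}^\sbA\{a\}\cong\sbA_2$, a $5$--element algebra. In either case $\sbC_4\subsetneq\sbB$ and $|B|\leq 6<|A|$, so $\sbB$ is a \emph{proper} subalgebra of $\sbA$ that strictly contains $\sbC_4$.

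Now, since $\sbA$ is simple and square-increasing, Lemma~\ref{fsi si simple}(\ref{simple}) tells us that $\neg(f^2)$ is the only strict lower bound of $e$ in $\sbA$; as $\neg(f^2)\in B$, the same description holds in $\sbB$, so $\sbB$ is simple. It is nontrivial, SI, and not isomorphic to $\sbC_4$ (cardinalities differ), so by J\'onsson's Theorem $\sbB\notin\mathbb{V}(\sbC_4)$, whence $\mathbb{V}(\sbC_4)\subsetneq\mathbb{V}(\sbB)\subseteq\mathbb{V}(\sbA)$. The covering hypothesis then gives $\mathbb{V}(\sbB)=\mathbb{V}(\sbA)$, so the finite SI algebra $\sbA$ lies in $\mathbb{HS}(\sbB)=\mathbb{SH}(\sbB)$ by J\'onsson's Theorem and the CEP. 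Because $\sbB$ is simple, this forces $\sbA\in\mathbb{IS}(\sbB)$, contradicting $|A|>|B|$. Therefore $|A|=5$, so if $|A|$ is odd then $|A|=5$, and otherwise $|A|$ is even.

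The only real work is organizing the case split; the main ``obstacle'' (really a sanity check) is confirming that $\sbB$ is simple and does not already lie in $\mathbb{V}(\sbC_4)$, so that the covering hypothesis can be invoked. Both points follow immediately from the tools already established (Lemma~\ref{fsi si simple}(\ref{simple}) and J\'onsson's Theorem applied to the simple algebra $\sbC_4$), and no new lemmas are required beyond Theorems~\ref{first two} and \ref{pre even}.
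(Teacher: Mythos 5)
Your proof is correct and follows essentially the same route as the paper: split on whether $A\ssetminus C_4$ contains an idempotent, invoke Theorems~\ref{first two} and \ref{pre even} to produce a $5$-- or $6$--element subalgebra strictly containing $\sbC_4$, and observe that such a subalgebra cannot be proper. The only difference is presentational: the paper appeals to the remark at the start of Section~\ref{other covers c4} that the covering hypothesis forces $\sbC_4$ to be the \emph{only} proper subalgebra of $\sbA$, whereas you re-derive that fact inline via J\'onsson's Theorem and the CEP.
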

\begin{proof}
The hypothesis implies that $\sbC_4$ is the only proper subalgebra of $\sbA$, as noted earlier.
If $\left|A\right|$ is odd, then $\left|A\right|\neq 6$, so by Theorems~\ref{first two} and \ref{pre even},
$\sbA$ has a $5$--element subalgebra, which cannot be proper, so $\left|A\right|=5$.
\end{proof}

%Under the assumptions of Theorem~\ref{pre even}, when $\left|A\right|$ is even, the powers of $c$ need not be closed under $\neg$,
%and $\sbA$ need not contain an isomorphic copy of the algebra $\sbA_p$ from Example~\ref{primes} (see the third example pictured
%above).

%There are further examples, generated by algebras with the
%same lattice reduct (and involution) as $\sbA$, but with a different fusion.  There are also examples not consisting of semilinear algebras.
%Moreover, t
In the statement of Theorem~\ref{covers of all atoms}(\ref{covers of vc4})(\ref{covers of vc4 2}),
the algebra $\sbA$
%(3)
%Theorem~\ref{covers of vc4}(\ref{covers of vc4 2})
cannot always be chosen finite, in view of
the following example.

\begin{exmp}\label{powers of 2}
\textup{The set
%\[
$B=\{0\}\cup\{2^n:n\in\omega\}\cup\{\infty\}$
%\]
is the universe of a rigorously compact Dunn monoid $\sbB$ whose lattice order is the conventional total order, and whose fusion is ordinary
multiplication
%$\sbB=\langle B;\bcdw,\rig,e,\leqslant\rangle$, where $\leqslant$ is the conventional total order
%provided we define
%$\bcdw$ is ordinary multiplication
on the finite elements of $B$ (hence $e=1$).
%, while $0\bcdw \infty=0$ and $b\bcdw \infty=\infty$ whenever $0\neq b\in B$.  This Dunn monoid is rigorously compact, with $e=1$ and
%$0\rig b=\infty=b\rig\infty$ for all $b\in B$, while $\infty\rig b=0$ unless $b=\infty$,
%whereas $b\rig 0=0$ unless $b=0$.  F
For finite nonzero $b,c\in B$, the value of $b\rig c$ is $c/b$
if $b$ divides $c$; otherwise it is $0$.  It is well known that there is a unique totally ordered De Morgan monoid $\sbA_\infty$, having $\sbB$ as
an RL--subreduct and having exactly the additional elements indicated and ordered below:
\[
0\,<\,1\,<\,2\,<\,4\,<\,8\,<\,16\,<\,\dots\,<\,\neg 16\,<\,\neg 8\,<\,\neg 4\,<\,\neg 2\,<\,\neg 1\,<\infty.
\]
Here, $b\bcdw\neg c=\neg(b\rig c)$ and $\neg b\bcdw \neg c=\infty$ for all finite nonzero $b,c\in B$.  Note that
$\sbA_\infty$ is generated by $2$.
%}
%The
%%existence of this $\sbA_\infty$ is well known; the
%full description of the operations of $\sbA_\infty$ follows from the above information and general properties of De Morgan monoids.
%
%\textup{
The subalgebra of $\sbA_\infty$ on $\{0,1,\neg 1,\infty\}$ is isomorphic to $\sbC_4$.
Clearly, $\sbA_\infty$ is simple, so $\sbA_\infty\notin\mathsf{W}$, whence
%, hence $\sbA_\infty\notin\mathsf{M}$.
%In fact, $\sbA_\infty\notin\mathsf{U}$, because (\ref{jamie}) fails in $\sbA_\infty$ when $x=2$.
%In particular,
$\sbA_\infty$ is not the reflection of a Dunn monoid.}

%\textup{Let $\sbU$ be any ultrapower of $\sbA_\infty$.  Recall that $\mathbb{HS}(\sbU)=\mathbb{SH}(\sbU)$, by the CEP.  But
%$\sbU$ is simple, because the criterion in Lemma~\ref{fsi si simple}(\ref{simple}) is first order definable, so the nontrivial
%members of $\mathbb{HS}(\sbU)$ belong to $\mathbb{IS}(\sbU)$.  Therefore, by J\'{o}nsson's Theorem,
\textup{By Corollary~\ref{semisimple}, every SI algebra $\sbC\in\mathbb{V}(\sbA_\infty)$ embeds into an ultrapower of
$\sbA_\infty$, and it is easily deduced that $\sbC$ contains an isomorphic copy of $\sbA_\infty$, unless $\sbC\cong\sbC_4$.
In particular, $\mathbf{2}, \sbS_3, \sbD_4\notin\mathbb{V}(\sbA_\infty)$, and $\mathbb{V}(\sbA_\infty)$ is not generated
by its finite members.
%, so it is certainly not finitely generated.
% do not embed into $\sbC$.
This establishes that
$\mathbb{V}(\sbA_\infty)$ is a join-irreducible cover of $\mathbb{V}(\sbC_4)$ within $\mathsf{DMM}$,
exemplifying Theorem~\ref{covers of all atoms}(\ref{covers of vc4})(\ref{covers of vc4 2}),
%(3).
and that $\mathbb{V}(\sbA_\infty)$ is not finitely generated.}
%,
%since it is not locally finite, e.g., the infinite algebra $\sbA_\infty$ is generated by $2$.}

\textup{Actually, $\sbA_\infty$ embeds naturally
into an ultraproduct of the algebras $\sbA_p$ ($p$ a positive prime) from Example~\ref{primes},
so $\mathbb{V}(\sbA_\infty)$ is contained in the varietal join of the $\mathbb{V}(\sbA_p)$.}
\end{exmp}

\section{Covers of $\mathbb{V}(\sbD_4)$}\label{other covers d4}

Suppose $\sbD_4$ is a subalgebra of an FSI De Morgan monoid $\sbA$.  Then $A$ is the \emph{disjoint} union of the anti-isomorphic
sublattices $[e)$ and $(f]$ of $\langle A;\wedge,\vee\rangle$, by Theorem~\ref{dmm fsi}(\ref{splitting}).  Consequently,
if $\sbA$ is finite, then $\left|A\right|$ is even.  Also, if $\sbA$ is simple (cf.\
Theorem~\ref{covers of all atoms}(\ref{covers of vd4})), then it is anti-idempotent, by
Lemma \ref{pre m}(\ref{pre m 3}).  When $\sbA$ is both finite and simple,
%Under these combined assumptions,
%In this case
%(just as for $\sbC_4$),
then $\sbD_4$ is the sole proper subalgebra
of $\sbA$ iff $\mathbb{V}(\sbA)$ is a cover of $\mathbb{V}(\sbD_4)$ within $\mathsf{DMM}$, in which
%.  In that
case $\mathbb{V}(\sbA)$ is join-irreducible (the arguments being just as for
%last two claims follow just as in the case of
$\sbC_4$).

\begin{exmp}\label{d6}
\textup{Each of the simple De Morgan monoids
% $\sbD_6$
depicted below generates a cover of $\mathbb{V}(\sbD_4)$ within $\mathsf{DMM}$, witnessing Theorem~\ref{covers of all atoms}(\ref{covers of vd4}).
Up to isomorphism, they are the only two such algebras on $6$ elements.
%(2).
%It is got by adding to $\sbD_4$ an idempotent element $a$ and its
%idempotent negation $\neg a$, where $a\bcdw\neg a=\neg a$ and $f\bcdw \neg a=1$.  (All other properties of $\sbD_6$ are determined by these
%stipulations and the order.)  To see that $\sbD_6$ really is a De Morgan monoid, note that it
(The one on the left is a subalgebra of the eight-element `Belnap lattice'; it is obtained by removing
from that structure the elements labeled $2$ and $-2$ in \cite[p.\,252]{AB75}.  The one on the right
is isomorphic to the algebra $\sbB_2$ in Example~\ref{d primes} below.)}
\end{exmp}

{\tiny

\thicklines
\begin{center}
\begin{picture}(330,70)(70,42)

\put(120,95){\circle*{4}}
\put(120,95){\line(-1,-1){10}}
\put(110,85){\circle*{4}}
\put(110,85){\line(-1,-1){10}}
\put(100,75){\circle*{4}}

\put(135,80){\circle*{4}}
\put(135,80){\line(-1,-1){10}}
\put(125,70){\circle*{4}}
\put(125,70){\line(-1,-1){10}}
\put(115,60){\circle*{4}}

\put(120,95){\line(1,-1){15}}
\put(110,85){\line(1,-1){15}}
\put(100,75){\line(1,-1){15}}

\put(118,102){\small ${f^2=f\bcdw\neg a}$}
\put(90,73){\small ${e}$}
\put(79,87){\small $a^2=a$}
\put(140,78){\small $f$}
\put(113,47){\small $\neg(f^2)$}
\put(129,64){\small $\neg a=(\neg a)^2=a\bcdw\neg a$}

%%%%%%

\put(290,95){\circle*{4}}
\put(290,95){\line(-1,-1){10}}
\put(280,85){\circle*{4}}
\put(280,85){\line(-1,-1){10}}
\put(270,75){\circle*{4}}

\put(305,80){\circle*{4}}
\put(305,80){\line(-1,-1){10}}
\put(295,70){\circle*{4}}
\put(295,70){\line(-1,-1){10}}
\put(285,60){\circle*{4}}

\put(290,95){\line(1,-1){15}}
\put(280,85){\line(1,-1){15}}
\put(270,75){\line(1,-1){15}}

\put(288,102){\small ${f^2=a^2=f\bcdw\neg a}$}
\put(260,73){\small ${e}$}
\put(271,87){\small $a$}
\put(310,78){\small $f=(\neg a)^2=a\bcdw\neg a$}
\put(283,47){\small $\neg(f^2)$}
\put(299,64){\small $\neg a$}

\end{picture}\nopagebreak
\end{center}}

In analogy with the case of $\sbC_4$, there are infinitely many finitely generated covers of $\mathbb{V}(\sbD_4)$
within $\mathsf{DMM}$, as well as a cover that is not finitely generated (nor even generated by its finite
members).  This is shown
by the next two examples.

\begin{exmp}\label{d primes}
\textup{For each positive integer $p$, it can be checked that there is a unique rigorously compact (simple) De Morgan
monoid $\sbB_p$ having the labeled
Hasse diagram and fusion indicated below, where it is understood that
$m,n,k,\ell\in\omega$ with $m,n\leq p$ and $k,\ell<p$.}

{\tiny

\thicklines
\begin{center}
%\scalebox{0.8}{
\begin{picture}(160,138)(10,-59)

%
%%%%%% B_n
%
\put(-65,40){\small $\sbB_p:$}

%dots
\put(-5.5,-5.5){\circle*{2}}
\put(-2.5,-2.5){\circle*{2}}
\put(0.5,0.5){\circle*{2}}

%dots
\put(-20.5,9.5){\circle*{2}}
\put(-17.5,12.5){\circle*{2}}
\put(-14.5,15.5){\circle*{2}}

%upwards

\put(10,10){\circle*{4}}
%\put(15,5){\small $ \neg (a^2) $}
\put(15,5){\small $ \neg 4 $}
\put(-5,25){\line(1,-1){15}}
\put(-5,25){\circle*{4}}
%\put(-25,25){\small $ a^{p-2} $}
\put(-25,25){\small $ 2^{p-2} $}

\put(25,25){\line(-1,-1){15}}
\put(25,25){\circle*{4}}
%\put(30,20){\small $ \neg a $}
\put(30,20){\small $ \neg 2 $}
\put(10,40){\line(1,-1){15}}
\put(10,40){\circle*{4}}
%\put(-10,40){\small $ a^{p-1} $}
\put(-10,40){\small $ 2^{p-1} $}
\put(10,40){\line(-1,-1){15}}

\put(40,40){\line(-1,-1){15}}
\put(25,55){\line(-1,-1){15}}

\put(40,40){\circle*{4}}
\put(45,35){\small $ \neg 1=f $}
\put(25,55){\line(1,-1){15}}
\put(25,55){\circle*{4}}
%\put(10,60){\small $ a^p = f^2 $}
\put(10,60){\small $ 2^p = f^2 $}

%downwards

\put(-15,-15){\circle*{4}}
%\put(-10,-20){\small $ \neg (a^{p-2}) $}
\put(-10,-20){\small $ \neg (2^{p-2}) $}
\put(-30,0){\circle*{4}}
%\put(-43,0){\small $ a^2 $}
\put(-43,0){\small $ 4 $}
\put(-30,0){\line(1,-1){15}}

\put(-30,-30){\line(1,1){15}}
\put(-30,-30){\circle*{4}}
%\put(-25,-35){\small $ \neg (a^{p-1}) $}
\put(-25,-35){\small $ \neg (2^{p-1}) $}
\put(-45,-15){\line(1,1){15}}
\put(-45,-15){\circle*{4}}
%\put(-58,-15){\small $ a $}
\put(-58,-15){\small $ 2 $}
\put(-45,-15){\line(1,-1){15}}

%dots
\put(-60,-30){\line(1,1){15}}
\put(-45,-45){\line(1,1){15}}

\put(-45,-45){\circle*{4}}
\put(-40,-52){\small $ 0=\neg(f^2) $}
\put(-60,-30){\line(1,-1){15}}
\put(-60,-30){\circle*{4}}
\put(-88,-32){\small $ e=1 $}

\put(110,50){\small $ 2^m\bcdw 2^n = 2^{\textup{min}\{m+n,p\}} $}

\put(110,20){{\small $ 2^m\bcdw \neg(2^\ell) = \left\{\begin{array}{ll}
                           \textup{$\neg(2^{\ell-m})$} & \textup{if $\ell\geq m$;}\\
                           2^p  & \textup{if
                           %\quad\quad\quad
                           $\ell<m$.}
                                               \end{array}
                   \right.$}}

\put(80,-20){{\small $ \neg(2^k)\bcdw \neg(2^\ell) = \left\{\begin{array}{ll}
                           \textup{$\neg(2^{k+\ell-p})$} & \textup{if $k+\ell\geq p$;}\\
                           2^p  & \textup{if
                           %\quad\quad\quad\;\, if
                           $k+\ell<p$.}
                                               \end{array}
                   \right.$}}

\end{picture}
%}
\end{center}}
\noindent \textup{The
%verification that $\sbB_p\in\mathsf{DMM}$
%%(which does not use the primeness of $p$)
%is omitted.
subalgebra of $\sbB_p$ on $\{0,1,\neg 1,2^p\}$ may be identified with $\sbD_4$.}

\textup{Now suppose $p$ is prime.  We claim that $\sbB_p$ has no proper subalgebra other than $\sbD_4$.
It suffices (by involution properties) to show, by induction on $k$, that
$2\in Y\seteq\mathit{Sg}^{\sbB_p}\{2^k\}$ for each positive integer $k<p$.  The base case is trivial, so let $k>1$.
As $k$ does not divide $p$, we have $r\seteq p-kn\in\{1,2,\dots,k-1\}$ for some positive integer $n$, so
$\neg(2^{p-r})=\neg(2^{kn})\in Y$, whence $2^r=e\vee\neg(2^{p-r})\in Y$.  By the induction hypothesis,
$2\in\mathit{Sg}^{\sbB_p}\{2^r\}$, so $2\in Y$, completing the proof.  Thus, $\mathbb{V}(\sbB_p)$ is a
cover of $\mathbb{V}(\sbD_4)$ within $\mathsf{DMM}$ and, by J\'{o}nsson's Theorem,
$\mathbb{V}(\sbB_p)\neq\mathbb{V}(\sbB_q)$ for distinct primes $p,q$.}
\end{exmp}

\begin{exmp}\label{d powers of 2}
%The analogy with $\sbC_4$ goes further.
\textup{In each $\sbB_p$ above, the element $e$ has a unique cover.
That is a first order property, so it persists in the rigorously compact simple ultraproduct $\prod_p\sbB_p/\mathcal{F}$, for each nonprincipal ultrafilter $\mathcal{F}$ over
the set of positive primes.
%, owing to \L os' Theorem \cite[Thm.\,V.2.9]{BS81}.
By similar applications of \L os' Theorem, in any such ultraproduct, the rigorously compact simple subalgebra $\sbB_\infty$
generated by the cover of $e$ (still denoted by $2$) has the infinite
lattice reduct shown in the next diagram, and its fusion is determined by the following additional
information, where $m,n$ are positive integers:}
\begin{align*}
& f\bcdw x=f^2\textup{ whenever $x\in B_\infty\bs\{0,e\}$}\\
& 2^m\bcdw 2^n=2^{m+n}\\
& 2^m\bcdw\ov{2^n}=\ov{2^{m+n}}=\ov{2^m}\bcdw\ov{2^n}\\
& \neg(2^m)\bcdw\neg\ov{2^n}=f^2=\neg\ov{2^m}\bcdw\neg\ov{2^n}=\neg(2^m)\bcdw\neg(2^n)\\
& 2^m\bcdw\neg\ov{2^n}=\left\{\begin{array}{ll}
                           \textup{$\neg\ov{2^{n-m}}$} & \textup{ if $m\leq n$}\\
                           f^2  & \textup{
                           %\quad\quad \,\,
                           if $m>n$}
                                               \end{array}
                   \right\}
=2^m\bcdw\neg(2^n)=\ov{2^m}\bcdw\neg(2^n)=\ov{2^m}\bcdw\neg\ov{2^n}.
\end{align*}

\textup{We claim that $\mathbb{V}(\sbB_\infty)$ is a join-irreducible cover of
$\mathbb{V}(\sbD_4)$ within $\mathsf{DMM}$, not generated by its finite members.
For this, it suffices, as in Example~\ref{powers of 2},
to establish
%Fact~\ref{d infty fact} below.}
the following.}
\begin{fact}\label{d infty fact}
%The argument from Example~\ref{powers of 2} can be adapted to show that e
Let\/ $\sbD$ be a subalgebra of an ultrapower of\/ $\sbB_\infty$\textup{,} where\/ $\sbD\not\cong\sbD_4$\textup{.}
Then\/ $\sbB_\infty$ can be embedded into\/ $\sbD$\textup{.}
%\in\mathbb{IS}(\sbD)$\textup{.}
%and\/ $\mathbf{2},\sbS_3,\sbC_4\notin\mathbb{IS}(\sbD)$\textup{.}
\end{fact}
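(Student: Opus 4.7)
The plan is to mimic, in the setting of $\sbB_\infty$, the argument sketched in Example~\ref{powers of 2} for $\sbA_\infty$. Throughout, $\sbU$ denotes an ultrapower of $\sbB_\infty$ containing $\sbD$ as a subalgebra.

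First, I would carry out a sequence of transfer arguments via \L os' theorem. The algebra $\sbB_\infty$ is rigorously compact with extrema $\neg(f^2)$ and $f^2$, is simple (its only strict lower bound of $e$ is $\neg(f^2)$), is anti-idempotent, and satisfies the first-order assertion ``$e$ has a unique upper cover'' (the cover being $2$). Each of these properties is first-order in the signature of IRLs---for simplicity, Lemma~\ref{fsi si simple}(\ref{simple}) gives the first-order formulation---so each passes to $\sbU$. By the congruence extension property, the nontrivial subalgebra $\sbD\le\sbU$ is simple; it is also anti-idempotent (belonging to $\mathbb V(\sbB_\infty)$), so by Theorem~\ref{combined}(\ref{idempotence f and e 2}) it has no trivial subalgebra and therefore contains the elements $\neg(f^2),e,f,f^2$, which form a copy of $\sbD_4$ inside $\sbD$.

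Next, let $c\in U$ denote the unique upper cover of $e$ in $\sbU$; this is precisely the image of $2\in\sbB_\infty$ under the diagonal embedding, so $\boldsymbol{\mathit{Sg}}^{\sbU}\{c\}$ is (an isomorphic copy of) $\sbB_\infty$. The crux of the proof is to show that $c\in D$, from which the conclusion is immediate: the copy of $\sbB_\infty$ generated by $c$ in $\sbU$ is automatically contained in $\sbD$. Since $\sbD\not\cong\sbD_4$, we fix some $d\in D\setminus\{\neg(f^2),e,f,f^2\}$ and, applying Theorem~\ref{dmm fsi}(\ref{splitting}) and the involution, may assume $e<d<f^2$. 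The subalgebra $\sbD$, inheriting ``$e$ has a unique upper cover'' from $\sbU$, possesses its own upper cover $c_\sbD$ of $e$, and $c\leq c_\sbD\leq d$ in $\sbU$. The plan is then to show that $c_\sbD=c$: if, contrariwise, $c<c_\sbD$ in $\sbU$, I would use the first-order description of $\sbB_\infty$'s lattice and fusion (including the rules for the $2^n$, the $\overline{2^n}$ and their negations listed before the statement) to extract $c$ from $c_\sbD$ by means of an explicit term in the language of IRLs, producing $c\in D$ and contradicting the assumption $c\notin D$.

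The main obstacle is this extraction step. One cannot hope for a single term $\tau(x)$ that recovers $c$ from every element of $[e,f^2]\setminus\{e,f^2\}$: already in $\sbA_\infty$, the element $2^n$ generates a subalgebra isomorphic to $\sbA_\infty$ but distinct from it, so no uniform definability is available. What one does have, however, is that the upper cover of $e$ is first-order specifiable, and in the $\sbB_\infty$-structure transferred to $\sbU$ by \L os, the cover $c_\sbD$ of $e$ in $\sbD$ must itself be an upper cover of $e$ in $\sbU$ (since any element strictly between $e$ and $c_\sbD$ that existed in $\sbU$ would, by transfer of the relevant universal statements about the chain above $e$, be witnessable inside $\sbD$ as well). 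This forces $c_\sbD=c$. Once $c\in D$, the embedding $\sbB_\infty\hookrightarrow\sbD$ is precisely the restriction to $\boldsymbol{\mathit{Sg}}^{\sbU}\{c\}$ of the diagonal embedding of $\sbB_\infty$ into $\sbU$, and the proof is complete.
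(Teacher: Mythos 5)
Your strategy hinges on the claim that the unique cover $c$ of $e$ in $\sbU$ (the diagonal image of $2$) must lie in $D$, via the assertion that the cover $c_{\sbD}$ of $e$ in $\sbD$ is also a cover of $e$ in $\sbU$. That assertion is false, and the parenthetical justification does not work: the existence in $\sbU$ of an element strictly between $e$ and $c_{\sbD}$ is an \emph{existential} statement about $\sbU$, and no transfer principle forces a subalgebra to contain a witness of it (only universal sentences pass down to subalgebras). A concrete counterexample is $\sbU=\sbB_\infty$ itself and $\sbD=\boldsymbol{\mathit{Sg}}^{\sbB_\infty}\{4\}$: every element of this subalgebra carries only even exponents, since powers, negations, the joins $e\vee\neg(2^p)$ and all the listed fusion rules preserve parity of the exponent; hence $2\notin D$ although $\sbD\not\cong\sbD_4$. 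You in fact quote the analogous phenomenon for $\sbA_\infty$ from Example~\ref{powers of 2} yourself, which already shows that the canonical generator cannot be recovered from an arbitrary $\sbD$; the Fact only claims that \emph{some} copy of $\sbB_\infty$ embeds into $\sbD$, not that the diagonal copy is contained in it. So the central step of your plan cannot be repaired along these lines.

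The paper's argument avoids the issue entirely: it chooses an arbitrary $a\in D\setminus D_4$ (possible because $\sbD\not\cong\sbD_4$ and membership in $D_4$ is first-order definable, $e$ being distinguished), uses universally quantified first-order properties of $\sbB_\infty$---which do persist in $\sbU$ and in its subalgebra $\sbD$---to replace $a$ by an element satisfying $e<a<a^2<f^2$, and then shows directly, by induction on the complexity of unary terms and using (\ref{t reg}), that the assignment $2\mapsto a$ extends to a (necessarily unique) embedding of $\sbB_\infty$ into $\sbD$; the image is in general a proper subalgebra of the diagonal copy. Your first paragraph (transfer of simplicity, rigorous compactness, anti-idempotence, and the splitting of $D$ into $[e)$ and $(f]$) is correct and matches the paper's setup, but the core of the proof is missing.
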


{\tiny
\thicklines
\begin{center}
\begin{picture}(160,160)(-65,-65)

%
%%%%%% D_\inf
%
\put(-60,60){\small $ \sbB_\infty: $}

%upwards

\put(-15,15){\line(1,-1){15}}
\put(-15,15){\circle*{4}}
%\put(-25,15){\small $ a^3 $}
\put(-25,15){\small $ 8 $}

\put(0,0){\circle*{4}}
%\put(5,-5){\small $ a^3_f $}
\put(7,-8){\small $ \ov{8} $}
%\put(15,15){\line(-1,-1){15}}
%dots
\put(4.5,4.5){\circle*{2}}
\put(7.5,7.5){\circle*{2}}
\put(10.5,10.5){\circle*{2}}

\put(15,15){\circle*{4}}
%\put(20,10){\small $ \neg (a^3) $}
\put(20,10){\small $ \neg 8 $}
\put(0,30){\line(1,-1){15}}
\put(0,30){\circle*{4}}
%\put(-28,32){\small $ \neg (a^3_f) $}
\put(-17,31){\small $ \neg \ov{8} $}
%\put(0,30){\line(-1,-1){15}}
%dots
\put(-10.5,19.5){\circle*{2}}
\put(-7.5,22.5){\circle*{2}}
\put(-4.5,25.5){\circle*{2}}

\put(30,30){\line(-1,-1){15}}
\put(30,30){\circle*{4}}
%\put(35,25){\small $ \neg (a^2) $}
\put(35,25){\small $ \neg 4 $}
\put(15,45){\line(1,-1){15}}
\put(15,45){\circle*{4}}
%\put(-12,47){\small $ \neg (a^2_f) $}
\put(-1,46){\small $ \neg \ov{4} $}
\put(15,45){\line(-1,-1){15}}

\put(45,45){\line(-1,-1){15}}
\put(45,45){\circle*{4}}
%\put(50,40){\small $ \neg a $}
\put(50,40){\small $ \neg 2 $}
\put(30,60){\line(1,-1){15}}
\put(30,60){\circle*{4}}
%\put(10,60){\small $ \neg a_f $}
\put(13,60){\small $ \neg \ov{2} $}
\put(30,60){\line(-1,-1){15}}

\put(60,60){\line(-1,-1){15}}
\put(45,75){\line(-1,-1){15}}

\put(60,60){\circle*{4}}
%\put(65,55){\small $ \neg 1=f $}
\put(65,55){\small $f$}
\put(45,75){\line(1,-1){15}}
\put(45,75){\circle*{4}}
%\put(33,80){\small $ f^2 =\neg\ov{1}$}
\put(43,81){\small $ f^2 $}

%downwards
\put(-15,-15){\line(1,1){15}}
\put(-15,-15){\circle*{4}}
%\put(-10,-20){\small $ a^2_f $}
\put(-10,-23){\small $ \ov{4} $}
\put(-30,0){\line(1,1){15}}
\put(-30,0){\circle*{4}}
%\put(-43,0){\small $ a^2 $}
\put(-41,0){\small $ 4 $}
\put(-30,0){\line(1,-1){15}}

\put(-30,-30){\line(1,1){15}}
\put(-30,-30){\circle*{4}}
%\put(-25,-35){\small $ a_f $}
\put(-25,-38){\small $ \ov{2} $}
\put(-45,-15){\line(1,1){15}}
\put(-45,-15){\circle*{4}}
%\put(-58,-15){\small $ a $}
\put(-56,-15){\small $ 2 $}
\put(-45,-15){\line(1,-1){15}}

%dots
\put(-60,-30){\line(1,1){15}}
\put(-45,-45){\line(1,1){15}}

\put(-45,-45){\circle*{4}}
%\put(-41,-52){\small $ 0=\ov{1}=\neg(f^2) $}
\put(-41,-52){\small $ 0=\neg(f^2) $}
\put(-60,-30){\line(1,-1){15}}
\put(-60,-30){\circle*{4}}
%\put(-88,-32){\small $ e=1 $}
\put(-70,-32){\small $ e $}

\end{picture}
\end{center}}
%\textup{We claim that $\mathbb{V}(\sbB_\infty)$ is a join-irreducible cover of
%$\mathbb{V}(\sbD_4)$ within $\mathsf{DMM}$, not generated by its finite members.
%For this, it suffices, as in Example~\ref{powers of 2},
%to establish the following.}
\end{exmp}
\begin{proof}
(Sketch) Identifying $\sbD_4$ with the $0$--generated subalgebra of the ultra\-power $\sbU$ (and hence of $\sbD$),
we see that $\sbD$ is anti-idempotent, rigorously compact and simple, and that $D$ is the disjoint union of its subsets
$[e)$ and $(f]$.
%It follows that $\mathbf{2},\sbS_3,\sbC_4\notin\mathbb{IS}(\sbD)$.
We may choose $a\in D\bs D_4$, because $\sbD\not\cong\sbD_4$ and $\sbD_4$ is finite.  Membership and non-membership
of $\sbD_4$ are first order properties, because $e$ is distinguished.
We can arrange that
\[
e<a<a^2<f^2,
\]
because
the following properties of $\sbB_\infty$ are expressible as universal first order sentences (which therefore persist
in both $\sbU$ and $\sbD$):
\begin{align*}
& x=x^2\;\Longrightarrow\;x\in D_4;\\
%\textup{if $x=x^2$, then $x\in D_4$;}\\
& (x\notin D_4 \;\;\&\;\; x^2=f^2)\,\Longrightarrow\,e<(e\vee\neg x)<(e\vee\neg x)^2<f^2.
%\textup{if $x\notin D_4$ and $x^2=f^2$, then $e<(e\vee\neg x)<(e\vee\neg x)^2<f^2$.}
\end{align*}
(So, we may replace $a$ by $e\vee\neg a\in D$ if $a^2=f^2$, and by $e\vee a\in D$ if $e\not< a$ with $a^2<f^2$.)
As $2$ generates $\sbB_\infty$, there is at most one homomorphism $h\colon\sbB_\infty\mrig\sbD$ sending $2$ to $a$.
To see that $h$ is well defined and injective, it suffices (by (\ref{t reg})) to show that for any
unary term $\al$ in $\bcdn,\wedge,\neg,e$, we have
\[
\textup{$e\leqslant \al^{\sbB_\infty}(2)$ \,iff\, $e\leqslant \al^\sbD(a)$.}
\]
This can be shown by induction on the complexity of $\al$.  At the inductive step, the case of $\wedge$ is trivial, while
$\neg$ is straightforward, because $D$ is the disjoint union of $[e)$ and $(f]$.  Fusion requires an examination of subcases,
which is aided by noting that $\sbB_\infty$ (and hence $\sbD$) has properties of the following kind,
where $n,m,p$ are any \emph{fixed} positive integers with $p\geq m$:
%\begin{align*}
%&
\[
%\textup{if $e<x<x^2<f^2$, then $x^m\bcdw x^n=x^{m+n}$ and $x^m\bcdw(e\vee\neg(x^p))=e\vee\neg(x^{p-m})$.}\qedhere
%\textup{if $
%\forall
e<x<x^2<f^2\Longrightarrow(x^m\bcdw x^n=x^{m+n} \;\,\&\;\; x^m\bcdw(e\vee\neg(x^p))=e\vee\neg(x^{p-m})).\;\qedhere
\]
\end{proof}

\end{document}